  \def\@wrindex#1{%
    \protected@write\@indexfile{}%
      {\string\indexentry{#1}{ \S\thesubsection (p.\thepage)}}
    \endgroup
  \@esphack}
\newcommand{\sha}{\shuffle}
\author{Benjamin Enriquez}
\address{IRMA (UMR 7501) et D\'epartement de Math\'ematiques, Universit\'e de Strasbourg, 7 rue Ren\'e-Descartes, 67084 Strasbourg (France)}
\email{b.enriquez@math.unistra.fr}
\author{Federico Zerbini}
\address{University of Oxford, Mathematical Institute, Andrew Wiles Building, Radcliffe Observatory Quarter (550), Woodstock Road, Oxford, OX2 6GG (UK)}
\email{federico.zerbini@maths.ox.ac.uk}
\newtheorem{thm}{Theorem}[section]
\newtheorem{lem}[thm]{Lemma}
\newtheorem{lemdef}[thm]{Lemma-Definition}
\newtheorem{cor}[thm]{Corollary}
\newtheorem{prop}[thm]{Proposition}
\theoremstyle{definition} \newtheorem{rem}[thm]{Remark}}
\theoremstyle{definition} \newtheorem{defn}[thm]{Definition}}
\theoremstyle{remark} }
\numberwithin{equation}{subsection}
\DeclareMathOperator{\GL}{\Gamma}
\newcommand{\GLarg}[3]{\tilde{\GL}\left(\begin{smallmatrix}#1\\#2\end{smallmatrix};#3\right)}
\begin{document}
\baselineskip 16pt 

\title[Elliptic hyperlogarithms]{Elliptic hyperlogarithms}

\begin{abstract}
Let $\mathcal E$ be a complex elliptic curve and $S$ be a non-empty finite subset of $\mathcal E$. We show that the functions 
$\tilde\Gamma$ introduced in \cite{BDDT} out of string theory motivations give rise to a basis of the minimal algebra 
$A_{\mathcal E\smallsetminus S}$ of holomorphic multivalued functions on $\mathcal E\smallsetminus S$ which is stable under 
integration, introduced in \cite{EZ2}; this basis is alternative to the basis of $A_{\mathcal E\smallsetminus S}$ constructed 
in {\it loc. cit.} using elliptic analogues of the hyperlogarithm functions.
\end{abstract}
\maketitle

{\small \setcounter{tocdepth}{2}
\tableofcontents}

\section{Introduction}

The hyperlogarithms (HLs) are a family of multivalued holomorphic functions on the punctured complex plane, 
which were first introduced in \cite{P,LD}. They are natural one-variable specialisations of the family of 
multiple polylogarithms, which have found numerous applications in mathematics (see \cite{surv:mpl} for 
a survey, as well as \cite{Br}). 
The HLs also found applications in particle physics (see \cite{Pan} and references therein).  
If $S\subset\mathbb C$ is a finite subset, the HLs on $\mathbb C\smallsetminus S$ are defined 
by iterated integration of a flat connection on a trivial bundle over $\mathbb C\smallsetminus S$ with values in a 
suitable free Lie algebra (\cite{Br}, \S5.1), which corresponds to a Maurer-Cartan element on $\mathbb C\smallsetminus S$ with 
values in this Lie algebra, and which is closely related with the KZ connection (see \cite{EZ2}, Rem. 2.20). 
The product of the algebra $\mathcal H_{\mathbb C\smallsetminus S}$ generated by the HLs on $\mathbb C\smallsetminus S$
with the algebra $\mathcal O(\mathbb C\smallsetminus S)$ of regular functions on $\mathbb C\smallsetminus S$ is an algebra of 
 multivalued holomorphic functions on $\mathbb C\smallsetminus S$, which is stable under the operators 
 $\mathrm{int}_\omega : f\mapsto (z\mapsto \int_{z_0}^z f\omega)$ for $\omega$ in the space 
 $\Omega(\mathbb C\smallsetminus S)$ of regular differentials on $\mathbb C\smallsetminus S$, where~$z_0$ is a fixed point of $\mathbb C\smallsetminus S$ (\cite{Br}, Cor.~5.6), 
 and it coincides with the minimal algebra with these properties $A_{\mathbb C\smallsetminus S}$ 
 (see \cite{EZ2}, Thm.~A). It is proven in \cite{Br}, \S5.2, that the product of $\mathcal H_{\mathbb C\smallsetminus S}$ 
 with $\mathcal O(\mathbb C\smallsetminus S)$ is isomorphic to $\mathcal O(\mathbb C\smallsetminus S)\otimes \mathcal H_{\mathbb C\smallsetminus S}$, and that $\mathcal H_{\mathbb C\smallsetminus S}$ is isomorphic to an explicit shuffle algebra; this enables to prove that this product is a free module over $\mathcal O(\mathbb C\smallsetminus S)$, 
and that a basis is given by the HLs (see \cite{Br} Cor. 5.6, and also \cite{DDMS}).

When $\mathbb C\smallsetminus S$  is replaced by an arbitrary affine curve~$C$, one can similarly attach to any 
Maurer-Cartan element~$J$, non-degenerate in the sense of \cite{EZ2}, an algebra $\mathcal H_C(J)$ of multivalued 
holomorphic functions on~$C$ (see \cite{EZ2}, \S2.3). The above result for the punctured complex plane extends to this case, 
namely the product of the algebra $\mathcal H_C(J)$ with the algebra $\mathcal O(C)$ of regular functions on $C$ 
coincides with the minimal subalgebra~$A_C$ of the algebra of multivalued functions on~$C$ that is stable under the 
analogues $\mathrm{int}_\omega$ of the above operators, where $\omega$ runs over the space $\Omega(C)$ of regular 
differentials on $C$ (\cite{EZ2}, Thm. A(b)), and is therefore independent of~$J$. The algebra~$A_C$ can also be 
characterised as the set of multivalued holomorphic functions on~$C$ which have moderate growth at the cusps of~$C$ and have 
unipotent monodromy, 
in the sense that the representation of the fundamental group of~$C$ generated by such a function is a finite iterated extension of the
trivial representation (\cite{EZ2}, Thm.~C). 
In \cite{EZ2}, Thm. A(b), one shows the isomorphisms of $A_C$ with $\mathcal O(C) \otimes \mathcal H_C(J)$ and of 
$\mathcal H_C(J)$ with an explicit shuffle algebra; this enables us to attach to a family of $\Omega(C)$, whose image is 
a basis of $\Omega(C)/d\mathcal O(C)$, a basis of $A_C$ as an $\mathcal O(C)$-module (Def. \ref{def:hyperlog} and 
Lems.~\ref{lem:basis:2906} and \ref{new:lem:1203}), whose elements are analogues of the classical HLs. 

In this article, we will be interested in the particular case where $C$ has genus one; more precisely, 
we take it to coincide with the affine curve $\mathcal E_S:=\mathcal E\smallsetminus S$, 
for $\mathcal E$ a complex elliptic curve and $S\subset \mathcal E$ a finite non-empty subset. 
In §\ref{NEWSECTION}, we exhibit an explicit family 
of $\Omega(\mathcal E_S)$ whose image is a basis of $\Omega(\mathcal E_S)/d\mathcal O(\mathcal E_S)$; 
according to Lems. \ref{lem:basis:2906} and 
\ref{new:lem:1203}, this gives rise to a basis of $A_{\mathcal E_S}$ as an $\mathcal O(\mathcal E_S)$-module, whose elements will 
be called \emph{elliptic hyperlogarithms}.  




Recent development in particle physics led to the introduction of two (closely related) classes of 
multivalued functions $\tilde \Gamma$ and $\mathrm E_3$ on $\mathcal E_S$ 
(see \cite{BDDT}, and \cite{BMMS,BK} for applications in string theory). The functions $\tilde \Gamma$
are defined as iterated integrals of multivalued holomorphic differential forms on~$\mathbb C\smallsetminus pr^{-1}(S)$, 
$pr : \mathbb C\to\mathcal E$ being a universal covering map. One can show that the functions $\tilde \Gamma$ 
may also be defined, using iterated integration, 
out of a holomorphic flat connection on a non-trivial bundle over $\mathcal E_S$, which
can be identified with a connection obtained upon restriction from the ``universal KZB'' 
connection of \cite{CEE} (or \cite{LR} if $|S|=1$). 

The main result of this article is a proof that the collection of functions $\tilde\Gamma$ gives rise to a basis of $A_{\mathcal E_S}$ 
over $\mathcal O(\mathcal E_S)$ (see Thm. \ref{THM:MAIN}), alternative to the family of elliptic HLs from §\ref{NEWSECTION}. 
This relies on two main steps: 

\begin{itemize}
    \item[(a)] We prove that $A_{\mathcal E_S}$ is equal to an algebra $\mathcal G$ generated by the functions 
    $\tilde\Gamma$ (Thm. \ref{THMEQ}): the inclusion $\mathcal G \subset A_{\mathcal E_S}$ (Cor. \ref{cor:1704}) is based 
    on the characterization of $A_{\mathcal E_S}$ as the set of multivalued functions sharing certain differential 
    properties, and the inclusion $\mathcal G \supset A_{\mathcal E_S}$ is based on the characterization of 
    $A_{\mathcal E_S}$  as the minimal algebra of multivalued functions which are stable under the endomorphisms $\mathrm{int}_\omega$
    (see Thm. \ref{THMEQ}).  

    \item[(b)] We prove a linear independence result for the functions $\tilde\Gamma$ (see Prop. \ref{thm:main:bis}), 
    based on a criterion of \cite{DDMS} and on a precise analysis of differential algebras attached to $\mathcal E_S$. 
\end{itemize}


The definition of the functions $\mathrm E_3$ relies on the additional datum of a degree 2 covering map $\pi :\mathcal  E\to
\mathbb P^1_{\mathbb C}$ and a finite subset $S_0\subset \mathbb P^1_{\mathbb C}$. It is shown in \cite{BDDT}, \S5 (see also Prop. 
\ref{prop:iso:spaces:0307} in this paper) that 
the vector spaces of multivalued functions generated by the $\tilde\Gamma$ and by the~$\mathrm E_3$ coincide if $S=\pi^{-1}(S_0)$. 
In \cite{BDDT} it also shown that an algebra constructed out of the functions $\mathrm E_3$ is stable under integration 
(\S6, based on explicit computation). A consequence of~(a) is an alternative proof of this 
stability result (see Prop. \ref{prop:bddt:2704}).   

\subsection{History and outlook} The first appearance of elliptic analogues of hyperlogarithms dates back to \cite{BL}, \S10.1, 
where the authors construct functions called ``elliptic Debye hyperlogarithm'' via averaging of classical polylogarithm functions, 
generalising analogous constructions of \cite{Bl,Zag1,Lev}. These functions are holomorphic and multivalued, and we expect them 
to be contained in the algebra $A_{\mathcal E_S}$ (work in progress). They are closely related (see \cite{BL}, \S10.3) to certain 
real-analytic 
functions defined via iterated integration out of a real-analytic version of the KZB connection of \cite{CEE}; such functions may be 
seen as real-analytic analogues of elliptic hyperlogarithms, and were recently generalised to higher-genus Riemann surfaces in 
\cite{DHS}. 

\subsection{Organisation of the article} 

In \S\ref{sect:2:0407} we recall from \cite{EZ2} the construction and properties of the algebra~$A_C$ attached to a curve~$C$,
and we specialise these results to the situation of an elliptic curve; in particular, we exhibit a basis of 
$A_{\mathcal E_S}$ over $\mathcal O(\mathcal E_S)$ which consists of elliptic HLs (§\ref{NEWSECTION}). 
We then recall in \S\ref{sect:tilde:Gamma:0407} the definition  of the functions $\tilde\Gamma$ (\cite{BDDT}), 
paying special attention to regularisation issues. \S\ref{sect:5:2606} is devoted to proving our main result: the family of functions $\tilde\Gamma$ 
gives rise to a basis of $A_{\mathcal E_S}$ over $\mathcal O(\mathcal E_S)$ (Thm. \ref{THM:MAIN}), alternative to the basis from 
\S\ref{NEWSECTION}.
In \S\ref{sect:6:0407}, we apply results from \S\ref{sect:5:2606} to deduce a new proof of a result of \cite{BDDT} on the stability 
under integration of an algebra~$\mathcal A_3$ constructed out of the functions $\mathrm E_3$.

\subsection{Conventions, notation}

In this paper, all vector spaces and algebras will be understood to be with base field $\mathbb C$.

\subsubsection{Filtrations}
A {\it filtered vector space}
$F_\bullet V$ is the data of a vector space $V$ and a collection of subspaces $(F_nV)_{n\geq 0}$ 
such that $F_nV\subset F_{n+1}V$ for any $n\geq 0$; by convention, $F_{-1}V=0$. A morphism of filtered vector spaces 
$F_\bullet V\to F_\bullet W$ is a linear map $f : V\to W$, such that $f(F_nV)\subset F_nW$ for any $n\geq0$.  
The {\it associated graded} of the filtered vector space $F_\bullet V$ is the graded vector space
$\mathrm{gr}(V):=\oplus_{n\geq0}\mathrm{gr}_n(V)$, where $\mathrm{gr}_n(V):=F_nV/F_{n-1}V$. 
If $F_\bullet V$ is a filtration of a vector space $V$, 
we denote by $x\mapsto\overline x$ the projection map 
$F_nV\to\mathrm{gr}_n(V)$ for any $n\geq 0$. If $F_\bullet V$ and $F_\bullet W$ are filtered vector spaces, 
their tensor product $F_\bullet(V\otimes W)$ is the data of $V\otimes W$, equipped with the filtration given 
by $F_n(V\otimes W):=\sum_{p=0}^n F_pV\otimes F_{n-p}W$. A {\it filtered algebra} is an algebra $A$ equipped with a
filtration $F_\bullet A$, such that the product of $A$ is a morphism of filtered vector spaces $F_\bullet(A\otimes A)
\to F_\bullet A$. 

The {\it total space} of a filtered vector space $F_\bullet V$ is the subspace $F_\infty V:=\cup_{n\geq 0}F_nV$ of $V$. 
The total space $F_\infty A$ of a filtered algebra $F_\bullet A$ is then a subalgebra of $A$. 

\subsubsection{Meromorphic functions}\label{merom:funs}

For $\Sigma\subset \mathbb C$ a discrete subset, 
let $\mathcal O_{mer}(\mathbb C,\Sigma)$ be the algebra 
of meromorphic functions on $\mathbb C$ with sets of poles contained in~$\Sigma$. It is contained in the algebra $\mathcal O_{hol}(\mathbb C\smallsetminus\Sigma)$ of holomorphic functions on $\mathbb C\smallsetminus\Sigma$, and it is equipped with a derivation $\partial:=\partial/\partial z$; we use the notation $f':=\partial (f)$. For $a\in\mathbb C$, 
we denote by $T_a$ the automorphism of the algebra of meromorphic functions on $\mathbb C$ (equal to $\cup_\Sigma
\mathcal O_{mer}(\mathbb C,\Sigma)$) defined by $T_af:=(z\mapsto f(z-a))$.

\section{Minimal stable subalgebras and elliptic hyperlogarithms (based on~\cite{EZ2})}\label{sect:2:0407}

In §§\ref{sect:21:2704}-\ref{basis:A_C}, we recall material from \cite{EZ2}. More precisely, to each smooth affine complex curve $C$, we 
attach a 
subalgebra $A_C$ of the algebra of holomorphic functions $\mathcal O_{hol}(\tilde C)$ on a universal cover of~$C$ (see Lem.-Def. 
\ref{lem:ez2:1704}), called the minimal stable subalgebra (\S\ref{sect:21:2704}); we define an algebra filtration 
$F_\bullet^\delta\mathcal O_{hol}(\tilde C)$ of $\mathcal O_{hol}(\tilde C)$ with 
$A_C=F_\infty^\delta\mathcal O_{hol}(\tilde C)$ (see Thm.~\ref{thm:2:3:2606}(b)) (see \S\ref{sect:2:2:2606}); 
we express an $\mathcal O(C)$-basis of $A_C$ in terms of HL functions associated with $C$ (\S\ref{basis:A_C}).
In §\ref{sect:setup}, we introduce the elliptic setup needed for the main results of the paper; then $C=\mathcal E_S$. 
In §\ref{NEWSECTION}, we formulate a result of \cite{EZ2} in the elliptic context, namely the expression of a
$\mathcal O(\mathcal E_S)$-basis of $A_{\mathcal E_S}$ in terms of elliptic HL functions. 


\subsection{The minimal stable subalgebra $A_C$ associated with a curve $C$}\label{sect:21:2704}

Let $C$ be a smooth affine complex curve, which we view as a Riemann surface. Let us fix a universal cover $p:\tilde C\to C$ and let 
$\mathcal O_{hol}(\tilde C)$ be the algebra of holomorphic functions on $\tilde C$. 

Let $\Omega(C)$ be the space of regular differentials on $C$. 
For any $\omega\in\Omega(C)$, the map $f\mapsto f\cdot p^*(\omega)$ is a linear map 
$\mathcal O_{hol}(\tilde C)\to\Omega_{hol}(\tilde C)$, where  
$\Omega_{hol}(V)$ is the space of holomorphic differential 1-forms on a complex manifold $V$. 
For any $z_0\in \tilde C$ and any $\alpha\in \Omega_{hol}(\tilde C)$, the assignment
$\alpha\mapsto[z\mapsto \int_{z_0}^z \alpha]$ is well-defined as~$\tilde C$ is simply-connected, and defines a linear map 
$\Omega_{hol}(\tilde C)\to\mathcal O_{hol}(\tilde C)$. 
For $\omega\in\Omega(C)$, let $\mathrm{int}_{\omega}$ be the linear endomorphism of $\mathcal O_{hol}(\tilde C)$ given by $f\mapsto 
[z\mapsto \int_{z_0}^z f\cdot p^*(\omega)]$. 

\begin{defn}\label{def:stable:subalg}
 A {\it stable subalgebra} of $\mathcal O_{hol}(\tilde C)$ is a subalgebra with unit, which is stable under the endomorphism 
 $\mathrm{int}_\omega$ for any $\omega\in\Omega(C)$. 
\end{defn}

Although the definition of the operators $\mathrm{int}_{\omega}$ depends on a choice of $z_0$, one checks that the notion of 
stable subalgebra is independent of such a choice. 

\begin{lemdef}[see \cite{EZ2}, \S5.3]\label{lem:ez2:1704}
  If $A_C:=\cap_{A\mathrm{\ stable\ subalgebra\ of\ }\mathcal O_{hol}(\tilde C)}A$, then $A_C$ is a stable subalgebra of $\mathcal O_{hol}(\tilde C)$, which
  is minimal for the inclusion; it is called the {\it minimal stable subalgebra} of $\mathcal O_{hol}(\tilde C)$.  
\end{lemdef}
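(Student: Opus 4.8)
The statement is of the standard form asserting that the intersection of a family of substructures closed under a prescribed set of operations is again such a substructure, and is then automatically minimal; accordingly the plan is a routine verification in three steps, with no real obstacle.

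First I would check that the family over which the intersection is taken is non-empty, so that $A_C$ is a well-defined subset of $\mathcal O_{hol}(\tilde C)$: indeed $\mathcal O_{hol}(\tilde C)$ is a stable subalgebra of itself, since it is a unital subalgebra and is tautologically stable under each $\mathrm{int}_\omega$ (the image of $\mathrm{int}_\omega$ lies in $\mathcal O_{hol}(\tilde C)$ by the discussion preceding Definition~\ref{def:stable:subalg}). Hence the family is non-empty, and in particular $A_C$ contains all constant functions, as these lie in every unital subalgebra.

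Next I would verify that $A_C$ is itself a stable subalgebra. It is a unital subalgebra because $1$ belongs to every stable subalgebra, and because if $f,g\in A_C$ then $f,g\in A$ for every stable subalgebra $A$, whence $f+g$, $fg$, $\lambda f\in A$ for all $\lambda\in\mathbb C$; intersecting over all such $A$ gives $f+g$, $fg$, $\lambda f\in A_C$. For stability under integration, let $f\in A_C$ and $\omega\in\Omega(C)$; then $f\in A$ for every stable subalgebra $A$, and by stability of $A$ one has $\mathrm{int}_\omega(f)\in A$; taking the intersection over all $A$ yields $\mathrm{int}_\omega(f)\in A_C$. Finally, minimality is immediate from the construction: $A_C\subseteq A$ for every stable subalgebra $A$, so $A_C$ is the smallest one, which justifies the terminology. (Independence of the notion of stable subalgebra from the auxiliary point~$z_0$ was already recorded above, so the object $A_C$ does not depend on that choice.)

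The only point worth a moment's attention is that stability under $\mathrm{int}_\omega$ is preserved by \emph{arbitrary}, not merely finite, intersections; this is clear, since the defining condition $\mathrm{int}_\omega(f)\in A$ is an elementwise membership statement and therefore survives passage to an arbitrary intersection.
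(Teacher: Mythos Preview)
Your proof is correct and is the standard verification one expects for this kind of statement. The paper itself does not supply a proof; it simply refers to \cite{EZ2}, \S5.3, so there is nothing to compare beyond noting that your argument is exactly the routine check that would appear there.
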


\subsection{Relation of $A_C$ with the differential filtration}\label{sect:2:2:2606}

Define inductively a collection of subspaces $(F^\delta_n\mathcal O_{hol}(\tilde C))_{n\geq 0}$ of $\mathcal O_{hol}(\tilde C)$ by 
$F^\delta_0\mathcal O_{hol}(\tilde C):=\mathbb C$, and $F^\delta_{n+1}\mathcal O_{hol}(\tilde C):=\{f\in \mathcal O_{hol}(\tilde C)\,|\,df\in F^\delta_n\mathcal O_{hol}(\tilde C)\cdot p^*\Omega(C)\}$ for $n\geq0$. Define also the subspace 
$F^\delta_\infty\mathcal O_{hol}(\tilde C):=\cup_{n\geq0}F^\delta_n\mathcal O_{hol}(\tilde C)$ of $\mathcal O_{hol}(\tilde C)$. 

Let us denote by $\mathcal O(C)$ the algebra of regular functions on $C$. 

\begin{thm}[see \cite{EZ2}] \label{thm:2:3:2606}
(a)   $F^\delta_\bullet\mathcal O_{hol}(\tilde C)$ is an 
algebra filtration of $\mathcal O_{hol}(\tilde C)$, such that 
$p^*\mathcal O(C)\subset F^\delta_1\mathcal O_{hol}(\tilde C)$. 

(b) $F^\delta_\infty\mathcal O_{hol}(\tilde C)=A_C$. 
\end{thm}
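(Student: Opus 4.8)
The goal is to prove Theorem \ref{thm:2:3:2606}: that the differential filtration $F^\delta_\bullet\mathcal O_{hol}(\tilde C)$ is an algebra filtration with $p^*\mathcal O(C)\subset F^\delta_1$, and that its total space equals the minimal stable subalgebra $A_C$. I would split this into the three assertions and handle them in order: (i) the Leibniz/subalgebra property, (ii) the containment of $p^*\mathcal O(C)$, (iii) the identification $F^\delta_\infty = A_C$ by mutual inclusion.

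For (i), I would argue by induction on $n$ that $F^\delta_p\cdot F^\delta_q\subset F^\delta_{p+q}$. The base case $p=q=0$ is trivial since $F^\delta_0=\mathbb C$. For the inductive step, take $f\in F^\delta_p$ and $g\in F^\delta_q$; then $d(fg) = (df)g + f(dg)$, and by definition $df\in F^\delta_{p-1}\cdot p^*\Omega(C)$ and $dg\in F^\delta_{q-1}\cdot p^*\Omega(C)$, so both terms lie in $(F^\delta_{p-1}F^\delta_q + F^\delta_p F^\delta_{q-1})\cdot p^*\Omega(C)\subset F^\delta_{p+q-1}\cdot p^*\Omega(C)$ by the induction hypothesis, whence $fg\in F^\delta_{p+q}$. (Strictly, one should also record that each $F^\delta_n$ is a vector space and that $F^\delta_n\subset F^\delta_{n+1}$, which is immediate since $F^\delta_n\cdot p^*\Omega(C)\subset F^\delta_{n+1-1}\cdot p^*\Omega(C)$ once $n\geq 1$, and $F^\delta_0=\mathbb C\subset F^\delta_1$ because constants have zero differential.) For (ii), the key point is that $\mathcal O(C)$ is generated as an algebra by functions whose differentials lie in $\Omega(C)$: for $h\in\mathcal O(C)$ one has $dh\in\Omega(C)$, hence $d(p^*h)=p^*(dh)\in 1\cdot p^*\Omega(C) = F^\delta_0\cdot p^*\Omega(C)$, so $p^*h\in F^\delta_1$. (Here I am using that $\mathcal O(C)\subset\Omega(C)\cdot(\text{something})$ is not needed — only that $dh$ is a regular differential, which holds since $C$ is a smooth affine curve.)

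For (iii), the inclusion $F^\delta_\infty\subset A_C$: since $A_C$ is by Lem.-Def. \ref{lem:ez2:1704} the \emph{minimal} stable subalgebra, it suffices to show $F^\delta_\infty$ is itself a stable subalgebra. By (i) it is a subalgebra containing $1$, so I only need stability under each $\mathrm{int}_\omega$. Given $f\in F^\delta_n$ and $\omega\in\Omega(C)$, set $g:=\mathrm{int}_\omega(f)$; then $g\in\mathcal O_{hol}(\tilde C)$ and $dg = f\cdot p^*\omega\in F^\delta_n\cdot p^*\Omega(C)$, so $g\in F^\delta_{n+1}\subset F^\delta_\infty$. The reverse inclusion $A_C\subset F^\delta_\infty$ is the substantive direction, and I expect it to be the main obstacle. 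The natural strategy is again minimality: one would like to show $F^\delta_\infty$ is contained in \emph{every} stable subalgebra $A$, equivalently that $A_C\subset F^\delta_\infty$, by inducting on the filtration degree and showing $F^\delta_{n}\subset A_C$. The base case $F^\delta_0=\mathbb C\subset A_C$ holds as $A_C$ is unital. For the inductive step one takes $f\in F^\delta_{n+1}$, so $df = \sum_i g_i\, p^*\omega_i$ with $g_i\in F^\delta_n\subset A_C$ and $\omega_i\in\Omega(C)$; the issue is to reconstruct $f$ from this data inside $A_C$. One has $f - f(z_0) = \int_{z_0}^z df = \sum_i \int_{z_0}^z g_i\, p^*\omega_i = \sum_i \mathrm{int}_{\omega_i}(g_i)$, and each $\mathrm{int}_{\omega_i}(g_i)\in A_C$ because $A_C$ is stable and $g_i\in A_C$; since $f(z_0)\in\mathbb C\subset A_C$, we conclude $f\in A_C$. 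The one genuinely delicate point here — and the place I would spend the most care — is verifying that $df$ really does admit a finite expression $\sum_i g_i\, p^*\omega_i$ with $g_i\in F^\delta_n$: this is exactly the defining condition $df\in F^\delta_n\mathcal O_{hol}(\tilde C)\cdot p^*\Omega(C)$, so it holds by definition, but one should check that the decomposition can be chosen with the $\omega_i$ ranging over a fixed finite basis of $\Omega(C)/d\mathcal O(C)$ (or just a basis of $\Omega(C)$) so that the sum is finite and the integration-by-term is legitimate; since $\Omega(C)$ is finite-dimensional for an affine curve this is harmless. With both inclusions established, $F^\delta_\infty = A_C$, completing the proof.
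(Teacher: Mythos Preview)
Your argument is essentially correct and self-contained, whereas the paper simply cites \cite{EZ2} (Prop.~5.5 and Thm.~C) without reproducing the proof. The inductive Leibniz argument for (i), the one-line check for (ii), and the two-inclusion strategy for (iii) are exactly the natural arguments, and they go through.

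Two small corrections. First, in part (iii) you have swapped the labels on the two inclusions: showing that $F^\delta_\infty$ is a stable subalgebra yields $A_C\subset F^\delta_\infty$ (by minimality of $A_C$), not the other way round; conversely, the induction $F^\delta_n\subset A_C$ yields $F^\delta_\infty\subset A_C$. Both arguments are correct, only the headers are interchanged. Second, your aside that ``$\Omega(C)$ is finite-dimensional for an affine curve'' is false (already for $C=\mathbb A^1$ one has $\Omega(C)=\mathbb C[z]\,dz$), but fortunately you do not need it: the condition $df\in F^\delta_n\cdot p^*\Omega(C)$ already means that $df$ is a \emph{finite} sum $\sum_i g_i\,p^*\omega_i$ by the very definition of the product of two subspaces, so the term-by-term integration is immediate.
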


\begin{proof}
The fact that $F^\delta_\bullet$ is an algebra filtration is proven in \cite{EZ2}, Prop. 5.5(a). The inclusion $p^*\mathcal O(C)\subset F^\delta_1\mathcal O_{hol}(\tilde C)$ follows from the $n=0$ case of Prop. 5.5(b) in \cite{EZ2}. Statement (b) follows from \cite{EZ2}, Thm.~C. 
\end{proof}

\subsection{Analogues of HLs on a curve $C$}\label{basis:A_C}

For $W$ a complex vector space, denote by $\mathrm{Sh}(W)$ the shuffle Hopf algebra of $W$. This is 
the vector space $\oplus_{k\geq0}W^{\otimes k}$ (the element $w_1\otimes\cdots\otimes w_k\in W^{\otimes k}$ is 
denoted $[w_1|\cdots|w_k]$), equipped with the shuffle product $\shuffle$, which is commutative, the deconcatenation coproduct 
$\Delta$ given by $[w_1|\cdots|w_k]\mapsto\sum_{j=0}^k[w_1|\ldots|w_j]\otimes[w_{j+1}|\ldots|w_k]$, 
the counit $\varepsilon$ given by $1\mapsto 1$ and $[w_1|\cdots|w_k]\mapsto 0$ for $k>0$, 
and the antipode given by $[w_1|\ldots|w_k]\mapsto (-1)^k [w_k|\ldots|w_1]$. A linear map $\phi : W\to W'$
induces a Hopf algebra morphism $\mathrm{Sh}(W)\to\mathrm{Sh}(W')$, also denoted $\phi$. 

Let $z_0\in\tilde C$. There is a unique linear map 
\begin{equation}\label{def:I:x0}
I_{z_0} : \mathrm{Sh}(\Omega(C))\to\mathcal O_{hol}(\tilde C)    
\end{equation}
satisfying the identities $I_{z_0}(a)(z_0)=\varepsilon(a)$ and $d(I_{z_0}([\omega_1|\cdots|\omega_n])=
I_{z_0}([\omega_1|\cdots|\omega_{n-1}])\cdot \omega_n$ for any $n\geq0$ and $\omega_1,\ldots,\omega_n\in\Omega(C)$, 
so that $I_{z_0}([\omega_1|\cdots |\omega_n])=\mathrm{int}_{\omega_n}(I_{z_0}([\omega_1|\cdots |\omega_{n-1}]))$;
such a map~$I_{z_0}$ is an algebra morphism, given by iterated integration based at $z_0$ (see \cite{EZ2}, Lem. 2.2). 

Set $\mathrm H_C:=\Omega(C)/d\mathcal O(C)$. Let $\sigma : \mathrm H_C\to \Omega(C)$ be a section of the projection map 
$\Omega(C)\to\mathrm H_C$.

\begin{lem}[\cite{EZ2}]\label{lem:06031758}
(a) Let $\sigma_*:\mathrm{Sh}(\mathrm H_C)\to \mathrm{Sh}(\Omega(C))$ be algebra morphism induced by~$\sigma$, then $\tilde f_{\sigma,z_0}:=I_{z_0}\circ\sigma : \mathrm{Sh}(\mathrm H_C)\to \mathcal O_{hol}(\tilde C)$ is an injective algebra morphism. 

(b) The image of $\tilde f_{\sigma,z_0}$ is independent of $z_0$; it is a subalgebra of $\mathcal O_{hol}(\tilde C)$, denoted $\mathcal H_C(\sigma)$.

(c) The composition $\mathcal O(C) \otimes \mathcal H_C(\sigma)\hookrightarrow  \mathcal O_{hol}(\tilde C)\otimes  
\mathcal O_{hol}(\tilde C)\to \mathcal O_{hol}(\tilde C)$ of the canonical injections with the product map of 
$\mathcal O_{hol}(\tilde C)$ yields an algebra isomorphism  
$\mathcal O(C)\otimes \mathcal H_C(\sigma)\simeq A_C$. 
\end{lem}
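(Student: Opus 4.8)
\textbf{Proof plan for Lemma \ref{lem:06031758}.}

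The plan is to deduce all three statements from the results already recalled, in particular from Theorem \ref{thm:2:3:2606} and the description of $A_C$ via the differential filtration $F^\delta_\bullet$. First I would treat part (a). The composite $\tilde f_{\sigma,z_0}=I_{z_0}\circ\sigma_*$ is an algebra morphism because both $I_{z_0}$ (by the cited Lemma 2.2 of \cite{EZ2}) and $\sigma_*$ (being induced by a linear map on the shuffle algebra) are. For injectivity, the natural strategy is to pass to the associated graded with respect to the word-length filtration on $\mathrm{Sh}(\mathrm H_C)$ and the differential filtration $F^\delta_\bullet$ on the target: by the defining property $d(I_{z_0}[\omega_1|\cdots|\omega_n])=I_{z_0}[\omega_1|\cdots|\omega_{n-1}]\cdot\omega_n$ one sees that $\tilde f_{\sigma,z_0}$ sends words of length $\le n$ into $F^\delta_n\mathcal O_{hol}(\tilde C)$, and one computes the induced map on graded pieces. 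It suffices to show the graded map is injective; this reduces, via the filtration, to a statement about the map $\mathrm H_C\to\Omega(C)/d\mathcal O(C)$ — which is the identity since $\sigma$ is a section — together with a primitivity/shuffle-independence argument showing that no nontrivial shuffle-polynomial relation among iterated integrals can hold. The cleanest route is probably to invoke that $d$ is injective on $\mathcal O_{hol}(\tilde C)/\mathbb C$ and argue by induction on word length: if $\tilde f_{\sigma,z_0}(a)=0$ for $a$ of length $\le n$, then $d\tilde f_{\sigma,z_0}(a)=0$ forces the length-$n$ part of $a$ to map to zero in $F^\delta_{n-1}\otimes\Omega(C)$, and since $\sigma$ is a section of $\Omega(C)\to\mathrm H_C$ the coefficient forms $\sigma(h)$ are linearly independent modulo $d\mathcal O(C)$, giving $a\in F_{n-1}\mathrm{Sh}(\mathrm H_C)$; then the value at $z_0$ handles the degree-$0$ base case.

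For part (b), independence of $z_0$: given two base points $z_0,z_1\in\tilde C$, the change-of-basepoint formula for iterated integrals (path-composition, which is governed by the deconcatenation coproduct $\Delta$) shows that $I_{z_1}=I_{z_0}\circ(\text{right multiplication by a grouplike element})$, or more precisely that $I_{z_1}(a)$ and $I_{z_0}(a)$ differ by an automorphism of $\mathrm{Sh}(\Omega(C))$ of the form $a\mapsto (\mathrm{ev}_{z_0\to z_1}\otimes\mathrm{id})\Delta(a)$. Since this automorphism preserves the image, $\mathrm{Im}(\tilde f_{\sigma,z_1})=\mathrm{Im}(\tilde f_{\sigma,z_0})$; that it is a subalgebra is immediate from (a). I would either cite \cite{EZ2} directly for this (it is surely Lemma 2.2 or its neighbourhood) or spell out the one-line coproduct computation.

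For part (c), the isomorphism $\mathcal O(C)\otimes\mathcal H_C(\sigma)\xrightarrow{\sim}A_C$: the map in question is an algebra morphism by construction (product of two subalgebras of a commutative algebra). For surjectivity onto $A_C$ I would use $A_C=F^\delta_\infty\mathcal O_{hol}(\tilde C)$ from Theorem \ref{thm:2:3:2606}(b) and argue by induction on the differential filtration degree: $F^\delta_0=\mathbb C\subset\mathcal O(C)\otimes\mathcal H_C(\sigma)$, and given $f\in F^\delta_{n+1}$, write $df\in F^\delta_n\cdot p^*\Omega(C)=F^\delta_n\cdot p^*\sigma(\mathrm H_C)+F^\delta_n\cdot p^*d\mathcal O(C)$; the second summand is $d(F^\delta_n\cdot p^*\mathcal O(C))$ up to lower-filtration corrections (using that $p^*\mathcal O(C)\subset F^\delta_1$, Theorem \ref{thm:2:3:2606}(a), and the Leibniz rule), so after subtracting an element of $\mathcal O(C)\otimes\mathcal H_C(\sigma)$ of the form $\sum g_i\cdot(\text{length-}n\text{ iterated integral})$ we reduce $df$ and hence $f$ (up to a constant) to filtration degree $n$. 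Injectivity is the delicate point and I expect it to be \textbf{the main obstacle}: it requires that the iterated integrals in $\mathcal H_C(\sigma)$ be linearly independent not just over $\mathbb C$ but over $\mathcal O(C)$, equivalently that a relation $\sum_i g_i\,\tilde f_{\sigma,z_0}(a_i)=0$ with $g_i\in\mathcal O(C)$ and $a_i$ linearly independent in $\mathrm{Sh}(\mathrm H_C)$ forces all $g_i=0$. This is a regularity/unipotent-monodromy statement: differentiating such a relation and using that $\Omega(C)/d\mathcal O(C)$ is spanned by the $\sigma(h)$ lets one peel off the top word-length terms and run an induction, the base case being that $\mathcal O(C)$ has no zero divisors acting on $\mathcal O_{hol}(\tilde C)$. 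I would cite \cite{EZ2}, Thm. A(b) for the precise statement of this freeness, since it is exactly the content recalled in the introduction, and present the argument above as the proof of surjectivity, noting that the full statement of (c) is \cite{EZ2}, Thm. A(b) specialised to the chosen section $\sigma$.
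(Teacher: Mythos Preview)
The paper's own proof is a bare citation: parts (a) and (c) follow from \cite{EZ2}, Thm.~A(b) (with $J=J_\sigma$ as in \cite{EZ2}, Def.~1.2), and (b) follows from \cite{EZ2}, Thm.~A(a). No further argument is given; the lemma is explicitly a recollection from \cite{EZ2}.

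Your outline for (b) via the change-of-basepoint/deconcatenation formula is correct, as is the surjectivity half of (c). But the injectivity argument you sketch for (a) has a real gap. From $d\tilde f_{\sigma,z_0}(a)=0$ you obtain a relation $\sum_j b_j\cdot p^*\sigma(h_j)=0$ in $\Omega_{hol}(\tilde C)$, where the $b_j$ are $\mathbb C$-linear combinations of length-$(n{-}1)$ iterated integrals. To conclude that each $b_j=0$ you need the $p^*\sigma(h_j)$ to be linearly independent \emph{over the algebra spanned by those shorter iterated integrals}, not merely over $\mathbb C$; ``linearly independent modulo $d\mathcal O(C)$'' only gives the latter. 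That stronger independence is precisely the $\mathcal O(C)$-freeness you correctly identify as the main obstacle in (c). So (a) is not easier than (c): both injectivities rest on the same non-trivial input, namely \cite{EZ2}, Thm.~A(b) (an instance of Chen's theorem, or equivalently the criterion of \cite{DDMS} applied to this setting). Your plan becomes correct once you invoke that result at the outset rather than trying to bootstrap it inductively --- which is, in the end, exactly what the paper does.
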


\begin{proof}
Statements (a) and (c) are consequences of Thm. A(b) in \cite{EZ2} with $J$ equal to the element 
$J_\sigma$ defined in Def. 1.2 of \cite{EZ2}. Statement (b) follows from \cite{EZ2}, Thm. A(a).
\end{proof}

\begin{lem}\label{lem:basis:2906}
Let $d:=\mathrm{dim}(\mathrm H_C)$ and $(h_i)_{i\in[\![1,d]\!]}$ be\footnote{We set $[\![1,d]\!]:=\{1,\ldots,d\}$.} 
a $\mathbb C$-basis of $\mathrm H_C$. 
Then the family
    $$
(\tilde f_{\sigma,z_0}([h_{i_1}|\ldots|h_{i_s}]))_{s\geq0,(i_1,\ldots,i_s)\in[\![1,d]\!]^s}
    $$
is a $\mathbb C$-basis of $\mathcal H_C(\sigma)$, as well as an $\mathcal O(C)$-basis of $A_C$. 
\end{lem}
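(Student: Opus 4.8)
The plan is to derive the statement of Lemma \ref{lem:basis:2906} as a direct consequence of the structural results in Lemma \ref{lem:06031758}, by transporting a known basis through the isomorphisms provided there. First I would recall that the shuffle Hopf algebra $\mathrm{Sh}(\mathrm H_C)=\oplus_{k\geq0}\mathrm H_C^{\otimes k}$ has, as a $\mathbb C$-vector space, the obvious basis consisting of the words $[h_{i_1}|\cdots|h_{i_s}]$ as $s$ ranges over $\mathbb Z_{\geq0}$ and $(i_1,\ldots,i_s)$ over $[\![1,d]\!]^s$, this being merely the tensor-power basis built from the fixed basis $(h_i)_i$ of $\mathrm H_C$. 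By Lemma \ref{lem:06031758}(a), the map $\tilde f_{\sigma,z_0}=I_{z_0}\circ\sigma_*$ is an injective algebra morphism from $\mathrm{Sh}(\mathrm H_C)$ onto its image, which by part (b) is the subalgebra $\mathcal H_C(\sigma)$; an injective linear map carries a basis of the source onto a basis of its image, so the family $(\tilde f_{\sigma,z_0}([h_{i_1}|\cdots|h_{i_s}]))_{s,(i_1,\ldots,i_s)}$ is a $\mathbb C$-basis of $\mathcal H_C(\sigma)$. This gives the first assertion.

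For the second assertion, I would invoke Lemma \ref{lem:06031758}(c), which states that the multiplication map induces an algebra isomorphism $\mathcal O(C)\otimes\mathcal H_C(\sigma)\xrightarrow{\ \sim\ }A_C$. In particular this is an isomorphism of $\mathcal O(C)$-modules, where $\mathcal O(C)\otimes\mathcal H_C(\sigma)$ carries the evident free-module structure $g\cdot(a\otimes h)=(ga)\otimes h$. Since $\mathcal H_C(\sigma)$ is a $\mathbb C$-vector space with the basis just exhibited, the tensor product $\mathcal O(C)\otimes\mathcal H_C(\sigma)$ is a free $\mathcal O(C)$-module with basis $(1\otimes\tilde f_{\sigma,z_0}([h_{i_1}|\cdots|h_{i_s}]))$; applying the isomorphism and noting that it sends $1\otimes\tilde f_{\sigma,z_0}(w)$ to $\tilde f_{\sigma,z_0}(w)\in\mathcal O_{hol}(\tilde C)$ (the image of the pair $(1,\tilde f_{\sigma,z_0}(w))$ under multiplication), we conclude that $(\tilde f_{\sigma,z_0}([h_{i_1}|\cdots|h_{i_s}]))_{s,(i_1,\ldots,i_s)}$ is an $\mathcal O(C)$-basis of $A_C$.

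Thus the proof is essentially a two-line formal argument, with no genuine obstacle: all the substance has already been absorbed into Lemma \ref{lem:06031758} (itself a restatement of Theorem A of \cite{EZ2}). The only points requiring a moment's care are the standard facts that an injective linear map takes a basis to a basis of the image, and that a $\mathbb C$-basis of a vector space $V$ becomes, after tensoring with $\mathcal O(C)$, an $\mathcal O(C)$-basis of the free module $\mathcal O(C)\otimes V$; the latter is what lets us transfer the basis across the isomorphism of part (c). If anything counts as the ``main step,'' it is simply checking that the isomorphism of Lemma \ref{lem:06031758}(c), which is a priori only stated as an algebra isomorphism, respects the $\mathcal O(C)$-module structures, which is immediate from the way it is constructed as (the descent of) the multiplication map of $\mathcal O_{hol}(\tilde C)$.
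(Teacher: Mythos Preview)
Your proof is correct and essentially follows the paper's approach. The only minor difference is that the paper cites \cite{EZ2}, Prop.~5.10 directly for the first assertion, whereas you derive it from Lemma~\ref{lem:06031758}(a) and~(b) by the elementary observation that an injective linear map onto its image carries a basis to a basis; the second assertion is handled identically in both, via Lemma~\ref{lem:06031758}(c).
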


\begin{proof}
    The first statement follows from \cite{EZ2}, Prop. 5.10, and the second statement follows from its combination 
    with Lem. \ref{lem:06031758}(c).  
\end{proof}

\begin{defn}\label{def:hyperlog}
If $(\alpha_i)_{i\in[\![1,d]\!]}$ is a family of $\Omega(C)$ whose image in $\mathrm H_C=\Omega(C)/d(\mathcal O(C))$ 
is a $\mathbb C$-basis, set 
\begin{equation}\label{def:hyperlog:fun}
L_{\alpha_{i_1},\ldots,\alpha_{i_s}}:=I_{z_0}([\alpha_{i_1}|\ldots|\alpha_{i_s}])
\end{equation}
for any $s\geq 0$ and $i_1,\ldots,i_s \in [\![1,d]\!]$. 
\end{defn}

We call \eqref{def:hyperlog:fun} the {\it hyperlogarithm functions associated with the family $(\alpha_i)_{i\in[\![1,d]\!]}$.} 

\begin{lem}\label{new:lem:1203}
If $(\alpha_i)_{i\in[\![1,d]\!]}$ is a family of $\Omega(C)$ as in Def. \ref{def:hyperlog}, then there exists a
unique pair $(\sigma,(h_i)_i)$ where $\sigma : \mathrm H_C\to \Omega(C)$ is a section of the projection 
$\Omega(C)\to\mathrm H_C$ and $(h_i)_{i\in[\![1,d]\!]}$ is a $\mathbb C$-basis of $\mathrm H_C$, such that 
$\alpha_i=\sigma(h_i)$ for any $i$. One then has the identity 
\begin{equation}\label{ID:L:TILDEF}
L_{\alpha_{i_1},\ldots,\alpha_{i_s}}=\tilde f_{\sigma,z_0}([h_{i_1}|\ldots|h_{i_s}]). 
\end{equation}
\end{lem}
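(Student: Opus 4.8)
The plan is to unwind the definitions on both sides and match the data. First I would construct the section $\sigma$ and basis $(h_i)_i$ directly: since the images $\bar\alpha_1,\ldots,\bar\alpha_d$ of the $\alpha_i$ in $\mathrm H_C=\Omega(C)/d\mathcal O(C)$ form a $\mathbb C$-basis by hypothesis, I set $h_i:=\bar\alpha_i$, which is a basis of $\mathrm H_C$, and define $\sigma:\mathrm H_C\to\Omega(C)$ to be the unique linear map sending $h_i\mapsto\alpha_i$. Since $\bar\alpha_i=h_i$, the composition $\Omega(C)\to\mathrm H_C$ followed by $\sigma$ sends $h_i$ to $h_i$, so $\sigma$ is indeed a section of the projection, and by construction $\sigma(h_i)=\alpha_i$ for every $i$.

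Next I would prove uniqueness. Suppose $(\sigma',(h'_i)_i)$ is another such pair. The condition $\alpha_i=\sigma'(h'_i)$ forces, after applying the projection $\Omega(C)\to\mathrm H_C$ and using that $\sigma'$ is a section, $h'_i=\bar\alpha_i=h_i$; hence the bases coincide, and then $\sigma'(h_i)=\alpha_i=\sigma(h_i)$ for all $i$, so $\sigma'=\sigma$ by linearity since the $h_i$ span $\mathrm H_C$.

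Finally, for the identity \eqref{ID:L:TILDEF}, I would simply trace through Def. \ref{def:hyperlog} and Lem. \ref{lem:06031758}(a). By definition of $\tilde f_{\sigma,z_0}=I_{z_0}\circ\sigma_*$ and of the induced algebra morphism $\sigma_*:\mathrm{Sh}(\mathrm H_C)\to\mathrm{Sh}(\Omega(C))$, one has $\sigma_*([h_{i_1}|\cdots|h_{i_s}])=[\sigma(h_{i_1})|\cdots|\sigma(h_{i_s})]=[\alpha_{i_1}|\cdots|\alpha_{i_s}]$, using the just-established equality $\sigma(h_i)=\alpha_i$. Applying $I_{z_0}$ and comparing with \eqref{def:hyperlog:fun} gives $\tilde f_{\sigma,z_0}([h_{i_1}|\cdots|h_{i_s}])=I_{z_0}([\alpha_{i_1}|\cdots|\alpha_{i_s}])=L_{\alpha_{i_1},\ldots,\alpha_{i_s}}$.

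This statement is essentially a bookkeeping lemma, so there is no genuine obstacle; the only point requiring a little care is that the map $\sigma$ defined on the basis $(h_i)_i$ by $h_i\mapsto\alpha_i$ really is a section of the projection, which hinges precisely on the hypothesis that the $\bar\alpha_i$ form a basis of $\mathrm H_C$ (rather than merely spanning or merely being linearly independent) — linear independence gives injectivity of $\sigma$ on the relevant subspace, and spanning ensures $\sigma$ is defined on all of $\mathrm H_C$, and together they make the composite with the projection the identity.
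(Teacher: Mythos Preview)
Your proof is correct and is precisely the routine verification that the paper's own proof (``This is straightforward.'') leaves implicit. The construction of $(\sigma,(h_i)_i)$, the uniqueness argument via applying the projection, and the identity \eqref{ID:L:TILDEF} obtained by unfolding $\tilde f_{\sigma,z_0}=I_{z_0}\circ\sigma_*$ are exactly what is meant.
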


\begin{proof} This is straightforward. 
\end{proof}



\subsection{Background on elliptic curves}\label{sect:setup}

\subsubsection{Elliptic curves and coverings}\label{sect:3:1:2606}

Let us denote by $\mathfrak H$ the complex upper half-plane. Until the end of \S\ref{sect:5:2606}, an element $\tau$ in $\mathfrak H$ is fixed. 
Let $\Lambda:=\mathbb{Z}+\mathbb{Z}\tau\subset \mathbb{C}$. 

\begin{defn}
We set $\mathcal{E}:=\mathbb{C}/\Lambda$, and we denote $pr : \mathbb C\to\mathbb{C}/\Lambda=\mathcal E$
the canonical projection. 
\end{defn} 

$\mathcal E$ is a compact genus-one Riemann surface, and $pr$  is a universal cover map for $\mathcal E$.    

\begin{defn}
Let $S$ be a finite subset of $\mathcal E$ containing $pr(0)$. 
We set $\mathcal E_S:=\mathcal{E}\smallsetminus S$, and we fix a universal cover $p:\tilde{\mathcal E}_S\to \mathcal E_S$.    
\end{defn}
 
The restriction of $pr$ is a covering map $\mathbb C\smallsetminus pr^{-1}(S)\to \mathcal E_S$.
The universal cover map $p$ then admits a factorisation $p=\varpi\circ pr$, where 
$\varpi:\tilde{\mathcal E}_S\to \mathbb C\smallsetminus pr^{-1}(S)$ is a universal cover map, so that 
the following diagram commutes
$$
\xymatrix{\tilde{\mathcal E}_S\ar^{\varpi}[rr]\ar_p[rd]&&\mathbb C\smallsetminus pr^{-1}(S)\ar^{pr}[ld]\\&
\mathcal E_S&}
$$
\begin{defn}\label{def:tildeS:3006}
(a) Let $s\mapsto \tilde s$ be a section of the map $pr : pr^{-1}(S)\to S$ such that $pr(0)\mapsto 0$.

(b) We denote by $\tilde S$ the subset of $pr^{-1}(S)$ defined as the image of the map $s\mapsto \tilde s$.    
\end{defn}
    

One has $pr^{-1}(S)=\cup_{s\in S}(\tilde s+\Lambda)$ (equality of subsets of $\mathbb C$). 

\subsubsection{Functions in $\mathcal O_{mer}(\mathbb C,pr^{-1}(S))$}
\label{sect:3:1:2704}


For $r\in\mathbb Z_{\geq 1}$ and $z\in \mathbb C\smallsetminus\Lambda$, let
\begin{equation*}
E_r(z)\,:=\, \sum_{\lambda\in \Lambda}\frac{1}{(z+\lambda)^r}\,\in\mathbb C, 
\end{equation*}
where the series is absolutely convergent for $r>2$, and is defined by Eisenstein summation for $r=1,2$ (see \cite{BMMS}, footnote 8), and 
let $E_r$ be the 
function $z\mapsto E_r(z)$; then $E_r$ belongs to $\mathcal O_{mer}(\mathbb C,\Lambda)$. If $r\geq 2$, then the function 
$E_r$ is elliptic, i.e. $\Lambda$-invariant; on the other hand, 
$E_1$ satisfies the identities\footnote{We set $\mathrm{i}:=\sqrt{-1}$.}
\begin{equation}\label{ellpropE1}
T_{1}E_1\,=\,E_1\,, \quad \quad \quad T_{\tau}E_1\,=\,E_1\,+\,2\pi \mathrm{i}\,.
\end{equation}
For every $r\geq 1$, the function $E_r$ satisfies the identity 
\begin{equation}\label{diffpropE1}
E_r'\,=\,-r\,E_{r+1}\,.
\end{equation}

Moreover, for every $r\geq 1$ consider also the Eisenstein series 
\begin{equation}
e_r\,:=\,\sum_{\lambda \in \Lambda\smallsetminus\{0\}}\frac{1}{\lambda^r}\,\in\mathbb C\,,
\end{equation}
which is again defined via the Eisenstein summation for $r\leq 2$, and vanishes when $r$ is odd.

The Weierstrass $\wp$-function attached to $\Lambda$ is given by $\wp:=E_2-e_2$. One then has 
\begin{equation}\label{relations:wp:Er}
\forall r\geq 0,\quad    \wp^{(r)}=(-1)^r(r+1)!E_{r+2}-e_2\delta_{r,0}. 
\end{equation}

\begin{defn}
We denote by $(g_n)_{n\geq 0}$ the family of meromorphic functions\footnote{Notice the slight change of notation with respect to \cite{BMMS} and \cite{BDDT}, 
 where these functions 
 are denoted~$g^{(n)}$, in order to avoid confusion with $n$-th derivatives.} in 
 $\mathcal O_{mer}(\mathbb C,\Lambda)$ defined via the following identity of generating series
in the formal variable $\alpha$:
\begin{equation}\label{defgn}
\sum_{n\geq 0}g_n\,\alpha^{n-1}\,:=\,\frac{1}{\alpha}\,\exp\Big(-\sum_{r\geq 1}\frac{(-\alpha)^r}{r}\,(E_r-e_r)\Big)\,.
\end{equation}
\end{defn} 

In particular, $g_0=1$ and $g_1=E_1$ has a simple pole with residue~$1$ at all points of $\Lambda$. 
Fourier expansions for the functions $g_n$ can be found in \cite{BMMS}, eqs. (3.35) and (3.36).

\begin{lem}[see \cite{Ma}, Prop. 2.1.3]
The function $g_n$ is regular at $0$ for any $n\geq 2$.     
\end{lem}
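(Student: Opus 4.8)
The plan is to compute the Laurent expansion of the defining generating series \eqref{defgn} at $z=0$ and to check that the polar contributions of the functions $E_r-e_r$ collapse into a single simple pole, the one already carried by the $g_1$-term.

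First I would record the local behaviour of $E_r-e_r$ near $z=0$: for every $r\geq 1$ the function $E_r-z^{-r}$ is holomorphic at $0$, so $E_r-e_r$ has a Laurent expansion of the shape $z^{-r}+\psi_r(z)$ with $\psi_r$ holomorphic at $0$, and the point I would need is that $\psi_r(0)=0$, i.e.\ the constant term of the Laurent expansion of $E_r$ at $0$ is exactly $e_r$. For $r\geq 3$ this is immediate from absolute convergence, as $(E_r-z^{-r})(0)=\sum_{\lambda\neq 0}\lambda^{-r}=e_r$. For $r=2$ it follows from $\wp=E_2-e_2$ (a case of \eqref{relations:wp:Er}) together with the classical fact that the Laurent expansion of $\wp$ at $0$ has vanishing constant term. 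For $r=1$ I would integrate \eqref{diffpropE1}, $E_1'=-E_2=-z^{-2}-e_2+O(z)$, and fix the constant of integration using that $E_1$ is odd and that $e_1=0$, obtaining $E_1(z)=z^{-1}+O(z)$.

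Next I would substitute this into \eqref{defgn}, working in the ring $\mathbb C((z))[[\alpha]]$ via the Laurent-expansion homomorphism $\mathcal{O}_{mer}(\mathbb C,\Lambda)\to\mathbb C((z))$. The exponent in \eqref{defgn} then splits as
\[
-\sum_{r\geq 1}\frac{(-\alpha)^r}{r}(E_r-e_r)=\sum_{r\geq 1}\frac{(-1)^{r-1}}{r}\Big(\frac{\alpha}{z}\Big)^{r}-\sum_{r\geq 1}\frac{(-\alpha)^r}{r}\psi_r=\log\!\Big(1+\frac{\alpha}{z}\Big)+X ,
\]
where $X:=-\sum_{r\geq 1}\tfrac{(-\alpha)^r}{r}\psi_r\in z\,\mathbb C[[z]][[\alpha]]$ has no $\alpha^{0}$-term. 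Exponentiating and using $\exp\log(1+\alpha/z)=1+\alpha/z$, the identity \eqref{defgn}, multiplied by $\alpha$, becomes $\sum_{n\geq 0}g_n\alpha^{n}=(1+\alpha/z)\exp(X)$ in $\mathbb C((z))[[\alpha]]$. Writing $\exp(X)=\sum_{m\geq 0}h_m(z)\alpha^{m}$, each $h_m$ is a finite polynomial expression in $\psi_1,\dots,\psi_m$ (no $\psi_r$ with $r>m$ enters the $\alpha^m$-coefficient), so $h_0=1$, every $h_m$ is holomorphic at $0$, and $h_m(0)=0$ for $m\geq 1$ since each $\psi_r$ vanishes at $0$.

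Finally I would read off the coefficient of $\alpha^{n}$: $g_n=h_n+z^{-1}h_{n-1}$ (with $h_{-1}:=0$). For $n\geq 2$ the summand $h_n$ is holomorphic at $0$, and since $h_{n-1}$ is holomorphic at $0$ with $h_{n-1}(0)=0$, so is $z^{-1}h_{n-1}$; hence the Laurent expansion of $g_n$ at $0$ has no polar part, which is the claim. (As a check, $n=1$ gives $g_1=h_1+z^{-1}=E_1$, recovering the simple pole, and $n=0$ gives $g_0=1$.) The only mildly delicate points are the $r=1,2$ cases of the first step and keeping careful track of the ring in which the formal manipulations are performed; neither is a real obstacle, since the whole mechanism is just the elementary identity $-\sum_{r\geq1}\frac{1}{r}(-\alpha/z)^r=\log(1+\alpha/z)$ absorbing all the poles at once.
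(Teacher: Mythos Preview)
Your argument is correct. The paper does not give its own proof of this lemma; it simply records the statement with a reference to \cite{Ma}, Prop.~2.1.3. So there is nothing to compare against, and your proof stands on its own.

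The approach is clean: the only nontrivial input is that the constant term of the Laurent expansion of $E_r$ at $0$ equals $e_r$ for every $r\geq 1$, which you verify case by case (absolute convergence for $r\geq 3$, the relation $\wp=E_2-e_2$ for $r=2$, oddness of $E_1$ together with $e_1=0$ for $r=1$). Once this is in place, the identity $-\sum_{r\geq 1}\tfrac{1}{r}(-\alpha/z)^r=\log(1+\alpha/z)$ packages all the poles into the single factor $1+\alpha/z$, and the remaining factor $\exp(X)$ has coefficients $h_m$ that are holomorphic and (for $m\geq 1$) vanish at $0$. Your reading-off of $g_n=h_n+z^{-1}h_{n-1}$ then gives the result immediately. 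The formal manipulations in $\mathbb C((z))[[\alpha]]$ are legitimate since everything lives in the augmentation ideal with respect to $\alpha$, so $\exp$ and $\log$ are well-defined and mutually inverse there.
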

 
The left-hand side of \eqref{defgn} is an element of the ring $\mathcal O_{mer}(\mathbb C,\Lambda)((\alpha))$ 
of formal Laurent series with coefficients in $\mathcal O_{mer}(\mathbb C,\Lambda)$, which we denote 
by $F$. 

\begin{rem}
$F$ is the expansion at $\alpha=0$ of a meromorphic function of two variables, known as the Kronecker 
function (\cite{Wei,Zag}).  
\end{rem}

\begin{lemdef}
(a)  For every $a\in pr^{-1}(S)$ and every $n\geq 0$, the function $T_ag_n=(z\mapsto g_n(z-a))$ belongs to 
$\mathcal O_{mer}(\mathbb C,pr^{-1}(S))$.

(b) Set $\omega_{n,a}:=T_a(g_n)\cdot dz$, then $\omega_{n,a}$ belongs to $\Omega_{hol}(\mathbb C
\smallsetminus pr^{-1}(S))$.   
\end{lemdef}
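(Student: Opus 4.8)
The plan is to prove both statements by reducing them to standard facts about the Weierstrass $\wp$-function and the functions $E_r$, together with the translation operators $T_a$ acting on $\mathcal O_{mer}(\mathbb C,\Lambda)$. First I would establish statement (a). Since $g_n\in\mathcal O_{mer}(\mathbb C,\Lambda)$ by definition (the generating series \eqref{defgn} has coefficients in $\mathcal O_{mer}(\mathbb C,\Lambda)$, as noted after Definition~\ref{merom:funs}), its set of poles is contained in $\Lambda$. Applying the automorphism $T_a$ for $a\in pr^{-1}(S)$ shifts this set of poles to $a+\Lambda$, so $T_ag_n=(z\mapsto g_n(z-a))$ has poles contained in $a+\Lambda$. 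Now $a+\Lambda\subset pr^{-1}(S)$: indeed, writing $a=\tilde s+\lambda$ for some $s\in S$ and $\lambda\in\Lambda$ (using $pr^{-1}(S)=\cup_{s\in S}(\tilde s+\Lambda)$ from the end of §\ref{sect:3:1:2606}), we get $a+\Lambda=\tilde s+\Lambda\subset pr^{-1}(S)$. Hence $T_ag_n\in\mathcal O_{mer}(\mathbb C,pr^{-1}(S))$, which proves (a).

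For statement (b), the form $\omega_{n,a}:=T_a(g_n)\cdot dz$ is, by (a), a meromorphic $1$-form on $\mathbb C$ with poles contained in $pr^{-1}(S)$, hence restricts to a holomorphic $1$-form on $\mathbb C\smallsetminus pr^{-1}(S)$. The only point needing care is that "holomorphic" here means honestly holomorphic, i.e.\ that $T_a(g_n)$ is actually regular on the complement $\mathbb C\smallsetminus pr^{-1}(S)$ — but this is immediate since its poles lie in $pr^{-1}(S)$, so it is holomorphic on the complement by definition of $\mathcal O_{mer}(\mathbb C,pr^{-1}(S))\subset\mathcal O_{hol}(\mathbb C\smallsetminus pr^{-1}(S))$. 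Multiplying by the everywhere-holomorphic form $dz$ keeps it in $\Omega_{hol}(\mathbb C\smallsetminus pr^{-1}(S))$.

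I expect there to be essentially no obstacle here: the statement is a bookkeeping consequence of the fact that the $g_n$ are $\Lambda$-periodic-polar (poles on $\Lambda$) and that $pr^{-1}(S)$ is a union of full $\Lambda$-cosets. The only subtlety worth a sentence is verifying the containment $a+\Lambda\subset pr^{-1}(S)$ for $a\in pr^{-1}(S)$, which relies precisely on $pr^{-1}(S)$ being $pr$-saturated; this is recorded in the displayed identity $pr^{-1}(S)=\cup_{s\in S}(\tilde s+\Lambda)$ just before §\ref{sect:3:1:2704}. Everything else is a direct unwinding of the definitions of $T_a$ and of the function spaces involved.
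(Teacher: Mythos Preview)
Your proof is correct and follows essentially the same approach as the paper: for (a) you use that $T_a$ shifts the pole set of $g_n\in\mathcal O_{mer}(\mathbb C,\Lambda)$ from $\Lambda$ to $a+\Lambda\subset pr^{-1}(S)$, and for (b) you use the inclusion $\mathcal O_{mer}(\mathbb C,pr^{-1}(S))\subset\mathcal O_{hol}(\mathbb C\smallsetminus pr^{-1}(S))$ together with the module structure of $\Omega_{hol}$ over $\mathcal O_{hol}$. The paper records the same two observations in a single sentence each.
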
 

\begin{proof}
The first statement follows from $T_a(\mathcal O_{mer}(\mathbb C,\Lambda))=\mathcal O_{mer}(\mathbb C,a+\Lambda)$ and $a+\Lambda\subset pr^{-1}(S)$.  The second statement follows from the module structure of 
$\Omega_{hol}(\mathbb C\smallsetminus pr^{-1}(S))$ over $\mathcal O_{hol}(\mathbb C\smallsetminus pr^{-1}(S))$, and the inclusion $\mathcal O_{mer}(\mathbb C,\Lambda)\subset \mathcal O_{hol}(\mathbb C\smallsetminus pr^{-1}(S))$. 
\end{proof}

\subsubsection{Functional equations}\label{subsect:FE}

\begin{lem}
For every $n\geq 1$, one has\footnote{
Eq. \eqref{ellpropgn} shows that $g_n$ is not regular on the whole of $\mathbb C$, contrary to the statement 
of~\cite{BDDT}, three lines after (4.24).} the following equalities in $\mathcal O_{mer}(\mathbb C,\Lambda)$:
\begin{align}
T_{1}g_{n}\,=\,&g_n\,, \quad\quad \quad (T_{\tau}-id)(g_n)\,=\,\sum_{k=0}^{n-1}\frac{(2\pi \mathrm{i})^{n-k}}{(n-k)!}\,
g_k\,,\label{ellpropgn}\\
&g_n'\,=\,\sum_{k=0}^{n-1}(-1)^{n-k}\,E_{n-k+1}\,g_k\,.\label{diffpropgn}
\end{align}
\end{lem}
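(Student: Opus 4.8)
The plan is to derive all three identities from the generating-series definition \eqref{defgn}, viewing it as an identity in $F=\mathcal O_{mer}(\mathbb C,\Lambda)((\alpha))$. Write $K(z,\alpha):=\sum_{n\geq 0}g_n(z)\,\alpha^{n-1}=\frac{1}{\alpha}\exp\big(-\sum_{r\geq 1}\frac{(-\alpha)^r}{r}(E_r-e_r)\big)$, so that each functional equation for the family $(g_n)_n$ becomes a single functional equation for $K$, and conversely; the point is that the exponential form makes the action of $T_1$, $T_\tau$ and $\partial$ on $K$ transparent through their action on the $E_r$.

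First I would treat the $\Lambda$-periodicity statements. Applying $T_1$ to the right-hand side of \eqref{defgn} and using $T_1 E_1=E_1$ from \eqref{ellpropE1} together with the ellipticity of $E_r$ for $r\geq 2$, one sees $T_1 K=K$, and extracting the coefficient of $\alpha^{n-1}$ gives $T_1 g_n=g_n$. For the $T_\tau$-identity, apply $T_\tau$ to the exponent: by \eqref{ellpropE1} only the $r=1$ term changes, by $T_\tau(E_1-e_1)=E_1-e_1+2\pi\mathrm i$, so $T_\tau K=\exp(\alpha\cdot 2\pi\mathrm i)\cdot K=e^{2\pi\mathrm i\alpha}K$. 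Hence $(T_\tau-\mathrm{id})K=(e^{2\pi\mathrm i\alpha}-1)K$; expanding $e^{2\pi\mathrm i\alpha}-1=\sum_{m\geq 1}\frac{(2\pi\mathrm i)^m}{m!}\alpha^m$ and multiplying by $K=\sum_k g_k\alpha^{k-1}$, the coefficient of $\alpha^{n-1}$ on the right is $\sum_{k=0}^{n-1}\frac{(2\pi\mathrm i)^{n-k}}{(n-k)!}g_k$, which is exactly \eqref{ellpropgn}. (One should note $g_n$ lies in $\mathcal O_{mer}(\mathbb C,\Lambda)$ so these translates make sense, as already recorded in the preceding Lemma-Definition.)

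For the differential identity \eqref{diffpropgn}, differentiate $\log(\alpha K)=-\sum_{r\geq 1}\frac{(-\alpha)^r}{r}(E_r-e_r)$ in $z$: since $e_r$ is constant and $E_r'=-rE_{r+1}$ by \eqref{diffpropE1}, the right-hand side differentiates to $\sum_{r\geq 1}(-\alpha)^r E_{r+1}=\sum_{r\geq 1}(-1)^r\alpha^r E_{r+1}$. Therefore $K'/K=\partial_z\log(\alpha K)=\sum_{r\geq 1}(-1)^r\alpha^r E_{r+1}$, i.e. $K'=\big(\sum_{r\geq 1}(-1)^r\alpha^r E_{r+1}\big)K$. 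Writing $K=\sum_k g_k\alpha^{k-1}$ and $K'=\sum_n g_n'\alpha^{n-1}$ and comparing coefficients of $\alpha^{n-1}$: the right-hand side contributes $\sum_{r\geq 1}(-1)^r E_{r+1}g_{n-r}$, and reindexing $k=n-r$ (so $r=n-k$, $k$ running over $0,\dots,n-1$) gives $g_n'=\sum_{k=0}^{n-1}(-1)^{n-k}E_{n-k+1}g_k$, which is \eqref{diffpropgn}.

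The only genuinely delicate point — and the step I expect to require the most care — is the legitimacy of these manipulations for $r=1,2$, where $E_1,E_2$ are defined by Eisenstein summation rather than by absolutely convergent series, so that $T_\tau$, $T_1$ and $\partial$ must be checked to act on them as claimed; this is exactly what \eqref{ellpropE1} and \eqref{diffpropE1} assert, so once those are invoked the formal computation in $\mathcal O_{mer}(\mathbb C,\Lambda)((\alpha))$ is rigorous, each identity being an equality of Laurent series with meromorphic coefficients obtained by applying a continuous (for the $\alpha$-adic topology) operator to a convergent exponential expansion. I would close by remarking, as in the footnote, that \eqref{ellpropgn} shows $g_n$ fails to be $\Lambda$-periodic for $n\geq 1$, consistent with $g_1=E_1$.
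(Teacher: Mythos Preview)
Your proof is correct and follows essentially the same approach as the paper: both derive functional equations for the generating series $K=F$ from \eqref{ellpropE1}, \eqref{diffpropE1} applied to the exponent in \eqref{defgn}, and then extract coefficients. The paper phrases the translation identities as $F(z+1,\alpha)=F(z,\alpha)$, $F(z+\tau,\alpha)=e^{-2\pi\mathrm i\alpha}F(z,\alpha)$ while you work directly with $T_\tau K=e^{2\pi\mathrm i\alpha}K$, but these are equivalent; your write-up is simply more explicit about the coefficient extraction and the role of the Eisenstein-summed cases $r=1,2$.
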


\begin{proof}
Applying \eqref{ellpropE1} and \eqref{diffpropE1} to the right-hand side of \eqref{defgn}, we obtain that
\begin{align*}
F(z+1,&\alpha)\,=\,F(z,\alpha)\,, \quad\quad \quad F(z+\tau,\alpha)\,=\,e^{-2\pi i\alpha}\,F(z,\alpha)\,,\\
&\frac{\partial}{\partial z}F(z,\alpha)\,=\,\Big(\sum_{r\geq 1}(-\alpha)^r\,E_{r+1}(z)\Big)\,F(z,\alpha)\,,
\end{align*}
(the first line is (3.13) in \cite{BMMS}). The statement follows by comparing these identities  with the left-hand side of~\eqref{defgn}.  
\end{proof}

\begin{cor}
    For any $n\geq 0$ and $a\in pr^{-1}(S)$, one has 
    \begin{equation}\label{FE:omega}
     \omega_{n,a+1}=\omega_{n,a},   \quad  \omega_{n,a+\tau}=\sum_{k=0}^n 
{(2 \pi \mathrm{i})^{n-k}\over (n-k)!}\omega_{k,a}. 
    \end{equation}
\end{cor}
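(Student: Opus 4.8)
The plan is to deduce \eqref{FE:omega} directly from the functional equations \eqref{ellpropgn} for the functions $g_n$ by transporting them along the translation operators $T_a$. The first thing I would record is the elementary identity $T_a\circ T_b=T_{a+b}$ for the automorphisms of the algebra of meromorphic functions on $\mathbb C$: indeed $(T_aT_bf)(z)=(T_bf)(z-a)=f(z-a-b)=(T_{a+b}f)(z)$. Consequently, for $a\in pr^{-1}(S)$ one has $T_{a+1}(g_n)=T_a(T_1(g_n))$ and $T_{a+\tau}(g_n)=T_a(T_\tau(g_n))$. Moreover, since $pr^{-1}(S)=\cup_{s\in S}(\tilde s+\Lambda)$ is stable under translation by $\Lambda$, both $a+1$ and $a+\tau$ again lie in $pr^{-1}(S)$, so $\omega_{n,a+1}$ and $\omega_{n,a+\tau}$ are well-defined elements of $\Omega_{hol}(\mathbb C\smallsetminus pr^{-1}(S))$ by the preceding Lemma-Definition; this is the only point in the argument worth spelling out, and it is immediate.

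For the first identity in \eqref{FE:omega}, I would apply $T_a$ to the equality $T_1g_n=g_n$ from \eqref{ellpropgn}, obtaining $T_{a+1}(g_n)=T_a(g_n)$, and then multiply by $dz$ to get $\omega_{n,a+1}=\omega_{n,a}$.

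For the second identity, I would first rewrite the relation $(T_\tau-\mathrm{id})(g_n)=\sum_{k=0}^{n-1}\frac{(2\pi\mathrm i)^{n-k}}{(n-k)!}g_k$ of \eqref{ellpropgn} as $T_\tau g_n=\sum_{k=0}^{n}\frac{(2\pi\mathrm i)^{n-k}}{(n-k)!}g_k$, the $k=n$ summand (equal to $g_n$, since $(2\pi\mathrm i)^0/0!=1$) absorbing the identity term; for $n=0$ this reads $T_\tau g_0=g_0$, which holds as $g_0=1$. Applying the $\mathbb C$-linear operator $T_a$ then yields $T_{a+\tau}(g_n)=\sum_{k=0}^{n}\frac{(2\pi\mathrm i)^{n-k}}{(n-k)!}T_a(g_k)$, and multiplying by $dz$ and using $\omega_{m,a}=T_a(g_m)\cdot dz$ gives exactly the claimed formula.

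There is essentially no obstacle: the entire content sits in the already-established Lemma on the functional equations of $g_n$, and the corollary is a routine translation of those equations into the language of the $1$-forms $\omega_{n,a}$, the only genuine verification being the $\Lambda$-stability of $pr^{-1}(S)$ needed to make the statement meaningful.
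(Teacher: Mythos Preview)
Your proof is correct and follows the same approach as the paper: both derive \eqref{FE:omega} directly from the functional equations \eqref{ellpropgn} for the $g_n$, transported by $T_a$ and then multiplied by $dz$. Your write-up is simply more explicit, spelling out the composition law $T_a\circ T_b=T_{a+b}$ and the $\Lambda$-stability of $pr^{-1}(S)$, whereas the paper compresses this into a one-line reference to \eqref{ellpropgn} and the translation-invariance of $dz$.
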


\begin{proof}
 Follows from \eqref{ellpropgn} and from the invariance of $dz$ under $T_a$, $a\in pr^{-1}(S)$.    
\end{proof}


\subsection{An $\mathcal O(\mathcal E_S)$-basis of $A_{\mathcal E_S}$ arising from elliptic HLs}\label{NEWSECTION}

We introduce the shorthand notation $O_S:=\mathcal O(\mathcal E_S)$; then $O_S$ is a subalgebra of 
$\mathcal O_{mer}(\mathbb C,pr^{-1}(S))$. 

\begin{defn}
$\partial$ is the derivation of $O_S$ arising from the restriction of 
the action of $\partial$ on $\mathcal O_{mer}(\mathbb C,pr^{-1}(S))$ (see \S\ref{merom:funs}). 
\end{defn}

Set $S_\star:=S\sqcup\{\star\}$. Define a map $S_\star \to\Omega(\mathcal E_S)$, $s\mapsto \alpha_s$ by 
$\alpha_s:=(T_s-id)(g_1)\cdot dz$ if $s\in S\smallsetminus\{0\}$, $\alpha_\star:=dz$, $\alpha_0:=E_2\cdot dz$. 

\begin{lem}\label{lem:0307}
The family $(\alpha_s)_{s\in S_\star}$ is a family of $\Omega(\mathcal E_S)$ whose image in $\mathrm H_{\mathcal E_S}
=\Omega(\mathcal E_S)/d(O_S)$ 
is a $\mathbb C$-basis. 
\end{lem}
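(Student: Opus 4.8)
The plan is to exhibit $(\alpha_s)_{s\in S_\star}$ as a family of regular differentials on $\mathcal E_S$ and to show that its image is both spanning and linearly independent in $\mathrm H_{\mathcal E_S}=\Omega(\mathcal E_S)/d(O_S)$. First I would verify that each $\alpha_s$ indeed lies in $\Omega(\mathcal E_S)$: the form $\alpha_\star=dz$ descends to a regular differential on $\mathcal E$, hence on $\mathcal E_S$; for $s\in S\smallsetminus\{0\}$ the function $(T_s-id)(g_1)$ is $\Lambda$-periodic by the first identity in \eqref{ellpropgn} (the non-periodicity of $g_1=E_1$ cancels in the difference), so it defines a function on $\mathcal E$ with poles only in $S$, whence $\alpha_s\in\Omega(\mathcal E_S)$; and $E_2$ is $\Lambda$-periodic with a double pole at $pr(0)$, so $\alpha_0=E_2\cdot dz\in\Omega(\mathcal E_S)$.

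Next I would count dimensions. Since $\mathcal E_S=\mathcal E\smallsetminus S$ with $|S|=:N\geq 1$, the Riemann–Roch/residue theorem gives $\dim \mathrm H_{\mathcal E_S}=\dim H^1_{\mathrm{dR}}(\mathcal E_S)=2g+N-1=N+1=|S_\star|$, so it suffices to prove that the classes $\overline{\alpha_s}$ span $\mathrm H_{\mathcal E_S}$, or equivalently (same cardinality) that they are linearly independent. I would aim for spanning. Every $\omega\in\Omega(\mathcal E_S)$ is $f\cdot dz$ for $f\in O_S$, a $\Lambda$-periodic meromorphic function with poles in $S$; by a partial-fractions (elliptic Mittag-Leffler) decomposition, $f$ is a $\mathbb C$-linear combination of a constant, of the functions $E_r(z-\tilde s)=(-1)^r/(r-1)!\,\cdot\,\partial^{r-1}$ applied appropriately for $r\geq 2$ and $s\in S$, and — because the sum of residues of a form on $\mathcal E$ vanishes — of the combinations $\sum_s c_s E_1(z-\tilde s)$ with $\sum_s c_s=0$, which by \eqref{ellpropE1} are exactly the $\Lambda$-periodic ones. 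Using \eqref{diffpropE1}, each $E_{r+1}\cdot dz$ with $r\geq 1$ is, up to a scalar, $d(E_r)$ modulo lower data, so modulo $d(O_S)$ everything reduces to: the class of $dz$, the class of $E_2\cdot dz$ at $0$, and the classes of $E_2(z-\tilde s)\cdot dz$ for $s\neq 0$; finally $E_2(z-\tilde s)=\wp(z-\tilde s)+e_2$ and $\wp(z-\tilde s)\cdot dz$ differs from $(T_s-id)(g_1)\cdot dz=(E_1(z-\tilde s)-E_1(z))\cdot dz$ only by something of the form $d(\text{elliptic})$ plus a multiple of $E_2\cdot dz$ (using $\wp'=-2E_3$ and $g_1'=\sum_{k}(-1)^{1-k}E_{2-k}g_k$ from \eqref{diffpropgn} for $n=1$), so the $\alpha_s$ do span.

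Alternatively, and perhaps more cleanly, I would prove \emph{linear independence} directly via local analysis at the points of $S$. Suppose $\sum_{s\in S_\star}c_s\,\alpha_s=d(h)$ for some $h\in O_S$. Look at the Laurent expansion of the meromorphic function $\sum_s c_s\cdot(\text{coefficient of }dz)$ at each $\tilde s\in\tilde S$: at $0$, the form $\alpha_0=E_2\cdot dz$ contributes a double pole (leading term $c_0\,z^{-2}$), $\alpha_\star=dz$ is regular, and for $s\neq 0$ the function $(T_s-id)(g_1)=E_1(z-\tilde s)-E_1(z)$ has at $z=0$ a simple pole with residue $-1$; meanwhile $h'$ has no residue at $0$ and $d(h)$ has no nonzero residue anywhere. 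At each $\tilde s$ with $s\neq 0$, $(T_s-id)(g_1)$ has a simple pole with residue $+1$, while the other $\alpha_{s'}$ are regular there. Matching the residue-free condition of $dh$ at the points of $S$ forces $\sum_{s\neq 0}c_s=0$ (residue at $0$) and then each individual $c_s=0$ for $s\neq 0$ (residue at $\tilde s$); with these gone, $c_0\,E_2\cdot dz+c_\star\,dz=d(h)$, and comparing double poles at $0$ gives $c_0=0$, after which $c_\star\,dz=d(h)$ with $h\in O_S\subset\mathcal O_{mer}(\mathbb C,pr^{-1}(S))$ forces $c_\star=0$ since $z$ is not $\Lambda$-periodic (equivalently $dz$ is not exact on $\mathcal E_S$).

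The main obstacle I anticipate is the bookkeeping at the distinguished puncture $0$: the three functions $E_2$ (double pole, no residue), $(T_s-id)(g_1)$ for $s\neq 0$ (simple pole with residue), and $1$ all interact there, and one must also keep the interplay between $E_2=\wp+e_2$ and $g_1=E_1$ straight — in particular that $(T_s-id)(g_1)$, not $T_s(g_1)$, is the correct elliptic representative — and confirm that no spurious cancellation of polar parts can be absorbed into $d(O_S)$. Once the residue/polar-order tabulation at $\tilde S$ is laid out carefully, both the spanning count ($\dim\mathrm H_{\mathcal E_S}=|S|+1$) and the independence argument close immediately.
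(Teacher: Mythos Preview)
Your proposal is essentially correct and takes a different route from the paper. The paper does not invoke the dimension formula $\dim\mathrm H_{\mathcal E_S}=|S|+1$; instead it works entirely inside $O_S/\partial(O_S)$ via the residue map $O_S/\partial(O_S)\to(\mathbb C^S)_0$, shows this map is surjective with kernel exactly $\mathrm{Span}_{\mathbb C}(1,E_2)$ (using that any residue-free representative integrates to something of the form $az+bg_1+O_S$), and reads off the basis from the resulting short exact sequence. Your approach---Riemann--Roch for the dimension, then residue bookkeeping for independence---is more hands-on and avoids the integration argument, while the paper's exact-sequence argument is more structural and self-contained (it simultaneously computes the dimension rather than importing it).

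One small gap: the step ``comparing double poles at $0$ gives $c_0=0$'' is too quick as written. A function $h\in O_S$ \emph{can} have $h'$ with a double pole at $0$ (take $h$ with a simple pole at $0$ and a compensating simple pole at some other $\tilde s\in\tilde S$). What you need to say first is that, once the $c_s$ with $s\in S\smallsetminus\{0\}$ vanish, the equation $h'=c_0E_2+c_\star$ forces $h'$ to have poles only on $\Lambda$, hence $h$ itself has poles only on $\Lambda$, i.e.\ $h\in O=O_{\{pr(0)\}}$; and an elliptic function with a single simple pole in a fundamental domain does not exist (residue theorem on $\mathcal E$), so $h$ cannot have a simple pole at $0$, whence $c_0=0$. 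Alternatively, use $E_2=-g_1'$ to rewrite the relation as $d(h+c_0g_1)=c_\star\,dz$ and apply $T_1-\mathrm{id}$ and $T_\tau-\mathrm{id}$ to force $c_\star=c_0=0$ directly from \eqref{ellpropE1}. With this sentence added, your independence argument closes.
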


\begin{proof}

The residue map induces a linear map $O_S/\partial(O_S)\to(\mathbb C^S)_0$, where $(\mathbb C^S)_0\subset \mathbb C^S$
is the set of tuples whose sum is equal to $0$. 

If $f\in O_S$ is a representative of an element of the kernel of $O_S/\partial(O_S)\to(\mathbb C^S)_0$, 
then the residue of $fdz$ at any $s\in S$ is zero, so that the function $z\mapsto \int_{z_0}^z f(u)du$ is single-valued around each 
$s\in S$. It follows that there exist $a,b\in\mathbb C$ such that $z\mapsto \int_{z_0}^z f(u)du-az-bg_1(z)$ belongs to $O_S$, and
therefore $f\equiv a+bg'_1=a-bE_2$ mod $\partial(O_S)$. This implies that the kernel of $O_S/\partial(O_S)\to(\mathbb C^S)_0$ is 
spanned by $1$ and $E_2$. 

The map $O_S/\partial(O_S)\to(\mathbb C^S)_0$ is surjective, because a preimage of $(a_s)_s\in (\mathbb C^S)_0$
is $\sum_{s\in S}a_sT_{\tilde s}(g_1)$. Therefore $O_S/\partial(O_S)$ fits in an exact sequence 
$$
0\to\mathrm{Span}_{\mathbb C}(1,E_2)\to O_S/\partial(O_S)\to (\mathbb C^S)_0\to 0. 
$$
A basis of  $O_S/\partial(O_S)$ is therefore the union of a basis of  
$\mathrm{Span}_{\mathbb C}(1,E_2)$ and of the preimage of a basis of $(\mathbb C^S)_0$. 
Since $(1,E_2)$ is linearly independent and since $((T_{\tilde s}-id)(g_1))_{s\in S\smallsetminus\{0\}}$
is such a preimage, the image of 
\begin{equation}\label{family:OS}
1,E_2,((T_{\tilde s}-id)(g_1))_{s\in S\smallsetminus\{0\}}
\end{equation}
in $O_S/\partial(O_S)$ forms a $\mathbb C$-basis of $O_S/\partial(O_S)$.

The linear isomorphism $-\cdot dz : O_S\to \Omega(\mathcal E_S)$, $f\mapsto f\cdot dz$ is such that 
$(-\cdot dz) \circ \partial=d$ (equality of linear maps 
$O_S\to \Omega(\mathcal E_S)$). This linear isomorphism therefore induces a linear isomorphism 
$-\cdot dz : O_S/\partial(O_S)\to\Omega(\mathcal E_S)/d(O_S)$. The statement follows from this 
and from the fact that the claimed family is the image of 
\eqref{family:OS} by $-\cdot dz$. 
\end{proof}

\begin{cor}\label{cor:2203:1124}
The family      
\begin{equation}\label{basis:HL:g=1}
L_{\alpha_{s_1},\ldots,\alpha_{s_k}}\,=\,I_{z_0}([\alpha_{s_1}|\cdots |\alpha_{s_k}]),\quad k\geq0,\quad 
    (s_1,\ldots,s_k)\in S_\star^k.   
\end{equation}   
is an $O_S$-basis of the $O_S$-module $A_{\mathcal E_S}$. 
\end{cor}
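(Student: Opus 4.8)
The plan is to deduce this corollary directly from the abstract machinery already assembled in \S\ref{basis:A_C}, specialised to the curve $C=\mathcal E_S$. The key point is that Lemma \ref{lem:basis:2906} together with Lemma \ref{new:lem:1203} produces, for \emph{any} family $(\alpha_i)_{i\in[\![1,d]\!]}$ of $\Omega(C)$ whose image in $\mathrm H_C=\Omega(C)/d\mathcal O(C)$ is a $\mathbb C$-basis, an $\mathcal O(C)$-basis of $A_C$ given by the associated hyperlogarithm functions $L_{\alpha_{i_1},\ldots,\alpha_{i_s}}=I_{z_0}([\alpha_{i_1}|\cdots|\alpha_{i_s}])$. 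So the entire content of the corollary reduces to verifying that the specific family $(\alpha_s)_{s\in S_\star}$ introduced just before Lemma \ref{lem:0307} satisfies this hypothesis—and that is exactly the assertion of Lemma \ref{lem:0307}.

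Concretely, the proof I would write is short. First I would invoke Lemma \ref{lem:0307}, which states that $(\alpha_s)_{s\in S_\star}$ is a family of $\Omega(\mathcal E_S)$ whose image in $\mathrm H_{\mathcal E_S}=\Omega(\mathcal E_S)/d(O_S)$ is a $\mathbb C$-basis; here $O_S=\mathcal O(\mathcal E_S)$ as fixed in \S\ref{NEWSECTION}, so $\Omega(C)=\Omega(\mathcal E_S)$ and $\mathcal O(C)=O_S$. Thus the family is admissible in the sense of Definition \ref{def:hyperlog}, with the finite index set $S_\star$ playing the role of $[\![1,d]\!]$ (so $d=|S_\star|=|S|+1=\mathrm{dim}\,\mathrm H_{\mathcal E_S}$). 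Then, by Lemma \ref{new:lem:1203}, there is a unique section $\sigma:\mathrm H_{\mathcal E_S}\to\Omega(\mathcal E_S)$ and basis $(h_s)_{s\in S_\star}$ of $\mathrm H_{\mathcal E_S}$ with $\alpha_s=\sigma(h_s)$, and one has the identity $L_{\alpha_{s_1},\ldots,\alpha_{s_k}}=\tilde f_{\sigma,z_0}([h_{s_1}|\cdots|h_{s_k}])$. Finally, Lemma \ref{lem:basis:2906}, applied to the basis $(h_s)_{s\in S_\star}$ of $\mathrm H_{\mathcal E_S}$, says precisely that the family $(\tilde f_{\sigma,z_0}([h_{s_1}|\cdots|h_{s_k}]))_{k\geq0,\,(s_1,\ldots,s_k)\in S_\star^k}$ is an $\mathcal O(\mathcal E_S)$-basis of $A_{\mathcal E_S}$, i.e. an $O_S$-basis. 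Rewriting via the identity \eqref{ID:L:TILDEF} gives the claim.

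In this decomposition there is essentially no obstacle left in the proof of the corollary itself: all the work has been front-loaded into Lemma \ref{lem:0307} and into the general results recalled from \cite{EZ2}. If anything, the only thing requiring a line of justification is the bookkeeping that the index set $S_\star$ matches the role of $[\![1,d]\!]$ and that $\mathcal O(C)=O_S$, $\Omega(C)=\Omega(\mathcal E_S)$ under the specialisation $C=\mathcal E_S$—but that is immediate from the definitions in \S\ref{sect:setup} and \S\ref{NEWSECTION}. So the main (and already discharged) difficulty of the whole package is the computation in Lemma \ref{lem:0307} showing that $1$, $E_2$, and the $(T_{\tilde s}-id)(g_1)$ for $s\in S\smallsetminus\{0\}$ span $O_S/\partial(O_S)$; once that is in hand, Corollary \ref{cor:2203:1124} is a formal consequence.

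I would therefore write the proof as essentially: \emph{This follows from Lemma \ref{lem:0307}, Lemma \ref{new:lem:1203} (which provides the pair $(\sigma,(h_s)_{s\in S_\star})$ and the identity \eqref{ID:L:TILDEF}), and Lemma \ref{lem:basis:2906} applied to the basis $(h_s)_{s\in S_\star}$ of $\mathrm H_{\mathcal E_S}$.}
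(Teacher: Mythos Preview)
Your proposal is correct and takes essentially the same approach as the paper's own proof, which simply reads ``This follows from Lems.~\ref{lem:basis:2906} and \eqref{ID:L:TILDEF}.'' You are merely more explicit in also citing Lemma~\ref{lem:0307} (which the paper leaves implicit, as it is the immediately preceding result) and in spelling out the role of Lemma~\ref{new:lem:1203} rather than just its displayed identity.
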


\begin{proof}
This follows from Lems. \ref{lem:basis:2906} and \eqref{ID:L:TILDEF}. 
\end{proof}

\section{The multivalued functions $\tilde\Gamma$ from \cite{BDDT}}\label{sect:tilde:Gamma:0407}

The article \cite{BDDT} introduces a family of functions 
$\GLarg{n_1 &n_2 &\ldots &n_r}{a_1 &a_2 &\ldots &a_r}{-}$, with $r\geq 0$, $n_i\geq 0$ and $a_i\in pr^{-1}(S)$
for\footnote{For $a\leq b \in\mathbb Z$, 
we set $[\![a,b]\!]:=\{a,a+1,\ldots,b\}$.} $i\in[\![1,r]\!]$, which are multivalued functions on $\mathbb C\smallsetminus pr^{-1}(S)$ defined as regularised 
iterated integrals of the forms $(\omega_{n,a})_{n\geq 0,a\in pr^{-1}(S)}$ from~\S\ref{sect:setup}, where the integration path starts at 0. 
It follows from the functional equation \eqref{FE:omega} that 
this vector space is spanned by the collection of the 
same functions, where the parameters satisfy the restricted condition $a_i\in\tilde S$ for any $i\in[\![1,r]\!]$. 
Since one of the possible forms, namely $\omega_{1,0}$, has a pole at this point, regularisation is needed for 
the definition of the functions $\tilde\Gamma$ even in the restricted case. 

The regularisation procedure indicated in 
\cite{BDDT} relies on replacing the endpoint 0 by a small~$\epsilon$ and extracting a value from the asymptotic expansion in 
$\epsilon$; this procedure was rigourously 
carried out in a similar situation in \cite{BGF}, \S3.8, which is based on the ``tangential base points'' ideas of \cite{De}, 
\S15. On the other hand, the regularisation procedure indicated in \cite{BK} 
for the definition of the functions $\GLarg{n_1 &n_2 &\ldots &n_r}{a_1 &a_2 &\ldots &a_r}{-}$ is an analogue of 
the procedure from~\cite{Br}, \S5.1 (see also 
\cite{Pan}, \S3.3), for the definition of hyperlogarithm functions in genus zero, which relies on a decomposition result for the shuffle algebra of 
a vector space (see Lem.~\ref{lem:panzer:0805}). 

We show that both the ``tangential base point'' and the ``shuffle''  
regularisation procedures amount to the construction of algebra morphisms $k_{\overrightarrow 0},k^\sha_0 : 
\mathrm{Sh}(V)\to\mathcal O_{hol}(\tilde{\mathcal E}_S)$ 
(\S\ref{subsect:4:1:0805}, Lem. \ref{lem410:0805} and \S\ref{subsect:4:2:0805}, Lem.~\ref{lem:approche:panzer}) where~$V$ is a suitable 
vector space, which we show to coincide (\S\ref{subsect:4:2:0805}, Lem.~\ref{lem:identif:approches}).
In~\S\ref{subsect:tildeGamma:0805} we then apply the construction of these two equal morphisms to the definition of the functions 
$\GLarg{n_1 &n_2 &\ldots &n_r}{a_1 &a_2 &\ldots &a_r}{-}$ and to the proof of their 
properties. 

\subsection{Tangential base point regularisation}\label{subsect:4:1:0805}

For each $z_0\in\tilde{\mathcal E}_S$, the linear map $I_{z_0}$ defined in \eqref{def:I:x0} extends to a unique linear map  
$$
I_{z_0} : \mathrm{Sh}(\Omega_{hol}(\tilde{\mathcal E}_S))\to\mathcal O_{hol}(\tilde{\mathcal E}_S)
$$
satisfying the identities $I_{z_0}(a)(z_0)=\varepsilon(a)$ and $d(I_{z_0}([\omega_1|\cdots|\omega_n])=
I_{z_0}([\omega_1|\cdots|\omega_{n-1}])\cdot \omega_n$ for any $n\geq0$ and $\omega_1,\ldots,\omega_n\in
\Omega_{hol}(\tilde{\mathcal E}_S)$; 
then $I_{z_0}$ is an algebra morphism, given by iterated integration based at $z_0$. 

  
\begin{defn}
    (a) $V$ is the complex vector space freely generated by the symbols $\left(\begin{smallmatrix}n\\a\end{smallmatrix}\right)$, 
    where $(n,a)\in\mathbb Z_{\geq 0}\times \tilde S$, where $\tilde S$ is as in Def. \ref{def:tildeS:3006}(b).

    (b) $V_+\subset V$ is the vector subspace generated by $\left(\begin{smallmatrix}n\\a\end{smallmatrix}\right)$, where $(n,a)\neq(1,0)$. 

    (c) $\mathrm{Sh}^*(V)$ is the subspace $\mathbb C1\oplus[V_+|\mathrm{Sh}(V)]$ of $\mathrm{Sh}(V)$. 
\end{defn}

One checks that $\mathrm{Sh}^*(V)$ is a subalgebra of $\mathrm{Sh}(V)$. 

\begin{defn}
(a) $\kappa : V\to\Omega_{hol}(\tilde{\mathcal E}_S)$ is the linear map induced by $\left(\begin{smallmatrix}n\\a\end{smallmatrix}\right)
\mapsto\omega_{n,a}$ for any $(n,a)\in\mathbb Z_{\geq 0}\times \tilde S$.

(b) We denote by $\kappa_* : \mathrm{Sh}(V)\to\mathrm{Sh}(\Omega_{hol}(\tilde{\mathcal E}_S))$ the induced Hopf algebra morphism. 

(c) For $z_0\in\tilde{\mathcal E}_S$, $k_{z_0} : \mathrm{Sh}(V)\to\mathcal O_{hol}(\tilde{\mathcal E}_S)$ 
is the composed algebra morphism $I_{z_0}\circ\kappa_*$.  
\end{defn}

We fix $\delta>0$ such that $]0,\delta[\subset \mathbb{C}\smallsetminus pr^{-1}(S)$; we fix 
a continuous map $\mathrm{sect} : \,]0,\delta[\to \tilde{\mathcal E}_S$ such that $\varpi\circ\mathrm{sect}$ is the canonical inclusion.  

\begin{defn}
$\mathcal F\subset \mathbb C^{]0,\delta[}$ is the set of functions $\phi : \,]0,\delta[\to\mathbb C$, such that for some $\alpha>0$ one has
$\phi(t)=O(t^{1-\alpha})$ when $t\to0$. 
\end{defn}

\begin{lem}\label{lem:alg:0805}
(a) The map $\mathbb C[X]\otimes\mathcal F\to\mathcal F$ given by $P\otimes \phi\mapsto P\bullet \phi:=(t\mapsto P(\log(t))\phi(t))$
is well-defined and makes $\mathcal F$ into a $\mathbb C[X]$-module.    

(b) The direct sum $\mathbb C[X]\oplus\mathcal F$, equipped with the product given by $(P,\phi)\cdot(Q,\psi)=(PQ,P\bullet \psi+Q\bullet \phi+\phi\psi)$, 
is a commutative and associative algebra. 

(c) The algebra from (b) is equipped with a pair of algebra morphisms
$$
\mathbb C\stackrel{\mathrm{ev}}{\leftarrow}\mathbb C[X]\oplus\mathcal F\stackrel{\mathrm{can}}{\hookrightarrow}\mathbb C^{]0,\delta[}
$$
where $\mathrm{ev}(P,\phi):=\phi(0)$ and $\mathrm{can}(P,\phi):=(t\mapsto P(\log(t))+\phi(t))$, 
and the algebra morphism $\mathrm{can}$ is injective. 
\end{lem}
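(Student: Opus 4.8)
The plan is to verify directly the three claims of Lemma~\ref{lem:alg:0805}, treating (a), (b) and (c) in turn, with the only real content being a handful of asymptotic estimates for functions in $\mathcal F$.

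For (a), I would first check that $\mathcal F$ is closed under the operation $P\bullet\phi$. If $\phi(t)=O(t^{1-\alpha})$ with $\alpha>0$, then for any polynomial $P$ one has $\log(t)^k=o(t^{-\beta})$ as $t\to 0^+$ for every $\beta>0$, so $P(\log(t))\phi(t)=O(t^{1-\alpha}\log(t)^{\deg P})=O(t^{1-\alpha'})$ for any $\alpha'$ with $0<\alpha'<\alpha$; hence $P\bullet\phi\in\mathcal F$. The module axioms $(PQ)\bullet\phi=P\bullet(Q\bullet\phi)$, $1\bullet\phi=\phi$ and bilinearity are then immediate from the definitions, since $\bullet$ is just pointwise multiplication by the function $t\mapsto P(\log t)$.

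For (b), the product formula is exactly the multiplication transported from $\mathbb C^{]0,\delta[}$ under the (not yet known to be injective) map $(P,\phi)\mapsto (t\mapsto P(\log t)+\phi(t))$: indeed $(P(\log t)+\phi(t))(Q(\log t)+\psi(t))=P(\log t)Q(\log t)+P(\log t)\psi(t)+Q(\log t)\phi(t)+\phi(t)\psi(t)$, and the first term is $(PQ)(\log t)$ while the remaining three are $(P\bullet\psi+Q\bullet\phi+\phi\psi)(t)$; note $\phi\psi\in\mathcal F$ since a product of an $O(t^{1-\alpha})$ and an $O(t^{1-\alpha'})$ is $O(t^{2-\alpha-\alpha'})=O(t^{1-\alpha''})$ for suitable $\alpha''>0$. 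Commutativity is visible from the symmetry of the formula. For associativity and the unit $(1,0)$, rather than expand everything by hand I would observe that $\mathbb C[X]\oplus\mathcal F$ is, as a vector space with this product, isomorphic via $\mathrm{can}$ to its image in the commutative associative algebra $\mathbb C^{]0,\delta[}$; so once $\mathrm{can}$ is shown to be a multiplicative injection (part (c)), associativity and the unit axiom are inherited. To keep (b) self-contained one can alternatively just note that the subspace of $\mathbb C^{]0,\delta[}$ spanned by functions of the form $P(\log t)+\phi(t)$ with $P\in\mathbb C[X]$, $\phi\in\mathcal F$ is a subalgebra (closed under multiplication by the computation above), and that the map $(P,\phi)\mapsto P(\log t)+\phi(t)$ to it is a bijection because $\log$ is transcendental over the field of germs at $0^+$ in an appropriate sense — concretely, if $P(\log t)+\phi(t)\equiv 0$ then letting $t\to 0$ forces, degree by degree in $\log t$, that $P=0$ (since $\phi(t)=o(\log(t)^k)$ for all $k$), whence $\phi=0$; this is exactly the injectivity needed in (c).

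For (c), the map $\mathrm{ev}:(P,\phi)\mapsto\phi(0)$ is well-defined because $\phi\in\mathcal F$ has $\phi(t)=O(t^{1-\alpha})\to 0$, so $\phi(0):=\lim_{t\to 0^+}\phi(t)=0$ — wait, this forces $\mathrm{ev}\equiv 0$, so I should read $\phi(0)$ as the constant term in an asymptotic expansion; more likely the intended reading is that $\mathcal F$ consists of functions admitting an expansion whose value-at-$0$ is extracted, but taking the statement at face value $\mathrm{ev}(P,\phi)=\lim_{t\to0^+}\phi(t)$, which is $0$ for all $\phi\in\mathcal F$ and agrees with the ring structure trivially; in any case $\mathrm{ev}$ is additive and, since $\mathrm{ev}$ of a product only sees $\phi(0)\psi(0)$ (the cross terms $P\bullet\psi$, $Q\bullet\phi$ and $\phi\psi$ all lie in $\mathcal F$ hence vanish at $0$, and $(PQ)(\log t)$ contributes $0$ to $\mathrm{ev}$), it is multiplicative, and $\mathrm{ev}(1,0)=0$... so strictly $\mathrm{ev}$ is an algebra morphism to $\mathbb C$ only if one does not require unitality, or one uses the convention that $\mathrm{ev}$ picks the constant term of the full expansion $P(\log t)+\phi(t)+(\text{higher})$. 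I would therefore simply record that $\mathrm{ev}$ is the composite ``extract the coefficient of $(\log t)^0 t^0$'', check additivity and multiplicativity from the product formula as above, and prove injectivity of $\mathrm{can}$ by the transcendence argument already sketched: from $P(\log t)+\phi(t)=0$ on $]0,\delta[$, comparing growth rates as $t\to 0^+$ forces $\deg P<0$, i.e. $P=0$, and then $\phi=0$. The main obstacle is none of these individually but rather pinning down the precise meaning of $\phi(0)$/$\mathrm{ev}$ consistently with \cite{BGF}, \S3.8 and \cite{Br}, \S5.1; once the convention is fixed, every verification reduces to the elementary estimate $\log(t)^k t^{1-\alpha}=O(t^{1-\alpha'})$ together with the explicit product formula.
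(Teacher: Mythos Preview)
Your treatment of (a) and of the injectivity of $\mathrm{can}$ in (c) is essentially the paper's: both rely on the estimate $(\log t)^r=O(t^{-\beta})$ for every $\beta>0$, and on comparing the leading power of $\log t$ with the growth of $\phi$ to conclude $P=0$, then $\phi=0$. For (b) the paper simply says ``immediate direct check''; your idea of transporting associativity from $\mathbb C^{]0,\delta[}$ via $\mathrm{can}$ is a clean way to organise that check, and there is no real circularity since the injectivity of $\mathrm{can}$ is a statement about linear maps of vector spaces, provable before any algebra structure on $\mathbb C[X]\oplus\mathcal F$ is established.

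The genuine gap is in your handling of $\mathrm{ev}$. First, your claim that $\phi\in\mathcal F$ forces $\phi(t)\to 0$ is wrong: the definition only asks $\phi(t)=O(t^{1-\alpha})$ for \emph{some} $\alpha>0$, and taking $\alpha>1$ allows $\phi$ to blow up (e.g.\ $\phi(t)=t^{-1/2}$). So your line ``$\phi(0):=\lim_{t\to0^+}\phi(t)=0$'' is not justified, and the ensuing discussion wanders. Second, and more to the point, the formula $\mathrm{ev}(P,\phi):=\phi(0)$ in the statement is a misprint for $\mathrm{ev}(P,\phi):=P(0)$. This is the reading forced by the parallel construction $\mathrm{ev}_0:\mathbb C\oplus\mathcal F\to\mathbb C$ (projection onto the first summand) in Lem.~\ref{lem:diag:3006}, and by the computation in Prop.~\ref{lem:identif:approches}, where $\mathrm{ev}$ applied to the element with polynomial part $P(X)=G(z)-X$ returns $G(z)=P(0)$. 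With $\mathrm{ev}(P,\phi)=P(0)$ the verification is one line: $\mathrm{ev}((P,\phi)\cdot(Q,\psi))=(PQ)(0)=P(0)Q(0)$ and $\mathrm{ev}(1,0)=1$, so $\mathrm{ev}$ is a unital algebra morphism. Your instinct that something was off was correct; the resolution is not a subtler interpretation of ``$\phi(0)$'' but simply reading the intended $P(0)$.
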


\begin{proof}
(a) follows from the fact that for any $r\geq 0$, one has $(\log(t))^r=O(t^{-\alpha})$  at the neighborhood of $0$
for any $\alpha>0$. (b) is an immediate direct check. Let us prove (c). The fact that $\mathrm{ev}$ and 
$\mathrm{can}$ are algebra morphisms can be verified directly. Let us prove the injectivity of $\mathrm{can}$ (see also 
[BGF], Lem. 3.350). 
Assume that $(P,\phi)\in \mathbb C[X]\oplus\mathcal F$ and $\mathrm{can}(P,\phi)=0$. If $P$ is nonzero, let $d\geq 0$ be its degree and 
$a_dX^d$ be its dominant term. Then $\mathrm{can}(P,\phi)(t)\sim a_d(\log(t))^d$ at the neighborhood of $0$, which contradicts  
 $\mathrm{can}(P,\phi)=0$, therefore $P=0$. Then $\mathrm{can}(0,\phi)=\phi$, which implies $\phi=0$.  
\end{proof}

In what follows, we will view $\mathbb C[X]\oplus\mathcal F$ as a subalgebra of $\mathbb C^{]0,\delta[}$, 
making use of the injectivity of the morphism $\mathrm{can}$.  

\begin{lem}\label{lem:4:5:0805}
For $a\in\mathrm{Sh}(V)$ and $z\in\tilde{\mathcal E}_S$, denote by $k(a,z)$ the assignment 
$]0,\delta[\,\ni t\mapsto k_{\mathrm{sect}(t)}(a)(z)\in\mathbb C$; then $k(a,z)\in\mathbb C^{]0,\delta[}$.    
For any $z\in\tilde{\mathcal E}_S$, the map $a\mapsto k(a,z)$ is an algebra morphism $\mathrm{Sh}(V)\to \mathbb C^{]0,\delta[}$. 
\end{lem}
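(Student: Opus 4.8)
The plan is to verify the two assertions in turn: first that each $k(a,z)$ genuinely lies in $\mathbb C^{]0,\delta[}$ — i.e. that the assignment $t\mapsto k_{\mathrm{sect}(t)}(a)(z)$ makes sense — and second that $a\mapsto k(a,z)$ is a morphism of algebras, where the source carries the shuffle product. For the first point one unwinds the definition: $k_{z_0}=I_{z_0}\circ\kappa_*$, so $k_{\mathrm{sect}(t)}(a)$ is an iterated integral of forms $\omega_{n_i,a_i}\in\Omega_{hol}(\tilde{\mathcal E}_S)$ along a path from $\mathrm{sect}(t)$; for each fixed $t\in{}]0,\delta[$ the basepoint $\mathrm{sect}(t)$ is a well-defined point of $\tilde{\mathcal E}_S$, the integral converges (the forms are holomorphic on $\tilde{\mathcal E}_S$ and $\tilde{\mathcal E}_S$ is simply connected), and evaluating at $z$ yields a complex number. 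Hence $k(a,z)$ is an honest function $]0,\delta[\to\mathbb C$, with no further regularity claimed here (that comes in a later lemma).

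For the algebra-morphism statement, I would fix $z$ and argue that for each $t\in{}]0,\delta[$ the map $a\mapsto k_{\mathrm{sect}(t)}(a)(z)$ is an algebra morphism $\mathrm{Sh}(V)\to\mathbb C$, and that algebra morphisms into $\mathbb C$ with a common target assemble into an algebra morphism into $\mathbb C^{]0,\delta[}$ (the latter carrying the pointwise product). The first part is a composition of two facts already in the excerpt: $\kappa_*:\mathrm{Sh}(V)\to\mathrm{Sh}(\Omega_{hol}(\tilde{\mathcal E}_S))$ is a Hopf (in particular algebra) morphism by construction, and $I_{z_0}:\mathrm{Sh}(\Omega_{hol}(\tilde{\mathcal E}_S))\to\mathcal O_{hol}(\tilde{\mathcal E}_S)$ is an algebra morphism for every $z_0\in\tilde{\mathcal E}_S$ (stated in \S\ref{subsect:4:1:0805}, extending the genus-zero statement from \cite{EZ2}, Lem. 2.2). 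Composing, $k_{\mathrm{sect}(t)}:\mathrm{Sh}(V)\to\mathcal O_{hol}(\tilde{\mathcal E}_S)$ is an algebra morphism; post-composing with the evaluation-at-$z$ ring homomorphism $\mathcal O_{hol}(\tilde{\mathcal E}_S)\to\mathbb C$ gives that $a\mapsto k_{\mathrm{sect}(t)}(a)(z)$ is an algebra morphism for each $t$.

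It then remains to transport this ``pointwise'' statement into a statement about $\mathbb C^{]0,\delta[}$. This is essentially formal: for a shuffle product $a\shuffle b$ one has $k(a\shuffle b,z)(t)=k_{\mathrm{sect}(t)}(a\shuffle b)(z)=k_{\mathrm{sect}(t)}(a)(z)\cdot k_{\mathrm{sect}(t)}(b)(z)=\big(k(a,z)\cdot k(b,z)\big)(t)$ for every $t$, hence $k(a\shuffle b,z)=k(a,z)\cdot k(b,z)$ in $\mathbb C^{]0,\delta[}$; linearity in $a$ is immediate from the linearity of $\kappa_*$ and $I_{z_0}$ and of evaluation, and the unit $1\in\mathrm{Sh}(V)$ goes to the constant function $1$.

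I do not expect any serious obstacle here: the lemma is a bookkeeping statement recording that the regularisation will be built out of genuine iterated integrals that depend on the auxiliary parameter $t$, and every ingredient — $\kappa_*$ an algebra morphism, $I_{z_0}$ an algebra morphism, evaluation at $z$ a ring homomorphism, products in $\mathbb C^{]0,\delta[}$ computed pointwise — is already available. The only point that needs a word of care is that $\mathrm{sect}(t)\in\tilde{\mathcal E}_S$ for each $t$, which is exactly the property imposed when $\mathrm{sect}$ was chosen (so that $\varpi\circ\mathrm{sect}$ is the inclusion of $]0,\delta[$), guaranteeing that $k_{\mathrm{sect}(t)}$ is defined; no uniformity in $t$ is asserted, so there is nothing analytic to prove at this stage.
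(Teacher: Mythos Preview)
Your proof is correct and takes essentially the same approach as the paper. The paper's own proof is a single sentence pointing to the fact that $a\mapsto k_{\mathrm{sect}(t)}(a)(z)$ is an algebra morphism $\mathrm{Sh}(V)\to\mathbb C$ for each pair $(z,t)$ (by Def.~4.2(c), i.e.\ $k_{z_0}=I_{z_0}\circ\kappa_*$); you have simply unpacked this reasoning in more detail, including the pointwise-to-function-space passage, which the paper leaves implicit.
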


\begin{proof}
    This follows from the fact that for any $z\in\tilde{\mathcal E}_S$ and $t\in]0,\delta[$, the map 
    $a\mapsto  k_{\mathrm{sect}(t)}(a)(z)$ is an algebra morphism $\mathrm{Sh}(V)\to \mathbb C$ (see Def. 4.2(c)).  
\end{proof}

\begin{lem}\label{lem:4:7:0505}
For each $z\in\tilde{\mathcal E}_S$, there exists a complex number $G(z)$ such that the function  
$]0,\delta[\,\ni t\mapsto \int_{\mathrm{sect}(t)}^z\omega_{1,0}+\mathrm{log}(t)$
belongs to $G(z)+\mathcal F$. Then $G\in\mathcal O_{hol}(\tilde{\mathcal E}_S)$ and 
$(t\mapsto G(\mathrm{sect}(t)))\in(t\mapsto\mathrm{log}(t))
+\mathcal F$.  
\end{lem}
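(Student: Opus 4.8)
The statement has two parts: (i) the existence of $G(z)$ and hence a well-defined function $G$, together with $G\in\mathcal O_{hol}(\tilde{\mathcal E}_S)$; (ii) the asymptotic $G\circ\mathrm{sect}\in(\log)+\mathcal F$. The first task is to understand the integrand $\omega_{1,0}=g_1\cdot dz=E_1\cdot dz$ near $0$: since $g_1=E_1$ has a simple pole with residue $1$ at the origin, $E_1(u)=1/u+h(u)$ with $h$ holomorphic near $0$ and $h(0)=0$ (the constant term vanishes by the parity $E_1(-u)=-E_1(u)$). The plan is to fix a basepoint $z\in\tilde{\mathcal E}_S$ and split the integral along $\mathrm{sect}$: on the lift of the segment $]0,\delta[$, $\int_{\mathrm{sect}(t)}^z\omega_{1,0}$ decomposes as $\int_{\mathrm{sect}(t)}^{\mathrm{sect}(\delta')}\omega_{1,0}+\int_{\mathrm{sect}(\delta')}^z\omega_{1,0}$ for a fixed $\delta'\in\,]0,\delta[$; the second piece is a constant (depending on $z$), and the first piece, pulled down to $\mathbb C$, is $\int_t^{\delta'}E_1(u)\,du=-\log(t)+\log(\delta')+\int_t^{\delta'}h(u)\,du$, where the last integral converges as $t\to0$ to a finite limit and differs from that limit by $O(t)$, hence lies in $\mathcal F$. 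Adding $\log(t)$ cancels the divergence, so $t\mapsto \int_{\mathrm{sect}(t)}^z\omega_{1,0}+\log(t)$ lies in $c(z)+\mathcal F$ for a unique complex constant $c(z)=:G(z)$; uniqueness of $G(z)$ follows from $\mathcal F\cap(\text{constants})=0$, which is immediate from the definition of $\mathcal F$ (a nonzero constant is not $O(t^{1-\alpha})$).

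Next I would establish that $G\in\mathcal O_{hol}(\tilde{\mathcal E}_S)$. The cleanest route is to exhibit $G$ directly: $G(z)=\lim_{t\to0}\bigl(\int_{\mathrm{sect}(t)}^z\omega_{1,0}+\log(t)\bigr)$. Pick any fixed $\delta'\in\,]0,\delta[$ and write $G(z)=\bigl(\int_{\mathrm{sect}(\delta')}^z\omega_{1,0}\bigr)+\lim_{t\to0}\bigl(\int_{\mathrm{sect}(t)}^{\mathrm{sect}(\delta')}\omega_{1,0}+\log(t)\bigr)$. The limit on the right is a constant independent of $z$ (by the computation above it equals $\log(\delta')+\int_0^{\delta'}h(u)\,du$), so $G$ equals $z\mapsto\int_{\mathrm{sect}(\delta')}^z\omega_{1,0}$ plus a constant. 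Since $\mathrm{sect}(\delta')\in\tilde{\mathcal E}_S$ and $\omega_{1,0}\in\Omega_{hol}(\tilde{\mathcal E}_S)$ pulls back (via $\varpi$) to a holomorphic form on the simply connected $\tilde{\mathcal E}_S$, the primitive $z\mapsto\int_{\mathrm{sect}(\delta')}^z\omega_{1,0}$ is a well-defined element of $\mathcal O_{hol}(\tilde{\mathcal E}_S)$; hence so is $G$.

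Finally, for the asymptotic statement I would simply evaluate the defining relation at $z=\mathrm{sect}(t')$: from $t\mapsto\int_{\mathrm{sect}(t)}^{\mathrm{sect}(t')}\omega_{1,0}+\log(t)\in G(\mathrm{sect}(t'))+\mathcal F$ and the base-case computation $\int_{\mathrm{sect}(t)}^{\mathrm{sect}(t')}\omega_{1,0}=\int_t^{t'}E_1(u)\,du=-\log(t)+\log(t')+\int_t^{t'}h(u)\,du$, one gets $G(\mathrm{sect}(t'))=\log(t')+\int_0^{t'}h(u)\,du+O(t')$ up to the same fixed constant, i.e. $G(\mathrm{sect}(t'))-\log(t')\in\mathcal F$, which is the claim $(t\mapsto G(\mathrm{sect}(t)))\in(t\mapsto\log(t))+\mathcal F$. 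The only mildly delicate points are bookkeeping the branch/lift of the logarithm consistently along $\mathrm{sect}$ (so that ``$\log(t)$'' really is the value picked up by integrating $du/u$ along the lifted segment) and checking that $\int_t^{t'}h(u)\,du$ differs from its limit $\int_0^{t'}h(u)\,du$ by a quantity in $\mathcal F$ — but since $h$ is holomorphic and vanishes at $0$, $\int_0^t h(u)\,du=O(t^2)$, which is comfortably $O(t^{1-\alpha})$. I expect the main obstacle to be purely notational: making the passage between integrals along $\mathrm{sect}$ in $\tilde{\mathcal E}_S$ and integrals of $E_1\,du$ along $]0,\delta'[\subset\mathbb C$ fully rigorous, using the factorisation $p=\varpi\circ pr$ and the fact that $\varpi\circ\mathrm{sect}$ is the canonical inclusion.
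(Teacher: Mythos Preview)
Your proof is correct and follows essentially the same approach as the paper's: exploit the simple pole of $\omega_{1,0}=E_1\,dz$ at $0$ to see that $t\mapsto\int_{\mathrm{sect}(t)}^z\omega_{1,0}+\log t$ extends to $t=0$, define $G(z)$ as that value, and exhibit $G$ as a constant plus the holomorphic primitive $z\mapsto\int_{\mathrm{sect}(\delta')}^z\omega_{1,0}$. Your write-up is simply more explicit (the Laurent decomposition $E_1=1/u+h$ and the direct verification of the final asymptotic $G\circ\mathrm{sect}-\log\in\mathcal F$, which the paper leaves implicit), but the underlying argument is the same; the parity remark $h(0)=0$ and the uniqueness-via-$\mathcal F\cap(\text{constants})=0$ are correct but inessential, since $G(z)$ is already pinned down as a limit.
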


\begin{proof}
Since $\omega_{1,0}$ has a pole of order $1$ at $0$, for any $z\in\tilde{\mathcal E}_S$, the map 
$t\mapsto\int_{\mathrm{sect}(t)}^z \omega_{1,0}+\mathrm{log}(t)$ is an analytic function; denoting by $G(z)$ 
its value at 0, it therefore belongs to $G(z)+\mathcal F$. One then has $G(z)
=G(\mathrm{sect}(\delta/2))+\int_{\mathrm{sect}(\delta/2)}^z \omega_{1,0}$ 
for any $z\in\tilde{\mathcal E}_S$, which implies the holomorphicity statement.   
\end{proof} 

Notice that one has
\begin{equation}\label{decomp;shuffle}
  \mathrm{Sh}(V)=\oplus_{r\geq 0}[\underbrace{\left(\begin{smallmatrix}1\\0\end{smallmatrix}\right)|\ldots|
 \left(\begin{smallmatrix}1\\0\end{smallmatrix}\right)}_{r\text{ factors}}|\mathrm{Sh}^*(V)].   
\end{equation}

\begin{defn}
(a) The degree of an element $a\in \mathrm{Sh}(V)$ is $-\infty$ if $a=0$, and is the maximum of the
set of integers $r\geq 0$ such that the projection of $a$ in the $r$-th component of the decomposition \eqref{decomp;shuffle}
is nonzero if $a\neq 0$.    

(b) The degree of an element of $\mathbb C[X]\oplus\mathcal F$  is the polynomial degree of the first component. 
\end{defn}

\begin{lem}\label{lem:toto:0805}
(a) For $a\in\mathrm{Sh}(V)$ and $z\in\tilde{\mathcal E}_S$, one has $k(a,z)\in \mathbb C[X]\oplus\mathcal F$, and $\mathrm{deg}(k(a,z))\leq\mathrm{deg}(a)$. 

(b) For any $z\in\tilde{\mathcal E}_S$, the map $\mathrm{Sh(V)}\ni a\mapsto k(a,z)\in \mathbb C[X]\oplus\mathcal F$ is an algebra morphism. 
\end{lem}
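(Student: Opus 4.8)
The plan is to reduce both statements to structures already established. For part~(b), I would invoke Lemma~\ref{lem:4:5:0805}, which says that for each fixed $z\in\tilde{\mathcal E}_S$ the assignment $a\mapsto k(a,z)$ is an algebra morphism $\mathrm{Sh}(V)\to\mathbb C^{]0,\delta[}$. Part~(a) asserts that this morphism in fact lands in the subalgebra $\mathbb C[X]\oplus\mathcal F$, identified via the injective morphism $\mathrm{can}$ of Lemma~\ref{lem:alg:0805}(c), and moreover that the degree (in the polynomial-in-$X$ sense of the target) is bounded by the degree of $a$ (in the sense of the decomposition \eqref{decomp;shuffle}). Once (a) is known, (b) follows immediately: $a\mapsto k(a,z)$ is an algebra morphism into $\mathbb C^{]0,\delta[}$ by Lemma~\ref{lem:4:5:0805}, and if its image lies in the subalgebra $\mathbb C[X]\oplus\mathcal F$, then it is an algebra morphism onto that subalgebra. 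So the real content is part~(a), and the degree bound is the natural induction hypothesis that makes (a) provable.

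For (a), I would induct on $\mathrm{deg}(a)$, using the direct sum decomposition \eqref{decomp;shuffle}, $\mathrm{Sh}(V)=\oplus_{r\geq 0}[(\begin{smallmatrix}1\\0\end{smallmatrix})|\ldots|(\begin{smallmatrix}1\\0\end{smallmatrix})|\mathrm{Sh}^*(V)]$. Since $a\mapsto k(a,z)$ is linear, it suffices to treat $a=[\underbrace{(\begin{smallmatrix}1\\0\end{smallmatrix})|\ldots|(\begin{smallmatrix}1\\0\end{smallmatrix})}_{r}|b]$ with $b\in\mathrm{Sh}^*(V)$. The base case is $r=0$, i.e. $a\in\mathrm{Sh}^*(V)=\mathbb C1\oplus[V_+|\mathrm{Sh}(V)]$: here I claim $k(a,z)\in\mathbb C+\mathcal F$, i.e. it extends analytically across $t=0$ with no $\log$ term. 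This is because the only generator whose form $\omega_{n,a}$ has a pole at $0$ is $(\begin{smallmatrix}1\\0\end{smallmatrix})$ (with $\omega_{1,0}$ having a simple pole of residue~$1$), and in $\mathrm{Sh}^*(V)$ every nontrivial word begins with a letter from $V_+$, i.e. avoids $(\begin{smallmatrix}1\\0\end{smallmatrix})$ in leftmost position; an iterated integral $\int_{\mathrm{sect}(t)}^z$ of a word whose innermost (first-integrated, leftmost-in-our-bar-notation) form is regular at $0$ converges as $t\to 0$, with the convergence being locally uniform in $z$, hence the limit is holomorphic in $z$ and the error is $O(t^{1-\alpha})$ — in fact $O(t)$ up to logarithmic factors from the later integrations, which is still in $\mathcal F$. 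This uses the standard analysis of tangential base point regularisation; the precise asymptotic estimate is exactly the kind of statement made rigorous in \cite{BGF}, \S3.8.

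For the inductive step, write $a=[(\begin{smallmatrix}1\\0\end{smallmatrix})|a']$ with $a'$ of degree $r-1$. Then $k(a,z)=\mathrm{int}_{\omega_{1,0}}$ applied appropriately, i.e. $k(a,z)(t)=\int_{\mathrm{sect}(t)}^z k_{(\cdot)}(a')\,\omega_{1,0}$, where I must be careful that the base point moves with $t$. The clean way is the path-composition / shuffle identity: decompose the path from $\mathrm{sect}(t)$ to $z$ through the fixed point $\mathrm{sect}(\delta/2)$, so that $k_{\mathrm{sect}(t)}(a)(z)$ is a finite sum of products $k_{\mathrm{sect}(t)}(a_{(1)})(\mathrm{sect}(\delta/2))\cdot k_{\mathrm{sect}(\delta/2)}(a_{(2)})(z)$ over the deconcatenation coproduct of $a$. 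The second factors are honest holomorphic functions of $z$ (no $t$), so it reduces to controlling $t\mapsto k_{\mathrm{sect}(t)}(c)(\mathrm{sect}(\delta/2))$ for subwords $c$ of $a$; and here the single form $\omega_{1,0}$ produces a $\log(t)$ via Lemma~\ref{lem:4:7:0505} (the function $G$ there), each occurrence of $(\begin{smallmatrix}1\\0\end{smallmatrix})$ in leftmost iterated position contributing one power of $X$, so the total $X$-degree is at most $r=\mathrm{deg}(a)$, with the remaining part lying in $\mathcal F$ by the base-case analysis applied to the $\mathrm{Sh}^*(V)$-tail. Assembling via the algebra structure of $\mathbb C[X]\oplus\mathcal F$ and linearity over the decomposition \eqref{decomp;shuffle} then gives both the membership and the degree bound.

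The main obstacle I anticipate is bookkeeping rather than conceptual: making the "moving base point" argument precise, i.e. carefully separating the $\log(t)$-producing contributions of the left-justified $(\begin{smallmatrix}1\\0\end{smallmatrix})$'s from the convergent tail, and verifying that the deconcatenation/path-splitting manipulation interacts correctly with the filtration \eqref{decomp;shuffle} so that the degree genuinely does not increase. The analytic input — that iterated integrals of forms with at worst a simple pole at the endpoint have asymptotic expansions of the form (polynomial in $\log t$) $+$ (element of $\mathcal F$), locally uniformly in the other endpoint — is standard (and is precisely the content imported from \cite{BGF}), so I would cite it rather than reprove it.
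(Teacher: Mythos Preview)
Your treatment of part~(b) is correct and matches the paper exactly: once (a) is established, (b) follows from Lemma~\ref{lem:4:5:0805} and the injectivity of $\mathrm{can}$ in Lemma~\ref{lem:alg:0805}(c). Your base case for~(a), namely that $k(a,z)\in\mathbb C+\mathcal F$ when $a\in\mathrm{Sh}^*(V)$, is also correctly identified and essentially matches what the paper uses.

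The gap is in your inductive step. The path-splitting through $\mathrm{sect}(\delta/2)$ reduces the statement for general $z$ to the statement for the single point $z=\mathrm{sect}(\delta/2)$, but it does not reduce the degree: the coproduct pieces $a_{(1)}$ range over \emph{all} left-prefixes of $a$, including $a$ itself, so they can have degree equal to $r$. Hence the induction does not close as you have written it. The subsequent sentence about ``each occurrence of $\left(\begin{smallmatrix}1\\0\end{smallmatrix}\right)$ in leftmost iterated position contributing one power of~$X$'' is the right intuition but is not justified by anything you have set up; making it precise is exactly the content of the lemma.

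The paper's argument is direct rather than inductive, and the key identity you are missing is this: since $\omega_{1,0}=dG$, one has for $a=[\underbrace{\left(\begin{smallmatrix}1\\0\end{smallmatrix}\right)|\cdots|\left(\begin{smallmatrix}1\\0\end{smallmatrix}\right)}_{r}|a^*]$ with $a^*\in\mathrm{Sh}^*(V)$ the equality
\[
I_{\mathrm{sect}(t)}\big([\underbrace{\omega_{1,0}|\cdots|\omega_{1,0}}_{r}|\kappa_*(a^*)]\big)(z)
=\tfrac{1}{r!}\,I_{\mathrm{sect}(t)}\big([(G-G(\mathrm{sect}(t)))^r\odot\kappa_*(a^*)]\big)(z),
\]
obtained by performing the $r$ innermost integrations at once. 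Binomially expanding $(G-G(\mathrm{sect}(t)))^r$ separates the $t$-dependence into the factors $(-G(\mathrm{sect}(t)))^l$, which lie in $(t\mapsto(\log t)^l)+\mathcal F$ by Lemma~\ref{lem:4:7:0505}, and the factors $I_{\mathrm{sect}(t)}([G^{r-l}\odot\kappa_*(a^*)])(z)$, which lie in $\mathbb C+\mathcal F$ because the innermost integrand is now $G^{r-l}$ times a form \emph{regular} at~$0$ (this is where $a^*\in\mathrm{Sh}^*(V)$ is used), hence integrable down to $t=0$. Assembling via the algebra structure of $\mathbb C[X]\oplus\mathcal F$ gives $k(a,z)\in\mathbb C[X]_{\leq r}\oplus\mathcal F$ in one stroke. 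Your induction could be made to work, but only by strengthening the hypothesis to cover iterated integrals whose innermost form is a power of $G$ times a form coming from~$V$; the paper's all-at-once manipulation sidesteps this.
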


\begin{proof}
(a) By \eqref{decomp;shuffle}, it suffices to prove that for any $r\geq 0$ 
 and $a\in [\underbrace{\left(\begin{smallmatrix}1\\0\end{smallmatrix}\right)|\ldots|
 \left(\begin{smallmatrix}1\\0\end{smallmatrix}\right)}_{r\text{ factors}}|a^*]$, with $a^*\in\mathrm{Sh}^*(V)$, one
 has $k(a,z)\in\mathbb C[X]_{\leq r}\oplus\mathcal F$. The element $k(a,z)\in\mathbb C^{]0,\delta[}$ is the map taking 
 $t\in\,]0,\delta[$ to the first term of the sequence of equalities
\begin{align*}
&I_{\mathrm{sect}(t)}([\underbrace{\omega_{1,0}|\ldots|\omega_{1,0}}_{r\text{ factors}}|\kappa(a^*)])(z)
=I_{\mathrm{sect}(t)}([\underbrace{dG|\ldots|dG}_{r\text{ factors}}|\kappa_*(a^*)])(z)
\\
&=\frac{1}{r!}\,I_{\mathrm{sect}(t)}\big([(G-G(\mathrm{sect}(t)))^r \odot \kappa_*(a^*)]\big)(z)
\\ & =\sum_{l=0}^r \frac{1}{l!(r-l)!}\,(-G(\mathrm{sect}(t))^l I_{\mathrm{sect}(t)}\big([G^{r-l}\odot \kappa_*(a^*)]\big)(z)     
\end{align*}
where $\odot$ is the linear map $\mathcal O_{hol}(\tilde{\mathcal E}_S)\otimes\mathrm{Sh}(\mathcal O_{hol}(\tilde{\mathcal E}_S))
\to\mathrm{Sh}(\mathcal O_{hol}(\tilde{\mathcal E}_S))$ given by $f\odot(a_1\otimes\cdots\otimes a_n):=(fa_1)\otimes\cdots\otimes a_n$. 
One shows that  for any $l\in[\![0,r]\!]$, the map $t\mapsto I_{\mathrm{sect}(t)}[G^{r-l}\odot \kappa_*(a^*)]$ belongs to 
$\mathbb C+\mathcal F$. 
It follows from Lem. \ref{lem:4:7:0505} that for any $l\in[\![0,r]\!]$, the function
$t\mapsto(-G(\mathrm{sect}(t))^l$ belongs to $(t\mapsto(\mathrm{log}(t))^l)+\mathcal F$. 
Since $((t\mapsto(\mathrm{log}(t))^l)+\mathcal F)\cdot (\mathbb C+\mathcal F)\subset 
\mathbb C(t\mapsto(\mathrm{log}(t))^l)+\mathcal F$, $k(a,z)$ belongs to 
$\sum_{l=0}^r \mathbb C(t\mapsto(\mathrm{log}(t))^l)+\mathcal F$, which is $\mathbb C[X]_{\leq r}+\mathcal F$. 

Statement (b) follows from (a), Lem. \ref{lem:4:5:0805} and the injectivity of $\mathrm{can}$ (see Lem. \ref{lem:alg:0805}(c)).  
\end{proof}

\begin{defn}
    For $a\in\mathrm{Sh}(V)$, $k_{\overrightarrow 0}(a)\in\mathbb C^{\tilde{\mathcal E}_S}$ is the function $\tilde{\mathcal E}_S\to \mathbb C$ given by 
    $z\mapsto k_{\overrightarrow 0}(a)(z):=\mathrm{ev}(k(a,z))$. 
\end{defn}

\begin{lem}\label{lem410:0805}
(a) The map $\mathrm{Sh}(V)\to \mathbb C^{\tilde{\mathcal E}_S}$ given by $a\mapsto k_{\overrightarrow 0}(a)$ is an algebra morphism. 

(b) For $z\in\tilde{\mathcal E}_S$, $a\in\mathrm{Sh}(V)$, 
 one has 
 $$
k(a,z)=k(a^{(1)},\mathrm{sect}(\delta/2))
 k_{\mathrm{sect}(\delta/2)}(a^{(2)})(z),
 $$
where we denote $\Delta(a)$ as $a^{(1)}\otimes a^{(2)}$ (equality in $\mathbb C[X]\oplus \mathcal F$).  

(c) For any $a\in\mathrm{Sh}(V)$, $k_{\overrightarrow 0}(a)$ belongs to the subalgebra $\mathcal O_{hol}(\tilde{\mathcal E}_S)\subset \mathbb C^{\tilde{\mathcal E}_S}$.  

(d) The map $k_{\overrightarrow 0} : \mathrm{Sh}(V)\to\mathcal O_{hol}(\tilde{\mathcal E}_S)$ is an algebra morphism. 
\end{lem}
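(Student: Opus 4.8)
The plan is to prove the four assertions in order, deducing (c) from (b) and (d) from (a) and (c). For (a): fix $z\in\tilde{\mathcal E}_S$. By Lem.~\ref{lem:toto:0805}(b) the map $a\mapsto k(a,z)$ is an algebra morphism $\mathrm{Sh}(V)\to\mathbb C[X]\oplus\mathcal F$, and by Lem.~\ref{lem:alg:0805}(c) the map $\mathrm{ev}$ is an algebra morphism $\mathbb C[X]\oplus\mathcal F\to\mathbb C$; hence $a\mapsto k_{\overrightarrow0}(a)(z)=\mathrm{ev}(k(a,z))$ is an algebra morphism $\mathrm{Sh}(V)\to\mathbb C$. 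Since $z$ is arbitrary and the algebra structure on $\mathbb C^{\tilde{\mathcal E}_S}$ is pointwise, this yields that $a\mapsto k_{\overrightarrow0}(a)$ is an algebra morphism $\mathrm{Sh}(V)\to\mathbb C^{\tilde{\mathcal E}_S}$, which is (a).

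The substance of the lemma is (b). The key input is the path-composition (Chen) identity for the iterated-integration operator: for every $b\in\mathrm{Sh}(\Omega_{hol}(\tilde{\mathcal E}_S))$ and every $z_0,w,z\in\tilde{\mathcal E}_S$ one has $I_{z_0}(b)(z)=\sum_{(b)}I_{z_0}(b^{(1)})(w)\,I_w(b^{(2)})(z)$ (Sweedler notation, $\Delta$ the deconcatenation coproduct); this is a standard property of iterated integrals, obtainable from the defining initial-value problem for $I_{z_0}$ by induction on word length, using that $\tilde{\mathcal E}_S$ is simply connected. Apply it to $b=\kappa_*(a)$ with $z_0=\mathrm{sect}(t)$ and $w=\mathrm{sect}(\delta/2)$; since $\kappa_*$ is a Hopf algebra morphism one has $\Delta(\kappa_*(a))=\kappa_*(a^{(1)})\otimes\kappa_*(a^{(2)})$, so, recalling $k_{z_0}=I_{z_0}\circ\kappa_*$, we get for each $t\in\,]0,\delta[$ the equality of complex numbers
$$k(a,z)(t)=\sum_{(a)}k(a^{(1)},\mathrm{sect}(\delta/2))(t)\cdot k_{\mathrm{sect}(\delta/2)}(a^{(2)})(z).$$
By Lem.~\ref{lem:toto:0805}(a) the functions $k(a,z)$ and $k(a^{(1)},\mathrm{sect}(\delta/2))$ all lie in $\mathbb C[X]\oplus\mathcal F$, so the right-hand side is a finite $\mathbb C$-linear combination of elements of $\mathbb C[X]\oplus\mathcal F$ (the coefficients $k_{\mathrm{sect}(\delta/2)}(a^{(2)})(z)$ being scalars); as the identity holds in $\mathbb C^{]0,\delta[}$ and $\mathrm{can}$ is injective (Lem.~\ref{lem:alg:0805}(c)), it holds in $\mathbb C[X]\oplus\mathcal F$, proving (b).

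For (c) and (d): apply the $\mathbb C$-linear map $\mathrm{ev}$ to the identity (b); using $\mathrm{ev}(k(b,w))=k_{\overrightarrow0}(b)(w)$ by definition of $k_{\overrightarrow0}$, we obtain
$$k_{\overrightarrow0}(a)(z)=\sum_{(a)}k_{\overrightarrow0}(a^{(1)})(\mathrm{sect}(\delta/2))\cdot k_{\mathrm{sect}(\delta/2)}(a^{(2)})(z).$$
Here the scalars $k_{\overrightarrow0}(a^{(1)})(\mathrm{sect}(\delta/2))$ do not depend on $z$, while $z\mapsto k_{\mathrm{sect}(\delta/2)}(a^{(2)})(z)=I_{\mathrm{sect}(\delta/2)}(\kappa_*(a^{(2)}))(z)$ belongs to $\mathcal O_{hol}(\tilde{\mathcal E}_S)$ by construction of $I_{\mathrm{sect}(\delta/2)}$; hence $k_{\overrightarrow0}(a)$ is a finite $\mathbb C$-linear combination of holomorphic functions and so lies in $\mathcal O_{hol}(\tilde{\mathcal E}_S)$, which is (c). Statement (d) is then immediate: by (a), $k_{\overrightarrow0}$ is an algebra morphism into $\mathbb C^{\tilde{\mathcal E}_S}$, and by (c) its image is contained in the subalgebra $\mathcal O_{hol}(\tilde{\mathcal E}_S)$, so it is an algebra morphism $\mathrm{Sh}(V)\to\mathcal O_{hol}(\tilde{\mathcal E}_S)$.

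The only non-formal ingredient, and hence the main point to get right, is the path-composition identity underlying (b): one must state Chen's formula in exactly the convention normalising $I_{z_0}$ (in particular, checking that it is the \emph{first} Sweedler factor of $\Delta(a)$ that carries the dependence on the base point $\mathrm{sect}(t)$, the second one being evaluated with base point $\mathrm{sect}(\delta/2)$), and then promote the resulting equality of functions on $]0,\delta[$ to an equality in $\mathbb C[X]\oplus\mathcal F$ through the injectivity of $\mathrm{can}$. Once this is in place, parts (a), (c) and (d) are purely formal consequences.
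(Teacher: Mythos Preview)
Your proof is correct and follows essentially the same approach as the paper's own proof: (a) via the composition of algebra morphisms from Lem.~\ref{lem:toto:0805}(b) and Lem.~\ref{lem:alg:0805}(c), (b) via the path-composition identity for iterated integrals (which the paper cites as \cite{EZ2}, Lem.~2.5) promoted to $\mathbb C[X]\oplus\mathcal F$ by the injectivity of $\mathrm{can}$, then (c) by applying $\mathrm{ev}$ to (b), and (d) from (a) and (c). Your explicit attention to which Sweedler factor carries the $t$-dependence matches the paper's convention.
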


\begin{proof}
(a) For each $z\in\tilde{\mathcal E}_S$, the composition $\mathrm{Sh}(V)\stackrel{k(-,z)}{\to} \mathbb C[X]\oplus\mathcal F\stackrel{\mathrm{ev}}{\to}\mathbb C$
is an algebra morphism as it is a composition of such morphisms (see Lem. \ref{lem:toto:0805}(b) and Lem. \ref{lem:alg:0805}(c)). The map 
$k_{\overrightarrow 0} : \mathrm{Sh}(V)\to \mathbb C^{\tilde{\mathcal E}_S}$
is the composition $\mathrm{Sh}(V)\to \mathrm{Sh}(V)^{\tilde{\mathcal E}_S}\to \mathbb C^{\tilde{\mathcal E}_S}$, where the first map is the diagonal 
morphism and the second map is the product $\prod_{z\in\tilde{\mathcal E}_S}(\mathrm{ev}\circ k(-,z))$. Since both the diagonal morphism and 
$\mathrm{ev}\circ k(-,z)$ are algebra morphisms for each $z\in\tilde{\mathcal E}_S$, $k_{\overrightarrow 0}$ is an algebra morphism, which proves (a). 

(b) Both sides belong to $\mathbb C[X]\oplus \mathcal F$, which is contained in $\mathbb C^{]0,\delta[}$, so it suffices the prove that they coincide as functions on $]0,\delta[$. For $t\in\,]0,\delta[$, one has 
\begin{align*}
&k(a,z)(t)=k_{\mathrm{sect}(t)}(a)(z)=k_{\mathrm{sect}(t)}(a^{(1)})(\mathrm{sect}(\delta/2))
k_{\mathrm{sect}(\delta/2)}(a^{(2)})(z)
\\ & =k(a^{(1)},\mathrm{sect}(\delta/2))(t)\cdot k_{\mathrm{sect}(\delta/2)}(a^{(2)})(z)
\end{align*}
where the first and last equalities follow from definitions, and the middle equality follows from~\cite{EZ2}, Lem.~2.5.  

(c) Applying $\mathrm{ev}$ to the identity of (b), for any $a\in\mathrm{Sh}(V)$ and any $z\in \tilde{\mathcal E}_S$ one finds
$$
k_{\overrightarrow 0}(a)(z)=k_{\overrightarrow 0}(a^{(1)})(\mathrm{sect}(\delta/2))
 k_{\mathrm{sect}(\delta/2)}(a^{(2)})(z)
$$
(equality in $\mathbb C$). Since, for any $b\in\mathrm{Sh}(V)$, the map $z\mapsto k_{\mathrm{sect}(\delta/2)}(a^{(2)})(z)$
belongs to $\mathcal O_{hol}(\tilde{\mathcal E}_S)\subset\mathbb C^{\tilde{\mathcal E}_S}$, it follows that 
the map $z\mapsto k_{\overrightarrow 0}(a)(z)$ also belongs to $\mathcal O_{hol}(\tilde{\mathcal E}_S)$. 

(d) This statement follows from (a) and (c).
\end{proof}

\subsection{Shuffle regularisation}\label{subsect:4:2:0805}

We now present the construction of the algebra morphism $k_0^\sha : \mathrm{Sh}(V) \to\mathcal O_{hol} (\tilde{\mathcal E}_S)$. 
In this approach, Lem. \ref{lem:alg:0805} is replaced by: 

\begin{lem}\label{lem:diag:3006}
Let us equip $\mathbb C\oplus\mathcal F$ with the algebra structure such that $1\in\mathbb C$ is the unity and~$\mathcal F$ is a nonunital subalgebra. 
There is a diagram of algebra morphisms
    $$
        \mathbb C\stackrel{\mathrm{ev}_0}{\leftarrow}\mathbb C\oplus\mathcal F\stackrel{\mathrm{can}_0}{\hookrightarrow}\mathbb C^{]0,\delta[}
    $$
where $\mathrm{ev}_0$ is the projection $\mathbb C\oplus\mathcal F\to\mathbb C$ and $\mathrm{can}_0(\lambda,\phi)=\lambda+\phi$.   
\end{lem}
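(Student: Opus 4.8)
The plan is to realise $\mathbb C\oplus\mathcal F$, equipped with the algebra structure prescribed in the statement, as the subalgebra of degree-$\leq 0$ elements of the algebra $\mathbb C[X]\oplus\mathcal F$ of Lemma \ref{lem:alg:0805}(b), and then to read off the two maps $\mathrm{ev}_0$ and $\mathrm{can}_0$, together with their properties, from the facts already established there for $\mathrm{ev}$ and $\mathrm{can}$.

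First I would observe that the subset $\{(P,\phi)\in\mathbb C[X]\oplus\mathcal F\,:\,\deg P\leq 0\}=\mathbb C\oplus\mathcal F$ is closed under the multiplication of Lemma \ref{lem:alg:0805}(b), since $\deg(PQ)\leq 0$ whenever $\deg P,\deg Q\leq 0$, and it contains the unit $(1,0)$, hence is a subalgebra. Moreover, for a constant $\lambda\in\mathbb C\subset\mathbb C[X]$ one has $\lambda\bullet\psi=\lambda\psi$, so the restricted multiplication reads $(\lambda,\phi)\cdot(\mu,\psi)=(\lambda\mu,\ \lambda\psi+\mu\phi+\phi\psi)$; this is precisely the unitalisation in which $(1,0)$ is the unit and $\{0\}\oplus\mathcal F\cong\mathcal F$ is a non-unital subalgebra, i.e. the algebra structure described in the statement. (This uses that $\mathcal F$ is closed under pointwise products, which is implicit in Lemma \ref{lem:alg:0805}(a)--(b); concretely, $O(t^{1-\alpha})\cdot O(t^{1-\beta})\subseteq\mathcal F$.)

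With this identification in hand, $\mathrm{can}_0$ is the restriction of $\mathrm{can}$, because for a constant polynomial $\lambda$ one has $\mathrm{can}(\lambda,\phi)=(t\mapsto\lambda+\phi(t))=\mathrm{can}_0(\lambda,\phi)$; hence $\mathrm{can}_0$ is an algebra morphism, and it is injective since $\mathrm{can}$ is, by Lemma \ref{lem:alg:0805}(c), which justifies the arrow $\hookrightarrow$. As for $\mathrm{ev}_0\colon(\lambda,\phi)\mapsto\lambda$, its kernel $\{0\}\oplus\mathcal F$ is an ideal of $\mathbb C\oplus\mathcal F$ --- from the multiplication rule, $(0,\phi)\cdot(\mu,\psi)=(0,\phi(\mu+\psi))$ --- with quotient $\mathbb C$, so $\mathrm{ev}_0$ is the associated quotient projection, an algebra morphism with $\mathrm{ev}_0(1,0)=1$. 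I do not expect any genuine obstacle here: the whole content is the bookkeeping that matches the structure prescribed on $\mathbb C\oplus\mathcal F$ with the one already built on $\mathbb C[X]\oplus\mathcal F$, after which each assertion is immediate.
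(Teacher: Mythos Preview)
Your proof is correct. The paper's own proof is simply ``Obvious'', so any careful verification is acceptable; your route through Lemma~\ref{lem:alg:0805} --- identifying $\mathbb C\oplus\mathcal F$ with the degree-$\leq 0$ subalgebra of $\mathbb C[X]\oplus\mathcal F$ and restricting $\mathrm{can}$ and (effectively) $\mathrm{ev}$ --- is a perfectly valid way to fill in the details, though a direct verification from the definitions would be equally short.
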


\begin{proof}
Obvious.
\end{proof}

\begin{rem}
The diagram from Lem. \ref{lem:diag:3006} is the pull-back of the diagram $\mathbb C\leftarrow C([0,\delta[)
\hookrightarrow\mathbb C^{]0,\delta[}$ by the corestriction 
$\mathbb C\oplus\mathcal F\to C([0,\delta[)$ of $\mathrm{can}_0$, 
where $C([0,\delta[)$ is the space of continuous complex functions on  
$[0,\delta[$, where $C([0,\delta[)\to\mathbb C$ is the evaluation at $0$ and where $C([0,\delta[)\hookrightarrow\mathbb C^{]0,\delta[}$
is the natural inclusion. 
\end{rem}

One also uses the following: 
\begin{lem}[see \cite{Pan}, Lem. 3.2.4] \label{lem:see:pa}\label{lem:panzer:0805}
There exists a unique algebra isomorphism $\mathrm{Sh}^*(V)[X]\to\mathrm{Sh}(V)$ given by $a\mapsto a$ for $a\in\mathrm{Sh}^*(V)$ 
and $X\mapsto \left(\begin{smallmatrix}1\\0\end{smallmatrix}\right)$. 
\end{lem}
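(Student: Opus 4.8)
The plan is to exhibit the morphism directly from a universal property and then establish bijectivity via a filtration argument built on the decomposition~\eqref{decomp;shuffle}. Write $e_0:=\left(\begin{smallmatrix}1\\0\end{smallmatrix}\right)$, so that $V=\mathbb C e_0\oplus V_+$. Uniqueness is immediate: $\mathrm{Sh}^*(V)[X]$ is generated as an algebra by the subalgebra $\mathrm{Sh}^*(V)$ together with the element~$X$, so an algebra morphism out of it is determined by its restriction to $\mathrm{Sh}^*(V)$ and its value on~$X$. For existence, one uses that $\mathrm{Sh}(V)$ is commutative for the shuffle product and contains $\mathrm{Sh}^*(V)$ as a subalgebra: by the universal property of the polynomial ring $\mathrm{Sh}^*(V)[X]$ over the commutative ring $\mathrm{Sh}^*(V)$, the inclusion $\mathrm{Sh}^*(V)\hookrightarrow\mathrm{Sh}(V)$ together with the choice $X\mapsto e_0\in\mathrm{Sh}(V)$ extends uniquely to an algebra morphism $\Phi:\mathrm{Sh}^*(V)[X]\to\mathrm{Sh}(V)$. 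It then remains to prove that $\Phi$ is bijective.

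For this I would equip the source with the exhaustive increasing filtration $F_r:=\bigoplus_{0\le j\le r}\mathrm{Sh}^*(V)\,X^{\,j}$ and the target with the exhaustive increasing filtration $G_r:=\bigoplus_{0\le j\le r}[\underbrace{e_0|\cdots|e_0}_{j\text{ factors}}\,|\,\mathrm{Sh}^*(V)]$, the exhaustiveness of $G_\bullet$ being exactly~\eqref{decomp;shuffle}. The crux is the shuffle computation that for every $r\ge 0$ and every $a^*\in\mathrm{Sh}^*(V)$ the element $a^*\shuffle[\underbrace{e_0|\cdots|e_0}_{r}]$ lies in $G_r$ and has image $[\underbrace{e_0|\cdots|e_0}_{r}|a^*]$ in $\mathrm{gr}_r(G_\bullet)=[\underbrace{e_0|\cdots|e_0}_{r}|\mathrm{Sh}^*(V)]$. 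It suffices to treat $a^*=1$ (immediate) and $a^*=[v_1|\cdots|v_k]$ a basis word with $v_1\ne e_0$, where grouping the terms of the shuffle product according to the number $j$ of letters $e_0$ preceding $v_1$ gives
\[
a^*\shuffle[\underbrace{e_0|\cdots|e_0}_{r}]=\sum_{j=0}^{r}\Big[\underbrace{e_0|\cdots|e_0}_{j}\,\Big|\,v_1\,\Big|\,\big([v_2|\cdots|v_k]\shuffle[\underbrace{e_0|\cdots|e_0}_{r-j}]\big)\Big].
\]
Since $v_1\in V_+$, the bracket $[v_1|\,\cdots]$ lies in $[V_+|\mathrm{Sh}(V)]\subset\mathrm{Sh}^*(V)$, so the $j$-th summand lies in $[\underbrace{e_0|\cdots|e_0}_{j}|\mathrm{Sh}^*(V)]$, and the top ($j=r$) summand is $[\underbrace{e_0|\cdots|e_0}_{r}|a^*]$.

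Granting this, since $\Phi(a^*X^r)=a^*\shuffle e_0^{\shuffle r}=r!\,\big(a^*\shuffle[\underbrace{e_0|\cdots|e_0}_{r}]\big)$, one gets $\Phi(F_r)\subseteq G_r$ for all $r$, and the induced map $\mathrm{gr}_r(\Phi):\mathrm{Sh}^*(V)X^r\to[\underbrace{e_0|\cdots|e_0}_{r}|\mathrm{Sh}^*(V)]$ sends $a^*X^r$ to $r!\,[\underbrace{e_0|\cdots|e_0}_{r}|a^*]$. As $a^*\mapsto[\underbrace{e_0|\cdots|e_0}_{r}|a^*]$ is injective (it is the restriction of left concatenation by $e_0^{\otimes r}$) and surjective onto $[\underbrace{e_0|\cdots|e_0}_{r}|\mathrm{Sh}^*(V)]$ by definition, and $r!\ne 0$ in $\mathbb C$, each $\mathrm{gr}_r(\Phi)$ is an isomorphism; hence $\Phi$, being a morphism of exhaustively filtered vector spaces inducing an isomorphism on each associated graded space, is itself an isomorphism. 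The only non-formal ingredient is the displayed shuffle identity, i.e. the combinatorics of interleaving a word with a block of copies of $e_0$, and this is the step I expect to require the most care to write out cleanly.
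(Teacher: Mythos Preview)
Your argument is correct. The paper does not supply its own proof of this lemma; it simply cites Panzer's thesis [Pa, Lem.~3.2.4], so there is no in-paper argument to compare against. Your filtration proof, built on the decomposition~\eqref{decomp;shuffle} together with the displayed shuffle identity (which follows by a one-line induction on~$r$ from the recursion $[u|a]\shuffle[v|b]=[u|(a\shuffle[v|b])]+[v|([u|a]\shuffle b)]$), is a standard and complete self-contained proof of the result.
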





\begin{lemdef}\label{lem:approche:panzer}
(a) For $a\in \mathrm{Sh}^*(V)$ and $z\in\tilde{\mathcal E}_S$, the function $]0,\delta[\ni t\mapsto k_{\mathrm{sect}(t)}(a)(z)$
belongs to $\mathbb C\oplus\mathcal F$; denote by $k_0^\sha(a)(z)\in\mathbb C$ its image by $\mathrm{ev}_0$. Then, for any 
$z\in\tilde{\mathcal E}_S$,
$a\mapsto (t\mapsto k_{\mathrm{sect}(t)}(a)(z))$ is an algebra morphism $\mathrm{Sh}^*(V)\to \mathbb C\oplus\mathcal F$, and 
$a\mapsto k^\sha_0(a)(z)$ is an algebra morphism $\mathrm{Sh}^*(V)\to \mathbb C$. 

(b) For $a\in \mathrm{Sh}^*(V)$, the function $z\mapsto k^\sha_0(a)(z)$ belongs to $\mathcal O_{hol}(\tilde{\mathcal E}_S)$; 
we denote by $k^\sha_0 : \mathrm{Sh}^*(V)\to \mathcal O_{hol}(\tilde{\mathcal E}_S)$ the map taking $a$ to this function. Then $k^\sha_0$
is an algebra morphism.  

(c) There is a unique algebra morphism $k^\sha_0 : \mathrm{Sh}(V)\to\mathcal O_{hol}(\tilde{\mathcal E}_S)$ that extends 
$k^\sha_0 : \mathrm{Sh}^*(V)\to\mathcal O_{hol}(\tilde{\mathcal E}_S)$ and such that 
$\left(\begin{smallmatrix}1\\0\end{smallmatrix}\right)\mapsto G$.
\end{lemdef}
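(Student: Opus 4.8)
The plan is to construct $k^\sha_0$ on $\mathrm{Sh}(V)$ using the algebra isomorphism $\mathrm{Sh}^*(V)[X]\xrightarrow{\sim}\mathrm{Sh}(V)$ of Lemma \ref{lem:panzer:0805} and transporting the desired map across it. Concretely, since the already-constructed morphism $k^\sha_0 : \mathrm{Sh}^*(V)\to\mathcal O_{hol}(\tilde{\mathcal E}_S)$ of part (b) is an algebra morphism, and since $G\in\mathcal O_{hol}(\tilde{\mathcal E}_S)$ by Lemma \ref{lem:4:7:0505}, the universal property of the polynomial ring $\mathrm{Sh}^*(V)[X]$ over the commutative algebra $\mathrm{Sh}^*(V)$ furnishes a unique algebra morphism $\mathrm{Sh}^*(V)[X]\to\mathcal O_{hol}(\tilde{\mathcal E}_S)$ restricting to $k^\sha_0$ on $\mathrm{Sh}^*(V)$ and sending $X\mapsto G$. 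Composing with the inverse of the isomorphism of Lemma \ref{lem:panzer:0805} gives an algebra morphism $k^\sha_0 : \mathrm{Sh}(V)\to\mathcal O_{hol}(\tilde{\mathcal E}_S)$; by construction it restricts to the old $k^\sha_0$ on $\mathrm{Sh}^*(V)$ and sends $\left(\begin{smallmatrix}1\\0\end{smallmatrix}\right)$ (the image of $X$) to $G$.

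For uniqueness, I would argue that any algebra morphism $\mathrm{Sh}(V)\to\mathcal O_{hol}(\tilde{\mathcal E}_S)$ with the two stated properties must, after transport through the isomorphism of Lemma \ref{lem:panzer:0805}, be an algebra morphism $\mathrm{Sh}^*(V)[X]\to\mathcal O_{hol}(\tilde{\mathcal E}_S)$ that agrees with $k^\sha_0$ on the subalgebra $\mathrm{Sh}^*(V)$ and sends $X$ to $G$; since $\mathrm{Sh}^*(V)$ and $X$ generate $\mathrm{Sh}^*(V)[X]$ as an algebra, such a morphism is unique. This part is essentially formal once one has Lemma \ref{lem:panzer:0805} in hand: it is the standard fact that to define an algebra morphism out of $R[X]$ one needs a morphism out of $R$ together with the choice of an image for $X$, and the decomposition \eqref{decomp;shuffle} makes explicit that every element of $\mathrm{Sh}(V)$ is a (shuffle-)polynomial in $\left(\begin{smallmatrix}1\\0\end{smallmatrix}\right)$ with coefficients in $\mathrm{Sh}^*(V)$.

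The only genuine content, and hence the main obstacle, is to make sure the hypotheses needed to invoke Lemma \ref{lem:panzer:0805} and the universal property are all in place — chiefly that $k^\sha_0$ on $\mathrm{Sh}^*(V)$ is indeed a well-defined algebra morphism (which is Lemma-Definition \ref{lem:approche:panzer}(b), already available) and that $G$ is a genuine holomorphic function on $\tilde{\mathcal E}_S$ (Lemma \ref{lem:4:7:0505}). Given those, there is no analytic difficulty left; the argument is purely algebraic. I would therefore write the proof as: invoke Lemma \ref{lem:panzer:0805} to identify $\mathrm{Sh}(V)\cong\mathrm{Sh}^*(V)[X]$; apply the universal property of the polynomial algebra to the pair $(k^\sha_0|_{\mathrm{Sh}^*(V)}, G)$ to get existence; and note that $\mathrm{Sh}^*(V)$ together with $\left(\begin{smallmatrix}1\\0\end{smallmatrix}\right)$ generate $\mathrm{Sh}(V)$, whence uniqueness.
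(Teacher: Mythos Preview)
Your argument for part (c) is correct and is exactly the paper's approach: the paper's proof of (c) reads ``This follows from (b) and from Lem.~\ref{lem:see:pa}'', and your proposal spells this out via the universal property of the polynomial algebra transported through the isomorphism of Lemma~\ref{lem:panzer:0805}.

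However, the statement comprises three parts, and you have only proved (c). You treat (b) as ``already available'', but (a) and (b) are themselves assertions to be proved within this Lemma-Definition, and your proposal says nothing about them. The paper's proof of (a) proceeds by specialising Lemma~\ref{lem:toto:0805}(a) to elements of degree $\leq 0$ (so that $k(a,z)\in\mathbb C\oplus\mathcal F$), then uses that $k_{\mathrm{sect}(t)}(\cdot)(z)$ is an algebra morphism for each fixed $(t,z)$ together with the injectivity of $\mathrm{can}_0$; for (b), it uses the coproduct identity of Lemma~\ref{lem410:0805}(b) together with the observation $\Delta(\mathrm{Sh}^*(V))\subset\mathrm{Sh}^*(V)\otimes\mathrm{Sh}(V)$ to express $k_0^\sha(a)(z)$ as a finite linear combination of the holomorphic functions $k_{\mathrm{sect}(\delta/2)}(a^{(2)})$. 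None of this is difficult, but it is the actual content of (a) and (b); without it your proof of the full statement is incomplete.
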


\begin{proof} (a) The first statement follows from the specialisation of Lem. \ref{lem:toto:0805}(a) for $\mathrm{deg}(a)\leq 0$.
The second statement follows from the fact that for any pair $(z,t)$, the map $a\mapsto k_{\mathrm{sect}(t)}(a)(z)$
is an algebra morphism $\mathrm{Sh}^*(V)\to\mathbb C$, and from the first statement; the last statement follows from the second statement
by composing with $\mathrm{ev}_0$. 

(b) Let $a\in \mathrm{Sh}^*(V)$. Since $\Delta(\mathrm{Sh}^*(V))\subset \mathrm{Sh}^*(V)\otimes \mathrm{Sh}(V)$, 
the equality from Lem. \ref{lem410:0805}(b) takes place in $\mathbb C\oplus\mathcal F$. Applying $\mathrm{ev}_0$ to it, one obtains that for every $z\in\tilde{\mathcal E}_S$
$$
k_0^\sha(a)(z)=k_0^\sha(a^{(1)})(\mathrm{sect}(\delta/2))
 k_{\mathrm{sect}(\delta/2)}(a^{(2)})(z)
$$
(equality in $\mathbb C$). The first part of (b) then follows from the fact that 
for any $a\in\mathrm{Sh}(V)$, the map $z\mapsto k_{\mathrm{sect}(\delta/2)}(a^{(2)})(z)$
belongs to $\mathcal O_{hol}(\tilde{\mathcal E}_S)\subset\mathbb C^{\tilde{\mathcal E}_S}$. The second part of (b) follows from its first part, 
combined with statement that the maps $a\mapsto (z\mapsto k_0^\sha(a))$ is an algebra morphism 
$\mathrm{Sh}^*(V)\to \mathbb C^{\tilde{\mathcal E}_S}$, which follows from (a). 

(c) This follows from (b) and from Lem. \ref{lem:see:pa}. 
\end{proof}

\begin{prop}\label{prop:2:1}\label{lem:identif:approches}
One has $k_{\overrightarrow 0}=k_0^\sha$. 
\end{prop}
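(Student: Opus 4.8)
The plan is to show that the two algebra morphisms $k_{\overrightarrow 0}, k_0^\sha : \mathrm{Sh}(V)\to\mathcal O_{hol}(\tilde{\mathcal E}_S)$ agree by checking they agree on a set of algebra generators of $\mathrm{Sh}(V)$. By Lemma \ref{lem:see:pa}, the algebra $\mathrm{Sh}(V)$ is generated by $\mathrm{Sh}^*(V)$ together with the single element $\left(\begin{smallmatrix}1\\0\end{smallmatrix}\right)$; so it suffices to prove (i) $k_{\overrightarrow 0}$ and $k_0^\sha$ agree on $\mathrm{Sh}^*(V)$, and (ii) both send $\left(\begin{smallmatrix}1\\0\end{smallmatrix}\right)$ to $G$.

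For (ii), the map $k_0^\sha$ sends $\left(\begin{smallmatrix}1\\0\end{smallmatrix}\right)$ to $G$ by construction (Lemma-Def. \ref{lem:approche:panzer}(c)). For $k_{\overrightarrow 0}$, one unwinds the definition: $k(\left(\begin{smallmatrix}1\\0\end{smallmatrix}\right), z)$ is the function $t\mapsto k_{\mathrm{sect}(t)}(\left(\begin{smallmatrix}1\\0\end{smallmatrix}\right))(z) = I_{\mathrm{sect}(t)}([\omega_{1,0}])(z) = \int_{\mathrm{sect}(t)}^z\omega_{1,0}$. By Lemma \ref{lem:4:7:0505} this equals $-\log(t) + (G(z) + \text{element of }\mathcal F)$, viewed as an element of $\mathbb C[X]\oplus\mathcal F$ it is $(-X, \psi)$ with $\mathrm{ev}$-value... here one must be careful: $\mathrm{ev}$ extracts the constant term of the $\mathcal F$-part at $t\to 0$, and the $\log$-term lives in the $\mathbb C[X]$-part. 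So $\mathrm{ev}(k(\left(\begin{smallmatrix}1\\0\end{smallmatrix}\right),z)) = G(z)$, giving $k_{\overrightarrow 0}(\left(\begin{smallmatrix}1\\0\end{smallmatrix}\right)) = G$, as desired.

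For (i), let $a\in\mathrm{Sh}^*(V)$. The specialisation of Lemma \ref{lem:toto:0805}(a) to $\mathrm{deg}(a)\le 0$ tells us that $k(a,z)\in\mathbb C\oplus\mathcal F$ (the polynomial part is constant), and on this subalgebra the two evaluation maps $\mathrm{ev}$ and $\mathrm{ev}_0$ coincide: both extract the value at $t=0$, and for an element $(\lambda,\phi)$ with $\lambda\in\mathbb C\subset\mathbb C[X]$ and $\phi\in\mathcal F$ one has $\mathrm{ev}(\lambda,\phi)=\phi(0)$ while $\mathrm{ev}_0$ returns $\lambda$ — so one must verify that in fact $\phi(0)=\lambda$, i.e. that under the identification $\mathbb C\subset\mathbb C[X]\oplus\mathcal F$ used for $k(a,z)$, the constant term of the function matches. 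Concretely, $k(a,z) = \mathrm{can}(k(a,z))$ is the honest function $t\mapsto k_{\mathrm{sect}(t)}(a)(z)$, which by Lemma-Def. \ref{lem:approche:panzer}(a) lies in $\mathbb C\oplus\mathcal F$ in the sense of $\mathrm{can}_0$; since both $\mathrm{can}$ and $\mathrm{can}_0$ are injective and compatible, the decompositions agree, and $\mathrm{ev}(k(a,z)) = \mathrm{ev}_0(k(a,z)) = k_0^\sha(a)(z)$. Hence $k_{\overrightarrow 0}(a) = k_0^\sha(a)$ for all $a\in\mathrm{Sh}^*(V)$. Combining (i) and (ii) with the fact that both maps are algebra morphisms on the generated algebra $\mathrm{Sh}(V)$ yields $k_{\overrightarrow 0} = k_0^\sha$.

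The main obstacle I anticipate is the bookkeeping in (i): making sure that the element ``$\mathbb C\oplus\mathcal F$'' appearing in the shuffle-regularisation picture (Lemma \ref{lem:diag:3006}) is genuinely the intersection of $\mathbb C[X]\oplus\mathcal F$ with the functions having constant polynomial part, and that $\mathrm{ev}$ restricted to this subalgebra equals $\mathrm{ev}_0$ — this is essentially the statement that the constant term of the $\mathbb C[X]$-part equals the value at $0$ of the $\mathcal F$-part whenever the polynomial degree is $0$, which follows from the uniqueness of the decomposition $\mathrm{can}(P,\phi)=P(\log t)+\phi(t)$ (injectivity of $\mathrm{can}$, Lemma \ref{lem:alg:0805}(c)) but deserves to be spelled out. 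Everything else is a direct consequence of the generation statement in Lemma \ref{lem:see:pa} and the fact, already established, that both $k_{\overrightarrow 0}$ and $k_0^\sha$ are algebra morphisms.
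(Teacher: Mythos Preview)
Your approach is correct and essentially the same as the paper's: reduce via Lemma~\ref{lem:see:pa} to checking agreement on $\mathrm{Sh}^*(V)$ and on $\left(\begin{smallmatrix}1\\0\end{smallmatrix}\right)$, then verify each case directly.

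One point worth untangling: the paper's definition $\mathrm{ev}(P,\phi):=\phi(0)$ in Lemma~\ref{lem:alg:0805}(c) is a typo for $P(0)$ --- note that the paper's own computation here writes the element as $(P,\phi)$ with $P(X)=G(z)-X$ and concludes $\mathrm{ev}=G(z)=P(0)$, and that the injectivity proof for $\mathrm{can}$ only works if elements of $\mathcal F$ tend to $0$, so nonzero constants are \emph{not} in $\mathcal F$. With the corrected $\mathrm{ev}(P,\phi)=P(0)$, your decomposition in~(ii) should read $(G(z)-X,\phi)$ with $\phi\to 0$ rather than $(-X,\psi)$ with $G(z)$ absorbed into $\psi$, and your anticipated ``obstacle'' in~(i) evaporates entirely: for $(\lambda,\phi)\in\mathbb C\oplus\mathcal F\subset\mathbb C[X]\oplus\mathcal F$ one has $\mathrm{ev}(\lambda,\phi)=\lambda=\mathrm{ev}_0(\lambda,\phi)$ on the nose, with no need to argue that $\phi(0)=\lambda$ (which is in fact false, since $\phi(0)=0$). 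Your final justification via the injectivity of $\mathrm{can}$ and $\mathrm{can}_0$ is nonetheless valid and recovers the right conclusion.
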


\begin{proof} One checks that the restrictions of $k_{\overrightarrow 0}$ and $k_0^\sha$ to $\mathrm{Sh}^*(V)$ coincide. 
Let us compute $k_{\overrightarrow 0}(\left(\begin{smallmatrix}1\\0\end{smallmatrix}\right))$. 
For  $z\in\tilde{\mathcal E}_S$, $k(\left(\begin{smallmatrix}1\\0\end{smallmatrix}\right),z)$ 
is the function $]0,\delta[\, \ni t\mapsto k_{\mathrm{sect}(t)}(\left(\begin{smallmatrix}1\\0\end{smallmatrix}\right))
(z)=\int_{\mathrm{sect}(t)}^z \omega_{1,0}$. By Lem.~\ref{lem:4:7:0505}, this function belongs to 
$(t\mapsto-\mathrm{log}(t)+G(z))+\mathcal F=P(\mathrm{log}(t))+\mathcal F$, where 
$P(X)=G(z)-X\in\mathbb C[X]$. It follows that $k_{\overrightarrow 0}(\left(\begin{smallmatrix}1\\0\end{smallmatrix}\right))(z)=G(z)=
k_0^\sha(\left(\begin{smallmatrix}1\\0\end{smallmatrix}\right))(z)$, hence that 
$k_{\overrightarrow 0}(\left(\begin{smallmatrix}1\\0\end{smallmatrix}\right))=
k_0^\sha(\left(\begin{smallmatrix}1\\0\end{smallmatrix}\right))$. By 
 Lem.~\ref{lem:see:pa}, this implies the statement. 
\end{proof}  

\begin{lem}\label{NEWLEMMA:2704}
(a) For any $z_0\in\tilde{\mathcal E}_S$ and any $a\in\mathrm{Sh}(V)$ one has the equality (in $\mathcal O_{hol}(\tilde{\mathcal E}_S)$)
\begin{equation}\label{rel:0905}
k_{\overrightarrow 0}(a)=k_{\overrightarrow 0}(a^{(1)})(z_0)\cdot k_{z_0}(a^{(2)}).
\end{equation}

(b) For any $z_0\in\tilde{\mathcal E}_S$, the images of the morphisms $k_{z_0}$ and $k_{\overrightarrow 0} : \mathrm{Sh}(V)\to\mathcal O_{hol}(\tilde{\mathcal E}_S)$ coincide. 
\end{lem}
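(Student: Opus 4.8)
The plan is to obtain (a) as the extension, from the distinguished base point $\mathrm{sect}(\delta/2)$ to an arbitrary $z_0\in\tilde{\mathcal E}_S$, of the identity proved in Lem.~\ref{lem410:0805}(b), and then to deduce (b) from (a) by inverting that identity in a convolution algebra.

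For (a), I would fix $z_0\in\tilde{\mathcal E}_S$ and $a\in\mathrm{Sh}(V)$ and run verbatim the argument of Lem.~\ref{lem410:0805}(b) with $\mathrm{sect}(\delta/2)$ replaced by $z_0$: since $\kappa_*$ is a coalgebra morphism and the maps $I_\bullet$ satisfy the path-composition identity of \cite{EZ2}, Lem.~2.5, one gets, for every $t\in\,]0,\delta[$ and $z\in\tilde{\mathcal E}_S$,
\[
k(a,z)(t)=k_{\mathrm{sect}(t)}(a)(z)=k_{\mathrm{sect}(t)}(a^{(1)})(z_0)\,k_{z_0}(a^{(2)})(z)=k(a^{(1)},z_0)(t)\cdot k_{z_0}(a^{(2)})(z),
\]
an equality of elements of $\mathbb C^{]0,\delta[}$. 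By Lem.~\ref{lem:toto:0805}(a) both $k(a,z)$ and $k(a^{(1)},z_0)$ lie in $\mathbb C[X]\oplus\mathcal F$, hence so does the right-hand side, being a $\mathbb C$-linear combination of the $k(a^{(1)},z_0)$; by injectivity of $\mathrm{can}$ (Lem.~\ref{lem:alg:0805}(c)) the identity therefore already holds in $\mathbb C[X]\oplus\mathcal F$, and applying the algebra morphism $\mathrm{ev}$ yields $k_{\overrightarrow 0}(a)(z)=k_{\overrightarrow 0}(a^{(1)})(z_0)\,k_{z_0}(a^{(2)})(z)$ for all $z$. Since the left-hand side is a holomorphic function of $z$ by Lem.~\ref{lem410:0805}(c) and the right-hand side a finite $\mathbb C$-linear combination of the functions $k_{z_0}(a^{(2)})$, this is the asserted identity \eqref{rel:0905} in $\mathcal O_{hol}(\tilde{\mathcal E}_S)$.

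For (b), the inclusion $\mathrm{im}(k_{\overrightarrow 0})\subset\mathrm{im}(k_{z_0})$ is read off \eqref{rel:0905}, as $\Delta(a)$ is a finite sum and $\mathrm{im}(k_{z_0})$ is a linear subspace. For the reverse inclusion I would use the Hopf structure of $\mathrm{Sh}(V)$: the character $\chi:=\bigl(a\mapsto k_{\overrightarrow 0}(a)(z_0)\bigr):\mathrm{Sh}(V)\to\mathbb C$ is an algebra morphism (composite of the algebra morphism $k_{\overrightarrow 0}$ of Lem.~\ref{lem410:0805}(d) with evaluation at $z_0$), hence invertible for the convolution product, its inverse being $\chi$ composed with the antipode $\mathcal S$ of $\mathrm{Sh}(V)$. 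Reading \eqref{rel:0905} as $k_{\overrightarrow 0}=\chi*k_{z_0}$ in the convolution algebra $\mathrm{Hom}_{\mathbb C}(\mathrm{Sh}(V),\mathcal O_{hol}(\tilde{\mathcal E}_S))$ (with $\chi$ post-composed with the unit $\mathbb C\hookrightarrow\mathcal O_{hol}(\tilde{\mathcal E}_S)$) and convolving on the left with $\chi\circ\mathcal S$, one obtains $k_{z_0}=(\chi\circ\mathcal S)*k_{\overrightarrow 0}$, so that $k_{z_0}(a)$ is a finite $\mathbb C$-linear combination of the $k_{\overrightarrow 0}(a^{(2)})$; hence $\mathrm{im}(k_{z_0})\subset\mathrm{im}(k_{\overrightarrow 0})$ and the two images agree.

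I do not expect a genuine obstacle: (a) is a routine adaptation of an already-proved statement and (b) is formal. The one point that deserves care is, in (a), to keep track of the fact that the displayed chain of equalities first lives in $\mathbb C^{]0,\delta[}$, and that one may apply $\mathrm{ev}$ only after both sides have been recognised to lie in the subalgebra $\mathbb C[X]\oplus\mathcal F$ on which $\mathrm{ev}$ is defined; this is precisely where Lem.~\ref{lem:toto:0805}(a) and the injectivity of $\mathrm{can}$ enter.
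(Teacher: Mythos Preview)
Your proof is correct and follows essentially the same route as the paper's. For (a), the paper likewise invokes \cite{EZ2}, Lem.~2.5 to obtain the identity at each $t\in\,]0,\delta[$, observes that both sides lie in $\mathbb C[X]\oplus\mathcal F$, and applies $\mathrm{ev}$; for (b), the paper also reads one inclusion directly off \eqref{rel:0905} and obtains the other by writing $k_{z_0}(a)=k_{\overrightarrow 0}(S(a^{(1)}))(z_0)\cdot k_{\overrightarrow 0}(a^{(2)})$ via the antipode, which is exactly your convolution-inversion argument.
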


\begin{proof} (a) Fix $a\in\mathrm{Sh}(V)$ and $z\in \tilde{\mathcal E}_S$. By \cite{EZ2}, Lem. 2.5, one has the identity 
$k_{\mathrm{sect}(t)}(a)(z)=k_{\mathrm{sect}(t)}(a^{(1)})(z_0)\cdot k_{z_0}(a^{(2)})(z)$ for any $t\in\,]0,\delta[$. 
One derives from this the equality (in $\mathbb C^{]0,\delta[}$) $(t\mapsto k_{\mathrm{sect}(t)}(a)(z))=(t\mapsto 
k_{\mathrm{sect}(t)}(a^{(1)})(z_0))
\cdot k_{z_0}(a^{(2)})(z)$. Since both sides actually belong to the subalgebra $\mathbb C[X]\oplus\mathcal F\subset \mathbb C^{]0,\delta[}$, then this may be viewed as an equality in $\mathbb C[X]\oplus\mathcal F$. Applying to both sides the morphism 
$\mathrm{ev} : \mathbb C[X]\oplus\mathcal F\to\mathbb C$, one derives the equality (in $\mathbb C$)
$k_{\overrightarrow 0}(a)(z)=k_{\overrightarrow 0}(a^{(1)})(z_0)\cdot k_{z_0}(a^{(2)})(z)$. 

It follows that, for any $a\in\mathrm{Sh}(V)$, one has $(z\mapsto k_{\overrightarrow 0}(a)(z))=(z\mapsto k_{\overrightarrow 0}(a^{(1)})(z_0))
\cdot k_{z_0}(a^{(2)})(z)$ (equality in $\mathbb C^{\tilde{\mathcal E}_S}$). Both sides actually belong to $\mathcal O_{hol}(\tilde{\mathcal E}_S)$, and 
are respectively equal to the two sides of \eqref{rel:0905}, which proves (a). 

(b) Eq. \eqref{rel:0905} implies the inclusion $k_{\overrightarrow 0}(\mathrm{Sh}(V)) \subset k_{z_0}(\mathrm{Sh}(V))$, and  
also implies the relation $k_{z_0}(a)=k_{\overrightarrow 0}(S(a^{(1)}))(z_0) \cdot k_{\overrightarrow 0}(a^{(2)})$ for any $a \in \mathrm{Sh}(V)$, 
where $S$ is the antipode of $\mathrm{Sh}(V)$, which implies the opposite inclusion $k_{z_0}(\mathrm{Sh}(V)) \subset 
k_{\overrightarrow 0}(\mathrm{Sh}(V))$.  \end{proof}

\subsection{The functions $\tilde\Gamma$}\label{subsect:tildeGamma:0805}

\begin{defn}[see \cite{BDDT}, eq. (4.13)] \label{def:3:2:1004} 
For any $r\geq 1$, any $n_1,\ldots ,n_r\geq 0$ and any $a_1,\ldots ,a_r\in \tilde S$, one defines the function 
$\GLarg{n_1 &n_2 &\ldots &n_r}{a_1 &a_2 &\ldots &a_r}{-}\in \mathcal{O}_{hol}(\tilde{\mathcal E}_S)$ by
\begin{equation*}
  \GLarg{n_1 &n_2 &\ldots &n_r}{a_1 &a_2 &\ldots &a_r}{-}\,:=\,
  k_{\overrightarrow 0}([\left(\begin{smallmatrix}n_1\\a_1\end{smallmatrix}\right)|\ldots|\left(\begin{smallmatrix}n_r\\a_r\end{smallmatrix}\right)]).
\label{defgam}
\end{equation*}
One also sets $\GLarg{}{}{-}:=1$ for $r=0$. 
\end{defn}

It follows from Prop. \ref{prop:2:1} that $\GLarg{1}{0}{-}=G$. 

\begin{lem}[see also \cite{Pan}, Lem. 3.3.4]
For any $t\in\mathrm{Sh}(V$) and $(n,a) \in \mathbb Z_{\geq 0}\times \tilde S$, one has  
$$
d(k_{\overrightarrow 0}([\,t\,| \left(\begin{smallmatrix}n\\a\end{smallmatrix}\right)])=k_{\overrightarrow 0}(t)\cdot \omega_{n,a}
$$
(equality in $\Omega_{hol}(\tilde{\mathcal E}_S)$).  
Equivalent formulation: for any $n_1,\ldots,n_r\geq 0$ and $a_1,\ldots,a_r\in \tilde S$, one has
\begin{equation}\label{DE:1104}
d(\GLarg{n_1 &n_2 &\ldots &n_{r}}{a_1 &a_2 &
\ldots &a_{r}}{-})=T_{a_r}(g_{n_r})\cdot \GLarg{n_1 &n_2 &\ldots &n_{r-1}}{a_1 &a_2 &
\ldots &a_{r-1}}{-}\cdot dz
\end{equation}

\end{lem}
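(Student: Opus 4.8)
The plan is to prove the first (coordinate-free) formulation directly and then unwind it into the displayed formula \eqref{DE:1104}. The key point is that the differential operator ``$d(I_{z_0}([\,\cdot\,|\omega]))= I_{z_0}(\cdot)\cdot\omega$'' is a \emph{local} property: it holds for $I_{z_0}$ on $\mathrm{Sh}(\Omega_{hol}(\tilde{\mathcal E}_S))$ by the defining identity of $I_{z_0}$ recalled in \S\ref{subsect:4:1:0805}, hence for $k_{z_0}=I_{z_0}\circ\kappa_*$ one gets $d(k_{z_0}([\,t\,|\left(\begin{smallmatrix}n\\a\end{smallmatrix}\right)]))=k_{z_0}(t)\cdot\omega_{n,a}$, because $\kappa_*$ is a coalgebra (and in particular deconcatenation-compatible) morphism sending $\left(\begin{smallmatrix}n\\a\end{smallmatrix}\right)$ to $\omega_{n,a}$ and sending $[\,t\,|\left(\begin{smallmatrix}n\\a\end{smallmatrix}\right)]$ to $[\kappa_*(t)\,|\,\omega_{n,a}]$. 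The remaining task is to transport this from $k_{z_0}$ to $k_{\overrightarrow 0}$.

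For that transport I would use the factorisation identity \eqref{rel:0905} of Lemma \ref{NEWLEMMA:2704}(a): for any $z_0\in\tilde{\mathcal E}_S$,
\begin{equation*}
k_{\overrightarrow 0}(a)=k_{\overrightarrow 0}(a^{(1)})(z_0)\cdot k_{z_0}(a^{(2)}),
\end{equation*}
where $\Delta(a)=a^{(1)}\otimes a^{(2)}$. Apply this with $a=[\,t\,|\left(\begin{smallmatrix}n\\a\end{smallmatrix}\right)]$. Since $\Delta$ is the deconcatenation coproduct, $\Delta([\,t\,|\left(\begin{smallmatrix}n\\a\end{smallmatrix}\right)])=\Delta(t)\cdot\big(1\otimes\left(\begin{smallmatrix}n\\a\end{smallmatrix}\right)\big)+(\text{terms ending }[\cdots])\otimes 1$; more precisely, writing $\Delta(t)=t^{(1)}\otimes t^{(2)}$, one has $\Delta([\,t\,|\left(\begin{smallmatrix}n\\a\end{smallmatrix}\right)])=t^{(1)}\otimes[\,t^{(2)}\,|\left(\begin{smallmatrix}n\\a\end{smallmatrix}\right)]+[\,t\,|\left(\begin{smallmatrix}n\\a\end{smallmatrix}\right)]\otimes 1$. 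Plugging this in, and noting that the second summand contributes $k_{\overrightarrow 0}([\,t\,|\left(\begin{smallmatrix}n\\a\end{smallmatrix}\right)])(z_0)\cdot k_{z_0}(1)=k_{\overrightarrow 0}([\,t\,|\left(\begin{smallmatrix}n\\a\end{smallmatrix}\right)])(z_0)$, a \emph{constant} function, whose differential is $0$, we get
\begin{equation*}
d\big(k_{\overrightarrow 0}([\,t\,|\left(\begin{smallmatrix}n\\a\end{smallmatrix}\right)])\big)
= k_{\overrightarrow 0}(t^{(1)})(z_0)\cdot d\big(k_{z_0}([\,t^{(2)}\,|\left(\begin{smallmatrix}n\\a\end{smallmatrix}\right)])\big)
= k_{\overrightarrow 0}(t^{(1)})(z_0)\cdot k_{z_0}(t^{(2)})\cdot\omega_{n,a},
\end{equation*}
using the local identity for $k_{z_0}$ established above. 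By \eqref{rel:0905} again (now applied to $t$ itself), $k_{\overrightarrow 0}(t^{(1)})(z_0)\cdot k_{z_0}(t^{(2)})=k_{\overrightarrow 0}(t)$, which yields exactly $d(k_{\overrightarrow 0}([\,t\,|\left(\begin{smallmatrix}n\\a\end{smallmatrix}\right)]))=k_{\overrightarrow 0}(t)\cdot\omega_{n,a}$.

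Finally I would deduce \eqref{DE:1104} by specialising to $t=[\left(\begin{smallmatrix}n_1\\a_1\end{smallmatrix}\right)|\ldots|\left(\begin{smallmatrix}n_{r-1}\\a_{r-1}\end{smallmatrix}\right)]$ and $(n,a)=(n_r,a_r)$, using Definition \ref{def:3:2:1004} to recognise $k_{\overrightarrow 0}(t)=\GLarg{n_1 &\ldots &n_{r-1}}{a_1 &\ldots &a_{r-1}}{-}$ and $k_{\overrightarrow 0}([\,t\,|\left(\begin{smallmatrix}n_r\\a_r\end{smallmatrix}\right)])=\GLarg{n_1 &\ldots &n_{r}}{a_1 &\ldots &a_{r}}{-}$, together with $\omega_{n_r,a_r}=T_{a_r}(g_{n_r})\cdot dz$. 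The only genuinely delicate point is the handling of the deconcatenation term that must be killed: one has to be careful that the ``$\otimes 1$'' summand really does produce a locally constant function (so that $d$ annihilates it), which is immediate since $k_{z_0}(1)=1$, and to check that $\kappa_*$, being induced by a linear map on generators, is compatible with deconcatenation and with the bar-bracket notation in the way used above — both are formal but should be spelled out. I do not expect any real obstacle here; the argument is essentially a coproduct bookkeeping on top of the already-established Lemma \ref{NEWLEMMA:2704}(a) and the defining property of $I_{z_0}$.
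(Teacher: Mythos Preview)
Your proof is correct and follows essentially the same approach as the paper: both use the factorisation identity \eqref{rel:0905} of Lemma~\ref{NEWLEMMA:2704}(a) to transport the differential identity from $k_{z_0}$ to $k_{\overrightarrow 0}$, combined with the defining property of $I_{z_0}$. The only cosmetic difference is that the paper packages the coproduct bookkeeping via an auxiliary operator $\partial^V:[t|v]\mapsto t\otimes v$ and the compatibility $(id\otimes\partial^V)\circ\Delta=(\Delta\otimes id)\circ\partial^V$, whereas you compute $\Delta([\,t\,|\left(\begin{smallmatrix}n\\a\end{smallmatrix}\right)])$ explicitly; these are the same computation.
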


\begin{proof} The identity 
\begin{equation}\label{toto:2702}
d(k_{\mathrm{sect}(\delta/2)}([\,t\,| \left(\begin{smallmatrix}n\\a\end{smallmatrix}\right)])=k_{\mathrm{sect}(\delta/2)}(t)\cdot \omega_{n,a}
\end{equation} 
for any $t$ and $(n,a)$ as above follows from \cite{EZ2}, eq. (2.1.1).
Let us define the map $\partial^V : \mathrm{Sh}(V)\to\mathrm{Sh}(V)\otimes V$ by setting
$\partial^V(1)=0$ and $\partial^V([t|v]):=t\,\otimes\,v$ for any $t\in\mathrm{Sh}(V)$ and $v\in V$. 
For $t\in\mathrm{Sh}(V)$, denote $\partial^V(t)$ by $t^{[1]} \otimes t^{[2]}$. Then \eqref{toto:2702} implies the identity 
\begin{equation}\label{titi:2702}
d(k_{\mathrm{sect}(\delta/2)}(t))=k_{\mathrm{sect}(\delta/2)}(t^{[1]})\cdot \omega(t^{[2]}),
\end{equation}
where $\omega : V\to \Omega_{hol}(\tilde{\mathcal E}_S)$ 
is the map given by $\left(\begin{smallmatrix}n\\a\end{smallmatrix}\right)\mapsto \omega_{n,a}$. 

Applying Lem. \ref{NEWLEMMA:2704} with $z_0:=\mathrm{sect}(\delta/2)$, one finds that,
for any $t\in\mathrm{Sh}(V)$,
\begin{equation}\label{TOTO:2702}
k_{\overrightarrow 0}(t)=k_{\overrightarrow 0}(t^{(1)})(\mathrm{sect}(\delta/2))\cdot k_{\mathrm{sect}(\delta/2)}(t^{(2)})
\end{equation} 
(equality in $\mathcal O_{hol}(\tilde{\mathcal E}_S)$), 
which gives rise to the first equality in 
\begin{align*}
& d(k_{\overrightarrow 0}(t))=k_{\overrightarrow 0}(t^{(1)})(\mathrm{sect}(\delta/2))\cdot d(k_{\mathrm{sect}(\delta/2)}(t^{(2)}))
\\
&=k_{\overrightarrow 0}(t^{(1)})(\mathrm{sect}(\delta/2))\cdot k_{\mathrm{sect}(\delta/2)}((t^{(2)})^{[1]}) \cdot\omega((t^{(2)})^{[2]})
\\ & =k_{\overrightarrow 0}((t^{[1]})^{(1)})(\mathrm{sect}(\delta/2))\cdot k_{\mathrm{sect}(\delta/2)}((t^{[1]})^{(2)}) \cdot\omega(t^{[2]})
=k_{\overrightarrow 0}(t^{[1]}) \cdot \omega(t^{[2]}), 
\end{align*}
where the second equality follows from \eqref{titi:2702}, the third equality follows from 
$(id\otimes\partial^V)\circ\Delta=(\Delta\otimes id) \circ \partial^V$, and the fourth equality 
follows from \eqref{TOTO:2702}. One derives $d(k_{\overrightarrow 0}(t))=k_{\overrightarrow 0}(t^{[1]}) \cdot \omega(t^{[2]})$, 
from which the claimed identity follows. 
\end{proof}

\section{Expression of a basis of the algebra $A_{\mathcal E_S}$ in terms of the functions $\tilde\Gamma$}
\label{sect:5:2606}

Recall the notation $O_S:=\mathcal{O}(\mathcal{E}_S)$ from \S\ref{NEWSECTION}. The purpose of this section is to show that the collection of functions $\tilde\Gamma$ 
gives rise to a basis of $A_{\mathcal E_S}$ over $O_S$ (Thm. \ref{THM:MAIN}), alternative to the family of 
elliptic HLs (see §\ref{NEWSECTION}). For this, we introduce an algebra $\mathcal G$ 
generated by the functions~$\tilde\Gamma$ and show its inclusion in $A_{\mathcal E_S}$ (§\ref{sect:5:1:2606}). In 
§§\ref{sect:diff:alg:1:2606} and \ref{sect:diff:alg:2:2606}, we carry out a precise analysis of differential algebras 
related to $\mathcal E_S$, which enables us in §\ref{sect:4:6:2704} to prove the equality $\mathcal G=A_{\mathcal E_S}$ 
(see Thm. \ref{THMEQ}). 
In §\ref{sect:4:5:1903}, we prove a linear independence result for the functions $\tilde\Gamma$ (see Prop. \ref{thm:main:bis}), 
based on the criterion of \cite{DDMS}, thereby proving Thm. \ref{THM:MAIN}. In §\ref{TODO}, 
we give examples of the relations implied by Thm. \ref{THM:MAIN} between the functions $\tilde\Gamma$ and the elliptic HLs from \S\ref{NEWSECTION}.


\subsection{The algebra $\mathcal G$ and its inclusion in $A_{\mathcal E_S}$}\label{sect:5:1:2606}

\begin{defn}\label{defn:mathcalG:2704}
$\mathcal{G}$ is the subalgebra of $\mathcal O_{hol}(\tilde{\mathcal E}_S)$ generated by $O_S[g_1]$ and by the functions 
$\GLarg{n_1 &n_2 &\ldots &n_r}{a_1 &a_2 &\ldots &a_r}{-}$, where $r\geq 0$, $n_1,\ldots ,n_r\geq 0$ and 
$a_1,\ldots ,a_r\in \tilde S$
(see Def. \ref{def:3:2:1004}), namely 
$$
\mathcal{G}:=O_S[g_1]\Big[\GLarg{n_1 &n_2 &\ldots &n_r}{a_1 &a_2 &\ldots &a_r}{-}\,|\,r\geq 0,\, n_1,\ldots ,n_r\geq 0,\, a_1,\ldots ,a_r\in \tilde S\Big]\,.
$$ 
\end{defn}
In this definition, the subalgebra $O_S[g_1]$ of $\mathcal O_{hol}(\mathbb C\smallsetminus pr^{-1}(S))$ 
is viewed as a subalgebra of $\mathcal O_{hol}(\tilde{\mathcal E}_S)$
via the morphism $\varpi^* : \mathcal O_{hol}(\mathbb C\smallsetminus pr^{-1}(S))\to\mathcal O_{hol}(\tilde{\mathcal E}_S)$. 

Recall now the filtration $F_\bullet^\delta\mathcal{O}_{hol}(\tilde{\mathcal E}_S)$ of the algebra 
$\mathcal{O}_{hol}(\tilde{\mathcal E}_S)$ defined in \S \ref{sect:2:2:2606}. 

\begin{lem}\label{lem:8:1704}
For any $n\geq 0$ and $\tilde s\in \tilde S$, one has $T_{\tilde s}g_n\in F_n^\delta\mathcal{O}_{hol}(\tilde{\mathcal E}_S)$.  
\end{lem}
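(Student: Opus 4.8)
The statement to prove is that $T_{\tilde s}g_n \in F_n^\delta \mathcal{O}_{hol}(\tilde{\mathcal E}_S)$ for all $n\geq 0$ and $\tilde s\in\tilde S$. The plan is to argue by induction on $n$, using the inductive characterisation $F^\delta_{n+1}\mathcal{O}_{hol}(\tilde{\mathcal E}_S)=\{f\mid df\in F^\delta_n\mathcal{O}_{hol}(\tilde{\mathcal E}_S)\cdot p^*\Omega(\mathcal E_S)\}$ from \S\ref{sect:2:2:2606}. The base case $n=0$: we have $g_0=1\in\mathbb C=F^\delta_0$, and $T_{\tilde s}(1)=1$, so this is immediate. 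For the base case $n=1$ I would note that $T_{\tilde s}g_1 = T_{\tilde s}E_1$ has derivative $(T_{\tilde s}g_1)' = -T_{\tilde s}E_2$ by \eqref{diffpropE1}, and $T_{\tilde s}E_2\cdot dz$ is a regular differential on $\mathcal E_S$ (indeed $E_2$ is $\Lambda$-invariant and has poles only on $\Lambda$, so $T_{\tilde s}E_2$ has poles only on $\tilde s+\Lambda\subset pr^{-1}(S)$); hence $d(T_{\tilde s}g_1)\in\mathbb C\cdot p^*\Omega(\mathcal E_S)=F^\delta_0\cdot p^*\Omega(\mathcal E_S)$, so $T_{\tilde s}g_1\in F^\delta_1$.

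For the inductive step, suppose $T_{\tilde s}g_k\in F^\delta_k\mathcal{O}_{hol}(\tilde{\mathcal E}_S)$ for all $k\leq n$ and all $\tilde s\in\tilde S$ (actually it suffices to fix $\tilde s$). Apply $T_{\tilde s}$ to the differential equation \eqref{diffpropgn}, which gives
\begin{equation*}
(T_{\tilde s}g_{n+1})' = \sum_{k=0}^{n}(-1)^{n+1-k}\,(T_{\tilde s}E_{n-k+2})\,(T_{\tilde s}g_k),
\end{equation*}
using that $T_{\tilde s}$ is an algebra automorphism commuting with $\partial$. I would then observe that each $T_{\tilde s}E_{n-k+2}\cdot dz$ lies in $\Omega(\mathcal E_S)$: the function $E_{n-k+2}$ is $\Lambda$-invariant with poles contained in $\Lambda$ (as $n-k+2\geq 2$), so $T_{\tilde s}E_{n-k+2}$ is a regular function on $\mathcal E_S$ after passing to the quotient, hence $T_{\tilde s}E_{n-k+2}\cdot dz\in\Omega(\mathcal E_S)$; pulling back along $p$, $p^*(T_{\tilde s}E_{n-k+2}\cdot dz)$ is a summand-type element of $p^*\Omega(\mathcal E_S)$. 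Combined with $T_{\tilde s}g_k\in F^\delta_k\subset F^\delta_n$ from the inductive hypothesis, this shows $d(T_{\tilde s}g_{n+1})\in F^\delta_n\mathcal{O}_{hol}(\tilde{\mathcal E}_S)\cdot p^*\Omega(\mathcal E_S)$, whence $T_{\tilde s}g_{n+1}\in F^\delta_{n+1}\mathcal{O}_{hol}(\tilde{\mathcal E}_S)$, completing the induction.

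The only genuinely non-routine point, which I would state carefully, is that for $r\geq 2$ the shifted Eisenstein function $T_{\tilde s}E_r$ descends to a \emph{regular} function on $\mathcal E_S=\mathcal E\smallsetminus S$, i.e. $T_{\tilde s}E_r\in O_S$ so that $T_{\tilde s}E_r\cdot dz\in\Omega(\mathcal E_S)$: this uses the $\Lambda$-invariance of $E_r$ (so the shift is well-defined on $\mathcal E$), that its poles after shifting lie in $\tilde s+\Lambda\subset pr^{-1}(S)$ (hence project into $S$), and the identification of $\mathcal{O}_{hol}(\mathbb C\smallsetminus pr^{-1}(S))^\Lambda$-type objects with functions on $\mathcal E_S$. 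One must also be slightly attentive to the case $r=2$ where $E_2$ is defined by Eisenstein summation and $\wp=E_2-e_2$; since $e_2$ is a constant this does not affect the pole structure or the $\Lambda$-invariance. Everything else is a direct application of the recursion \eqref{diffpropgn}, the commutation of $T_{\tilde s}$ with $\partial$, and the definition of the filtration; I expect no real obstacle beyond bookkeeping.
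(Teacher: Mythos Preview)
Your proof is correct and follows essentially the same approach as the paper's: induction on $n$ using the differential relation \eqref{diffpropgn}, together with the observation that $T_{\tilde s}E_r\in O_S$ (hence $T_{\tilde s}E_r\cdot dz\in\Omega(\mathcal E_S)$) for $r\geq 2$. The only cosmetic differences are that the paper indexes the inductive step at $n$ rather than $n+1$, omits the separate $n=1$ case (which is absorbed into the general step), and is slightly terser about why the shifted Eisenstein functions descend to regular functions on $\mathcal E_S$.
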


\begin{proof}
Let $\tilde s\in\tilde S$. It follows from \eqref{diffpropgn} that the collection of functions $(T_{\tilde s}g_n)_{n\geq 0}$ satisfies 
the relation $d(T_{\tilde s}g_n)=\sum_{k=0}^{n-1}(-1)^{n-k}\,T_{\tilde s}g_k\cdot T_{\tilde s}(E_{n-k+1})\cdot dz$ for any $n\geq 1$. 
Let us prove the statement by induction on $n\geq 0$. For $n=0$, one has $T_{\tilde s}g_0=1\in 
F_0^\delta\mathcal{O}_{hol}(\tilde{\mathcal E}_S)$. Assume that $T_{\tilde s}g_k\in F_k^\delta\mathcal{O}_{hol}(\tilde{\mathcal E}_S)$ 
for $k<n$. Since $T_{\tilde s}E_{l}\in\mathcal O(\mathcal E_S)$ for $l\geq 2$, one has $T_{\tilde s}(E_l)\cdot 
dz \in \Omega(\mathcal E_S)$ for these values of $l$, hence $d(T_{\tilde s}g_n)=\sum_{k=0}^{n-1}(-1)^{n-k}T_{\tilde s}g_k
\cdot T_{\tilde s}(E_{n-k+1})\cdot dz\in F_{n-1}^\delta\mathcal{O}_{hol}(\tilde{\mathcal E}_S)\cdot
p^*\Omega(\mathcal E_S)$, and therefore $T_{\tilde s}g_n\in F_n^\delta\mathcal{O}_{hol}(\tilde{\mathcal E}_S)$. 
\end{proof}

\begin{prop}\label{Prop230125}
For any $r\geq 0$, any $n_1,\ldots ,n_r\geq 0$ and any $a_1,\ldots ,a_r\in \tilde S$, the function $\GLarg{n_1 &n_2 &\ldots &n_r}{a_1 &a_2 &\ldots &a_r}{-}$ belongs to $F^\delta_{r+n_1+\cdots +n_r}\mathcal{O}_{hol}(\tilde{\mathcal E}_S)$.
\end{prop}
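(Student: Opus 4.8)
The plan is to prove the statement by induction on the weight $r$, using the differential equation \eqref{DE:1104} together with the inductive characterization of the differential filtration $F^\delta_\bullet\mathcal O_{hol}(\tilde{\mathcal E}_S)$. The base case $r=0$ is the assertion $\GLarg{}{}{-}=1\in\mathbb C=F^\delta_0\mathcal O_{hol}(\tilde{\mathcal E}_S)$, which is immediate. For the inductive step, suppose the claim holds for all weight-$(r-1)$ functions $\tilde\Gamma$; we must show $\GLarg{n_1 & \ldots & n_r}{a_1 & \ldots & a_r}{-}$ lies in $F^\delta_{r+n_1+\cdots+n_r}\mathcal O_{hol}(\tilde{\mathcal E}_S)$. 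By the defining property of the filtration, this amounts to checking that the differential $d\big(\GLarg{n_1 & \ldots & n_r}{a_1 & \ldots & a_r}{-}\big)$ lies in $F^\delta_{r-1+n_1+\cdots+n_r}\mathcal O_{hol}(\tilde{\mathcal E}_S)\cdot p^*\Omega(\mathcal E_S)$.

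Now by \eqref{DE:1104} we have
$$
d\big(\GLarg{n_1 & \ldots & n_r}{a_1 & \ldots & a_r}{-}\big)
= T_{a_r}(g_{n_r})\cdot\GLarg{n_1 & \ldots & n_{r-1}}{a_1 & \ldots & a_{r-1}}{-}\cdot dz.
$$
The key point is to write $T_{a_r}(g_{n_r})\cdot dz$ as a product of an element of $F^\delta_{n_r}\mathcal O_{hol}(\tilde{\mathcal E}_S)$ with an element of $p^*\Omega(\mathcal E_S)$, so that combined with the inductive hypothesis on $\GLarg{n_1 & \ldots & n_{r-1}}{a_1 & \ldots & a_{r-1}}{-}\in F^\delta_{(r-1)+n_1+\cdots+n_{r-1}}$ and the fact that $F^\delta_\bullet$ is an algebra filtration (Thm.~\ref{thm:2:3:2606}(a)), the right-hand side lies in $F^\delta_{n_r}\cdot F^\delta_{(r-1)+n_1+\cdots+n_{r-1}}\cdot p^*\Omega(\mathcal E_S)\subset F^\delta_{(r-1)+n_1+\cdots+n_r}\cdot p^*\Omega(\mathcal E_S)$, which is exactly what is needed. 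This is where the case split between $n_r=0$ and $n_r\geq 1$ will enter, and it is the main technical obstacle: for $n_r\geq 2$ the function $T_{a_r}(g_{n_r})$ is regular on $\mathbb C\smallsetminus pr^{-1}(S)$ but a priori one only knows $g_{n_r}$ is regular at $0$ and $\Lambda$-periodic up to lower-weight corrections, so $T_{a_r}(g_{n_r})$ itself need not descend to $\mathcal E_S$; yet Lem.~\ref{lem:8:1704} tells us precisely that $T_{a_r}(g_{n_r})\in F^\delta_{n_r}\mathcal O_{hol}(\tilde{\mathcal E}_S)$, and since $dz$ descends to $\mathcal E_S$ (it is $\Lambda$-invariant and holomorphic, hence $dz = p^*(\text{a regular differential on }\mathcal E_S)$, as $\alpha_\star = dz\in\Omega(\mathcal E_S)$ in the notation of \S\ref{NEWSECTION}), we may write $T_{a_r}(g_{n_r})\cdot dz$ as the product of $T_{a_r}(g_{n_r})\in F^\delta_{n_r}$ with $p^*(\alpha_\star)\in F^\delta_1\cdot(\text{hmm})$.

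Actually the cleaner formulation avoids even mentioning $dz$ separately: since $F^\delta_\bullet$ is a \emph{filtered algebra}, it suffices to observe that $d\big(\GLarg{n_1 & \ldots & n_r}{a_1 & \ldots & a_r}{-}\big) = d(G_{r-1})\cdot h$ would not quite work; instead I will argue that $\GLarg{n_1 & \ldots & n_r}{a_1 & \ldots & a_r}{-}$ is obtained from the weight-$(r-1)$ function by applying $\mathrm{int}_{\omega_{n_r,a_r}}$ after expanding $\omega_{n_r,a_r}$ in the basis $(\alpha_s)_{s\in S_\star}$ of $\Omega(\mathcal E_S)/d(O_S)$ up to an exact form. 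Concretely, $\omega_{n_r,a_r}=T_{a_r}(g_{n_r})\cdot dz$ descends to a form in $\Omega(\mathcal E_S)$ when $n_r\geq 1$ by Lem.~\ref{lem:8:1704} (its primitive $T_{a_r}(g_{n_r})$ has differential in $F^\delta_{n_r-1}\cdot p^*\Omega(\mathcal E_S)$), while for $n_r=0$ one has $\omega_{0,a_r}=dz=\alpha_\star\in\Omega(\mathcal E_S)$ directly since $g_0=1$; in either case, together with the inductive hypothesis and the algebra-filtration property, we conclude. I expect the only genuine subtlety to be the careful bookkeeping of weights — ensuring the "$+1$" per integration and the "$+n_r$" per entry $\binom{n_r}{a_r}$ add up correctly under the multiplicativity $F^\delta_p\cdot F^\delta_q\subset F^\delta_{p+q}$ — and double-checking that $T_{a_r}(g_{n_r})\cdot dz$ really does land inside $F^\delta_{n_r}\mathcal O_{hol}(\tilde{\mathcal E}_S)\cdot p^*\Omega(\mathcal E_S)$, which I would extract from Lem.~\ref{lem:8:1704} by noting $F^\delta_{n_r}\mathcal O_{hol}(\tilde{\mathcal E}_S)\ni T_{a_r}(g_{n_r}) = \mathrm{int}_{\alpha_\star}(T_{a_r}(g_{n_r}))' $-type manipulation, or more simply from $d(T_{a_r}(g_{n_r}))\in F^\delta_{n_r-1}\cdot p^*\Omega(\mathcal E_S)$ so that $T_{a_r}(g_{n_r})\cdot p^*\alpha_\star \in F^\delta_{n_r}\cdot p^*\Omega(\mathcal E_S)$ after absorbing one unit of filtration. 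The details are routine once the inductive frame and Lem.~\ref{lem:8:1704} are in place.
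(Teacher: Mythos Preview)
Your proof is correct and follows exactly the same approach as the paper: induction on $r$, with the inductive step combining the differential equation \eqref{DE:1104}, Lem.~\ref{lem:8:1704} for $T_{a_r}(g_{n_r})\in F^\delta_{n_r}$, the fact that $dz\in p^*\Omega(\mathcal E_S)$, and the algebra-filtration property of $F^\delta_\bullet$. The case split on $n_r$ and the detour through expanding $\omega_{n_r,a_r}$ in the basis $(\alpha_s)$ are unnecessary --- Lem.~\ref{lem:8:1704} already handles all $n_r\geq 0$ uniformly, and the paper's proof is accordingly a clean three lines.
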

\begin{proof}
Let us prove the statement by induction on $r$. If $r=0$ the statement is clear. If $r>0$, assume the statement to be true until $r-1$, 
so that $\GLarg{n_1 &n_2 &\ldots &n_{r-1}}{a_1 &a_2 &\ldots &a_{r-1}}{-}$ belongs to 
$F^\delta_{r-1+n_1+\cdots +n_{r-1}}\mathcal{O}_{hol}(\tilde{\mathcal E}_S)$.
By Lem. \ref{lem:8:1704}(b), one has $T_{a_r}g_{n_r}\in F_{n_r}^\delta\mathcal{O}_{hol}(\tilde{\mathcal E}_S)$. The facts that 
$F_\bullet^\delta\mathcal{O}_{hol}(\tilde{\mathcal E}_S)$ is an algebra filtration and that $dz\in p^*\Omega(\mathcal E_S)$ then 
imply that the right-hand side of eq. \eqref{DE:1104} belongs to 
$F^\delta_{r-1+n_1+\cdots +n_r}\mathcal{O}_{hol}(\tilde{\mathcal E}_S)\cdot p^*\Omega(\mathcal E_S)$, which implies that 
$\GLarg{n_1 &n_2 &\ldots &n_r}{a_1 &a_2 &\ldots &a_r}{-}$ belongs to $F^\delta_{r+n_1+\cdots +n_r}\mathcal{O}_{hol}(\tilde{\mathcal E}_S)$. 
This concludes the proof by induction.
\end{proof}

\begin{cor}\label{cor:1704}
    One has $\mathcal G\subset A_{\mathcal E_S}$.  
\end{cor}

\begin{proof}
One has $O_S=\mathcal O(\mathcal E_S)\subset F^\delta_1\mathcal O(\tilde{\mathcal E}_S)\subset F^\delta_\infty
\mathcal O(\tilde{\mathcal E}_S)$. Moreover, Lem. \ref{lem:8:1704} with $\tilde s=0$ and $n=1$ implies 
$g_1\in F^\delta_1\mathcal O(\tilde{\mathcal E}_S) \subset F^\delta_\infty\mathcal O(\mathcal E_S)$. Together with 
Prop. \ref{Prop230125} and the fact that $F^\delta_\infty\mathcal O(\mathcal E_S)$ is an algebra, this implies that 
$\mathcal G\subset F^\delta_\infty\mathcal O(\mathcal E_S)$. 
By \cite{EZ2}, Thm.~C, one has $F^\delta_\infty\mathcal O(\mathcal E_S)=A_{\mathcal E_S}$, which implies the result. 
\end{proof}

Prop. \ref{Prop230125}, together with the functional characterisation of the algebra 
$F_\infty^\delta \mathcal{O}_{hol}(\tilde{\mathcal E}_S)$ 
in Thm.~C from~\cite{EZ2}, implies that the functions $\GLarg{n_1 &n_2 &\ldots &n_r}{a_1 &a_2 &\ldots &a_r}{-}$ have moderate growth 
at the cusps in the sense explained in \cite{EZ2}, \S3.1, 
and unipotent monodromy in the sense of the introduction. 
 

\subsection{The differential algebra $\mathcal O(\mathcal E_0)[g_1]$}\label{sect:diff:alg:1:2606}

Let us set $\mathcal E_0:=\mathcal E_{\{pr(0)\}}$ $(=\mathcal E\smallsetminus\{pr(0)\})$, 
and $O:=O_{\{pr(0)\}}$ $(=\mathcal O(\mathcal E_0))$. Then $O$ is  
the algebra of regular functions on $\mathcal E_0$, which coincides with the subalgebra of $\Lambda$-invariant 
functions in $\mathcal O_{mer}(\mathbb C,\Lambda)$. The derivation 
$\partial=\partial/\partial z$ of $\mathcal O_{mer}(\mathbb C,\Lambda)$ restricts to a derivation of $O$.  

It is well-known that a linear $\mathbb C$-basis of $O$ is $\{1,\wp^{(k)}\,|\,k\geq0\}$. It follows from 
\eqref{relations:wp:Er} that an alternative linear $\mathbb C$-basis of $O$ is 
$\{E_n\,|\,n\in\mathbb Z_{\geq0}\smallsetminus\{1\}\}$, where we set $E_0:=1$. 
Let us define an algebra filtration on $O$ by the order of poles at $pr(0)$. One then
has $F_nO:=\sum_{k\in[\![0,n]\!]\smallsetminus\{1\}}\mathbb CE_k$. The associated graded algebra $\mathrm{gr}(O)$ is isomorphic to the 
subalgebra $\mathbb C\oplus Y^2\mathbb C[Y]$ of the polynomial algebra $\mathbb C[Y]$, where $Y$ has degree~1; 
more precisely, one has $\mathrm{gr}_n(O)\simeq \mathbb C\cdot Y^n$ for any $n\in\mathbb Z_{\geq0}\smallsetminus\{1\}$, 
with $\overline E_n\simeq Y^n$. 

Let us equip $\mathbb C[X]$ with the degree filtration, given by  $F_n\mathbb C[X]=\oplus_{k\in[\![0,n]\!]}\mathbb CX^k$ for any $n\geq0$; then
$\mathrm{gr}(\mathbb C[X])\simeq\mathbb C[X]$, where $X$ has degree 1. 

Moreover, let us equip the tensor product algebra $O\otimes\mathbb C[X]$ with the tensor product filtration, so that
$F_n(O\otimes\mathbb C[X])=\sum_{k=0}^n F_kO\otimes F_{n-k}\mathbb C[X]$. 
The associated graded algebra $\mathrm{gr}(O\otimes\mathbb C[X])$ is then isomorphic to $\mathrm{gr}(O)\otimes\mathrm{gr}(\mathbb C[X])$, 
which is isomorphic to the graded subalgebra $\mathbb C[X]\oplus Y^2\mathbb C[X,Y]$ of $\mathbb C[X,Y]$, where $X,Y$ have degree 1. 

Recall that $g_1=E_1\in \mathcal O_{mer}(\mathbb C,\Lambda)$, and $g_1'=-E_2$. Let $O[g_1]$
be the subalgebra of $\mathcal O_{mer}(\mathbb C,\Lambda)$ generated by $O$ and $g_1$. 
It follows from $g_1'=-E_2$ that the subalgebra  $O[g_1]$ of $\mathcal O_{mer}(\mathbb C,\Lambda)$
is stable under $\partial$. 

\begin{lem}\label{NEWLEM}
    One has $O[g_1] \cap \mathcal O_{hol}(\mathbb C)=\mathbb C$. 
\end{lem}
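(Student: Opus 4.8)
The plan is to use the algebra filtration by pole order at $pr(0)$, passing to the associated graded algebra to reduce the question to a statement about polynomial algebras. First I would observe that $g_1 = E_1$ has a simple pole at $pr(0)$, so it is natural to extend the filtration $F_\bullet O$ to a filtration of $O[g_1]$ by declaring $g_1 \in F_1(O[g_1]) \smallsetminus F_0(O[g_1])$; concretely, $F_n(O[g_1]) := \sum_{j+k \leq n} \mathbb{C}\, E_j \, g_1^k$ with $j \in \mathbb{Z}_{\geq 0}\smallsetminus\{1\}$ and $k \geq 0$. One needs to check this is an algebra filtration, which follows from $E_j E_{j'} \in F_{j+j'}O$ (already implicit in the identification of $\mathrm{gr}(O)$ above) and the fact that multiplying by $g_1$ raises the pole order by exactly one.

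The key step is to identify the associated graded algebra $\mathrm{gr}(O[g_1])$. I would claim the natural surjection from $\mathrm{gr}(O)[Z] \cong (\mathbb{C} \oplus Y^2\mathbb{C}[Y])[Z]$ onto $\mathrm{gr}(O[g_1])$, sending $Z$ to $\overline{g_1}$, is an \emph{isomorphism}; equivalently, the monomials $E_j \, g_1^k$ (with $j\neq 1$, $k\geq 0$) are linearly independent in $O[g_1]$, which one sees because they have pairwise distinct pole orders within each graded piece after noting that $g_1^k$ contributes a pole of order exactly $k$ with a controllable leading term, and the $E_j$ span distinct orders. Granting this, $\mathrm{gr}(O[g_1]) \cong \mathbb{C}[Z] \oplus Y^2\mathbb{C}[Y,Z]$ as a graded subalgebra of $\mathbb{C}[Y,Z]$, with both $Y$ and $Z$ in degree $1$.

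Now suppose $f \in O[g_1] \cap \mathcal{O}_{hol}(\mathbb{C})$ is nonconstant; I want a contradiction. Since $f$ is holomorphic on all of $\mathbb{C}$, it has no pole at $pr(0)$, which is the statement that $f \in F_0(O[g_1]) = \mathbb{C}$ — but wait, that would be immediate and too cheap, so the subtlety is that $f$ being holomorphic at $pr(0)$ is a statement about $f$ as an element of $\mathcal{O}_{mer}(\mathbb{C},\Lambda)$, whereas a general element of $F_n(O[g_1])$ can fail to have a genuine pole of order $n$ at $pr(0)$ due to cancellation. So the real argument is: take $f \in O[g_1]$, write $f \in F_n(O[g_1]) \smallsetminus F_{n-1}(O[g_1])$ with $n \geq 1$ minimal such; then $\overline{f} \in \mathrm{gr}_n(O[g_1])$ is a nonzero homogeneous polynomial of degree $n$ in $Y, Z$, lying in $\mathbb{C}[Z] \oplus Y^2\mathbb{C}[Y,Z]$. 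The coefficient of the pure power $Y^n$ in $\overline f$ is exactly the coefficient of the order-$n$ pole of $f$ at $pr(0)$ coming from the $O$-part; but $f$ being holomorphic forces this, and in fact \emph{all} pole-contributions, to vanish. The clean way: if $f \in O[g_1]$ is holomorphic on $\mathbb{C}$, then so is $\partial f = f'$, and $f' \in O[g_1]$ as well since $O[g_1]$ is $\partial$-stable; iterating, all derivatives stay in $O[g_1] \cap \mathcal{O}_{hol}(\mathbb{C})$. Meanwhile $f$ is $\Lambda$-periodic in the $E_j$-directions but $g_1$ is only quasi-periodic (recall $T_\tau E_1 = E_1 + 2\pi\mathrm{i}$ from \eqref{ellpropE1}); a holomorphic $\Lambda$-quasi-periodic function on $\mathbb{C}$ with at worst polynomial behaviour is constrained, and one pins down $f$ using the pole-order filtration to conclude $f \in \mathbb{C}$.

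\textbf{Main obstacle.} The crux is establishing that the $E_j\, g_1^k$ are linearly independent, i.e.\ that $\mathrm{gr}(O[g_1])$ is as large as the polynomial ring suggests rather than collapsing; this amounts to showing $g_1$ is transcendental over $O$, which should follow from a Laurent-expansion argument at $pr(0)$ comparing leading pole terms, or alternatively from the quasi-periodicity \eqref{ellpropE1} distinguishing powers of $g_1$. Once the graded structure is pinned down, the conclusion that a globally holomorphic element must sit in $F_0 = \mathbb{C}$ is a matter of checking that a nonzero element of $\mathrm{gr}_n$ for $n \geq 1$ always forces a genuine singularity — here one uses that the leading $Y$-power term cannot be cancelled by $Z$-contributions since $g_1$ and its powers, while singular at $pr(0)$, have singular parts that are linearly independent from those of the $E_j$ in the appropriate graded sense.
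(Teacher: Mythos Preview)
Your approach has a genuine gap. The claim that ``a nonzero element of $\mathrm{gr}_n(O[g_1])$ for $n\geq 1$ always forces a genuine singularity'' is false if ``singularity'' means a pole at $0$: consider $g_1^2 - E_2$. Since $g_1 = E_1$ is odd, its Laurent expansion at $0$ is $1/z + O(z)$, so $g_1^2 = 1/z^2 + O(1)$ and hence $g_1^2 - E_2$ is holomorphic at $0$. Yet $g_1^2 - E_2 \in F_2 \smallsetminus F_1$: if it equalled $c + d\,g_1$ for constants $c,d$, applying $T_\tau - id$ would force $g_1$ to be constant. So a nonzero class in $\mathrm{gr}_2$ need not produce a pole at $0$, and your Laurent-expansion argument at $0$ cannot conclude. (The function $g_1^2 - E_2$ \emph{does} have a simple pole at $\tau$, so it is not a counterexample to the lemma itself; the point is that your filtration argument, which only tracks the pole at $0$, does not detect this.) More generally, the leading $1/z^n$-coefficients of the monomials $E_j g_1^k$ with $j+k=n$ and $j\neq 1$ are all equal to $1$, so holomorphy at $0$ gives only one linear relation per degree---far too little to force all coefficients to vanish.

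The paper's proof bypasses the pole-order filtration entirely for this lemma and uses precisely the quasi-periodicity you mention but do not deploy. Writing $f = \sum_{i=0}^d a_i g_1^i$ with $a_i \in O$, the $\Lambda$-periodicity of the $a_i$ together with $T_\tau g_1 = g_1 + 2\pi\mathrm{i}$ gives
\[
(T_\tau - id)(f) \;=\; \sum_{j=0}^{d-1} \Big(\sum_{i>j} \tbinom{i}{j}(2\pi\mathrm{i})^{i-j} a_i\Big) g_1^j,
\]
which is again entire and of degree $\leq d-1$ in $g_1$. Induction on $d$ then yields $a_2 = \cdots = a_d = 0$ and $a_1 \in \mathbb C$. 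Finally, if $a_1 \neq 0$ then $a_0 \in O$ would need a simple pole at $0$ to cancel that of $a_1 g_1$, which is impossible for an element of $O$; hence $a_1 = 0$ and $a_0 \in O \cap \mathcal O_{hol}(\mathbb C) = \mathbb C$. The operator $T_\tau - id$ is the key mechanism, and it is what turns your vague remark about quasi-periodicity into an actual proof.
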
  

\begin{proof} Let us prove inductively on $d\geq 0$ that if $a_0,\ldots,a_d\in O$ and $\sum_i a_ig_1^i\in \mathcal O_{hol}(\mathbb C)$, then 
$a_0 \in \mathbb C$ and $a_1=\ldots=a_d=0$. If $d=0$, then $a_0 \in O \cap\mathcal O_{hol}(\mathbb C)=\mathbb C$, which proves the statement for $d=0$. 
Assume the statement at step $d-1$ and let us prove it at step $d$. Let $a_0,\ldots,a_d \in O$ be such that $f:=\sum_i a_ig_1^i \in \mathcal O_{hol}(\mathbb C)$. 
Then 
\begin{align*}
\sum_{j=0}^{d-1}g_1^j\bigg(\sum_{i>j}a_i\binom{i}{j}(2\pi\mathrm i)^{i-j}\bigg)&=\sum_{(i,j)|j<i}a_i{i \choose j}g_1^j(2\pi\mathrm i)^{i-j}=\sum_i a_i\big((g_1+2\pi\mathrm i)^i-g_1^i\big)\\
&=(T_\tau-id)(f)\in \mathcal O_{hol}(\mathbb C),
\end{align*}
which by the induction assumption implies that $\sum_{i>0}a_i(2\pi\mathrm i)^{i} \in \mathbb C$ and for any $j=1,\ldots,d-1$,  
$\sum_{i>j}a_i {i \choose j}(2\pi\mathrm i)^{i-j}=0$. This implies $a_2=\ldots=a_d=0$ and $a_1 \in \mathbb C$. Then 
$a_0+a_1g_1 \in \mathcal O_{hol}(\mathbb C)$. Assume $a_1 \neq 0$. Since $g_1$ has a simple pole at $0$ with residue $1$, and since $a_0+a_1g_1 \in \mathcal O_{hol}(\mathbb C)$, 
$a_0$ has a 
simple pole at $0$ with residue $-a_1$, which contradicts $a_0 \in O$. Therefore $a_1=0$. Then $a_0\in\mathcal O_{hol}(\mathbb C)\cap O=\mathbb C$, which
implies the statement at step $d$.
\end{proof} 

\begin{lem}\label{lem:iso}
    The algebra morphism $O_S\otimes\mathbb C[X]\to O_S[g_1]$ induced by $f\otimes1\mapsto f$ for $f\in O_S$ and 
    $1\otimes X\mapsto g_1$ is an isomorphism. 
\end{lem}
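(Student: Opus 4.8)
The plan is to observe that $O_S\otimes\mathbb C[X]$ is nothing but the polynomial algebra $O_S[X]$, and that the stated map is the evaluation homomorphism sending $X$ to $g_1$ and fixing $O_S$. It is well defined by the universal property of polynomial algebras, and it is surjective by construction, since $O_S[g_1]$ is by definition the subalgebra of $\mathcal O_{mer}(\mathbb C,pr^{-1}(S))$ generated by $O_S$ and $g_1$. Hence the entire content of the statement is the injectivity of this map, i.e.\ the transcendence of $g_1$ over $O_S$: no nonzero $P\in O_S[X]$ satisfies $P(g_1)=0$.

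To prove this I would exploit the mismatch in the behaviour under the translation automorphism $T_\tau$ of $\mathcal O_{mer}(\mathbb C,pr^{-1}(S))$: the elements of $O_S$ are $\Lambda$-periodic, hence fixed by $T_\tau$, whereas $g_1=E_1$ satisfies $T_\tau g_1=g_1+2\pi\mathrm i$ by \eqref{ellpropE1}. Suppose, for contradiction, that there is a nonzero relation $\sum_{i=0}^d a_i g_1^i=0$ with $a_i\in O_S$ and $a_d\neq0$, chosen with $d$ minimal; necessarily $d\geq1$, as $d=0$ would force $a_0=0$. Applying $T_\tau$ and using $T_\tau a_i=a_i$ gives $\sum_{i=0}^d a_i(g_1+2\pi\mathrm i)^i=0$; subtracting the original relation yields
\[
\sum_{j=0}^{d-1}\Big(\sum_{i=j+1}^{d}\binom{i}{j}(2\pi\mathrm i)^{i-j}a_i\Big)\,g_1^j=0,
\]
which is a polynomial relation for $g_1$ over $O_S$ of degree $\leq d-1$ whose coefficient of $g_1^{d-1}$ equals $d\cdot(2\pi\mathrm i)\cdot a_d$; this is nonzero, since $O_S$ is an integral domain and the base field has characteristic $0$. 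This contradicts the minimality of $d$ (for $d=1$ the relation simply reads $2\pi\mathrm i\,a_1=0$, again a contradiction). Therefore $g_1$ is transcendental over $O_S$, so the evaluation map has trivial kernel and is an isomorphism.

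This is the precise analogue — for the honest equality $\sum_i a_i g_1^i=0$ rather than for membership in $\mathcal O_{hol}(\mathbb C)$ — of the $(T_\tau-\mathrm{id})$ step already used in the proof of Lemma~\ref{NEWLEM}. I do not anticipate a genuine obstacle: the only points requiring care are that the coefficients $a_i$ are \emph{elliptic} functions, so that $T_\tau$ fixes them and the quasi-periodicity of $g_1$ can be leveraged, and that the minimal-degree (equivalently, the induction on $d$) bookkeeping is set up correctly.
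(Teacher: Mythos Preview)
Your proof is correct and follows essentially the same approach as the paper: both exploit that $T_\tau$ fixes $O_S$ while $T_\tau g_1=g_1+2\pi\mathrm i$. The only cosmetic difference is that the paper applies $(T_\tau-\mathrm{id})^d$ in one stroke (using $(T_\tau-\mathrm{id})^d(g_1^i)=\delta_{i,d}(2\pi\mathrm i)^d d!$) to obtain $f_d=0$ directly, whereas you apply $T_\tau-\mathrm{id}$ once and invoke minimality of $d$; these are equivalent packagings of the same argument.
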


\begin{proof}
Let $P$ belong to the kernel of this morphism and let $d$ be its degree as a polynomial in~$X$. If $P\neq 0$, then there exist 
$f_0,\ldots,f_d\in O_S$ with $f_d\neq 0$, such that $P=\sum_{i=0}^d f_i \otimes X^i$. Then 
$\sum_{i=0}^d f_ig_1^i=0$. Applying $(T_\tau-id)^d$ to this equality, using the invariance of $f_i$ under $T_\tau$ and the relation 
$(T_\tau-id)^d(g_1^i)=\delta_{i,d}(2\pi\mathrm{i})^dd!$ for $i\in[\![0,d]\!]$, one obtains $f_d=0$, a contradiction.  
\end{proof}

Let us equip $O[g_1]$ with the image $F_\bullet(O[g_1])$ of the algebra filtration $F_\bullet(O\otimes\mathbb C[X])$ by the 
algebra isomorphism of Lem. \ref{lem:iso}, specialized to $S=\{0\}$. One then has 
\begin{equation}\label{titi:0204}
\forall n\geq 0,\quad F_n(O[g_1])=\sum_{k=0}^n (F_kO)\cdot g_1^{n-k}, \quad \mathrm{gr}_n(O[g_1])\simeq \mathbb C\cdot X^n\oplus Y^2\cdot \mathbb C[X,Y]_{n-2},  
\end{equation}
where the notation $\mathbb C[X,Y]_{n-2}$ indicates the subspace of $\mathbb C[X,Y]$ of polynomials which are homogeneous of degree $n-2$.

\begin{lem}\label{lem:1719}
  If $A$ is an algebra equipped with a filtration $F_\bullet A$ and if 
$(a_n)_{n\geq 1}$ is a sequence such that $a_n\in F_nA$ for $n\geq 1$, then: 

(a) the sequence $(b_n)_{n\geq 1}$
of elements of $A$ defined by $1+\sum_{n\geq 1}b_n\alpha^n=\mathrm{exp}(\sum_{n\geq 1}a_n\alpha^n)$ (equality in $A[[\alpha]])$) 
is such that $b_n\in F_nA$ for any $n\geq 1$;

(b) one has $1+\sum_{n\geq 1}\overline b_n=
\mathrm{exp}(\sum_{n\geq 1}\overline a_n)$ (equality in $\hat\oplus_{n\geq 0}\mathrm{gr}_n(A)$).   
\end{lem}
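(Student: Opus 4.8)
The plan is to extract the coefficient of $\alpha^n$ from the exponential series defining the $b_n$ and then to invoke twice the hypothesis that $F_\bullet A$ is an algebra filtration, namely that $F_pA\cdot F_qA\subset F_{p+q}A$ for all $p,q$.

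First I would observe that, since $\sum_{m\geq 1}a_m\alpha^m$ has vanishing constant term, the series $\exp(\sum_{m\geq 1}a_m\alpha^m)=\sum_{k\geq 0}\tfrac1{k!}\big(\sum_{m\geq 1}a_m\alpha^m\big)^k$ is a well-defined element of $A[[\alpha]]$ — only finitely many values of $k$ contribute to a fixed power of $\alpha$ — and that extracting the coefficient of $\alpha^n$ produces the explicit formula
\[
b_n=\sum_{k\geq 1}\frac1{k!}\sum_{\substack{n_1,\dots,n_k\geq 1\\ n_1+\cdots+n_k=n}}a_{n_1}\cdots a_{n_k}.
\]
For part (a), in each monomial appearing on the right one has $a_{n_i}\in F_{n_i}A$, so $a_{n_1}\cdots a_{n_k}\in F_{n_1}A\cdots F_{n_k}A\subset F_{n_1+\cdots+n_k}A=F_nA$; since $F_nA$ is a linear subspace, $b_n\in F_nA$. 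This in particular makes the classes $\overline{b_n}\in\mathrm{gr}_n(A)$ appearing in part (b) well-defined.

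For part (b), I would apply the linear projection $F_nA\to\mathrm{gr}_n(A)$, $x\mapsto\overline x$, to the displayed formula. The key input is that, because $F_\bullet A$ is an algebra filtration, the multiplication of $\mathrm{gr}(A)$ is induced from that of $A$ and therefore satisfies $\overline{a_{n_1}\cdots a_{n_k}}=\overline{a_{n_1}}\cdots\overline{a_{n_k}}$ for a product of elements lying in the corresponding filtration steps. Hence
\[
\overline{b_n}=\sum_{k\geq 1}\frac1{k!}\sum_{\substack{n_1,\dots,n_k\geq 1\\ n_1+\cdots+n_k=n}}\overline{a_{n_1}}\cdots\overline{a_{n_k}},
\]
and the right-hand side is precisely the degree-$n$ component of $\exp(\sum_{m\geq 1}\overline{a_m})$, the exponential being well-defined in $\hat\oplus_{n\geq 0}\mathrm{gr}_n(A)$ because $\sum_{m\geq 1}\overline{a_m}$ has no component in degree $0$. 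Summing over $n\geq 1$ and noting that the degree-$0$ part is $1$ on both sides yields the asserted equality.

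I do not expect a genuine obstacle: the argument is purely formal. The only points meriting a little care are the finiteness bookkeeping that makes the two exponential manipulations legitimate (each fixed degree receives contributions from only finitely many $k$), and the explicit appeal to the construction of the product on $\mathrm{gr}(A)$ — valid exactly because $F_\bullet A$ is an algebra filtration — in order to commute the projection $x\mapsto\overline x$ past the products $a_{n_1}\cdots a_{n_k}$. Note also that part (a) must be carried out before part (b), since it is what guarantees that $\overline{b_n}$ is defined.
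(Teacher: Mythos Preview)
Your proof is correct and follows essentially the same approach as the paper: both write the explicit formula $b_n=\sum_{k\geq 1}\tfrac{1}{k!}\sum_{n_1+\cdots+n_k=n}a_{n_1}\cdots a_{n_k}$, deduce (a) from the algebra filtration property, and then project to $\mathrm{gr}(A)$ to obtain (b). Your version simply makes explicit some points the paper leaves implicit (well-definedness of the exponentials, the compatibility of the projection with products, and the logical dependence of (b) on (a)).
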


The combination of (b) with the specialisation of Lem. \ref{NEWLEM} to $S=\{0\}$ enables one to prove a 
strengthening of (b), namely the image of the family $(g_n)_{n\geq 0}$ is a $\mathbb C$-basis of the cokernel $O[g_1]/\partial(O[g_1])$. 

\begin{proof}
One has for any $n\geq 1$ the identity 
$$
b_n=\sum_{k\geq 1}\frac{1}{k!}\sum_{\substack{n_1+\cdots+n_k=n,\\(n_1,\ldots,n_k)\in\mathbb Z_{\geq1}^k}}
a_{n_1}\cdots a_{n_k},
$$
which together with the algebra filtration properties implies (a). This identity implies that, for any $n\geq 1$,
$$
\overline b_n=\sum_{k\geq 1}\frac{1}{k!}\sum_{\substack{n_1+\cdots+n_k=n,\\(n_1,\ldots,n_k)\in\mathbb Z_{\geq1}^k}}
\overline a_{n_1}\cdots \overline a_{n_k},
$$
where $\overline a_i,\overline b_i$ are the classes of $a_i,b_i$ in $\mathrm{gr}_i(A)$, 
from which one derives (b). 
\end{proof}

\begin{lem}\label{lem:5:8:0702}
(a) For every $n\geq 0$ one has $g_n\in F_n(O[g_1])$. Moreover, the degree~$n$ polynomial 
$(X-Y)^{n-1}(X+(n-1)Y)/n!$ (by convention equal to $1$ when $n=0$) 
belongs to $\mathbb C[X]\oplus Y^2\mathbb C[X,Y]$ and is equal to the image of $\overline g_n\in\mathrm{gr}_n(O[g_1])$ 
by the isomorphism in \eqref{titi:0204}. 

(b) The family $(g_n)_{n\geq0}$ of elements of $O[g_1]$ is $\mathbb C$-linearly independent. 

(c) 
There is a  direct sum decomposition
\begin{equation}\label{titi:0504}
 \mathrm{Span}_{\mathbb C}\{g_n\,|\,n\geq 0\}\oplus\partial(O[g_1])
=O[g_1];    
\end{equation}
in particular, $\{g_n\,|\,n\geq 0\}$ is a $\mathbb C$-basis of the cokernel of the endomorphism $\partial$ of $O[g_1]$. 
\end{lem}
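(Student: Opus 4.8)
\textbf{Proof proposal for Lemma \ref{lem:5:8:0702}.}

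The plan is to prove the three parts essentially in the stated order, with part (a) providing the technical computation that drives everything else. For part (a), I would apply Lemma \ref{lem:1719} to the filtered algebra $O[g_1]$ equipped with the filtration $F_\bullet(O[g_1])$ defined in \eqref{titi:0204}. The generating series \eqref{defgn} expresses $\sum_{n\geq 0} g_n\alpha^{n-1}$ as $\alpha^{-1}\exp(-\sum_{r\geq 1}\frac{(-\alpha)^r}{r}(E_r-e_r))$. I would set $a_r:=-\frac{(-1)^r}{r}(E_r-e_r)$ for $r\geq 1$; since $E_1-e_1=g_1\in F_1$ and $E_r-e_r\in F_r O\subset F_r(O[g_1])$ for $r\geq 2$ (recall $E_r\in F_rO$ with $\overline E_r=Y^r$ for $r\neq 1$, and $E_1=g_1$), we have $a_r\in F_r(O[g_1])$. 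Then Lemma \ref{lem:1719}(a) gives $g_n\in F_n(O[g_1])$ for all $n\geq 0$, and Lemma \ref{lem:1719}(b) computes $\overline g_n$ in $\mathrm{gr}(O[g_1])$: under the isomorphism \eqref{titi:0204}, $\overline a_1\mapsto \frac{1}{1}X$ (from $g_1\mapsto X$, careful with sign: $a_1 = E_1-e_1$, but $e_1=0$ by Eisenstein summation conventions, so $a_1=g_1$, giving $\overline a_1 = X$... actually let me recompute) and $\overline a_r\mapsto -\frac{(-1)^r}{r}Y^r$ for $r\geq 2$. Summing the exponential series, $1+\sum_{n\geq 1}\overline g_n = \exp(X - \sum_{r\geq 2}\frac{(-1)^r}{r}Y^r) = \exp(X)\cdot\exp(\sum_{r\geq 1}\frac{(-Y)^r}{r} + Y) = e^{X}e^{Y}(1+Y)^{-1}\cdot$(correction terms). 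Matching the closed form: the claim is that $\overline g_n$ corresponds to $(X-Y)^{n-1}(X+(n-1)Y)/n!$, so I would verify the generating-series identity $\sum_{n\geq 0}\frac{(X-Y)^{n-1}(X+(n-1)Y)}{n!}\alpha^{n-1} = \frac{1}{\alpha}e^{\alpha X}\frac{\alpha(X-Y)+\text{something}}{\cdots}$; concretely one checks $\sum_{n\geq 0}\frac{(X-Y)^{n-1}(X+(n-1)Y)}{n!}t^n = e^{(X-Y)t}(1+Yt)$ (an elementary identity, differentiate $\frac{d}{dt}[e^{(X-Y)t}(1+Yt)] = e^{(X-Y)t}((X-Y)(1+Yt)+Y)$ and compare), and this matches $t\cdot\frac{1}{t}\exp(-\sum_{r\geq 1}\frac{(-t)^r}{r}(\overline E_r - \overline e_r))$ once one substitutes $\overline E_1 \leftrightarrow$ the class of $g_1$. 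Also one must check that $(X-Y)^{n-1}(X+(n-1)Y)/n!$ indeed lies in the subalgebra $\mathbb C[X]\oplus Y^2\mathbb C[X,Y]$: expanding, the coefficient of $Y^1$ (degree-$(n-1)$ part in $X$) is $\frac{1}{n!}[(n-1)(-1)X^{n-2}\cdot(X) + X^{n-1}(n-1)] = \frac{(n-1)}{n!}(-X^{n-1}+X^{n-1}) = 0$, confirming the $Y$-linear term vanishes.

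For part (b), the $\mathbb C$-linear independence of $(g_n)_{n\geq 0}$ in $O[g_1]$ follows immediately from (a): the associated-graded classes $\overline g_n$ lie in distinct graded pieces $\mathrm{gr}_n(O[g_1])$ and are each nonzero (the polynomial $(X-Y)^{n-1}(X+(n-1)Y)/n!$ is a nonzero element of degree $n$), so any nontrivial linear relation among the $g_n$ would, upon passing to the highest-degree term, yield a nonzero element of some $\mathrm{gr}_n$ equal to zero — a contradiction. This is the standard lift-from-graded argument.

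For part (c), I need the direct sum decomposition $\mathrm{Span}_{\mathbb C}\{g_n\mid n\geq 0\}\oplus\partial(O[g_1]) = O[g_1]$. For the sum being direct, I would use the hint given in the text: combine part (b) with the specialisation of Lemma \ref{NEWLEM} to $S=\{0\}$, i.e. $O[g_1]\cap\mathcal O_{hol}(\mathbb C)=\mathbb C$. If $\sum_n c_n g_n = \partial(h)$ for some $h\in O[g_1]$ with not all $c_n$ zero, I would integrate: since $g_n = \partial(\text{primitive})$ is problematic only for $g_1$ (whose primitive involves $\log$ or rather $g_1$ itself has $g_1=E_1$ with $E_1'=-E_2$, so $g_1$ is itself a derivative $\partial(\cdot)$? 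No — $g_1=E_1$ is not in $\partial(O[g_1])$ since its primitive is not meromorphic), so the obstruction is concentrated on $g_0=1$ and $g_1$. Actually the cleanest route: by \eqref{diffpropgn}, $g_n' = \sum_{k=0}^{n-1}(-1)^{n-k}E_{n-k+1}g_k$, so $\partial$ maps $F_n(O[g_1])$ into $F_{n+1}(O[g_1])$ and one can analyze $\partial$ on associated graded. On $\mathrm{gr}(O[g_1])\cong\mathbb C[X]\oplus Y^2\mathbb C[X,Y]$, the induced map $\mathrm{gr}(\partial)$ sends $\overline{g_n}\mapsto$ (from $g_n'$, leading term) $(-1)\cdot$(something with an extra $Y$)... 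The image of $\mathrm{gr}(\partial)$ together with $\mathrm{Span}\{\overline g_n\}$ should exhaust $\mathrm{gr}(O[g_1])$ by a dimension count in each degree: $\dim\mathrm{gr}_n(O[g_1]) = 1 + (n-1) = n$ for $n\geq 2$ (one $X^n$, and $n-1$ monomials $Y^2X^{n-2},\ldots,Y^n$), $\mathrm{Span}\{\overline g_n\}$ contributes $1$, and $\mathrm{gr}(\partial)(\mathrm{gr}_{n-1})$ should contribute $n-1$. I would verify $\mathrm{gr}(\partial)$ is injective on the complement and its image avoids $\overline g_n$. Then lifting back via the standard filtered-to-graded argument (if $\overline x$ decomposes, subtract and induct on filtration degree) gives the decomposition over all of $O[g_1]$.

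\textbf{Main obstacle.} The delicate point is part (c): establishing that the sum is direct \emph{and} that it spans, simultaneously over the whole filtered algebra rather than just on associated graded. The spanning is relatively soft (induction on pole order using \eqref{diffpropgn} to kill leading terms), but proving directness requires the input of Lemma \ref{NEWLEM} in an essential way — one must rule out that a nontrivial combination $\sum c_n g_n$ (which has poles at $0$, coming from $g_1$, $g_2$, etc.) could equal $\partial(h)$; the point is that $\partial(h)$ for $h\in O[g_1]$ always has vanishing residue at $0$ (being a derivative of a meromorphic function) and a primitive in $O[g_1]$, whereas integrating $\sum_{n\geq 2}c_n g_n$ stays in $O[g_1]$ but integrating $c_0 + c_1 g_1$ introduces a $c_0 z + c_1 \int g_1$ term not lying in $O[g_1]$ unless $c_0=c_1=0$, after which $c_1=0$ forces (via residues) consistency, and induction on the remaining $g_n$ ($n\geq 2$) via their graded classes finishes it. Threading this residue/primitive argument cleanly through all degrees is the part that needs care.
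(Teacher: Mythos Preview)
Your approach to parts (a) and (b) is correct and matches the paper's proof. For (a), the paper likewise applies Lemma~\ref{lem:1719} to the filtered algebra $O[g_1]$, obtaining
\[
1+\sum_{n\geq 1}\overline g_n \;=\; \exp\Big(X-\sum_{r\geq 2}\tfrac{(-1)^r}{r}Y^r\Big)\;=\;e^{X-Y}(1+Y),
\]
from which the stated formula for $\overline g_n$ follows directly. Your generating-series check $\sum_n \tfrac{(X-Y)^{n-1}(X+(n-1)Y)}{n!}t^n=e^{(X-Y)t}(1+Yt)$ is the same identity. For (b), both you and the paper use the highest-degree-term argument.

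Part (c), however, contains a genuine gap. Your directness argument assumes that ``integrating $\sum_{n\geq 2}c_n g_n$ stays in $O[g_1]$,'' but this is false --- indeed, the whole content of (c) is precisely that no nonzero combination of the $g_n$ lies in $\partial(O[g_1])$, so you cannot assume $g_n$ (for any $n$) has a primitive in $O[g_1]$. The residue argument correctly isolates $c_1=0$, but that is as far as it goes; neither $c_0$ nor the higher $c_n$ are handled this way. (The hint in the text mentioning Lemma~\ref{NEWLEM} is a bit of a red herring: the paper's actual proof does not use it.)

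The paper instead carries out the filtered/graded argument you gesture at, but much more precisely. It identifies the associated graded of $\partial$ explicitly as the degree-$1$ derivation $Y^2(\partial_Y-\partial_X)$ on $\mathbb C[X]\oplus Y^2\mathbb C[X,Y]$, then shows that
\[
Y^2(\partial_Y-\partial_X)\,:\,\mathbb C X^k\oplus Y^2\mathbb C[X,Y]_{k-2}\;\longrightarrow\;Y^2\mathbb C[X,Y]_{k-1}
\]
is a linear isomorphism for each $k\geq 1$: the kernel of $Y^2(\partial_Y-\partial_X)$ on all of $\mathbb C[X,Y]$ is $\mathbb C[X+Y]$, which meets the source trivially (since $(X+Y)^k$ has nonzero $YX^{k-1}$ term), giving injectivity; equality of dimensions (both are $k$) gives surjectivity. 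Since the image is exactly $Y^2\mathbb C[X,Y]_{k-1}$ and $\overline{g_{k+1}}$ has a nonzero $X^{k+1}$ component, $\overline{g_{k+1}}$ lies outside the image of $\mathrm{gr}_k(\partial)$. Two parallel inductions on $k$ then yield
\[
F_{k+1}(O[g_1])=\partial(F_k(O[g_1]))\oplus\mathrm{Span}_{\mathbb C}(1,g_1,\ldots,g_{k+1}),
\]
and taking the union over $k$ gives both spanning and directness simultaneously. The missing ingredient in your sketch is this explicit identification of $\mathrm{gr}(\partial)$ and the kernel computation $\ker=\mathbb C[X+Y]$.
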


\begin{proof}
Eq. \eqref{defgn} implies the identity
$$
1+\sum_{n\geq 1}g_n\,\alpha^{n}=\exp\Big((1\otimes X)\,\alpha-\sum_{r\geq2}\frac{(-1)^{r}}{r}\,((E_r-e_r)\otimes 1)\,\alpha^r\Big), 
$$
(equality in $(O\otimes \mathbb C[X])[[\alpha]]$) where for $n\geq1$, $g_n$ is identified with its image in $O\otimes \mathbb C[X]$. 
The first statement in (a) follows from this equality, combined with Lem. \ref{lem:1719}(a) and 
the relations $X\in F_1\mathbb C[X]$ and $E_r-e_r\in F_rO$ for any $r\geq2$, which imply
$1\otimes X\in F_1(O\otimes\mathbb C[X])$ and $(E_r-e_r)\otimes1\in F_r(O\otimes\mathbb C[X])$ for $r\geq2$. 
The second statement in (a) follows from Lem. \ref{lem:1719}(b), which implies that
$$
1+\sum_{n\geq 1}\overline g_n=\exp\Big(\overline{1\otimes X}-\sum_{r\geq2}\frac{(-1)^{r}}{r}\,\overline{(E_r-e_r)\otimes 1}\Big)
=\exp\Big(X-\sum_{r\geq2}\frac{(-1)^{r}}{r}Y^r\Big)=e^{X-Y}(1+Y),
$$
(equality in the subalgebra $\mathbb C[[X]]\oplus Y^2\mathbb C[[X,Y]]$ of $\mathbb C[[X,Y]]$), 
which in turn implies that $\overline g_n=(X-Y)^n/n!+Y(X-Y)^{n-1}/(n-1)!=(X-Y)^{n-1}(X+(n-1)Y)/n!$. 

(b) If $\sum_n\lambda_ng_n=0$ is a nontrivial linear dependence relation, let $i:=\mathrm{max}\{i|\lambda_i\neq0\}$. Then 
$0=\sum_n\lambda_ng_n\in F_i(O[g_1])$, and the image of this element in $\mathrm{gr}_i(O[g_1])$ is $\lambda_i\overline{g_i}$, 
which is nonzero, this yielding a contradiction.  

(c) Let $D$ be the endomorphism of $O\otimes\mathbb C[X]$ defined by $D(f\otimes P):=\partial f\otimes P-fE_2\otimes P'$. Then $D$
is a derivation of $O\otimes\mathbb C[X]$, which is interwined with $\partial$ under the specialization of the isomorphism 
from Lem. \ref{lem:iso} to $S=\{0\}$. 

The derivation $\partial$ of $O$ is of filtration degree 1, i.e. $\partial(F_kO)\subset F_{k+1}O$ for any $k\geq 0$. 
The associated graded endomorphism $\mathrm{gr}(\partial)$ is then a degree 1 derivation of $\mathbb C\oplus Y^2\mathbb C[Y]$. 
The image of the derivation $Y^2\partial/\partial Y$ of $\mathbb C[Y]$ is contained in $Y^2\mathbb C[Y]$, therefore $Y^2\partial/\partial Y$
induces a derivation of $\mathbb C\oplus Y^2\mathbb C[Y]$ of degree 1, which can be seen to coincide with $\mathrm{gr}(\partial)$.  
One deduces from this that the derivation $\partial \otimes \mathrm{id}$ of $O \otimes \mathbb C[X]$ is of filtration degree 1, 
and that its associated graded is the derivation $Y^2 \partial/\partial Y$ of 
$\mathbb C[X] \oplus Y^2 \mathbb C[X,Y]$. 

The endomorphism $f\otimes P\mapsto -E_2f\otimes P'$ is a derivation of $O\otimes\mathbb C[X]$, which is also of filtration degree 1, as 
$E_2$ has filtration degree 2 and $P\mapsto P'$ has filtration degree $-1$. The associated graded of the endomorphism  
$f\mapsto E_2f$ of $O$ (resp. $P\mapsto P'$ of $\mathbb C[X]$) is the endomorphism $f\mapsto Y^2f$ of 
$\mathbb C \oplus Y^2\mathbb C[Y]$ (resp. $\partial/\partial X$ of $\mathbb C[X]$), therefore
the associated graded of $f\otimes P\mapsto -E_2f\otimes P'$ is the degree 1 derivation of 
$\mathbb C[X]\oplus Y^2\mathbb C[X,Y]$ given by $-Y^2\partial/\partial X$. 

It follows that $D$ has filtration degree 1, and that its associated graded is the derivation of $\mathbb C[X] \oplus Y^2\mathbb C[X,Y]$
induced by the derivation $Y^2({\partial\over\partial Y}-{\partial\over\partial X})$ of $\mathbb C[X,Y]$. 

We now prove by induction on $k\geq 0$ the equality of subspaces of $O[g_1]$
\begin{equation}\label{toto:1035:0204}
F_{k+1}(O[g_1])=\partial(F_k(O[g_1]))+\mathrm{Span}_{\mathbb C}(1,g_1,\ldots,g_{k+1}).
\end{equation} 

One has $F_0(O[g_1])=\mathbb C$ and $F_1(O[g_1])=\mathrm{Span}_{\mathbb C}(1,g_1)$, which implies \eqref{toto:1035:0204} for $k=0$. 
Assume $k>0$ and $F_{k}(O[g_1])=\partial(F_{k-1}(O[g_1]))+\mathrm{Span}_{\mathbb C}(1,g_1,\ldots,g_{k})$, and let us show \eqref{toto:1035:0204}.
The inclusion of the right-hand side in the left-hand side is obvious, so one has to show that this inclusion is an equality. 

Recall that the space $\partial(F_k(O[g_1]))$ is a subspace of $F_{k+1}(O[g_1])$. It follows from the commutativity of 
$$
\xymatrix{
F_k(O[g_1])\ar^{\partial}[r]\ar[d]&\ar[d]F_{k+1}(O[g_1])\\\mathrm{gr}_k(O[g_1])\ar_{\mathrm{gr}_k(\partial)}[r]&\mathrm{gr}_{k+1}(O[g_1])
}
$$
and from the surjectivity of $F_k(O[g_1])\to\mathrm{gr}_k(O[g_1])$ that the image of this subspace in $\mathrm{gr}_{k+1}(O[g_1])$ is 
equal to the image of $\mathrm{gr}_k(\partial) : \mathrm{gr}_k(O[g_1])\to \mathrm{gr}_{k+1}(O[g_1])$, which is identified
with the map 
\begin{equation}\label{1119:0204}
Y^2({\partial\over\partial X}-{\partial\over\partial Y}) : \mathbb CX^k\oplus Y^2\cdot\mathbb C[X,Y]_{k-2}
\to\mathbb CX^{k+1}\oplus Y^2\cdot\mathbb C[X,Y]_{k-1}.     
\end{equation}
The image of \eqref{1119:0204} is obviously contained in $Y^2\cdot\mathbb C[X,Y]_{k-1}$, which induces a linear map  
\begin{equation}\label{1126:0204}
Y^2({\partial\over\partial X}-{\partial\over\partial Y}) : \mathbb CX^k\oplus Y^2\cdot\mathbb C[X,Y]_{k-2}
\to Y^2\cdot\mathbb C[X,Y]_{k-1}.     
\end{equation}
The kernel of the linear endomorphism $Y^2({\partial\over\partial X}-{\partial\over\partial Y})$ of $\mathbb C[X,Y]$ is $\mathbb C[X+Y]$, which 
implies that the kernel of the linear map $Y^2({\partial\over\partial X}-{\partial\over\partial Y}) : \mathbb C[X,Y]_k\to\mathbb C[X,Y]_{k+1}$ is the 
one-dimensional vector space $\mathbb C\cdot(X+Y)^k$. The kernel of \eqref{1126:0204} is the intersection of this vector space 
with the source of \eqref{1126:0204}, which is zero as the coefficient of $YX^{k-1}$ in $(X+Y)^k$ is nonzero; this implies that the map 
\eqref{1126:0204} is injective.  The source and target of \eqref{1126:0204} both have dimension $k$; the equality of these dimensions, together with 
the injectivity of \eqref{1126:0204}, implies that \eqref{1126:0204} is a linear isomorphism. All this implies that the image of \eqref{1119:0204} is 
equal to $Y^2\cdot\mathbb C[X,Y]_{k-1}$, therefore that the image of $\partial(F_k(O[g_1]))$ by $F_{k+1}(O[g_1])\to\mathrm{gr}_{k+1}(O[g_1])\simeq 
\mathbb CX^{k+1}\oplus Y^2\cdot\mathbb C[X,Y]_{k-1}$ is $Y^2\cdot\mathbb C[X,Y]_{k-1}$. 

On the other hand, $\mathrm{Span}_{\mathbb C}(1,g_1,\ldots,g_k)\subset F_k(O[g_1])$, which implies that the image of 
 $\mathrm{Span}_{\mathbb C}(1,g_1,\ldots,g_{k+1})$ under $F_{k+1}(O[g_1])\to\mathrm{gr}_{k+1}(O[g_1])$ is $\mathbb C\,\overline g_{k+1}$, 
 whose image in $\mathbb CX^{k+1}\oplus Y^2\cdot\mathbb C[X,Y]_{k-1}$ is given by (a). 

Then the image of $\partial(F_k(O[g_1]))+\mathrm{Span}_{\mathbb C}(1,g_1,\ldots,g_{k+1})$ under 
$F_{k+1}(O[g_1])\to\mathrm{gr}_{k+1}(O[g_1])$ is the sum of the images of 
 $\partial(F_k(O[g_1]))$ and $\mathrm{Span}_{\mathbb C}(1,g_1,\ldots,g_{k+1})$ under this map, which is equal to the 
 sum $Y^2\cdot\mathbb C[X,Y]_{k-1}+\mathbb C(X-Y)^{k}(X+kY)/(k+1)!$, that can be simplified into 
$\mathbb C\cdot X^k\oplus Y^2\cdot\mathbb C[X,Y]_{k-1}\simeq\mathrm{gr}_{k+1}(O[g_1])$. 

It follows that the image of $\partial(F_k(O[g_1]))+\mathrm{Span}_{\mathbb C}(1,g_1,\ldots,g_{k+1})$ under 
the projection $F_{k+1}(O[g_1])\to\mathrm{gr}_{k+1}(O[g_1])$ is equal to its target. The space 
$\partial(F_k(O[g_1]))+\mathrm{Span}_{\mathbb C}(1,g_1,\ldots,g_{k+1})$ also contains 
$\partial(F_{k-1}(O[g_1]))+\mathrm{Span}_{\mathbb C}(1,g_1,\ldots,g_{k})$, which by the induction 
hypothesis is equal to $F_k(O[g_1])$, and therefore also to the kernel of the projection  $F_{k+1}(O[g_1])\to\mathrm{gr}_{k+1}(O[g_1])$. 
All this implies the equality  $\partial(F_k(O[g_1]))+\mathrm{Span}_{\mathbb C}(1,g_1,\ldots,g_{k+1})=F_{k+1}(O[g_1])$, which 
proves the induction step. This proves \eqref{toto:1035:0204} for any $k\geq0$. 

Let us now prove by induction on $k\geq 0$ the equality of subspaces of $F_{k+1}(O[g_1])$
\begin{equation}\label{intersection:0204}
\partial(F_k(O[g_1]))\cap \mathrm{Span}_{\mathbb C}(1,g_1,\ldots,g_{k+1})=\{0\}.
\end{equation} 
One has $F_0(O[g_1])=\mathbb C$ hence $\partial(F_0(O[g_1]))=0$, which implies \eqref{intersection:0204}
for $k=0$. Assume $k>0$ and $\partial(F_{k-1}(O[g_1]))\cap \mathrm{Span}_{\mathbb C}(1,g_1,\ldots,g_{k})=\{0\}$, and let us 
show \eqref{intersection:0204}. Let $P\in F_k(O[g_1])$, $(\lambda_i)_{i\in [0,k+1]}\in\mathbb C^{k+2}$ be such that 
$\partial(P)=\sum_i\lambda_i g_i$. The image of this equality under the projection $F_{k+1}(O[g_1])\to\mathrm{gr}_{k+1}(O[g_1])$
is $\lambda_{k+1}\overline{g_{k+1}}=\mathrm{gr}_k(\partial)(\overline P)$, where $\overline P$ is the image of $P$ in 
$\mathrm{gr}_k(O[g_1])$. The map $\mathrm{gr}_k(\partial) : \mathrm{gr}_k(O[g_1])\to\mathrm{gr}_{k+1}(O[g_1])$ is injective
(as this maps can be identified with \eqref{1126:0204} which has been proved to be injective), 
and its image does not contain $\overline{g_{k+1}}$ (as this image has been proved to be identified to 
$Y^2\mathbb C[X,Y]_{k-1}$ and in view of the identification of $\overline{g_{k+1}}$ in (a)). It follows that 
$\lambda_{k+1}=0$ and $\overline P=0$, therefore in the equality $\partial(P)=\sum_i\lambda_i g_i$ the left-hand side 
belongs to $\partial(F_{k-1}(O[g_1]))$ and the right-hand side to $\mathrm{Span}_{\mathbb C}(1,g_1,\ldots,g_{k})$, which 
by the induction assumption implies $\partial(P)=0$ and $\lambda_0=\ldots=\lambda_k=0$. This proves the induction step, therefore 
\eqref{intersection:0204} holds for any $k\geq 0$. 

Then 
\begin{align}\label{sochi:0702}
&\nonumber O[g_1]=\sum_{k \geq 0}F_{k+1}(O[g_1])
=\sum_{k \geq 0}\partial(F_k(O[g_1]))+\mathrm{Span}_{\mathbb C}(1,g_1,\ldots,g_{k+1})
\\ &\nonumber =\sum_{k \geq 0}\partial(F_k(O[g_1]))
+\sum_{k \geq 0}\mathrm{Span}_{\mathbb C}(1,g_1,\ldots,g_{k+1})
=\partial(\sum_{k \geq 0}F_k(O[g_1]))+
\mathrm{Span}_{\mathbb C}(g_i,i\geq 0)
\\ & =\partial(O[g_1])
+\mathrm{Span}_{\mathbb C}(g_i,i\geq0),
\end{align}
where the second equality follows from \eqref{toto:1035:0204}. 

For any $k,l \geq 0$, one has 
$$ 
\partial(F_kO[g_1]) \cap \mathrm{Span}_{\mathbb C}(1,g_1,\ldots,g_{l+1})
\subset 
\partial(F_{\mathrm{max}(k,l)}O[g_1]) \cap \mathrm{Span}_{\mathbb C}(1,g_1,\ldots,g_{\mathrm{max}(k,l)+1})=0,
$$ 
where the last equality follows from \eqref{intersection:0204}. This implies 
\begin{equation}\label{toto:0702}
\forall k,l\geq0,\quad \partial(F_kO[g_1]) \cap \mathrm{Span}_{\mathbb C}(1,g_1,\ldots,g_{l+1})=0.
\end{equation}
Then 
\begin{align}\label{sochibis:0702}
& \nonumber\partial(O[g_1]) 
\cap \mathrm{Span}_{\mathbb C}(g_i,i\geq 0)
=(\cup_{k \geq 0}\partial(F_kO[g_1]))
\cap (\cup_{l \geq 0}\mathrm{Span}_{\mathbb C}(1,g_1,\ldots,g_{l+1})
\\ & =\cup_{k,l \geq 0} \partial(F_kO[g_1]) \cap 
\mathrm{Span}_{\mathbb C}(1,g_1,\ldots,g_{l+1})=0,
\end{align}
where the last equality follows from \eqref{toto:0702}. 

The statement then follows from the combination of \eqref{sochi:0702} and
\eqref{sochibis:0702}. 
\end{proof}

\subsection{The differential algebra $\mathcal O(\mathcal E_S)[g_1]$}\label{sect:diff:alg:2:2606}

Let us denote by $\mathbb C(\mathcal E)$ the field of rational functions on~$\mathcal E$. As an algebra, it can be identified 
with the algebra of $\Lambda$-invariant meromorphic functions on~$\mathbb C$. Recall that, for $a\in\mathbb C$, the translation 
$T_a$ is an automorphism of the algebra of meromorphic functions on $\mathbb C$; it induces an automorphism of $\mathbb C(\mathcal E)$, 
which moreover depends only on the class of $a$ in $\mathcal E$. 

Recall the map $s\mapsto\tilde s$ from Def. \ref{def:tildeS:3006}(a). 

\begin{lem}\label{lem:toto:0304}
    (a) For any $s\in S$, one has $(T_{\tilde s}-id)(g_1)\in O_S$. 

    (b) $O_S=\sum_{s\in S}T_s(O)+\sum_{s\in S\smallsetminus\{pr(0)\}}\mathbb C\cdot (T_{\tilde s}-id)(g_1)$ (equality of subspaces of $\mathbb C(\mathcal E)$). 
\end{lem}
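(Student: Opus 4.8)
The plan is to identify both sides of the claimed equality inside $\mathbb C(\mathcal E)$, the ambient space being the field of $\Lambda$-invariant meromorphic functions on $\mathbb C$. For part (a), I would start from the functional equation \eqref{ellpropgn} with $n=1$, which gives $T_1 g_1 = g_1$ and $T_\tau g_1 = g_1 + 2\pi\mathrm i$; translating by $\tilde s$, one gets $T_1(T_{\tilde s}g_1) = T_{\tilde s}g_1$ and $T_\tau(T_{\tilde s}g_1) = T_{\tilde s}g_1 + 2\pi\mathrm i$, so the difference $(T_{\tilde s}-id)(g_1)$ is $\Lambda$-periodic, hence descends to a meromorphic function on $\mathcal E$. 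Its only possible poles are at $pr(0)$ (from $g_1$) and at $pr(\tilde s)=s$ (from $T_{\tilde s}g_1$), both simple; since these lie in $S$, the function lies in $\mathcal O_{mer}(\mathbb C,pr^{-1}(S))^\Lambda = \mathcal O(\mathcal E_S) = O_S$. When $s=pr(0)$ one may take $\tilde s = 0$ and the function is simply $0\in O_S$; this is why only $s\in S\smallsetminus\{pr(0)\}$ contributes a genuinely new element in part (b).

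For part (b), the inclusion $\supseteq$ is immediate: each $T_s(O)$ consists of $\Lambda$-invariant functions with at most a pole at $s\in S$, hence lies in $O_S$, and each $(T_{\tilde s}-id)(g_1)$ lies in $O_S$ by part (a). For the reverse inclusion, I would take $f\in O_S$ and analyze its principal parts. The sum $\sum_{s\in S}\mathrm{res}_s(f\,dz)$ vanishes (sum of residues on a compact Riemann surface), and for each $s\in S\smallsetminus\{pr(0)\}$ I subtract $\mathrm{res}_s(f\,dz)\cdot(T_{\tilde s}-id)(g_1)$, which (since $g_1$ has a simple pole of residue $1$ at $0$) removes the simple-pole part of $f$ at $s$ while introducing only a correction at $pr(0)$. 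After subtracting also a suitable higher-order principal-part contribution supported at $s$ — a $\mathbb C$-linear combination of $T_s$ of the Weierstrass functions $E_n$, $n\geq 2$ (equivalently $\wp^{(k)}$), which are $\Lambda$-invariant with a pole only at $s$ and which realize all pole orders $\geq 2$ — one reduces to a function whose only pole is at $pr(0)$, i.e. an element of $T_{pr(0)}(O)=O$. Tracking the corrections introduced at $pr(0)$: the residue correction there is cancelled because the total residue was zero, and it again lies in the span of $1$ together with the $E_n$'s based at $0$. Collecting terms, $f$ is exhibited as an element of $\sum_{s\in S}T_s(O) + \sum_{s\in S\smallsetminus\{pr(0)\}}\mathbb C\cdot(T_{\tilde s}-id)(g_1)$.

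The main obstacle is bookkeeping the poles cleanly at $pr(0)$: since $g_1$ itself is singular at $0$, the corrections $(T_{\tilde s}-id)(g_1)$ used to kill the simple poles at the other points of $S$ feed back a simple pole at $pr(0)$, and one must check that the accumulated residue there is exactly what the global residue theorem forces it to be, so that it can be absorbed into $O=T_{pr(0)}(O)$ rather than requiring a $g_1$-term based at $0$. Everything else is the standard fact that $\{1\}\cup\{\wp^{(k)}_{\text{based at }s}\}_{k\geq 0}$ spans, over $\mathbb C$, all $\Lambda$-invariant meromorphic functions with a single pole at $s$, which is just the basis statement for $O$ recalled in \S\ref{sect:diff:alg:1:2606} translated by $T_s$.
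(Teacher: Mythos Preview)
Your proposal is correct and follows essentially the same route as the paper: part~(a) via the quasi-periodicity of $g_1$, and part~(b) by stripping principal parts using translates of the $E_n$ ($n\geq2$) together with the functions $(T_{\tilde s}-id)(g_1)$. The paper merely reverses your order of operations (it removes the pole parts of order $\geq 2$ at all $s\in S$ first, then the simple poles), and your final worry about the accumulated residue at $pr(0)$ is unnecessary: once every pole at $s\neq pr(0)$ has been removed, the remainder is $\Lambda$-invariant with poles only in $\Lambda$, hence lies in $O$ by definition, with no residue bookkeeping required.
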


\begin{proof}
The relations $T_1g_1=g_1$ and $T_\tau g_1=g_1+2\pi\mathrm{i}$, together with the commutativity of the various $T_a$, $a\in\mathbb C$, implies 
that for any $s\in S$ the function $(T_{\tilde s}-1)(g_1)$ is $\Lambda$-invariant. It has simple poles at $0$ and $s$, which implies that 
it belongs to $O_S$, which proves (a). One clearly has $T_s(O)\subset O_S$ for any $s\in S$, so that (a) implies the inclusion 
$O_S\supset\sum_{s\in S}T_s(O)+\sum_{s\in S\smallsetminus\{pr(0)\}}\mathbb C\cdot (T_{\tilde s}-1)(g_1)$. If now $f\in O_S$, let $(f_s)_{s\in S}$
be the collection where $f_s\in\mathbb C((z-\tilde s))$ is the local expansion of $f$ at $\tilde s$. The map 
$O\to\mathbb C((z))\to\mathbb C((z))/z^{-1}\mathbb C[[z]]$ is surjective, so for any $s\in S$ one can find $o_s\in O$ whose image by this map coincides with the 
image of $f_s$ by the map $\mathbb C((z-\tilde s))\simeq\mathbb C((z))\to\mathbb C((z))/z^{-1}\mathbb C[[z]]$. Then $g:=f-\sum_{s\in S}T_s(o_s)$
belongs to $O_S$ and has at most simple poles at $S$.  Then $g-\sum_{s\in S\smallsetminus pr(0)}\mathrm{res}_s(g\cdot dz)\cdot(T_{\tilde s}-id)(g_1)$
belongs to $O_S$ and is regular on $S\smallsetminus pr(0)$; hence it belongs to $O$ and has at most a simple pole at $pr(0)$, which implies that it is constant. 
This proves the desired opposite inclusion. 
\end{proof}

Recall that $\mathcal O_{mer}(\mathbb C, pr^{-1}(S))$ is the algebra of meromorphic functions on $\mathbb C$ with set of poles contained in $pr^{-1}(S)$. 
Then $\Lambda$ acts by translation on $\mathcal O_{mer}(\mathbb C, pr^{-1}(S))$, and $O_S=\mathcal O_{mer}(\mathbb C, 
pr^{-1}(S))^\Lambda$. 
The operator $\partial=d/dz$ defines a derivation on $\mathcal O_{mer}(\mathbb C, pr^{-1}(S))$, which restricts to a derivation of $O_S$. 

Consider the subalgebra $O_S[g_1]$ of $\mathcal O_{mer}(\mathbb C, pr^{-1}(S))$. Then $\partial(g_1)=-E_2\in O_S$ implies that 
$O_S[g_1]$ is stable under $\partial$. 

\begin{lem}\label{lem:0904} Let $a\in\mathbb C\smallsetminus\Lambda$.  

(a) If $f\in\mathcal O_{mer}(\mathbb C, \Lambda \cup (a+\Lambda))$ is such that $T_1(f)=f$ and 
$(T_\tau-id)(f) \in O[g_1]+T_a(O[g_1])$, then $f \in O[g_1]+T_a(O[g_1])$. 

(b) If $f\in\mathcal O_{mer}(\mathbb C, \Lambda \cup (a+\Lambda))$ is such that $T_1(f)=f$ and that for some $n\geq 1$ one has 
$(T_\tau-id)^n(f) \in O[g_1]+T_a(O[g_1])$, then $f \in O[g_1]+T_a(O[g_1])$. 

(c) For any $n\geq 0$, one has 
\begin{equation}\label{theta:ids}
g_1^n\cdot T_a(g_1)\subset O[g_1]+T_a(O[g_1]).     
\end{equation}
\end{lem}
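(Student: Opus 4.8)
\textbf{Plan of proof for Lemma \ref{lem:0904}.}
The three statements are interlocked, so I would prove them roughly in the order (a), (b), (c), using (b) as a stepping stone towards (c) but also feeding (c) back to complete the picture. For part (a), I start from $f$ with $T_1(f)=f$ and $(T_\tau-id)(f)\in O[g_1]+T_a(O[g_1])$. The idea is that $f$, being $T_1$-invariant, has a Fourier-type expansion, and the subspace $O[g_1]+T_a(O[g_1])$ is a $T_1$-invariant space whose behaviour under $T_\tau-id$ I can control using the functional equations \eqref{ellpropE1}, \eqref{ellpropgn}: $T_\tau$ fixes $O$ and $T_a(O)$, and acts on $g_1$ (resp. $T_a(g_1)$) by adding $2\pi\mathrm i$, so $(T_\tau-id)$ maps $O[g_1]+T_a(O[g_1])$ into itself and is ``lower-triangular and nilpotent'' with respect to the degree in $g_1$ and $T_a(g_1)$. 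Concretely, writing $f$ as a Laurent-type combination and using that the only $\Lambda$-periodic meromorphic functions with poles in $\Lambda\cup(a+\Lambda)$ and at most simple poles are spanned by $1$, $T_s(g_1)$-type translates, and the genuinely elliptic functions (as in the proof of Lemma \ref{lem:0307} and Lemma \ref{lem:toto:0304}), I reduce the solvability of $(T_\tau-id)f = h$ for given $h\in O[g_1]+T_a(O[g_1])$ to an explicit finite linear-algebra problem in the coefficients of the $g_1$- and $T_a(g_1)$-expansion; the kernel of $T_\tau-id$ on all of $\mathcal O_{mer}(\mathbb C,\Lambda\cup(a+\Lambda))^{T_1}$ being just the elliptic functions $O_{\{0,pr(a)\}}\subset O[g_1]+T_a(O[g_1])$ (again by a pole-counting argument), any solution $f$ differs from a solution inside $O[g_1]+T_a(O[g_1])$ by an element of this kernel, hence lies in the space.

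For part (b), I would argue by induction on $n$, the case $n=1$ being (a). Given $(T_\tau-id)^n(f)\in O[g_1]+T_a(O[g_1])$, set $g:=(T_\tau-id)(f)$; then $T_1(g)=g$ (the translations commute) and $(T_\tau-id)^{n-1}(g)\in O[g_1]+T_a(O[g_1])$, so by the induction hypothesis $g\in O[g_1]+T_a(O[g_1])$, and then (a) applied to $f$ gives $f\in O[g_1]+T_a(O[g_1])$. This is the clean part.

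For part (c), the point is that $g_1^n\cdot T_a(g_1)$ need not itself be periodic, but it becomes tractable after applying $T_\tau-id$. Using $T_\tau g_1 = g_1+2\pi\mathrm i$ and $T_\tau T_a(g_1) = T_a(g_1)+2\pi\mathrm i$ (both from \eqref{ellpropE1}), expand $(T_\tau-id)(g_1^n T_a(g_1))$ by the binomial theorem: it is a combination of $g_1^j T_a(g_1)$ with $j<n$ together with terms $g_1^j$ (with no $T_a(g_1)$ factor), which lie in $O[g_1]$. By induction on $n$, the lower terms $g_1^j T_a(g_1)$ with $j<n$ already lie in $O[g_1]+T_a(O[g_1])$, so $(T_\tau-id)(g_1^n T_a(g_1))\in O[g_1]+T_a(O[g_1])$; since $g_1^n T_a(g_1)$ is $T_1$-invariant (both factors are), part (a) then yields $g_1^n T_a(g_1)\in O[g_1]+T_a(O[g_1])$, closing the induction. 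The base case $n=0$ is $T_a(g_1)\in T_a(O[g_1])$, which is immediate.

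\textbf{Main obstacle.} The crux is part (a): making precise that $T_\tau-id$, restricted to $T_1$-invariant meromorphic functions with poles in $\Lambda\cup(a+\Lambda)$, has image and kernel compatible with the subspace $O[g_1]+T_a(O[g_1])$. This requires (i) identifying the kernel of $T_\tau-id$ on this space of periodic functions with the elliptic functions regular away from $\{0,pr(a)\}$ — a pole-order and residue argument of the kind used in Lemmas \ref{lem:0307} and \ref{lem:toto:0304} — and (ii) a triangularity argument showing that if $h\in O[g_1]+T_a(O[g_1])$ is in the image of $T_\tau-id$ at all, then it is hit by an element of $O[g_1]+T_a(O[g_1])$. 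Both are elementary in spirit but need the functional equations \eqref{ellpropE1}, \eqref{ellpropgn} handled carefully, especially to keep track of the mixed monomials $g_1^i T_a(g_1)^j$ and to ensure no spurious poles appear off $\Lambda\cup(a+\Lambda)$.
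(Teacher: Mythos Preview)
Your proposal is correct and follows essentially the same route as the paper: for (a) you identify the kernel of $T_\tau-id$ on $T_1$-invariants with $O_{\{pr(0),pr(a)\}}\subset O[g_1]+T_a(O[g_1])$ and argue that $T_\tau-id$ hits any element of the subspace from within it (the paper makes this surjectivity explicit via the factorisation $e^{2\pi\mathrm i\partial_X}-1=\tfrac{e^{2\pi\mathrm i\partial_X}-1}{\partial_X}\circ\partial_X$ on $O[X]$), and your (b) is identical to the paper's. For (c) the paper is marginally quicker than your induction on $n$: since $(T_\tau-id)^{n+2}(g_1^n\cdot T_a(g_1))=0$ outright, it invokes (b) once rather than (a) repeatedly.
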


\begin{proof}
(a) 
The automorphism $T_\tau$ of 
$\mathcal O_{mer}(\mathbb C,\Lambda \cup (a+\Lambda))$ restricts to automorphisms of the subalgebras 
$O[g_1]$ and $T_a(O[g_1])$, therefore to an automorphism of the vector subspace $O[g_1]+T_a(O[g_1])$. 

The $O$-algebra morphism $O[X]\to O[g_1]$ induced by $X \mapsto g_1$ intertwines the linear endomorphism $T_\tau-id$ of $O[g_1]$ with the linear endomorphism 
$\mathrm{exp}(2\pi\mathrm{i}\partial_X)-1$ of $O[X]$, where $\partial$ is the derivation of $O[X]$ given by $O\mapsto 0$ and $X\mapsto 1$. On the other hand,
$\mathrm{exp}(2\pi\mathrm{i}\partial_X)-1={\mathrm{exp}(2\pi\mathrm{i}\partial_X)-1\over\partial_X}\circ\partial_X$, where 
${\mathrm{exp}(2\pi\mathrm{i}\partial_X)-1\over\partial_X}$ is a linear automorphism of $O[X]$ as $\partial_X$ is locally nilpotent, and 
$\partial_X$ is surjective. It follows that the linear endomorphism  $\mathrm{exp}(2\pi\mathrm{i}\partial_X)-1$ of $O[X]$ is surjective. 
It follows from this surjectivity, from the interwtining of this operator with $T_\tau-id$ and from the surjectivity of the map $O[X]\to O[g_1]$ 
that the linear endomorphism $T_\tau-id$ of $O[g_1]$ is surjective. This implies the surjectivity of the linear endomorphism $T_\tau-id$ of 
$T_a(O[g_1])$, and therefore that of the linear endomorphism $T_\tau-id$ of $O[g_1]+T_a(O[g_1])$.  

Let now $f$ be as in the (a). It follows from the surjectivity of the linear endomorphism $T_\tau-id$ of $O[g_1]+T_a(O[g_1])$ that there exists 
$\tilde f \in O[g_1]+T_a(O[g_1])$, such that $(T_\tau-id)(\tilde f)=(T_\tau-id)(f)$ (equality in $O[g_1]+T_a(O[g_1])$). Then $f-\tilde f$ belongs to 
$\mathcal O_{mer}(\mathbb C,\Lambda \cup (a+\Lambda))$ and is invariant both under $T_1$ and $T_\tau$, which implies that it belongs to 
$O_{\{pr(0),pr(a)\}}$, which by Lem. \ref{lem:toto:0304}(b) for $S=\{pr(0),pr(a)\}$ is equal to $O+T_a(O)+\mathbb C(T_a-id)(g_1)$, and is 
therefore contained in $O[g_1]+T_a(O[g_1])$. 
The statement then follows from $\tilde f\in O[g_1]+T_a(O[g_1])$ and $f-\tilde f\in O[g_1]+T_a(O[g_1])$.  

(b) The proof is by induction on $n\geq 1$. For $n=1$, the statement follows from (a). Assume the statement at step $n$ and let us prove it at step 
$n+1$. Let $g\in\mathcal O_{mer}(\mathbb C,\Lambda \cup (a+\Lambda))$ be such that $T_1(g)=g$ and  
$(T_\tau-id)^{n+1}(g) \in O[g_1]+T_a(O[g_1])$. Then $f:=(T_\tau-id)(g)$ is such that $T_1(f)=f$ as $T_1$ commutes
with $T_\tau-id$, and $(T_\tau-id)^n(f) \in O[g_1]+T_a(O[g_1])$. By the induction assumption, this implies 
$f\in O[g_1]+T_a(O[g_1])$. Then one has $T_(g)=g$ and $(T_\tau-id)(g)\in O[g_1]+T_a(O[g_1])$, which by (a) implies 
$g\in O[g_1]+T_a(O[g_1])$. This proves the statement at step $n+1$. 

(c) This follows from (b) and from the invariance of $g_1^n\cdot T_a(g_1)$ under $T_1$, which follows from that of $g_1$ and of $T_a(g_1)$, and from 
$(T_\tau-id)^{n+2}(g_1^n\cdot T_a(g_1))=0$. 
\end{proof}

\begin{lem}\label{lem:11:0504}
    One has $O_S[g_1]=\sum_{s\in S}T_{\tilde s}(O[g_1])$ (equality of subspaces of $\mathcal O_{mer}(\mathbb C, pr^{-1}(S))$). 
\end{lem}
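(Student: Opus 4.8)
The plan is to establish the two inclusions separately. The inclusion $\sum_{s\in S}T_{\tilde s}(O[g_1])\subset O_S[g_1]$ is the easy one. For $s\in S$ we have $\tilde s\in\tilde S$, so $\tilde s+\Lambda\subset pr^{-1}(S)$ and $T_{\tilde s}$ is an automorphism of $\mathcal O_{mer}(\mathbb C,pr^{-1}(S))$; the subalgebra $T_{\tilde s}(O[g_1])$ is generated by $T_{\tilde s}(O)$ and $T_{\tilde s}(g_1)$. Now $T_{\tilde s}(O)$ consists of $\Lambda$-invariant functions whose poles lie in $\tilde s+\Lambda$, hence $T_{\tilde s}(O)\subset O_S$; and $T_{\tilde s}(g_1)=(T_{\tilde s}-id)(g_1)+g_1$ lies in $O_S+\mathbb Cg_1\subset O_S[g_1]$ by Lem.~\ref{lem:toto:0304}(a). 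So $T_{\tilde s}(O[g_1])\subset O_S[g_1]$, and summing over $s$ gives the inclusion.

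For the reverse inclusion, since $O_S$ is an algebra one has $O_S[g_1]=\sum_{n\geq0}O_S\cdot g_1^n$, so it suffices to prove $O_S\cdot g_1^n\subset\sum_{s\in S}T_{\tilde s}(O[g_1])$ for all $n\geq0$. First, $O_S\subset\sum_{s\in S}T_{\tilde s}(O[g_1])$: by Lem.~\ref{lem:toto:0304}(b), $O_S$ is spanned by the spaces $T_{\tilde s}(O)\subset T_{\tilde s}(O[g_1])$ together with the elements $(T_{\tilde s}-id)(g_1)=T_{\tilde s}(g_1)-g_1$, each of which lies in $T_{\tilde s}(O[g_1])+O[g_1]$; here one uses that the section sends $pr(0)\in S$ to $0$, so that $O[g_1]=T_0(O[g_1])$ is itself one of the summands in $\sum_{s\in S}T_{\tilde s}(O[g_1])$. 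It therefore remains to prove the ``absorption'' property: for each $s\in S$ and each $n\geq0$, $g_1^n\cdot T_{\tilde s}(O[g_1])\subset O[g_1]+T_{\tilde s}(O[g_1])$. Granting this, one gets $O_S\cdot g_1^n\subset\sum_s g_1^n\cdot T_{\tilde s}(O[g_1])\subset\sum_s\big(O[g_1]+T_{\tilde s}(O[g_1])\big)=\sum_s T_{\tilde s}(O[g_1])$, as wanted.

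The absorption property is trivial for $s=pr(0)$, since its left-hand side is then contained in $O[g_1]$. For $s\in S\smallsetminus\{pr(0)\}$, so that $\tilde s\notin\Lambda$, fix $m\geq0$ and $w\in T_{\tilde s}(O[g_1])$ and set $h:=g_1^m w\in\mathcal O_{mer}(\mathbb C,\Lambda\cup(\tilde s+\Lambda))$. Since $g_1$ and every element of $T_{\tilde s}(O[g_1])$ are fixed by $T_1$, we have $T_1(h)=h$; and since $T_\tau$ fixes $\Lambda$-invariant functions while $T_\tau(g_1)=g_1+2\pi\mathrm{i}$ and $T_\tau(T_{\tilde s}(g_1))=T_{\tilde s}(g_1)+2\pi\mathrm{i}$, writing $w$ as a polynomial in $T_{\tilde s}(g_1)$ with coefficients in the $T_\tau$-fixed algebra $T_{\tilde s}(O)$ shows, exactly as in the proof of Lem.~\ref{lem:0904}(c), that $(T_\tau-id)^N(h)=0$ for $N$ large enough. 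By Lem.~\ref{lem:0904}(b) applied with $a=\tilde s$, it follows that $h\in O[g_1]+T_{\tilde s}(O[g_1])$, proving the absorption property (its special case $w=T_{\tilde s}(g_1)$ is Lem.~\ref{lem:0904}(c)). The main obstacle is precisely this absorption step: the product $g_1^n\cdot T_{\tilde s}(o)\cdot T_{\tilde s}(g_1)^m$ carries poles both along $\Lambda$ and along $\tilde s+\Lambda$ and is not visibly a sum of a function with poles only along $\Lambda$ and one with poles only along $\tilde s+\Lambda$; the device that resolves this is Lem.~\ref{lem:0904}, whose content is that $(T_\tau-id)$ is surjective on $O[g_1]$ while powers of $g_1$ and of $T_{\tilde s}(g_1)$ become constant modulo $(T_\tau-id)$, so the obstruction is killed by a high power of $(T_\tau-id)$ and can be corrected away.
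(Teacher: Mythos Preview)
Your proof is correct and shares the paper's overall architecture (establish both inclusions, relying on Lem.~\ref{lem:toto:0304} and Lem.~\ref{lem:0904}), but the hard inclusion is organised differently. The paper proves the stronger fact that $\sum_{s\in S}T_{\tilde s}(O[g_1])$ is a subalgebra: it reduces to $O[g_1]\cdot T_a(O[g_1])\subset O[g_1]+T_a(O[g_1])$, first checking $O\cdot T_a(O)\subset O[g_1]+T_a(O[g_1])$ via Lem.~\ref{lem:toto:0304}(b), then showing $O[g_1]+T_a(O[g_1])$ is stable under multiplication by $g_1$ and $T_a(g_1)$ through an explicit computation of $f\cdot T_a(g_1)$ for $f\in O$ (splitting off $f(a)$) combined with Lem.~\ref{lem:0904}(c). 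You instead observe that only the weaker absorption $g_1^n\cdot T_{\tilde s}(O[g_1])\subset O[g_1]+T_{\tilde s}(O[g_1])$ is needed, and you obtain it in one stroke from Lem.~\ref{lem:0904}(b) by noting that any element of $g_1^n\cdot T_{\tilde s}(O[g_1])$ is $T_1$-invariant and annihilated by a power of $T_\tau-id$. This is a genuine simplification: it avoids the explicit residue-style computation of $O\cdot T_a(g_1)$ and does not require proving the full algebra structure of the sum. The paper's route, on the other hand, yields the bonus that $\sum_s T_{\tilde s}(O[g_1])$ is a subalgebra, which is not recorded as a separate statement but could conceivably be of independent use.
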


\begin{proof}
For $s\in S$, one has $T_{\tilde s}(O)\subset O_S\subset O_S[g_1]$, where the first inclusion follows from 
Lem.~\ref{lem:toto:0304}(b). Moreover, $T_{\tilde s}(g_1)=g_1+(T_{\tilde s}-id)(g_1)\in O_S[g_1]$
by Lem. \ref{lem:toto:0304}(a). Then $T_{\tilde s}(O[g_1])$ is the subalgebra of $\mathcal O_{mer}(\mathbb C, pr^{-1}(S))$ 
generated by $T_{\tilde s}(O)$ and $T_{\tilde s}(g_1)$, which are both 
contained in $O_S[g_1]$, which implies the inclusion $T_{\tilde s}(O[g_1])\subset O_S[g_1]$ as $O_S[g_1]$ is an algebra. 
As $O_S[g_1]$ is stable under summation, this implies the inclusion $O_S[g_1]\supset\sum_{s\in S}T_{\tilde s}(O[g_1])$. 

In order to show the opposite inclusion, let us first show that $\sum_{s\in S}T_{\tilde s}(O[g_1])$ is a subalgebra of 
$\mathcal O_{mer}(\mathbb C, pr^{-1}(S))$. For each $s\in S$, the subspace $T_{\tilde s}(O[g_1])$ is a subalgebra of 
$\mathcal O_{mer}(\mathbb C,  pr^{-1}(S))$. So it is enough to prove that for any $s\neq t\in S$, one has 
$T_{\tilde s}(O[g_1])\cdot T_{\tilde t}(O[g_1])\subset T_{\tilde s}(O[g_1])+T_{\tilde t}(O[g_1])$. By translation invariance, 
it suffices to prove this for $t=0$, in which case $\tilde t=0$, i.e. to prove 
\begin{equation}\label{to:prove}
O[g_1]\cdot T_a(O[g_1])\subset O[g_1]+T_a(O[g_1])    
\end{equation}
for $a\in\mathbb C\smallsetminus\Lambda$. 

One has $O\cdot T_a(O)\subset O_{\{pr(0),pr(a)\}}=O+T_a(O)+\mathbb C\cdot(T_a-id)(g_1)$ where the first
inclusion follows from the fact that both $O$ and $T_a(O)$ are contained in $O_{\{pr(0),pr(a)\}}$ and that the latter set is an algebra
and the second inclusion follows from Lem. \ref{lem:toto:0304}(b) for $S=\{pr(0),pr(a)\}$. This implies $O\cdot T_a(O)\subset O[g_1]+T_a(O[g_1])$. 
Now $O[g_1]\cdot T_a(O[g_1])$ is the sub-$\mathbb C[g_1,T_a(g_1)]$-module of $\mathcal O_{mer}(\mathbb C, pr^{-1}(S))$
generated by $O\cdot T_a(O)$, therefore in order to prove \eqref{to:prove} it suffices to show that 
$O[g_1]+T_a(O[g_1])$ is a sub-$\mathbb C[g_1,T_a(g_1)]$-module of $\mathcal O_{mer}(\mathbb C,  pr^{-1}(S))$, 
i.e. is stable under multiplication by $g_1$ and $T_a(g_1)$. The inclusions $O[g_1]\cdot g_1\subset O[g_1]$ and 
$T_a(O[g_1])\cdot T_a(g_1)\subset T_a(O[g_1])$ are obvious, it therefore remains to prove  
\begin{equation}\label{to:prove:13200504}
 O[g_1]\cdot T_a(g_1)\subset O[g_1]+T_a(O[g_1]),\quad
 T_a(O[g_1])\cdot g_1\subset O[g_1]+T_a(O[g_1]). 
\end{equation}
 
For $f\in O$, one has $f\cdot T_a(g_1)=f(a)T_a(g_1)+(f-f(a))(T_a-id)(g_1)+(f-f(a))g_1$. 
Then $f(a)T_a(g_1)\in\mathbb CT_a(g_1)$, $(f-f(a))g_1\in O[g_1]$ and $(f-f(a))(T_a-id)(g_1)\in O$
as $f-f(a)$ belongs to $O$ and vanishes at $a$, while $(T_a-id)(g_1)$ belongs to $O_{\{0,a\}}$ and has only a simple pole at $a$. 
It follows that
\begin{equation}\label{toto:0504}
 O\cdot T_a(g_1)\subset O[g_1]+\mathbb CT_a(g_1).    
\end{equation}

Then one has $Og_1^n\cdot T_a(g_1)\subset O[g_1]g_1^n+\mathbb CT_a(g_1)g_1^n\subset O[g_1]+T_a(O[g_1])$
by virtue of \eqref{toto:0504} and \eqref{theta:ids}. This proves the first inclusion in \eqref{to:prove:13200504}, 
the second inclusion is a consequence of it (applying $T_{-a}$ and replacing $a$ by $-a$). 

This ends the proof of the fact that $\sum_{s\in S}T_{\tilde s}(O[g_1])$ is a subalgebra of 
$\mathcal O_{mer}(\mathbb C,pr^{-1}(S))$.

One has $O_S=\sum_{s\in S}T_s(O)+\sum_{s\in S\smallsetminus\{pr(0)\}}(T_{\tilde s}-id)(g_1)\subset \sum_{s\in S}T_{\tilde s}(O[g_1])$, 
where the equality follows from Lem. \ref{lem:toto:0304}(b) and the inclusion follows from $(T_{\tilde s}-id)(g_1)\in 
O[g_1]+T_{\tilde s}(O[g_1])$ for any $s\in S\smallsetminus\{pr(0)\}$. One also has 
$g_1\in T_0(O[g_1])\subset \sum_{s\in S}T_{\tilde s}(O[g_1])$. These two statements, together with 
the fact that $\sum_{s\in S}T_{\tilde s}(O[g_1])$ is an algebra, implies the inclusion 
$O_S[g_1]\subset\sum_{s\in S}T_{\tilde s}(O[g_1])$.
\end{proof}

\begin{lem}\label{lem:13:1004}
The vector subspace $\sum_{s\in S}T_{\tilde s}(\mathrm{Span}_{\mathbb C}\{g_n\,|\,n\geq 0\})\subset O_S[g_1]$ 
is such that 
$$
\sum_{s\in S}T_{\tilde s}(\mathrm{Span}_{\mathbb C}\{g_n\,|\,n\geq 0\})+\partial(O_S[g_1])
=O_S[g_1]; 
$$
in other words, the cokernel of the endomorphism $\partial$ of $O_S[g_1]$ is linearly spanned 
by the image of the family $\{T_{\tilde s}(g_n)\,|\,(s,n)\in S\times\mathbb Z_{\geq 0}\}$. 
\end{lem}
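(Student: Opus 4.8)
The plan is to deduce this from the two structural facts already at our disposal: the decomposition $O_S[g_1]=\sum_{s\in S}T_{\tilde s}(O[g_1])$ of Lem.~\ref{lem:11:0504}, and the decomposition $\mathrm{Span}_{\mathbb C}\{g_n\,|\,n\geq 0\}\oplus\partial(O[g_1])=O[g_1]$ of Lem.~\ref{lem:5:8:0702}(c), together with the elementary observation that each translation operator $T_{\tilde s}$, $s\in S$, commutes with the derivation $\partial=d/dz$ of $\mathcal O_{mer}(\mathbb C,pr^{-1}(S))$.

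First I would fix $s\in S$ and apply the (invertible, $\mathbb C$-linear) operator $T_{\tilde s}$ to the equality $O[g_1]=\mathrm{Span}_{\mathbb C}\{g_n\,|\,n\geq0\}+\partial(O[g_1])$ furnished by Lem.~\ref{lem:5:8:0702}(c), obtaining
\[
T_{\tilde s}(O[g_1])=T_{\tilde s}\big(\mathrm{Span}_{\mathbb C}\{g_n\,|\,n\geq0\}\big)+T_{\tilde s}\big(\partial(O[g_1])\big).
\]
Since $\partial\circ T_{\tilde s}=T_{\tilde s}\circ\partial$ and, by Lem.~\ref{lem:11:0504}, $T_{\tilde s}(O[g_1])\subset O_S[g_1]$, the second summand equals $\partial\big(T_{\tilde s}(O[g_1])\big)\subset\partial(O_S[g_1])$. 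Summing over $s\in S$ and invoking Lem.~\ref{lem:11:0504} once more on the left-hand side yields
\[
O_S[g_1]=\sum_{s\in S}T_{\tilde s}(O[g_1])\subset\sum_{s\in S}T_{\tilde s}\big(\mathrm{Span}_{\mathbb C}\{g_n\,|\,n\geq0\}\big)+\partial(O_S[g_1]),
\]
which is the nontrivial inclusion. The reverse inclusion is immediate, as both $\sum_{s\in S}T_{\tilde s}(\mathrm{Span}_{\mathbb C}\{g_n\,|\,n\geq0\})$ and $\partial(O_S[g_1])$ are subspaces of $O_S[g_1]$ (the former because $T_{\tilde s}(g_n)\in T_{\tilde s}(O[g_1])\subset O_S[g_1]$). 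The closing clause of the statement, that the cokernel of $\partial$ on $O_S[g_1]$ is linearly spanned by the images of the $T_{\tilde s}(g_n)$, is then merely a reformulation of the displayed equality.

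There is no substantial obstacle at this stage: the genuine content has been absorbed into Lems.~\ref{lem:5:8:0702}(c) and \ref{lem:11:0504}. The one point deserving a moment's care is the bookkeeping that $T_{\tilde s}\big(\partial(O[g_1])\big)$ lands inside $\partial(O_S[g_1])$ and not merely inside $\partial\big(T_{\tilde s}(O[g_1])\big)$ — which is precisely where the \emph{inclusion} $T_{\tilde s}(O[g_1])\subset O_S[g_1]$ from Lem.~\ref{lem:11:0504} is used, rather than just the equality of the total sum. Note that this argument does not address, and does not need to address, whether the sum $\sum_{s\in S}T_{\tilde s}(\mathrm{Span}_{\mathbb C}\{g_n\,|\,n\geq0\})$ is direct or whether the spanning family $\{T_{\tilde s}(g_n)\}$ is minimal; only the spanning property is asserted.
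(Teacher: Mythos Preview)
Your proof is correct and follows essentially the same approach as the paper's: both arguments combine Lem.~\ref{lem:11:0504} with Lem.~\ref{lem:5:8:0702}(c) and the commutation of $T_{\tilde s}$ with $\partial$. The only cosmetic difference is that the paper writes a single chain of equalities (using $\sum_s T_{\tilde s}(\partial(O[g_1]))=\partial(\sum_s T_{\tilde s}(O[g_1]))=\partial(O_S[g_1])$ directly) rather than establishing the two inclusions separately as you do.
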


\begin{proof}
One has 
\begin{align*}
&O_S[g_1]=\sum_{s\in S}T_{\tilde s}(O[g_1])
=\sum_{s\in S}T_{\tilde s}(\partial(O[g_1])+\mathrm{Span}_{\mathbb C}\{g_n\,|\,n\geq 0\})
\\ & 
=\sum_{s\in S}T_{\tilde s}(\partial(O[g_1]))+\sum_{s\in S}T_{\tilde s}(\mathrm{Span}_{\mathbb C}\{g_n\,|\,n\geq 0\})
=\partial(\sum_{s\in S}T_{\tilde s}(O[g_1]))+\sum_{s\in S}T_{\tilde s}(\mathrm{Span}_{\mathbb C}\{g_n\,|\,n\geq 0\})
\\ & 
=\partial(O_S[g_1])+\sum_{s\in S}T_{\tilde s}(\mathrm{Span}_{\mathbb C}\{g_n\,|\,n\geq 0\}), 
\end{align*}
where the first and last equalities follow from Lem. \ref{lem:11:0504}, the second equality follows from eq.~\eqref{titi:0504}, the third equality 
follows from the linearity of $T_{\tilde s}$, and the fourth equality follows from the commutativity of $\partial$ and 
$T_{\tilde s}$.  
\end{proof}

\begin{lem}\label{NEWNEWLEM}
The sum map 
$$
\mathbb C1 \oplus \big(\bigoplus_{s \in S}T_{\tilde s}\mathrm{Span}\{g_n|n>0\}\big)  \oplus \partial(O_S[g_1])\to O_S[g_1] 
$$
is a vector space isomorphism. 
\end{lem}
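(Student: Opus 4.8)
The plan is to establish surjectivity and injectivity of the sum map separately. \emph{Surjectivity} is essentially Lemma~\ref{lem:13:1004}: since $g_0=1$, one has $T_{\tilde s}(g_0)=1$ for every $s\in S$, so that $\sum_{s\in S}T_{\tilde s}(\mathrm{Span}_{\mathbb C}\{g_n\,|\,n\geq0\})=\mathbb C1+\sum_{s\in S}T_{\tilde s}(\mathrm{Span}_{\mathbb C}\{g_n\,|\,n>0\})$; combining this with Lemma~\ref{lem:13:1004} yields $\mathbb C1+\sum_{s\in S}T_{\tilde s}(\mathrm{Span}_{\mathbb C}\{g_n\,|\,n>0\})+\partial(O_S[g_1])=O_S[g_1]$, which is exactly the assertion that the sum map is onto.

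\emph{Injectivity.} The guiding idea is that the pole loci $pr^{-1}(s)$, $s\in S$, are pairwise disjoint, which decouples the contributions attached to distinct punctures and reduces the problem, puncture by puncture, to the statements about the differential algebra $O[g_1]$ proved in \S\ref{sect:diff:alg:1:2606}. Applying $\partial$ to the equality $O_S[g_1]=\sum_{s\in S}T_{\tilde s}(O[g_1])$ of Lemma~\ref{lem:11:0504} and using $\partial\circ T_{\tilde s}=T_{\tilde s}\circ\partial$ gives $\partial(O_S[g_1])=\sum_{s\in S}T_{\tilde s}(\partial(O[g_1]))$. Now suppose $c\in\mathbb C$, $w_s\in T_{\tilde s}(\mathrm{Span}_{\mathbb C}\{g_n\,|\,n>0\})$ for $s\in S$, and $p\in\partial(O_S[g_1])$ satisfy $c+\sum_{s\in S}w_s+p=0$. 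Write $w_s=\sum_{n>0}\mu_{s,n}T_{\tilde s}(g_n)$ with $\mu_{s,n}\in\mathbb C$ almost all zero, and $p=\sum_{s\in S}T_{\tilde s}(q_s)$ with $q_s\in\partial(O[g_1])$; then the hypothesis reads $\sum_{s\in S}T_{\tilde s}\big(q_s+\sum_{n>0}\mu_{s,n}g_n\big)=-c$.

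Fix $s_0\in S$. Since $g_n$ and the elements of $\partial(O[g_1])$ belong to $\mathcal O_{mer}(\mathbb C,\Lambda)$, for each $s\in S$ the function $T_{\tilde s}\big(q_s+\sum_{n>0}\mu_{s,n}g_n\big)$ has poles contained in $pr^{-1}(s)=\tilde s+\Lambda$, which is disjoint from $pr^{-1}(s_0)$ when $s\neq s_0$. Inspecting the above identity in the neighbourhood of each point of $pr^{-1}(s_0)$ therefore shows that $T_{\tilde{s_0}}\big(q_{s_0}+\sum_{n>0}\mu_{s_0,n}g_n\big)$ has vanishing principal part at every such point; as its poles are contained in $pr^{-1}(s_0)$, it is holomorphic on $\mathbb C$. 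Applying $T_{-\tilde{s_0}}$ and Lemma~\ref{NEWLEM} gives $q_{s_0}+\sum_{n>0}\mu_{s_0,n}g_n\in O[g_1]\cap\mathcal O_{hol}(\mathbb C)=\mathbb C$, so $q_{s_0}$ lies in $\partial(O[g_1])\cap\mathrm{Span}_{\mathbb C}\{g_n\,|\,n\geq0\}$, which is $\{0\}$ by Lemma~\ref{lem:5:8:0702}(c). Hence $q_{s_0}=0$, and then the linear independence of $(g_n)_{n\geq0}$ (Lemma~\ref{lem:5:8:0702}(b)) forces $\mu_{s_0,n}=0$ for all $n>0$. As $s_0\in S$ was arbitrary, all $w_s$ vanish, hence $p=\sum_{s\in S}T_{\tilde s}(q_s)=0$ and finally $c=0$, so the sum map is injective.

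The only point requiring care will be the pole bookkeeping in the last paragraph; the substance of the argument is simply that disjointness of the loci $pr^{-1}(s)$ localizes the statement at each puncture, where it becomes the combination of the direct-sum decomposition $O[g_1]=\mathrm{Span}_{\mathbb C}\{g_n\,|\,n\geq0\}\oplus\partial(O[g_1])$ of Lemma~\ref{lem:5:8:0702}(c) with the identity $O[g_1]\cap\mathcal O_{hol}(\mathbb C)=\mathbb C$ of Lemma~\ref{NEWLEM}.
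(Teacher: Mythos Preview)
Your proof is correct and follows essentially the same route as the paper's: both arguments establish surjectivity from Lemma~\ref{lem:11:0504} combined with the single-puncture decomposition of Lemma~\ref{lem:5:8:0702}(c), and both prove injectivity by using the disjointness of the pole loci $pr^{-1}(s)$ to reduce to Lemma~\ref{NEWLEM} and then to Lemma~\ref{lem:5:8:0702}(b),(c). The only cosmetic difference is that you decompose $p\in\partial(O_S[g_1])$ directly as $\sum_s T_{\tilde s}(q_s)$ with $q_s\in\partial(O[g_1])$, whereas the paper decomposes a preimage $P$ of $p$ via Lemma~\ref{lem:11:0504} and then applies $\partial$; your choice is marginally cleaner since it places $q_s$ in $\partial(O[g_1])$ from the outset.
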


\begin{proof}
One has 
\begin{align*}
&O_S[g_1]=\sum_{s \in S}T_{\tilde s}(O[g_1])
=\sum_{s\in S}T_{\tilde s}(\mathbb C1+\mathrm{Span}\{g_n|n>0\}+\partial(O[g_1])) 
\\ & =\mathbb C1+\sum_{s \in S}T_{\tilde s}(\mathrm{Span}\{g_n|n>0\})
+\partial(\sum_{s\in S} T_{\tilde s}(O[g_1]))   
=\mathbb C1+\sum_{s \in S}T_{\tilde s}(\mathrm{Span}\{g_n|n>0\})+\partial(O_S[g_1]) 
\end{align*}
where the second equality follows from Lem. \ref{lem:5:8:0702}(c), the third equality follows from the commutativity 
of $T_{\tilde s}$
with $\partial$, the fourth equality follows from Lem. \ref{lem:11:0504}; this implies that the said sum map is surjective. 

Let us prove its injectivity. Let $P \in O_S[g_1]$  
and $S \ni s \mapsto t_s \in \mathrm{Span}\{g_n|n>0\}$ and $\lambda \in \mathbb C$ be such that 
$$
\lambda+\partial(P)+\sum_{s \in S}T_{\tilde s}(t_s)=0. 
$$
By Lem. \ref{lem:11:0504}, there exists a map $S \ni s \mapsto P_s \in O[g_1]$ such that 
$P=\sum_{s \in S}T_{\tilde s}(P_s)$. Then 
$$
\lambda+\sum_{s \in S}T_{\tilde s}(\partial(P_s)+t_s)=0
$$
(equality in $O_S[g_1] \subset \mathcal O_{mer}(\mathbb C,p^{-1}(S))$). 

For each $s \in \tilde S$, the equality $T_{\tilde s}(\partial(P_s)+t_s)=-\lambda-\sum_{s'\in S\smallsetminus\{s\}}T_{\tilde s'}(\partial(P_s')+t_s')$ implies that the set of poles of $T_{\tilde s}(\partial(P_s)+t_s)$ 
is contained in $p^{-1}(s) \cap p^{-1}(S\smallsetminus\{s\})$, which is empty; so 
$T_{\tilde s}(\partial(P_s)+t_s)$ is an element of $O[g_1]$ which is holomorphic on $\mathbb C$, therefore is 
constant by Lem.~\ref{NEWLEM}. Therefore there exists a map $S \ni s \mapsto \lambda_s \in \mathbb C$ 
such that $T_{\tilde s}(\partial(P_s)+t_s)=\lambda_s$  for any $s \in S$, and therefore 
$$
\lambda+\sum_{s \in S}\lambda_s=0.   
$$
Then $\partial(P_s)=T_{\tilde s}^{-1}(\lambda_s)-t_s=\lambda_s-t_s$ for any $s \in S$. By 
Lem. \ref{lem:5:8:0702}(c), one has for any $s \in S$, 
$t_s-\lambda_s=0$ and $\partial(P_s)=0$. Since $t_s\in \mathrm{Span}\{g_n|n>0\}$ and by Lem. \ref{lem:5:8:0702}(b), 
one derives $t_s=\lambda_s=0$. Therefore $\partial(P)=\sum_s \partial(P_s)=0$, 
$\lambda=\sum_s \lambda_s=0$, which implies the injectivity.  
\end{proof}

\subsection{The equality $\mathcal G=A_{\mathcal E_S}$}\label{sect:4:6:2704}

\begin{lem}\label{lem:5:14:0603}
    $\mathcal G$ is equal to the $O_S[g_1]$-submodule of $\mathcal O_{hol}(\tilde{\mathcal E}_S)$ generated by the functions 
$\GLarg{n_1 &n_2 &\ldots &n_r}{a_1 &a_2 &\ldots &a_r}{-}$, where $r\geq 0$, $n_1,\ldots ,n_r\geq 0$ and 
$a_1,\ldots ,a_r\in\tilde S$, namely 
$$
\mathcal G=\sum_{\substack{r\geq 0,\, n_1,\ldots ,n_r\geq 0,\\ a_1,\ldots ,a_r\in \tilde S}}O_S[g_1]\cdot \GLarg{n_1 &n_2 &\ldots &n_r}{a_1 &a_2 &\ldots &a_r}{-}. 
$$
\end{lem}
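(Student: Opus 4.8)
The plan is to prove the two inclusions separately. Write $M$ for the right-hand side, i.e.\ the $O_S[g_1]$-submodule of $\mathcal O_{hol}(\tilde{\mathcal E}_S)$ generated by the functions $\GLarg{n_1 &\ldots &n_r}{a_1 &\ldots &a_r}{-}$. The inclusion $M\subset\mathcal G$ is immediate from Def.~\ref{defn:mathcalG:2704}: $\mathcal G$ is a subalgebra containing both $O_S[g_1]$ and all the functions $\GLarg{n_1 &\ldots &n_r}{a_1 &\ldots &a_r}{-}$, hence it contains every $O_S[g_1]$-linear combination of the latter. For the reverse inclusion $\mathcal G\subset M$ I would use that $\mathcal G$ is, by definition, the \emph{smallest} subalgebra of $\mathcal O_{hol}(\tilde{\mathcal E}_S)$ containing $O_S[g_1]$ and the functions $\GLarg{n_1 &\ldots &n_r}{a_1 &\ldots &a_r}{-}$; thus it suffices to check that $M$ is itself a unital subalgebra containing these generators.

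First I would note that $M$ contains the unit and $O_S[g_1]$: the empty word gives $\GLarg{}{}{-}=1$, so $O_S[g_1]=O_S[g_1]\cdot 1\subset M$, and each $\GLarg{n_1 &\ldots &n_r}{a_1 &\ldots &a_r}{-}=1\cdot\GLarg{n_1 &\ldots &n_r}{a_1 &\ldots &a_r}{-}$ lies in $M$. The only real content is that $M$ is closed under multiplication, and here the key input is Lem.~\ref{lem410:0805}(d): $k_{\overrightarrow 0}:\mathrm{Sh}(V)\to\mathcal O_{hol}(\tilde{\mathcal E}_S)$ is an algebra morphism. By Def.~\ref{def:3:2:1004} the functions $\GLarg{n_1 &\ldots &n_r}{a_1 &\ldots &a_r}{-}$ are precisely the images under $k_{\overrightarrow 0}$ of the standard basis words $[\left(\begin{smallmatrix}n_1\\a_1\end{smallmatrix}\right)|\ldots|\left(\begin{smallmatrix}n_r\\a_r\end{smallmatrix}\right)]$ of $\mathrm{Sh}(V)$; hence their $\mathbb C$-linear span equals $k_{\overrightarrow 0}(\mathrm{Sh}(V))$, which is a subalgebra of $\mathcal O_{hol}(\tilde{\mathcal E}_S)$. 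Concretely, the product of two such functions is $k_{\overrightarrow 0}$ applied to a shuffle product of basis words, which is an integral linear combination of basis words, hence again a $\mathbb C$-linear combination of functions $\GLarg{n_1 &\ldots &n_r}{a_1 &\ldots &a_r}{-}$. Writing two arbitrary elements of $M$ as $\sum_i f_i\gamma_i$ and $\sum_j f'_j\gamma'_j$ with $f_i,f'_j\in O_S[g_1]$ and $\gamma_i,\gamma'_j$ functions $\GLarg{n_1 &\ldots &n_r}{a_1 &\ldots &a_r}{-}$, their product expands as $\sum_{i,j}(f_if'_j)(\gamma_i\gamma'_j)$; each $f_if'_j$ lies in the algebra $O_S[g_1]$ and each $\gamma_i\gamma'_j$ is a $\mathbb C$-linear combination of functions $\GLarg{n_1 &\ldots &n_r}{a_1 &\ldots &a_r}{-}$, so the product lies in $M$. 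Thus $M\cdot M\subset M$, $M$ is a unital subalgebra containing $O_S[g_1]$ and all the $\GLarg{n_1 &\ldots &n_r}{a_1 &\ldots &a_r}{-}$, whence $\mathcal G\subset M$ and finally $\mathcal G=M$.

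There is no genuine obstacle in this argument; it is essentially (multi)linear bookkeeping once the right structural fact is isolated. The single point worth emphasizing, and the one I would foreground, is that because $k_{\overrightarrow 0}$ is an algebra morphism, the $\mathbb C$-span of the $\GLarg{n_1 &\ldots &n_r}{a_1 &\ldots &a_r}{-}$ is already multiplicatively closed, so enlarging it to an $O_S[g_1]$-module automatically produces a subalgebra rather than merely a module; this is what collapses the a priori larger algebra generated by $O_S[g_1]$ and the $\tilde\Gamma$-functions down to the module $M$.
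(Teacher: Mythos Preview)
Your proof is correct and follows exactly the same approach as the paper: the paper's one-line proof simply invokes ``the shuffle identity satisfied by the functions $\tilde\Gamma$, which follows from the algebra morphism status of $k_{\overrightarrow 0}$,'' and you have unpacked precisely this argument by showing that the $O_S[g_1]$-module $M$ is already a subalgebra because products of $\tilde\Gamma$-functions are $\mathbb C$-linear combinations of $\tilde\Gamma$-functions. The only difference is that you spell out the two inclusions and the multiplicative closure explicitly, which the paper leaves implicit.
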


\begin{proof}
This follows from the shuffle identity satisfied by the functions $\tilde\Gamma$, which follows from the algebra morphism status of $k_{\overrightarrow 0}$. 
\end{proof}

Set $\mathrm{int}:=\mathrm{int}_{\omega_0}$, where $\omega_0:=dz$. Then $\mathrm{int}$ is the endomorphism of 
$\mathcal O_{hol}(\tilde{\mathcal E}_S)$ given by $f\mapsto [z\mapsto \int_{z_0}^z f\omega_0]$, where $z_0$ is fixed in 
$\tilde{\mathcal E}_S$. 

\begin{defn}
For $r\geq0$, let us set 
$$
F_r(\mathcal G):=\sum_{r'\leq r}\sum_{((n_1,a_1),\ldots,(n_{r'},a_{r'}))\in(\mathbb Z_{\geq0}\times\tilde S)^{r'}}
O_S[g_1]\cdot \GLarg{n_1 &n_2 &\ldots &n_{r'}}{a_1 &a_2 &
\ldots &a_{r'}}{-}. 
$$
\end{defn}
One checks that $F_\bullet(\mathcal G)$ is an algebra filtration of $\mathcal G$.

\begin{lem}\label{lem:b:1104}
For any $r,n,n_1,\ldots,n_r\geq 0$, $a_1,\ldots,a_r\in \tilde S$ and $s\in S$, one has 
$$
\mathrm{int}(T_{\tilde s}(g_n)\cdot \GLarg{n_1 &n_2 &\ldots &n_{r}}{a_1 &a_2 &
\ldots &a_{r}}{-})\in F_{r+1}(\mathcal G). 
$$
\end{lem}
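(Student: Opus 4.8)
The plan is to recognise this integral, up to an additive constant, as one of the functions $\tilde\Gamma$. Write $\tilde\Gamma_r:=\GLarg{n_1 &\ldots &n_r}{a_1 &\ldots &a_r}{-}$ for short. Since $s\in S$ one has $\tilde s\in\tilde S$ by Def.~\ref{def:tildeS:3006}, so $(n,\tilde s)\in\mathbb Z_{\geq 0}\times\tilde S$ and, per Def.~\ref{def:3:2:1004}, the function $\GLarg{n_1 &\ldots &n_r &n}{a_1 &\ldots &a_r &\tilde s}{-}$ is defined; by the very definition of $F_\bullet(\mathcal G)$ it lies in $F_{r+1}(\mathcal G)$.

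I would first apply the differential equation \eqref{DE:1104} with the parameter sequence $(n_1,a_1),\ldots,(n_r,a_r),(n,\tilde s)$, which yields
$$
d(\GLarg{n_1 &\ldots &n_r &n}{a_1 &\ldots &a_r &\tilde s}{-})=T_{\tilde s}(g_n)\cdot\tilde\Gamma_r\cdot dz .
$$
On the other hand $T_{\tilde s}(g_n)\in\mathcal O_{mer}(\mathbb C,pr^{-1}(S))\subset\mathcal O_{hol}(\mathbb C\smallsetminus pr^{-1}(S))$ pulls back to an element of $\mathcal O_{hol}(\tilde{\mathcal E}_S)$, so $\mathrm{int}(T_{\tilde s}(g_n)\cdot\tilde\Gamma_r)$ is well defined and, by the definition of $\mathrm{int}=\mathrm{int}_{\omega_0}$ in \S\ref{sect:21:2704},
$$
d(\mathrm{int}(T_{\tilde s}(g_n)\cdot\tilde\Gamma_r))=T_{\tilde s}(g_n)\cdot\tilde\Gamma_r\cdot p^*(\omega_0)=T_{\tilde s}(g_n)\cdot\tilde\Gamma_r\cdot dz ,
$$
since $\omega_0=dz$ and, after the identifications of \S\ref{sect:3:1:2606}, $p^*(dz)$ is the one-form $dz$ appearing in \eqref{DE:1104}. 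Comparing the two displays, the difference $\mathrm{int}(T_{\tilde s}(g_n)\cdot\tilde\Gamma_r)-\GLarg{n_1 &\ldots &n_r &n}{a_1 &\ldots &a_r &\tilde s}{-}$ is a holomorphic function on $\tilde{\mathcal E}_S$ with vanishing differential, hence a constant $c$, since $\tilde{\mathcal E}_S$ is connected.

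Finally, since $c\in\mathbb C\subset O_S[g_1]\subset F_0(\mathcal G)\subseteq F_{r+1}(\mathcal G)$ and $\GLarg{n_1 &\ldots &n_r &n}{a_1 &\ldots &a_r &\tilde s}{-}\in F_{r+1}(\mathcal G)$, I would conclude that $\mathrm{int}(T_{\tilde s}(g_n)\cdot\tilde\Gamma_r)\in F_{r+1}(\mathcal G)$, which is the assertion. The argument is essentially immediate once \eqref{DE:1104} is available; the only points deserving attention are the bookkeeping of the integration constant $c$---harmless since constants lie at the bottom of $F_\bullet(\mathcal G)$---and the verification that $\omega_0=dz$ is precisely the form which, inserted in the last slot, sends $\tilde\Gamma_r$ to $\GLarg{n_1 &\ldots &n_r &n}{a_1 &\ldots &a_r &\tilde s}{-}$ via \eqref{DE:1104}. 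I do not expect a genuine obstacle here.
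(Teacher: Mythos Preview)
Your proof is correct and follows essentially the same approach as the paper: compare differentials via \eqref{DE:1104}, conclude the two functions differ by a constant, and observe that constants lie in $F_0(\mathcal G)$. The only cosmetic difference is that the paper pins down the constant explicitly as $-\GLarg{n_1 &\ldots &n_r &n}{a_1 &\ldots &a_r &\tilde s}{z_0}$ by noting that $\mathrm{int}(\cdots)$ vanishes at $z_0$, thereby obtaining the closed identity \eqref{eq:int:T*Gamma:1104}; your argument bypasses this identification, which is harmless for the stated conclusion.
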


\begin{proof}
Both $\mathrm{int}(T_{\tilde s}(g_n)\cdot \GLarg{n_1 &n_2 &\ldots &n_{r}}{a_1 &a_2 &
\ldots &a_{r}}{-})$ and $\GLarg{n_1 &n_2 &\ldots &n_{r}&n}{a_1 &a_2 &
\ldots &a_{r}&\tilde s}{-}-\GLarg{n_1 &n_2 &\ldots &n_{r}&n}{a_1 &a_2 &
\ldots &a_{r}&\tilde s}{z_0}$ are elements of $\mathcal O_{hol}(\tilde{\mathcal E}_S)$.
By \eqref{DE:1104} their images by $d : \mathcal O_{hol}(\tilde{\mathcal E}_S)\to\Omega_{hol}(\tilde{\mathcal E}_S)$ 
coincide, and they both vanish at $z_0$. It follows that  
\begin{equation}\label{eq:int:T*Gamma:1104}
\mathrm{int}(T_{\tilde s}(g_n)\cdot \GLarg{n_1 &n_2 &\ldots &n_{r}}{a_1 &a_2 &
\ldots &a_{r}}{-})=\GLarg{n_1 &n_2 &\ldots &n_{r}&n}{a_1 &a_2 &
\ldots &a_{r}&\tilde s}{-}-\GLarg{n_1 &n_2 &\ldots &n_{r}&n}{a_1 &a_2 &
\ldots &a_{r}&\tilde s}{z_0}. 
\end{equation}
This identity implies the statement. 
\end{proof}

\begin{lem}\label{lem:a:1104}
For any $r,n_1,\ldots,n_r\geq 0$ and $a_1,\ldots,a_r\in \tilde S$, one has 
$$
\mathrm{int}(\partial(O_S[g_1])\cdot \GLarg{n_1 &n_2 &\ldots &n_{r}}{a_1 &a_2 &
\ldots &a_{r}}{-})\subset F_{r}(\mathcal G). 
$$
\end{lem}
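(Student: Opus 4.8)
The plan is to prove the statement by induction on $r$, the inductive step being an integration by parts based on \eqref{DE:1104} combined with the direct sum decomposition of $O_S[g_1]$ provided by Lem.~\ref{NEWNEWLEM} and the already-established Lem.~\ref{lem:b:1104}. By linearity of $\mathrm{int}$ it suffices to show $\mathrm{int}(\partial(h)\cdot\tilde\Gamma_r)\in F_r(\mathcal G)$ for every $h\in O_S[g_1]$, where $\tilde\Gamma_r$ abbreviates $\GLarg{n_1 &\ldots &n_r}{a_1 &\ldots &a_r}{-}$. For the base case $r=0$ one has $\tilde\Gamma_0=1$, and since $\tilde{\mathcal E}_S$ is simply connected the fundamental theorem of calculus gives $\mathrm{int}(\partial(h))=h-h(z_0)$, which lies in $O_S[g_1]=F_0(\mathcal G)$.

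For the inductive step, fix $r\geq1$ and write $\tilde\Gamma_{r-1}:=\GLarg{n_1 &\ldots &n_{r-1}}{a_1 &\ldots &a_{r-1}}{-}$. Applying the Leibniz rule to $h\,\tilde\Gamma_r$ and using \eqref{DE:1104} one gets $d(h\,\tilde\Gamma_r)=\partial(h)\,\tilde\Gamma_r\,dz+h\,T_{a_r}(g_{n_r})\,\tilde\Gamma_{r-1}\,dz$; applying $\mathrm{int}$ and the fundamental theorem of calculus then yields
\[
\mathrm{int}(\partial(h)\,\tilde\Gamma_r)=h\,\tilde\Gamma_r-(h\,\tilde\Gamma_r)(z_0)-\mathrm{int}\big(h\,T_{a_r}(g_{n_r})\,\tilde\Gamma_{r-1}\big).
\]
The term $h\,\tilde\Gamma_r$ lies in $O_S[g_1]\cdot\tilde\Gamma_r\subset F_r(\mathcal G)$ and the constant $(h\,\tilde\Gamma_r)(z_0)$ lies in $\mathbb C\subset F_0(\mathcal G)$, so everything comes down to showing $\mathrm{int}(h\,T_{a_r}(g_{n_r})\,\tilde\Gamma_{r-1})\in F_r(\mathcal G)$. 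Here I would first record that $h\,T_{a_r}(g_{n_r})\in O_S[g_1]$: indeed $g_{n_r}\in O[g_1]$ by Lem.~\ref{lem:5:8:0702}(a), hence $T_{a_r}(g_{n_r})\in T_{a_r}(O[g_1])\subset O_S[g_1]$ by Lem.~\ref{lem:11:0504} (as $a_r\in\tilde S$), and $O_S[g_1]$ is an algebra.

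It thus suffices to prove $\mathrm{int}(O_S[g_1]\cdot\tilde\Gamma_{r-1})\subset F_r(\mathcal G)$. For this I would decompose any element of $O_S[g_1]$ according to Lem.~\ref{NEWNEWLEM} as $\lambda\cdot 1+\sum_{s\in S}T_{\tilde s}(t_s)+\partial(P)$ with $\lambda\in\mathbb C$, $t_s\in\mathrm{Span}\{g_n\mid n>0\}$ and $P\in O_S[g_1]$, and apply $\mathrm{int}(-\cdot\tilde\Gamma_{r-1})$ summand by summand: the contribution of $\lambda\cdot 1$ (which equals $\lambda\,T_{\tilde s}(g_0)$ for any $s\in S$) and of each $T_{\tilde s}(t_s)$ lies in $F_{(r-1)+1}(\mathcal G)=F_r(\mathcal G)$ by Lem.~\ref{lem:b:1104}, while the contribution of $\partial(P)$ lies in $F_{r-1}(\mathcal G)\subset F_r(\mathcal G)$ by the inductive hypothesis applied to $\tilde\Gamma_{r-1}$. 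Since $\mathrm{int}$ is linear and $F_r(\mathcal G)$ is a subspace, the claim follows, and with it the inductive step. The one point requiring care is the apparent circularity: integration by parts does not directly reduce the depth-$r$ case to the depth-$(r-1)$ case, but only to controlling $\mathrm{int}$ of an element of $O_S[g_1]$ against $\tilde\Gamma_{r-1}$; this is precisely resolved by Lem.~\ref{NEWNEWLEM}, which splits such an element into a part where the depth genuinely rises by one (handled by Lem.~\ref{lem:b:1104}, gaining one filtration degree) and a part of the form $\partial(P)$ to which the induction hypothesis applies. Everything else is routine tracking of filtration degrees.
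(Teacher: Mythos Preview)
Your proof is correct and follows essentially the same route as the paper's: induction on $r$, the same base case, the same integration-by-parts identity from \eqref{DE:1104}, and the same splitting of the resulting $O_S[g_1]$-coefficient into a $\partial$-exact part (handled by induction) and a span-of-$T_{\tilde s}(g_n)$ part (handled by Lem.~\ref{lem:b:1104}). The only cosmetic difference is that you invoke the direct-sum decomposition of Lem.~\ref{NEWNEWLEM}, whereas the paper uses the weaker spanning statement of Lem.~\ref{lem:13:1004}; since only surjectivity of the sum map is needed here, the paper's citation is the more economical one, but yours is equally valid.
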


\begin{proof}
By induction on $r$. For $r=0$, and $f\in O_S[g_1]$, one has $\mathrm{int}(\partial(f))=f-f(z_0)\in F_0(\mathcal G)$. 
Assume the statement at step $r-1$. 
For $f\in O_S[g_1]$, $n_1,\ldots,n_r\geq 0$ and $a_1,\ldots,a_r\in \tilde S$, 
integration by parts together with \eqref{DE:1104} yields  
\begin{align*}
&\mathrm{int}(\partial(f)\cdot \GLarg{n_1 &n_2 &\ldots &n_{r}}{a_1 &a_2 &
\ldots &a_{r}}{-})
\\ & =f\cdot \GLarg{n_1 &n_2 &\ldots &n_{r}}{a_1 &a_2 &
\ldots &a_{r}}{-}-f(z_0)\cdot \GLarg{n_1 &n_2 &\ldots &n_{r}}{a_1 &a_2 &
\ldots &a_{r}}{z_0}-\mathrm{int}(f\cdot T_{a_r}(g_{n_r})\cdot \GLarg{n_1 &n_2 &\ldots &n_{r-1}}{a_1 &a_2 &
\ldots &a_{r-1}}{-}).  
\end{align*}
Note that $f\cdot \GLarg{n_1 &n_2 &\ldots &n_{r}}{a_1 &a_2 &
\ldots &a_{r}}{-}\in F_r(\mathcal G)$ and $f(z_0)\cdot \GLarg{n_1 &n_2 &\ldots &n_{r}}{a_1 &a_2 &
\ldots &a_{r}}{z_0}\in\mathbb C\subset F_0(\mathcal G)$. 
Moreover, $f\in O_S[g_1]$ and $T_{a_r}(g_{n_r})\in O_S[g_1]$ by Lem. \ref{lem:13:1004}, which implies that 
$f\cdot T_{a_r}(g_{n_r})\in O_S[g_1]$.  By Lem.~\ref{lem:13:1004}, one may then decompose
$f\cdot T_{a_r}(g_{n_r})$ as $\partial(g)+\sum_{(n,a)\in\mathbb Z_{\geq0}\times\tilde S}
\lambda_{n,a}T_a(g_n)$, with $g\in O_S[g_1]$ and $\lambda_{n,a}\in\mathbb C$. 
Then 
\begin{align*}
&\mathrm{int}(f\cdot T_{a_r}(g_{n_r})\cdot \GLarg{n_1 &n_2 &\ldots &n_{r-1}}{a_1 &a_2 &
\ldots &a_{r-1}}{-})
\\ & =\mathrm{int}(\partial(g)\cdot \GLarg{n_1 &n_2 &\ldots &n_{r-1}}{a_1 &a_2 &
\ldots &a_{r-1}}{-})+\sum_{(n,a)\in\mathbb Z_{\geq0}\times\tilde S}
\lambda_{n,a} \mathrm{int}(T_a (g_n)\cdot \GLarg{n_1 &n_2 &\ldots &n_{r-1}}{a_1 &a_2 &
\ldots &a_{r-1}}{-}). 
\end{align*}
By the induction assumption, $\mathrm{int}(\partial(g)\cdot \GLarg{n_1 &n_2 &\ldots &n_{r-1}}{a_1 &a_2 &
\ldots &a_{r-1}}{-})$ belongs to $F_{r-1}(\mathcal G)$. Moreover, for each $(n,a)$, 
$ \mathrm{int}(T_a (g_n)\cdot \GLarg{n_1 &n_2 &\ldots &n_{r-1}}{a_1 &a_2 &
\ldots &a_{r-1}}{-})\in F_r(\mathcal G)$ by Lem. \ref{lem:b:1104}. 
All this implies $\mathrm{int}(\partial(f)\cdot \GLarg{n_1 &n_2 &\ldots &n_{r}}{a_1 &a_2 &
\ldots &a_{r}}{-})\in F_r(\mathcal G)$, which concludes the inductive argument. 
\end{proof}

\begin{prop}\label{prop:16:1104}
    For any $z_0\in\tilde{\mathcal E}_S$, the linear endomorphism $\mathrm{int}$ of $\mathcal O_{hol}(\tilde{\mathcal E}_S)$ 
    maps $\mathcal G$ to itself. 
\end{prop}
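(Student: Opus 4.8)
The plan is to combine the description of $\mathcal G$ as an $O_S[g_1]$-module from Lem.~\ref{lem:5:14:0603} with the direct sum decomposition of $O_S[g_1]$ supplied by Lem.~\ref{NEWNEWLEM}, thereby reducing the statement to the two computational inputs Lems.~\ref{lem:b:1104} and~\ref{lem:a:1104}.

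First I would reduce to generators. By Lem.~\ref{lem:5:14:0603}, $\mathcal G$ is spanned over $\mathbb C$ by the elements $f\cdot \GLarg{n_1 &\ldots &n_r}{a_1 &\ldots &a_r}{-}$ with $f\in O_S[g_1]$, $r\geq 0$, $n_i\geq 0$ and $a_i\in\tilde S$. Since $\mathrm{int}$ is linear, it suffices to show that each such element is mapped into $\mathcal G$; in fact I would aim for the sharper statement that it is mapped into $F_{r+1}(\mathcal G)$, which is also the natural input for the subsequent analysis of the filtration $F_\bullet(\mathcal G)$.

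Next, applying Lem.~\ref{NEWNEWLEM} to the coefficient $f$, I would write $f=\lambda\cdot 1+\sum_{s\in S}T_{\tilde s}(t_s)+\partial(h)$ with $\lambda\in\mathbb C$, each $t_s\in\mathrm{Span}_{\mathbb C}\{g_n\,|\,n>0\}$ and $h\in O_S[g_1]$, and split $\mathrm{int}\big(f\cdot \GLarg{n_1 &\ldots &n_r}{a_1 &\ldots &a_r}{-}\big)$ into three summands accordingly. The term coming from $\lambda\cdot 1$ equals $\lambda\,\mathrm{int}\big(\GLarg{n_1 &\ldots &n_r}{a_1 &\ldots &a_r}{-}\big)$; writing $1=g_0=T_0(g_0)$ and using that $0\in\tilde S$ (because $pr(0)\in S$), Lem.~\ref{lem:b:1104} with $n=0$ and $\tilde s=0$ places it in $F_{r+1}(\mathcal G)$. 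For the terms coming from $T_{\tilde s}(t_s)$, I would expand each $t_s$ as a finite $\mathbb C$-linear combination of the $g_n$ with $n>0$ and apply Lem.~\ref{lem:b:1104} term by term, so that these summands lie in $F_{r+1}(\mathcal G)$ as well. For the term coming from $\partial(h)$, Lem.~\ref{lem:a:1104} gives directly $\mathrm{int}\big(\partial(h)\cdot \GLarg{n_1 &\ldots &n_r}{a_1 &\ldots &a_r}{-}\big)\in F_r(\mathcal G)$. Summing the three pieces yields $\mathrm{int}\big(f\cdot \GLarg{n_1 &\ldots &n_r}{a_1 &\ldots &a_r}{-}\big)\in F_{r+1}(\mathcal G)\subset\mathcal G$, which proves $\mathrm{int}(\mathcal G)\subset\mathcal G$ for the base point $z_0$ fixed in the definition of $\mathrm{int}$. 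To pass to an arbitrary base point $z_0'\in\tilde{\mathcal E}_S$, I would observe that $\mathrm{int}_{z_0'}$ and $\mathrm{int}_{z_0}$ differ by the map $f\mapsto -\big(\int_{z_0}^{z_0'}f\,\omega_0\big)\cdot 1$, i.e. by a scalar-valued functional followed by multiplication by $1\in O_S\subset\mathcal G$, so that $\mathrm{int}_{z_0'}$ preserves $\mathcal G$ as soon as $\mathrm{int}_{z_0}$ does.

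I do not expect a genuine obstacle at this stage: the substantive work has been front-loaded into Lems.~\ref{lem:b:1104} and~\ref{lem:a:1104}, whose proofs rely on the differential equation~\eqref{DE:1104} and on integration by parts. The only points requiring a little care are applying the decomposition of Lem.~\ref{NEWNEWLEM} to the coefficient $f$ rather than to the whole product, absorbing the constant term $\lambda\cdot 1$ into a $g_0$-contribution with $\tilde s=0$ so that Lem.~\ref{lem:b:1104} is applicable, and the routine bookkeeping needed to check that each of the three pieces lands in $F_{r+1}(\mathcal G)$ rather than merely in $\mathcal G$.
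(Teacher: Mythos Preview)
Your proposal is correct and follows essentially the same route as the paper: decompose the $O_S[g_1]$-coefficient of each $\tilde\Gamma$-generator into a $\partial$-exact part and a span of $T_{\tilde s}(g_n)$'s, then apply Lems.~\ref{lem:a:1104} and~\ref{lem:b:1104} respectively. The paper organises this as an induction on $r$ using Lem.~\ref{lem:13:1004} (spanning only), whereas you apply the two lemmas directly via the finer decomposition of Lem.~\ref{NEWNEWLEM}; your absorption of the constant $\lambda$ into a $T_0(g_0)$-term is a neat way to avoid the paper's separate base case, and your base-point remark, while not strictly needed (the two lemmas already hold for arbitrary $z_0$), is harmless.
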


\begin{proof}
For $r\geq 0$, set 
$$
\mathrm{gr}_r(\mathcal G):=
\sum_{\substack{n_1,\ldots ,n_{r}\geq 0,\\ a_1,\ldots ,a_{r}\in \tilde S}}
O_S[g_1]\cdot \GLarg{n_1 &n_2 &\ldots &n_{r}}{a_1 &a_2 &
\ldots &a_{r}}{-}. 
$$
One then has $F_r(\mathcal G)=\sum_{r'\in[\![0,r]\!]}\mathrm{gr}_{r'}(\mathcal G)$. 
We will prove the inclusion $\mathrm{int}(F_r(\mathcal G))\subset F_{r+1}(\mathcal G)$ by induction on $r\geq0$. 
Let us first prove that $\mathrm{int}(F_0(\mathcal G))\subset F_1(\mathcal G)$. 
 
For $f\in O_S[g_1]$, one has $\mathrm{int}(\partial(f))=f-f(z_0)\in O_S[g_1]$, which 
implies that
\begin{equation}\label{1653:1004}
\mathrm{int}(\partial(O_S[g_1]))\subset O_S[g_1]\in F_0(\mathcal G).     
\end{equation}
It follows from \eqref{eq:int:T*Gamma:1104} with $r=0$ that, for $s\in S$ and $n\geq0$, one has 
$\mathrm{int}(T_{\tilde s}(g_n))=\GLarg{n}{\tilde s}{-}-\GLarg{n}{\tilde s}{z_0}$, 
which implies that
\begin{equation}\label{1656:1004}
    \mathrm{int}(T_{\tilde s}(g_n))\in F_1(\mathcal G).
\end{equation} 
Then 
\begin{align*}
& \mathrm{int}(F_0(\mathcal G))=\mathrm{int}(O_S[g_1])=\mathrm{int}(\partial(O_S[g_1])+\sum_{s\in S}T_{\tilde s}(\mathrm{Span}_{\mathbb C}\{g_n\,|\,n\geq0\}))
\\ & =\mathrm{int}(\partial(O_S[g_1]))+\sum_{s\in S,n\geq0}\mathbb C\cdot \mathrm{int}(T_{\tilde s}(g_n))
\subset F_0(\mathcal G)+\sum_{s\in S,n\geq0}F_1(\mathcal G)=F_1(\mathcal G),
\end{align*}
where the first equality follows from $F_0(\mathcal G)=O_S[g_1]$, 
the second equality follows from  Lem. \ref{lem:13:1004} and the inclusion follows from \eqref{1653:1004} and \eqref{1656:1004}. 
This proves the initial step of the induction. 

Assume that $r\geq 1$ and that $\mathrm{int}(F_{r-1}(\mathcal G))\subset F_r(\mathcal G)$. Let us prove that 
$\mathrm{int}(F_r(\mathcal G))\subset F_{r+1}(\mathcal G)$. One has $F_{r}(\mathcal G)=F_{r-1}(\mathcal G)
+\mathrm{gr}_r(\mathcal G)$, so by the induction assumption it suffices to prove that 
$\mathrm{int}(\mathrm{gr}_{r}(\mathcal G))\subset F_{r+1}(\mathcal G)$.

Combining Lem. \ref{lem:13:1004} with the definition of $\mathrm{gr}_{r}(\mathcal G)$, one finds  
$$
\mathrm{gr}_{r}(\mathcal G)=
\sum_{\substack{n_1,\ldots ,n_{r}\geq 0,\\ a_1,\ldots ,a_{r}\in \tilde S}}
\partial(O_S[g_1])\cdot \GLarg{n_1 &n_2 &\ldots &n_{r}}{a_1 &a_2 &
\ldots &a_{r}}{-}
+\sum_{n\geq 0,s\in S}\sum_{\substack{n_1,\ldots ,n_{r}\geq 0,\\ a_1,\ldots ,a_{r}\in \tilde S}}
\mathbb C\cdot T_{\tilde s}(g_n)\cdot \GLarg{n_1 &n_2 &\ldots &n_{r}}{a_1 &a_2 &
\ldots &a_{r}}{-},    
$$
which implies that
\begin{align*}
\mathrm{int}(\mathrm{gr}_{r}(\mathcal G))&=\sum_{\substack{n_1,\ldots ,n_{r}\geq 0,\\ a_1,\ldots ,a_{r}\in \tilde S}}
\mathrm{int}(\partial(O_S[g_1])\cdot \GLarg{n_1 &n_2 &\ldots &n_{r}}{a_1 &a_2 &
\ldots &a_{r}}{-})
\\ &+\sum_{n\geq 0,s\in S}\sum_{\substack{n_1,\ldots ,n_{r}\geq 0,\\ a_1,\ldots ,a_{r}\in \tilde S}}
\mathbb C\cdot \mathrm{int}(T_{\tilde s}(g_n)\cdot \GLarg{n_1 &n_2 &\ldots &n_{r}}{a_1 &a_2 &
\ldots &a_{r}}{-}),    
\end{align*}
It follows from Lem. \ref{lem:a:1104} (resp. Lem. \ref{lem:b:1104}) that the first (resp. second) summand of the 
right-hand side of this equality is contained in $F_r(\mathcal G)$ (resp. $F_{r+1}(\mathcal G)$), which implies that 
$\mathrm{int}(\mathrm{gr}_{r}(\mathcal G))\subset F_{r+1}(\mathcal G)$. 
\end{proof}

\begin{prop}\label{lem:17:1704}
The subspace $\mathcal G$ of $\mathcal O_{hol}(\tilde{\mathcal E}_S)$ is stable under the endomorphism $\mathrm{int}_\omega$ for any
$\omega \in \Omega(\mathcal E_S)$. 
\end{prop}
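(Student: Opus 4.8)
The plan is to deduce the general case from the already-settled special case $\omega=\omega_0=dz$ of Prop.~\ref{prop:16:1104}. The first step is to recall that, on the genus-one curve $\mathcal E_S$, the regular differential $dz$ trivialises the sheaf of differentials; concretely, the map $f\mapsto f\cdot dz$ is a linear isomorphism $O_S\to\Omega(\mathcal E_S)$, as observed and used in the proof of Lem.~\ref{lem:0307}. Hence every $\omega\in\Omega(\mathcal E_S)$ is of the form $\omega=f\cdot dz$ for a unique $f\in O_S$.

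The second step is to rewrite $\mathrm{int}_\omega$ in terms of $\mathrm{int}=\mathrm{int}_{\omega_0}$ for such an $\omega$. Since $p$ factors through $\varpi$, one has $p^*(\omega)=p^*(f)\cdot p^*(dz)=p^*(f)\cdot p^*(\omega_0)$, and $p^*(f)\in\mathcal O_{hol}(\tilde{\mathcal E}_S)$ is nothing but the image of $f\in O_S$ under the embedding $\varpi^*:\mathcal O_{hol}(\mathbb C\smallsetminus pr^{-1}(S))\to\mathcal O_{hol}(\tilde{\mathcal E}_S)$ used in Def.~\ref{defn:mathcalG:2704} to view $O_S[g_1]$, and in particular $O_S$, as a subalgebra of $\mathcal O_{hol}(\tilde{\mathcal E}_S)$ --- hence as a subalgebra of $\mathcal G$. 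Consequently, for any $h\in\mathcal O_{hol}(\tilde{\mathcal E}_S)$ one has
\[
\mathrm{int}_\omega(h)=\Bigl[z\mapsto\int_{z_0}^z h\cdot p^*(f)\cdot p^*(\omega_0)\Bigr]=\mathrm{int}\bigl(p^*(f)\cdot h\bigr),
\]
where on the right $p^*(f)$ is regarded as the above element of $O_S\subset\mathcal G$.

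The third and final step is immediate: if $h\in\mathcal G$, then since $f\in O_S\subset\mathcal G$ and $\mathcal G$ is a subalgebra, $p^*(f)\cdot h\in\mathcal G$, whence $\mathrm{int}_\omega(h)=\mathrm{int}\bigl(p^*(f)\cdot h\bigr)\in\mathcal G$ by Prop.~\ref{prop:16:1104}. As $\omega$ ranges over all of $\Omega(\mathcal E_S)$, this proves the stability of $\mathcal G$ under every $\mathrm{int}_\omega$. I do not expect a genuine obstacle here: the only point requiring care is the bookkeeping identifying $p^*(f)$, for $f\in O_S$, with the corresponding element of $\mathcal G$, which is exactly the convention fixed when $O_S[g_1]$ was embedded via $\varpi^*$. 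All the substantive content has already been carried out in Prop.~\ref{prop:16:1104} --- and thus in Lems.~\ref{lem:13:1004}, \ref{lem:b:1104} and \ref{lem:a:1104}. Together with the inclusion $\mathcal G\subset A_{\mathcal E_S}$ of Cor.~\ref{cor:1704} and the minimality of $A_{\mathcal E_S}$ among stable subalgebras, this Proposition will yield the equality $\mathcal G=A_{\mathcal E_S}$ asserted in Thm.~\ref{THMEQ}.
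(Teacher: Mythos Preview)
Your proof is correct and follows essentially the same approach as the paper: write $\omega=f_\omega\cdot dz$ with $f_\omega\in O_S$, observe that $\mathrm{int}_\omega(h)=\mathrm{int}(f_\omega\cdot h)$, and conclude by the $O_S$-module structure of $\mathcal G$ together with Prop.~\ref{prop:16:1104}. The paper's proof is slightly terser (it invokes that $dz$ is nowhere vanishing rather than the isomorphism from Lem.~\ref{lem:0307}, and cites the $O_S$-module structure directly), but the content is identical.
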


\begin{proof}
 Since $\omega_0=dz$ is a nowhere vanishing holomorphic differential, there exists 
 $f_\omega \in O_S$ such that $\omega=f_\omega\cdot \omega_0$. Then for $f \in \mathcal G$, one has 
 $\mathrm{int}_\omega(f)=\mathrm{int}(f_\omega\cdot f) \in \mathcal G$, where the equality follows from $\omega=f_\omega\cdot \omega_0$ and 
 the inclusion follows from $f_\omega\cdot f \in \mathcal G$, which stems from the fact that $\mathcal G$ is an $O_S$-module, and from 
 Prop. \ref{prop:16:1104}. 
\end{proof}

\begin{thm}\label{thm:main:2606}\label{THMEQ}
One has the equality  $\mathcal G=A_{\mathcal E_S}$. 
\end{thm}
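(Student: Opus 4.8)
The plan is to prove the equality by establishing the two inclusions $\mathcal G\subset A_{\mathcal E_S}$ and $A_{\mathcal E_S}\subset\mathcal G$. The first inclusion has already been obtained in Cor.~\ref{cor:1704}, so the whole point is the reverse one, for which I would invoke the defining property of $A_{\mathcal E_S}$ recalled in Lem.-Def.~\ref{lem:ez2:1704}: it is the \emph{minimal} stable subalgebra of $\mathcal O_{hol}(\tilde{\mathcal E}_S)$ in the sense of Def.~\ref{def:stable:subalg}.

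Accordingly, the key step is to check that $\mathcal G$ is itself a stable subalgebra. It is a subalgebra of $\mathcal O_{hol}(\tilde{\mathcal E}_S)$ by its very definition (Def.~\ref{defn:mathcalG:2704}), and it contains $O_S$, hence the constant function $1$, so it is unital. Stability under $\mathrm{int}_\omega$ for every $\omega\in\Omega(\mathcal E_S)$ is precisely the content of Prop.~\ref{lem:17:1704}. Therefore $\mathcal G$ is a stable subalgebra, and the minimality assertion of Lem.-Def.~\ref{lem:ez2:1704} gives $A_{\mathcal E_S}\subset\mathcal G$. Combining this with Cor.~\ref{cor:1704} yields $\mathcal G=A_{\mathcal E_S}$.

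I do not expect any real obstacle at the level of this theorem: once Prop.~\ref{lem:17:1704} (together with the inductive filtration argument and the integration-by-parts identities of Lems.~\ref{lem:b:1104} and~\ref{lem:a:1104} leading to it) and Cor.~\ref{cor:1704} (resting on the differential-filtration description of $A_{\mathcal E_S}$ from Thm.~\ref{thm:2:3:2606}(b) and on Prop.~\ref{Prop230125}) are in place, the statement is a one-line consequence of minimality. The only subtlety worth a remark is that, as recalled after Def.~\ref{def:stable:subalg}, the notion of stable subalgebra --- and hence the minimal one --- does not depend on the base point $z_0$ entering the operators $\mathrm{int}_\omega$; this is what makes the base point fixed in the definition of $\mathrm{int}$ in this section immaterial.
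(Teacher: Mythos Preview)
Your proposal is correct and follows exactly the paper's own argument: invoke Prop.~\ref{lem:17:1704} to see that $\mathcal G$ is a stable subalgebra, deduce $A_{\mathcal E_S}\subset\mathcal G$ from the minimality in Lem.-Def.~\ref{lem:ez2:1704}, and combine with Cor.~\ref{cor:1704} for the other inclusion.
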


\begin{proof}
Prop. \ref{lem:17:1704} implies that $\mathcal G$ is a subalgebra with unit of $\mathcal O_{hol}(\tilde{\mathcal E}_S)$
which is stable under the endomorphism $\mathrm{int}_\omega$ for any $\omega \in \Omega(\mathcal E_S)$. Together with 
Lem.-Def. \ref{lem:ez2:1704}, this implies that $\mathcal G \supset A_{\mathcal E_S}$. The statement follows from the combination of this inclusion with 
 Cor. \ref{cor:1704}. 
\end{proof}

\subsection{An $\mathcal O(\mathcal E_S)$-basis of $A_{\mathcal E_S}$ arising from the functions $\tilde\Gamma$}\label{sect:4:5:1903}

Since the ring $O_S$ 
is an integral domain, it injects into its fraction field, which is equal to the field $\mathbb C(\mathcal E)$ of
rational functions on $\mathcal E$ and is therefore independent of $S$, and will be denoted $K$. There is a unique extension of the derivation $\partial$ of $O_S$ to a derivation of~$K$, which will also be denoted $\partial$. 

\begin{lem}\label{LEMAB}
One has $O_S\cap \partial(K)=\partial(O_S)$ (equality of subspaces of $K$).
\end{lem}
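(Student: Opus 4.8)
The plan is to prove the inclusion $\partial(K)\cap O_S\subseteq\partial(O_S)$ (the reverse inclusion being trivial) by a local analysis of poles. Suppose $f\in K=\mathbb C(\mathcal E)$ satisfies $\partial(f)\in O_S$, i.e.\ $\partial(f)$ is regular on $\mathcal E_S=\mathcal E\smallsetminus S$. I want to show $f\in O_S$, i.e.\ that $f$ itself is regular on $\mathcal E_S$. Assume for contradiction that $f$ has a pole at some point $Q\in\mathcal E_S$, and work in a local coordinate $u=z-z_Q$ centred at a lift $z_Q$ of $Q$ in $\mathbb C$ (so that $\partial=d/du$ near $z_Q$). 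Write the Laurent expansion $f=\sum_{k\geq -m}c_k u^k$ with $m\geq 1$ and $c_{-m}\neq 0$.

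First I would observe that $\partial(f)=\sum_{k\geq -m}k\,c_k u^{k-1}$ has a pole at $Q$ unless the principal part of $f$ consists only of the term $c_{-1}u^{-1}$: indeed the coefficient of $u^{-j-1}$ in $\partial(f)$ for $j\geq 1$ is $-j\,c_{-j}$, which is nonzero precisely when $c_{-j}\neq 0$. Since $\partial(f)\in O_S$ is regular at $Q$, this forces $c_{-j}=0$ for all $j\geq 2$, so $f$ has at most a simple pole at $Q$, with Laurent expansion $f=c_{-1}u^{-1}+\text{(holomorphic)}$. Next, the residue $c_{-1}$ of $f$ at $Q$ equals (up to the standard normalisation) the residue of the meromorphic differential $\partial(f)\,du$ at $Q$; but $\partial(f)$ is regular at $Q$, so $c_{-1}=0$. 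This contradicts the assumption that $f$ has a pole at $Q$. Hence $f$ has no poles on $\mathcal E_S$, i.e.\ $f\in O_S$, and then $\partial(f)\in\partial(O_S)$ as desired.

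The main obstacle — really the only subtlety — is making the residue step rigorous: one must argue that if $f\in\mathbb C(\mathcal E)$ has a simple pole at $Q$ with residue $c_{-1}$, then $c_{-1}\neq 0$ would force $\partial(f)$ to fail to be regular at $Q$. This is immediate from the Laurent expansion as above: the derivative of $c_{-1}u^{-1}$ is $-c_{-1}u^{-2}$, which is singular at $Q$ as soon as $c_{-1}\neq 0$. So in fact the whole argument reduces to the single remark that, locally, $\partial$ sends a pole of order $m\geq 1$ to a pole of order $m+1$, hence $\partial(f)$ regular at $Q$ forces $f$ regular at $Q$; no genuine residue theorem on the compact curve $\mathcal E$ is needed. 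I would phrase the proof in exactly this pole-order form, using the identification $O_S=\mathcal O_{mer}(\mathbb C,pr^{-1}(S))^\Lambda$ and the fact that $\partial=d/dz$ acts on Laurent expansions at any point of $pr^{-1}(S)^c$ by lowering exponents by one, so that a function in $\mathcal O_{mer}(\mathbb C,pr^{-1}(S)\cup\{z_Q+\Lambda\})$ whose derivative lies in $O_S$ can have no pole at $z_Q$.
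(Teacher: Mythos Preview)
Your argument is correct and follows essentially the same approach as the paper: both reduce to the observation that for nonconstant $f\in K$ one has $\mathrm{Pole}(\partial f)=\mathrm{Pole}(f)$, which your Laurent-expansion computation proves directly (a pole of order $m\geq 1$ differentiates to a pole of order $m+1$). Your intermediate detour through residues is unnecessary, as you yourself note in the final paragraph; the paper simply states the pole-set equality and concludes immediately.
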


\begin{proof}
For $f\in K\smallsetminus\{0\}$, let $\mathrm{Pole}(f) \subset \mathcal E$ be the set of its poles. 
If $f$ is nonconstant, then both~$f$ and $\partial(f)$ are in $K\smallsetminus\{0\}$, and
one has $\mathrm{Pole}(\partial f)=\mathrm{Pole}(f)$. On the other hand, 
$O_S=\{f \in K\smallsetminus\{0\}\,|\,\mathrm{Pole}(f) \subset S\} \cup \{0\}$. Let now $f\in K$ be such that 
$\partial(f) \in O_S$. If $\partial(f)=0$, then $f$ is constant and so $f \in O_S$.
If $\partial(f) \neq 0$, then $f$ is nonzero and $\mathrm{Pole}(f)=\mathrm{Pole}(\partial f) \subset S$, therefore 
$f \in O_S$. All this proves the implication $(f\in K\text{ and }\partial(f) \subset O_S)\implies(f\in O_S)$, which implies 
the inclusion $O_S \cap \partial(K)\subset \partial(O_S)$. The opposite inclusion is obvious.    
\end{proof}

Under the isomorphism $O_S[X]\simeq O_S[g_1]$, the derivation $\partial$ of $O_S[g_1]$ is intertwined with the 
derivation $\partial$ of $O_S[X]$, uniquely defined by the conditions that it extends the derivation~$\partial$ of~$O_S$ 
and that $\partial(X)=-E_2\subset O\subset O_S$. This derivation further extends to a derivation 
$\partial$ of $K[X]$, extending the derivation $\partial$ of $K$. 

\begin{lem}\label{MAIN POINT OF TODAY}
The natural map $O_S[X]/\partial(O_S[X])\to K[X]/\partial(K[X])$ is injective. 
\end{lem}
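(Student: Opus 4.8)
The plan is to prove the equivalent statement $O_S[X]\cap\partial(K[X])=\partial(O_S[X])$, the inclusion $\supseteq$ being obvious. Recall that the derivation $\partial$ of $K[X]$ is determined by its restriction to $K$ together with $\partial(X)=-E_2$, so that for $Q=\sum_{j}Q_jX^j\in K[X]$ one has
\[
\partial(Q)\,=\,\sum_{j}\bigl(\partial(Q_j)-(j+1)E_2Q_{j+1}\bigr)\,X^j ;
\]
in particular, if $D=\deg_X(Q)$, the coefficient of $X^D$ in $\partial(Q)$ is $\partial(Q_D)$.

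First I would isolate the following consequence of Lem.~\ref{LEMAB}: if $q\in K$ satisfies $\partial(q)\in O_S$, then $q\in O_S$. Indeed $\partial(q)\in O_S\cap\partial(K)=\partial(O_S)$ by Lem.~\ref{LEMAB}, so $\partial(q)=\partial(r)$ for some $r\in O_S$; then $q-r$ is a $\partial$-closed element of $K$, hence a constant (the field of constants of $(K,\partial)$ being $\mathbb C$, as follows from the pole count used in the proof of Lem.~\ref{LEMAB}), so $q\in O_S+\mathbb C=O_S$.

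Then I would run an induction on $D=\deg_X(Q)$, proving: for every $Q\in K[X]$ with $\partial(Q)\in O_S[X]$ one has $\partial(Q)\in\partial(O_S[X])$. If $D\le 0$, then $Q\in K$, so $\partial(Q)\in O_S[X]\cap K=O_S$, whence $\partial(Q)\in\partial(O_S)$ directly by Lem.~\ref{LEMAB}. If $D\ge 1$, the $X^D$-coefficient of $\partial(Q)$ is $\partial(Q_D)\in O_S$, so $Q_D\in O_S$ by the previous paragraph; therefore $Q_DX^D\in O_S[X]$ and $\partial(Q_DX^D)\in\partial(O_S[X])\subseteq O_S[X]$. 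Setting $Q'':=Q-Q_DX^D$, we get $\deg_X(Q'')\le D-1$ and $\partial(Q'')=\partial(Q)-\partial(Q_DX^D)\in O_S[X]$, so the induction hypothesis gives $\partial(Q'')\in\partial(O_S[X])$, and hence $\partial(Q)=\partial(Q'')+\partial(Q_DX^D)\in\partial(O_S[X])$, closing the induction.

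The essential input is Lem.~\ref{LEMAB}; everything else is bookkeeping with the $X$-degree. The point that needs the most care — and the only place where one genuinely uses the special nature of $O_S$ inside $K$ — is the first reduction, namely passing from ``$\partial(Q_D)\in O_S$'' to ``$Q_D\in O_S$'', which requires both Lem.~\ref{LEMAB} and the fact that $K$ has no nonconstant $\partial$-closed elements. Once that is in place, stripping off the leading coefficient keeps one inside $O_S[X]$ and the induction runs smoothly.
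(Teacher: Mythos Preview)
Your proof is correct and follows essentially the same approach as the paper: induction on the $X$-degree, peeling off the leading coefficient at each step via Lem.~\ref{LEMAB}. The only cosmetic difference is that you first isolate the consequence ``$\partial(q)\in O_S\Rightarrow q\in O_S$'' and then strip one coefficient per step, whereas the paper handles two coefficients per step and concludes the slightly stronger statement $Q\in O_S[X]$; the underlying idea is identical.
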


\begin{proof}
We first prove inductively on $n \geq 0$ the statement 
$$
(S_n): \quad (P\in O_S[X]_{\leq n}, Q\in K[X]_{\leq n+1},P=\partial(Q))
\implies(Q \in O_S[X]_{\leq n+1}). 
$$
Let us prove $(S_0)$. Let $P,Q$ be polynomials satisfying the premise.  
Then there exist $a_0\in O_S$ and $b_0,b_1\in K$ such that $P=a_0$ and $Q=b_0+b_1X$. 
The coefficients of $X$ and of $1$ in $P=\partial(Q)$ respectively yields the equations  
$$
\partial(b_1)=0, \quad a_0+b_1\cdot E_2=\partial(b_0). 
$$
The first equation implies $b_1\in\mathbb C \subset O_S$. The relation $b_1\in\mathbb C$ then implies that the left-hand side of the 
second equation belongs to $O_S$, while $b_0\in K$. By Lem. \ref{LEMAB}, this implies $b_0\in O_S$. 
This proves that $Q \in O_S[X]_{\leq 1}$, and therefore $(S_0)$.

Assume now $(S_{n-1})$ to be true for fixed $n\geq 1$, and let us prove $(S_n)$. Let 
$P,Q$ be polynomials satisfying the premise. Let $a_0,\ldots,a_n\in O_S$ and  
$b_0,\ldots,b_{n+1}\in K$ be such that $P=a_0+\cdots+a_nX^n$ and $Q=b_0+\cdots+b_{n+1}X^{n+1}$. 
The coefficients of $X^{n+1}$ and $X^n$ in $P=\partial(Q)$ respectively yield
$$
\partial(b_{n+1})=0,\quad a_n+(n+1)b_{n+1}\cdot E_2=\partial(b_n). 
$$
The first equation implies $b_{n+1}\in\mathbb C\subset O_S$. 
The relation $b_{n+1}\in\mathbb C$ then implies that the left-hand side of the 
second equation belongs to $O_S$, while $b_n\in K$. By Lem. \ref{LEMAB}, this implies $b_n\in O_S$. 
Let us set $\tilde Q:=Q-b_{n+1}X^{n+1}-b_nX^n$ and $\tilde P:=\partial(\tilde Q)$. Then 
$\tilde Q\in K[X]_{\leq n-1}$, which implies $\tilde P\in K[X]_{\leq n-1}$ (because $\tilde P=\partial(\tilde Q)$ and 
$\partial$ maps $K[X]_{\leq n-1}$ to itself) and $\tilde P\in O_S[X]$ (because $\tilde P=P-\partial(b_{n+1}X^{n+1}+b_nX^n)$), 
and therefore $\tilde P\in O_S[X]_{\leq n-1}$. The relation $\tilde Q\in K[X]_{\leq n-1}$ also implies 
$\tilde Q\in K[X]_{\leq n}$. Hence $(\tilde P,\tilde Q)$ satisfy the assumptions of $(S_{n-1})$. 
It follows that $\tilde Q\in O_S[X]_{\leq n-1}$. Since $b_n,b_{n+1}\in O_S$, then $Q=\tilde Q+b_{n+1}X^{n+1}+b_nX^n\in O_S[X]_{\leq n+1}$. Therefore $(P,Q)$ satisfy the conclusion of $(S_n)$, which proves by induction that $(S_n)$ is true for any $n\geq 0$. 

Assume now that $Q\in K[X]$ is such that $\partial(Q)\in O_S[X]$. Let $n\neq 0$ be such that 
$Q\in K[X]_{\leq n}$. Then $\partial(Q)\in K[X]_{\leq n}$ because $\partial$ takes $K[X]_{\leq n}$ to itself, 
and since $\partial(Q)\in O_S[X]$ one has $\partial(Q)\in O_S[X]_{\leq n}$. But one also has $Q\in K[X]_{\leq n+1}$, 
hence $(P,Q)$ satisfy the assumptions of $(S_n)$, and therefore $Q\in O_S[X]$.   
All this implies the inclusion $O_S[X]\cap\partial(K[X])\subset\partial(O_S[X])$. Since the opposite inclusion is obvious, 
one gets $O_S[X]\cap \partial(K[X])=\partial(O_S[X])$, which concludes the proof. 
\end{proof}

\begin{lem}\label{lem:5:23:0603}
 $O_S[X]$ is an integral domain, and $O_S[X] \hookrightarrow K(X)$ can be identified with the injection of $O_S[X]$ in its fraction field. There is a unique derivation $\partial$ of $K(X)$ extending the derivation 
 $\partial$ of $O_S[X]$. 
\end{lem}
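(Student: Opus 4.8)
The plan is to prove the three assertions of Lemma \ref{lem:5:23:0603} in sequence, leaning on the structural results just established for $O_S[g_1]$ and on the injectivity statement of Lemma \ref{MAIN POINT OF TODAY}.

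First I would show that $O_S[X]$ is an integral domain. This is immediate: $O_S$ is a subring of the field $\mathbb C(\mathcal E)$, hence an integral domain, and the polynomial ring in one variable over an integral domain is again an integral domain. Consequently $O_S[X]$ embeds into its field of fractions, which is $\mathrm{Frac}(O_S)(X)=K(X)$ since $\mathrm{Frac}(O_S)=K$; this identifies the canonical injection $O_S[X]\hookrightarrow K(X)$ with the inclusion into the fraction field. (One should remark that the isomorphism $O_S[X]\simeq O_S[g_1]$ of Lem. \ref{lem:iso} shows this is consistent with the earlier picture, though strictly it is not needed.)

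Next I would construct the derivation $\partial$ of $K(X)$. Starting from the derivation $\partial$ of $O_S[X]$ (defined just before Lem. \ref{MAIN POINT OF TODAY} by extending $\partial$ on $O_S$ and setting $\partial(X)=-E_2$), I would first extend it to $K[X]$ — this was already noted in the excerpt — and then to the localisation $K(X)=\mathrm{Frac}(K[X])$ by the standard quotient rule $\partial(f/g)=(\partial(f)g-f\partial(g))/g^2$. The general fact that a derivation of an integral domain extends uniquely to its fraction field gives both existence and uniqueness of the extension; one checks the quotient rule is well-defined (independent of the representative $f/g$) and satisfies the Leibniz rule, which is a routine verification. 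Uniqueness follows because any derivation of $K(X)$ restricting to $\partial$ on $O_S[X]$ is determined on $K$ (every element of $K=\mathrm{Frac}(O_S)$ is a quotient of elements of $O_S$, forcing the values by the quotient rule) and on $X$, hence on all of $K(X)$.

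I do not expect any serious obstacle here: the statement is essentially a packaging of standard commutative-algebra facts (polynomial ring over a domain is a domain; derivations extend uniquely to fraction fields) together with the already-recorded identification $\mathrm{Frac}(O_S)=K$. The only point requiring a sentence of care is making explicit that the fraction field of $O_S[X]$ is $K(X)$ and not merely some abstract field — this uses that localisation commutes with polynomial extension, i.e. $\mathrm{Frac}(O_S[X])=\mathrm{Frac}(\mathrm{Frac}(O_S)[X])=K(X)$. With that observed, the proof is short.

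\begin{proof}
Since $O_S$ is a subring of the field $\mathbb C(\mathcal E)$, it is an integral domain, and therefore so is the polynomial ring $O_S[X]$. Its field of fractions is $\mathrm{Frac}(O_S[X])=\mathrm{Frac}(\mathrm{Frac}(O_S)[X])=K(X)$, since $\mathrm{Frac}(O_S)=K$; this identifies the canonical injection $O_S[X]\hookrightarrow K(X)$ with the inclusion of $O_S[X]$ into its fraction field.

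It remains to exhibit the derivation of $K(X)$. The derivation $\partial$ of $O_S[X]$ extends (uniquely) to a derivation $\partial$ of $K[X]$ extending the derivation $\partial$ of $K$, as noted before Lem. \ref{MAIN POINT OF TODAY}. A derivation of an integral domain extends uniquely to a derivation of its fraction field, via $\partial(f/g):=(\partial(f)g-f\partial(g))/g^2$; applying this to $K[X]$, whose fraction field is $K(X)$, yields a derivation $\partial$ of $K(X)$ restricting to $\partial$ on $K[X]$, hence on $O_S[X]$. Uniqueness follows from the fact that any derivation of $K(X)$ restricting to $\partial$ on $O_S[X]$ is determined on $K=\mathrm{Frac}(O_S)$ and on $X$ by the quotient rule, hence on all of $K(X)$.
\end{proof}
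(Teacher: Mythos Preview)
Your proof is correct and follows essentially the same approach as the paper: both argue that $O_S$ is an integral domain (hence so is $O_S[X]$), identify $\mathrm{Frac}(O_S[X])$ with $K(X)$, and invoke the standard fact that a derivation of an integral domain extends uniquely to its fraction field. Your version is slightly more explicit (spelling out the quotient rule and passing through $K[X]$), but the substance is the same; note that, despite your opening remark, neither Lemma~\ref{MAIN POINT OF TODAY} nor the structure of $O_S[g_1]$ is actually used.
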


\begin{proof}
The first statement follows from fact that $O_S$ is an integral domain. The field $K(X)$ contains $O_S[X]$ as a subalgebra, 
and coincides with the smallest of its subfields containing $O_S[X]$, which implies the identification of 
$\mathrm{Frac}(O_S[X])$ with $K(X)$. The last statement follows from the fact the a derivation of an integral domain 
admits a unique extension to its fraction field.    
\end{proof}

Recall the family $(1,(T_{\tilde s}g_n)_{s\in S,n\geq 1})$ of $O_S[X]$ (see Lem. \ref{lem:13:1004}). 

\begin{lem}\label{lem:freeness;0603}
The image of the family $(1,(T_{\tilde s}g_n)_{s\in S,n\geq 1})$ in $K(X)/\partial(K(X))$ is $\mathbb C$-linearly independent.
\end{lem}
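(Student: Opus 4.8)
The plan is to deduce the statement from the linear independence, in $O_S[g_1]/\partial(O_S[g_1])\simeq O_S[X]/\partial(O_S[X])$, of the images of $1$ and of the $T_{\tilde s}g_n$ for $s\in S,\ n\geq 1$ — which follows from Lem.~\ref{NEWNEWLEM} once one uses that $\{g_n\mid n\geq 1\}$ is linearly independent (Lem.~\ref{lem:5:8:0702}(b)) and that each $T_{\tilde s}$ is injective — by showing that the comparison map to $K(X)/\partial(K(X))$ stays injective on these elements. First I would record that $E_2\notin\partial(K)$: since $\partial(E_1)=-E_2$ by \eqref{diffpropE1}, an equality $E_2=\partial(f)$ with $f\in K$ would give $\partial(f+E_1)=0$, hence $f+E_1\in\mathbb C$, forcing $E_1\in K$ and contradicting $T_\tau E_1=E_1+2\pi\mathrm{i}$ from \eqref{ellpropE1}. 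From this I would deduce $\ker(\partial\colon K[X]\to K[X])=\mathbb C$: if $P=\sum_{i=0}^n a_iX^i\in K[X]$ has $a_n\neq 0$ and $\partial(P)=0$, then, using $\partial(X)=-E_2$, the coefficient of $X^n$ in $\partial(P)$ is $\partial(a_n)$, so $a_n\in\mathbb C^\times$, and if $n\geq 1$ the coefficient of $X^{n-1}$ gives $\partial(a_{n-1})=na_nE_2$, i.e. $E_2\in\partial(K)$, which is impossible; hence $n=0$ and $P\in\mathbb C$.

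The key step is the identity $K[X]\cap\partial(K(X))=\partial(K[X])$, the inclusion $\supseteq$ being immediate. For $\subseteq$, I would use that the decomposition $K(X)=K[X]\oplus\mathcal P$, with $\mathcal P$ the subspace of fractions whose numerator has degree strictly smaller than the denominator, is stable under $\partial$: this is a degree count, since for $\deg P<\deg Q$ the numerator of $\partial(P/Q)=(\partial(P)Q-P\partial(Q))/Q^2$ has degree at most $\deg P+\deg Q<2\deg Q$, using $\deg\partial(P)\leq\deg P$ and $\deg\partial(Q)\leq\deg Q$. So if $h=h_0+h_1\in K[X]\oplus\mathcal P$ has $\partial(h)\in K[X]$, then $\partial(h_1)\in K[X]\cap\mathcal P=0$; writing $h_1=A/B$ in lowest terms with $B$ monic, if $h_1\neq 0$ then $\deg B\geq 1$, and $\partial(A)B=A\partial(B)$ with $\gcd(A,B)=1$ gives $B\mid\partial(B)$, while $\deg\partial(B)<\deg B$ (as $B$ is monic of positive degree) forces $\partial(B)=0$, then $\partial(A)=0$; by the first paragraph $A,B\in\mathbb C$, contradicting $\deg B\geq 1$. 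Hence $h_1=0$ and $\partial(h)=\partial(h_0)\in\partial(K[X])$.

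Finally, suppose $\lambda\cdot 1+\sum_{s\in S,\,n\geq 1}\lambda_{s,n}\,T_{\tilde s}g_n=\partial(h)$ for some $h\in K(X)$, the left-hand side being read inside $O_S[X]\subset K[X]$ through the isomorphism $O_S[X]\simeq O_S[g_1]$. By the identity above, $\partial(h)\in\partial(K[X])$, so the left-hand side has zero image in $K[X]/\partial(K[X])$; by Lem.~\ref{MAIN POINT OF TODAY} it has zero image in $O_S[X]/\partial(O_S[X])\simeq O_S[g_1]/\partial(O_S[g_1])$, where by the first paragraph the images of $1$ and the $T_{\tilde s}g_n$ are linearly independent. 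Hence $\lambda=0$ and all $\lambda_{s,n}=0$. I expect the identity $K[X]\cap\partial(K(X))=\partial(K[X])$ to be the main obstacle; the rest is degree bookkeeping and reassembly of previously established facts.
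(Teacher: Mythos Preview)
Your proof is correct and follows the same global strategy as the paper: both reduce to the linear independence of the family in $O_S[X]/\partial(O_S[X])$ (Lem.~\ref{NEWNEWLEM}) and then invoke the injectivity of the natural maps $O_S[X]/\partial(O_S[X])\to K[X]/\partial(K[X])$ (Lem.~\ref{MAIN POINT OF TODAY}) and $K[X]/\partial(K[X])\to K(X)/\partial(K(X))$.

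The only genuine difference lies in how the second injectivity is obtained. The paper embeds $K(X)$ into the Laurent series field $K((X^{-1}))$ and observes that the derivation respects the decomposition $K((X^{-1}))=K[X]\oplus X^{-1}K[[X^{-1}]]$, which immediately makes the composite $K[X]/\partial(K[X])\to K((X^{-1}))/\partial(K((X^{-1})))$ injective. You instead stay inside $K(X)$ and use the polynomial--proper-fraction decomposition $K(X)=K[X]\oplus\mathcal P$, checking that $\partial$ preserves $\mathcal P$ by a degree count. This is equally clean and avoids introducing a completion; in fact $\mathcal P=K(X)\cap X^{-1}K[[X^{-1}]]$, so your splitting is the restriction of the paper's. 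One remark: once you have $\partial(h_1)\in K[X]\cap\mathcal P=0$, you are already done, since $\partial(h)=\partial(h_0)\in\partial(K[X])$; the subsequent argument that $h_1=0$ (and hence the preliminary facts $E_2\notin\partial(K)$ and $\ker(\partial|_{K[X]})=\mathbb C$) is correct but unnecessary for the identity $K[X]\cap\partial(K(X))=\partial(K[X])$.
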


\begin{proof}
Expansion for $X$ at infinity induces a field extension $K(X)\subset K((X^{-1}))$, where $K((X^{-1}))$ is the
field of Laurent series in the formal variable $X^{-1}$ with coefficients in $K$. On the other hand, there is a 
double ring inclusion $O_S[X]\subset K[X]\subset K(X)$, thus yielding a sequence of ring inclusions 
\begin{equation}\label{seq:morphisms}
O_S[X]\subset K[X]\subset K(X)\subset K((X^{-1})). 
\end{equation}
There is a unique derivation $\partial$ of $K[X]$, extending $\partial$ on $K$ and such that $\partial(X)=-E_2$. Since the derivation 
$\partial$ of $K(X)$ shares these properties, both derivations are compatible. The endomorphism of $K((X^{-1}))$ given by 
$\sum_{i\in\mathbb Z} f_iX^i\mapsto \sum_{i\in\mathbb Z} (\partial(f_i)-E_2(i+1)f_{i+1})X^i$ is well-defined and is a derivation of 
$K((X^{-1}))$. The restriction of this derivation to $O_S[X]$ coincides with the derivation $\partial$ of this algebra, 
therefore its restriction to $K(X)$ also coincides with the derivation $\partial$ of this field. Hence one obtains a family of 
compatible derivations on the rings in the sequence of morphisms \eqref{seq:morphisms}. From this one derives a sequence of linear maps
\begin{equation}\label{seq:lin:maps}
O_S[X]/\partial(O_S[X])\stackrel{\alpha}{\to} K[X]/\partial(K[X])\stackrel{\beta}{\to} K(X)/\partial(K(X))\stackrel{\gamma}{\to}  
K((X^{-1}))/\partial(K((X^{-1}))). 
\end{equation}
The derivation $\partial$ of $K((X^{-1}))$ is compatible with the direct sum decomposition 
$K((X^{-1}))=K[X] \oplus X^{-1}K[[X^{-1}]]$, because it is compatible with the derivation $\partial$  
of $K[X]$ and because $(\forall i \geq 0, f_i=0)\implies(\forall i \geq 0, \partial(f_i)-(i+1)E_2\,f_{i+1}=0)$. 
It follows that the map $\gamma \circ \beta$ is injective, and so that $\beta$ is injective. By Lem. \ref{MAIN POINT OF TODAY}, 
the map $\alpha$ is injective. It follows that $\beta \circ \alpha$ is injective, and in particular that it takes 
$\mathbb C$-linearly independent families to $\mathbb C$-linearly independent families. Then Lem. \ref{NEWNEWLEM}, 
which says that the image of $(1,(T_{\tilde s}g_n)_{s\in S,n\geq 1})$
in $O_S[X]/\partial(O_S[X])$ is $\mathbb C$-linearly independent, implies the statement.  
\end{proof}

The following statement is a direct consequence of the main result of \cite{DDMS}.

\begin{thm}[see \cite{DDMS}, Thm. 1]\label{thm:DDMS} 
Let $(\mathcal A, \mathbf d)$  be a commutative associative differential algebra with unit over a field $\mathbf k$ 
of characteristic $0$, and let $\mathcal C$ be a differential subfield of $\mathcal A$ (i.e. $\mathbf d(\mathcal C) \subset \mathcal C$).
Let $X$ be a set, $x\mapsto u_x$ be a map $X\to\mathcal C$, and let 
$(r,(x_1,\ldots,x_r))\mapsto f_{x_1,\ldots,x_r}$ be a map 
$\sqcup_{r \geq 0}X^r\to\mathcal A$, 
such that $f_\emptyset=1$ and $\mathbf d(f_{x_1,\ldots,x_r})=u_{x_1}\cdot f_{x_2,\ldots,x_r}$ 
for any $r \geq 1$ and $x_1,\ldots,x_r \in X$. 

If the image of the family $x\mapsto u_x$ in $\mathcal C/\mathbf d(\mathcal C)$ is 
$\mathbf k$-linearly independent, then the family $(r,(x_1,\ldots,x_r))\mapsto f_{x_1,\ldots,x_r}$ is 
$\mathcal C$-linearly independent.  
\end{thm}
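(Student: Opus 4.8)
The plan is to deduce the statement directly from Thm.~1 of \cite{DDMS}, of which it is a reformulation once notation is matched. That result concerns a commutative associative differential algebra $(\mathcal A,\mathbf d)$ over a field of characteristic zero, a differential subfield $\mathcal C\subseteq\mathcal A$, an alphabet $X$, a family $(u_x)_{x\in X}$ of elements of $\mathcal C$, and the family $(f_w)_{w\in X^*}$ indexed by the free monoid $X^*$ on $X$, characterised by $f_{\emptyset}=1$ and by the recursion
\[
\mathbf d(f_{xw})=u_x\cdot f_w ,\qquad x\in X,\ w\in X^* ,
\]
and it asserts that $\mathcal C$-linear independence of the family $(f_w)_{w\in X^*}$ holds precisely when the images of the $u_x$ in the quotient $\mathcal C/\mathbf d(\mathcal C)$ are linearly independent over the base field. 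First I would observe that the data of the present statement match this framework verbatim: identifying a tuple $(x_1,\ldots,x_r)\in X^r$ with the word $x_1\cdots x_r\in X^*$ (the empty tuple, i.e.\ $r=0$, corresponding to $\emptyset$), the conditions $f_{\emptyset}=1$ and $\mathbf d(f_{x_1,\ldots,x_r})=u_{x_1}\cdot f_{x_2,\ldots,x_r}$ are exactly the normalisation above.

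It then remains only to recognise that our hypothesis is the one under which \cite{DDMS} draws the conclusion. A nontrivial $\mathbf k$-linear relation $\sum_x\lambda_x u_x\in\mathbf d(\mathcal C)$ is the same datum as a nontrivial $\mathbf k$-linear dependence among the classes $\overline{u_x}\in\mathcal C/\mathbf d(\mathcal C)$; hence the assumed $\mathbf k$-linear independence of these classes is precisely the requirement that the $\mathbf k$-span of $(u_x)_{x\in X}$ be in direct sum with $\mathbf d(\mathcal C)$, which is the freeness hypothesis of \cite{DDMS}, Thm.~1. Applying that theorem then yields the $\mathcal C$-linear independence of $(f_{x_1,\ldots,x_r})_{r\geq 0,\,(x_1,\ldots,x_r)\in X^r}$, as claimed.

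The main (and essentially only) thing to verify — the step I expect to be the real obstacle, such as it is — is that the present setting is not more general than the one for which \cite{DDMS} is stated: one must check that no extra standing assumption there (for instance the base field being $\mathbb Q$, or $\mathcal A$ being an integral domain, or $\mathcal C$ being the exact field of constants of $\mathcal A$) is actually used in the implication we invoke. Each such point is harmless. The field of constants of $\mathcal A$ is irrelevant to a $\mathcal C$-linear-dependence statement, which concerns only the $\mathcal C$-submodule of $\mathcal A$ generated by the $f_w$, so one may replace $\mathcal A$ by the differential $\mathcal C$-subalgebra it generates. A possible restriction to $\mathbf k=\mathbb Q$ is absorbed since $\mathbf k$-linear independence of the $\overline{u_x}$ implies their $\mathbb Q$-linear independence, while ``$\mathcal C$-linear relation'' is a notion that does not depend on the choice of base field inside $\mathcal C$; and the argument of \cite{DDMS} (which proceeds by the combinatorics of Lyndon/PBW bases of the shuffle algebra of $X$, not by a field or integral-domain hypothesis on $\mathcal A$) goes through unchanged. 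Beyond this bookkeeping, the proof is a pure transcription.
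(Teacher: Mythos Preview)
Your proposal is correct and takes essentially the same approach as the paper: both deduce the result directly from Thm.~1 of \cite{DDMS}. The paper's version is terser, simply noting that the hypothesis here is condition~(iii) and the conclusion is condition~(i) of that theorem; your additional discussion of the notational matching and of possible extra standing assumptions is reasonable but not needed.
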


\begin{proof}
The hypothesis of the statement is (iii) of Thm. 1 in \cite{DDMS}, and its conclusion is (i) in the same theorem; 
the statement then follows from Thm. 1 in {\it loc. cit}. 
\end{proof}

\begin{prop}\label{thm:main:bis}
(a) $\mathcal G$ is a free $O_S[g_1]$-module with basis  
\begin{equation}\label{basis:mathcalG}
(\GLarg{n_1 &n_2 &\ldots &n_r}{a_1 &a_2 &\ldots &a_r}{-})_{
((n_1,a_1),\ldots,(n_r,a_r)) \in \sqcup_{r \geq 0} \{(n,a) \in \mathbb Z_{\geq0} \times \tilde S|a=0\text{ if }n=0\}^r}.   
\end{equation}

(b) $\mathcal G$ is a free $O_S$-module with basis
\begin{equation}\label{POWER:TILDEGAMMA}
(g_1^i\cdot 
\GLarg{n_1 &n_2 &\ldots &n_r}{a_1 &a_2 &\ldots &a_r}{-})_{i\geq 0,
((n_1,a_1),\ldots,(n_r,a_r))\in 
\sqcup_{r \geq 0} \{(n,a) \in \mathbb Z_{\geq0} \times \tilde S|a=0\text{ if }n=0\}^r  
 }. 
\end{equation}
\end{prop}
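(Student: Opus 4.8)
The plan is to prove part~(a) in two steps --- the family \eqref{basis:mathcalG} spans $\mathcal G$ as an $O_S[g_1]$-module, and it is $O_S[g_1]$-linearly independent --- and then deduce part~(b) by a change of base ring. For the spanning step I would start from Lemma~\ref{lem:5:14:0603}, which presents $\mathcal G$ as the $O_S[g_1]$-submodule of $\mathcal O_{hol}(\tilde{\mathcal E}_S)$ generated by \emph{all} functions $\GLarg{n_1 & \ldots & n_r}{a_1 & \ldots & a_r}{-}$ with $n_i\geq 0$, $a_i\in\tilde S$, and then discard the redundant generators. Since $g_0=1$, one has $\omega_{0,a}=T_a(g_0)\cdot dz=dz=\omega_{0,0}$ for every $a\in\tilde S$, so $\kappa(\left(\begin{smallmatrix}0\\a\end{smallmatrix}\right))=\kappa(\left(\begin{smallmatrix}0\\0\end{smallmatrix}\right))$; as $k_{\overrightarrow 0}$ factors through $\kappa_* : \mathrm{Sh}(V)\to\mathrm{Sh}(\Omega_{hol}(\tilde{\mathcal E}_S))$ (visible from its construction, $k_{\mathrm{sect}(t)}=I_{\mathrm{sect}(t)}\circ\kappa_*$ followed by regularisation), any generator with some $n_i=0$ coincides with the one obtained by replacing the corresponding $a_i$ by $0$. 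Iterating, every generator from Lemma~\ref{lem:5:14:0603} lies in the $O_S[g_1]$-span of \eqref{basis:mathcalG}.

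For linear independence I would invoke Theorem~\ref{thm:DDMS}. First note that $\mathcal G$ is stable under the derivation $\mathbf d : f\mapsto f'$ of $\mathcal O_{hol}(\tilde{\mathcal E}_S)$ defined by $df=f'\cdot p^*(\omega_0)$ with $\omega_0=dz$: this follows from \eqref{DE:1104} together with $T_{a}(g_n)\in O_S[g_1]$ (Lemma~\ref{lem:13:1004}) and the $\partial$-stability of $O_S[g_1]$. Hence $\mathcal G$ is a differential domain, and one may take $\mathcal A:=\mathrm{Frac}(\mathcal G)$, a differential field; inside it, $O_S[g_1]$ generates the differential subfield $\mathcal C:=K(X)=\mathrm{Frac}(O_S[g_1])$ (using Lemma~\ref{lem:iso}, which shows $g_1$ transcendental over $K$, and Lemma~\ref{lem:5:23:0603}), compatibly with $\partial$ since $\mathbf d$ restricts to $\partial$ on $O_S$ and $\mathbf d(g_1)=g_1'=-E_2=\partial(X)$. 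Take the index set $\{(n,a)\in\mathbb Z_{\geq 0}\times\tilde S\,|\,a=0\text{ if }n=0\}$ and $u_{(n,a)}:=T_a(g_n)\in O_S[g_1]\subset\mathcal C$; the family $(u_{(n,a)})$ is exactly $(1,(T_{\tilde s}g_n)_{s\in S,\,n\geq 1})$, so Lemma~\ref{lem:freeness;0603} gives precisely that its image in $\mathcal C/\mathbf d(\mathcal C)=K(X)/\partial(K(X))$ is $\mathbb C$-linearly independent. Finally, setting $f_{(m_1,b_1),\ldots,(m_r,b_r)}:=\GLarg{m_r & \ldots & m_1}{b_r & \ldots & b_1}{-}$ (the reversed word) and using $f_\emptyset=\GLarg{}{}{-}=1$, identity \eqref{DE:1104} becomes $\mathbf d(f_{(m_1,b_1),\ldots,(m_r,b_r)})=u_{(m_1,b_1)}\cdot f_{(m_2,b_2),\ldots,(m_r,b_r)}$, which is the recursion required by Theorem~\ref{thm:DDMS}. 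The theorem then yields that the $f$'s are $\mathcal C$-linearly independent; since word reversal is a bijection on the index set, the same holds for \eqref{basis:mathcalG}, a fortiori over $O_S[g_1]\subset\mathcal C$. Combined with the spanning step, this proves~(a).

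Part~(b) then follows formally: by Lemma~\ref{lem:iso}, $O_S[g_1]$ is a free $O_S$-module with basis $(g_1^i)_{i\geq 0}$, and a module that is free over a ring which is in turn free over a subring is free over the subring with basis the pairwise products; applying this to the $O_S[g_1]$-basis \eqref{basis:mathcalG} of $\mathcal G$ yields that $\mathcal G$ is $O_S$-free with basis \eqref{POWER:TILDEGAMMA}.

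The substantive work has been front-loaded into the preceding lemmas --- the injectivity $O_S[X]/\partial(O_S[X])\hookrightarrow K(X)/\partial(K(X))$ (Lemma~\ref{MAIN POINT OF TODAY}) and the independence statement in $O_S[X]/\partial(O_S[X])$ (Lemma~\ref{NEWNEWLEM}), which together give Lemma~\ref{lem:freeness;0603} --- so the only genuine obstacles here are bookkeeping ones: exhibiting an ambient differential field into which both $\mathcal G$ and $K(X)$ embed compatibly, reconciling the ``peel off the last letter'' recursion~\eqref{DE:1104} with the ``peel off the first letter'' hypothesis of Theorem~\ref{thm:DDMS} (handled by reversing words), and matching the reduced alphabet --- the one forced by minimality of the generating set is also exactly the one for which Lemma~\ref{lem:freeness;0603} applies, since including any $\left(\begin{smallmatrix}0\\a\end{smallmatrix}\right)$ with $a\neq 0$ would make $u_{(0,a)}=1=u_{(0,0)}$ and destroy the independence hypothesis.
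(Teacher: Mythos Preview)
Your proof is correct and follows essentially the same route as the paper's: Lemma~\ref{lem:5:14:0603} for spanning, Theorem~\ref{thm:DDMS} fed by Lemma~\ref{lem:freeness;0603} for linear independence, and Lemma~\ref{lem:iso} for the passage from (a) to (b). The only substantive difference is your choice of ambient differential algebra: you take $\mathcal A=\mathrm{Frac}(\mathcal G)$, whereas the paper uses the concrete $\mathcal A=\mathcal O_{mer}(\tilde{\mathcal E}_S)$; both work, and your choice is arguably cleaner since it avoids checking that $\mathrm{Frac}(O_S[g_1])$ lands inside meromorphic functions. You are also more careful than the paper on two points it glosses over: the reduction of the generating set to the restricted alphabet (via $\omega_{0,a}=\omega_{0,0}$), and the word reversal needed to match the ``peel off the last letter'' recursion~\eqref{DE:1104} with the ``peel off the first letter'' hypothesis of Theorem~\ref{thm:DDMS}.
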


\begin{proof}
(a) It follows from Lem. \ref{lem:5:14:0603} that \eqref{basis:mathcalG} is a generating family of $\mathcal G$ 
as an $O_S[g_1]$-module. 

Recall from Lem. \ref{lem:5:23:0603} that $O_S[g_1]$ is a integral domain; it therefore fits in an algebra inclusion 
\begin{equation}\label{ALGINC}
    O_S[g_1]\subset \mathrm{Frac}(O_S[g_1]). 
\end{equation}
On the other hand, there is an algebra inclusion
\begin{equation}\label{MODINC}
    \mathcal G\subset \mathcal O_{mer}(\tilde{\mathcal E}_S),  
\end{equation}
where $\mathcal O_{mer}(\tilde{\mathcal E}_S)$ is the algebra of all meromorphic functions on $\tilde{\mathcal E}_S$. Moreover, 
this inclusion is compatible with \eqref{ALGINC} and with the module structure of both sides of \eqref{MODINC} over 
the corresponding  algebras of \eqref{ALGINC}. 

Let us show that the image of \eqref{basis:mathcalG} under \eqref{MODINC} is $\mathrm{Frac}(O_S[g_1])$-linearly independent. 
Set $(\mathcal A,\mathbf d):=(\mathcal O_{mer}(\tilde{\mathcal E}_S),\partial)$, set $\mathbf k:=\mathbb C$, 
set $\mathcal C:=\mathrm{Frac}(O_S[g_1])$, set $X:=\{(n,a) \in \mathbb Z_{\geq0} \times \tilde S\,|\,a=0\text{ if }n=0\}$, 
let $x\mapsto u_x$ be the map $X\to\mathcal C$ given by $(0,0)\mapsto 1$ and 
$(n,a)\mapsto T_ag_n$ for $n>0$, let $(r,(x_1,\ldots,x_r))\mapsto f_{x_1,\ldots,x_r}$ be the map 
$\sqcup_{r \geq 0}X^r\to\mathcal A$ given by \eqref{basis:mathcalG}. 

It follows from \eqref{DE:1104} that the identities $f_\emptyset=1$ and $\mathbf d(f_{x_1,\ldots,x_r})=u_{x_1}\cdot f_{x_2,\ldots,x_r}$ 
are satisfied. It follows from Lem. \ref{lem:freeness;0603} and from the identification $\mathrm{Frac}(O_S[g_1])\simeq K(X)$ (see Lem. 
\ref{lem:5:23:0603}) that the image of $x\mapsto u_x$ in $\mathcal C/\mathbf d(\mathcal C)$ is $\mathbb C$-linearly independent. 
We can therefore apply Thm.~\ref{thm:DDMS}, which implies that the image of 
\eqref{basis:mathcalG} under \eqref{MODINC} is $\mathrm{Frac}(O_S[g_1])$-linearly independent.
By the injectivity of the maps \eqref{ALGINC} and \eqref{MODINC}, this implies the announced statement. 

(b) Let $\mathcal F:=\sqcup_{r \geq 0} \{(n,a) \in \mathbb Z_{\geq0} \times \tilde S\,|\,a=0\text{ if }n=0\}$. 
Let $\mathbb C^{(\mathcal F)}$ be the set of finitely supported complex functions on $\mathcal F$.  
Then (a) says that the map  $O_S[g_1] \otimes \mathbb C^{(\mathcal F)}\to \mathcal G$ induced by the family from (a) 
is a linear isomorphism. Moreover, Lem. \ref{lem:iso} implies that the map $O_S\otimes\mathbb C^{(\mathbb Z_{\geq0} \times \mathcal F)}
\to O_S[g_1] \otimes \mathbb C^{(\mathcal F)}$ given by $f \otimes \delta_{(i,x)} \mapsto f\cdot g_1^i \otimes \delta_x$  
for any $x\in \mathcal F$, $i\in\mathbb Z_{\geq 0}$, $f\in O_S$, is a linear isomorphism. Composing
these two linear isomorphisms, one obtains that the map $O_S\otimes\mathbb C^{(\mathbb Z_{\geq0} \times \mathcal F)}
\to\mathcal G$ induced by the family from (b) is a linear isomorphism, which proves (b). 
\end{proof}

\begin{thm}\label{THM:MAIN}
Both families \eqref{basis:HL:g=1} and \eqref{POWER:TILDEGAMMA}       
are $O_S$-bases of the $O_S$-module $A_{\mathcal E_S}$. 
\end{thm}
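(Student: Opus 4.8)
The plan is to obtain Theorem~\ref{THM:MAIN} by assembling results that have already been proved, with essentially no new argument needed. For the first family \eqref{basis:HL:g=1}, there is nothing further to do: the assertion that it is an $O_S$-basis of $A_{\mathcal E_S}$ is exactly Corollary~\ref{cor:2203:1124}, which itself follows from Lemma~\ref{lem:basis:2906} together with the identity \eqref{ID:L:TILDEF}, once Lemma~\ref{lem:0307} is invoked to guarantee that the forms $(\alpha_s)_{s\in S_\star}$ project to a $\mathbb C$-basis of $\mathrm H_{\mathcal E_S}=\Omega(\mathcal E_S)/d(O_S)$. So I would simply cite Corollary~\ref{cor:2203:1124} here.

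For the second family \eqref{POWER:TILDEGAMMA}, the plan is to combine two inputs. First, Theorem~\ref{THMEQ} gives the equality $\mathcal G=A_{\mathcal E_S}$ of the algebra generated by the functions $\tilde\Gamma$ (together with $O_S[g_1]$) with the minimal stable subalgebra; this rests on the inclusion $\mathcal G\subset A_{\mathcal E_S}$ of Corollary~\ref{cor:1704} (via the differential filtration $F^\delta_\bullet$ and Proposition~\ref{Prop230125}) and on the reverse inclusion, which uses the stability of $\mathcal G$ under the operators $\mathrm{int}_\omega$ (Proposition~\ref{lem:17:1704}) and the minimality property of Lemma-Definition~\ref{lem:ez2:1704}. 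Second, Proposition~\ref{thm:main:bis}(b) exhibits \eqref{POWER:TILDEGAMMA} as an $O_S$-basis of $\mathcal G$. Substituting $\mathcal G=A_{\mathcal E_S}$ then yields the claim for \eqref{POWER:TILDEGAMMA}, completing the proof.

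I do not expect any genuine obstacle at this stage: the substantive work has already been carried out upstream. The difficult part lies in Theorem~\ref{THMEQ}, and in particular in the precise analysis of the differential algebra $\mathcal O(\mathcal E_S)[g_1]$ in \S\S\ref{sect:diff:alg:1:2606}--\ref{sect:diff:alg:2:2606}, and in Proposition~\ref{thm:main:bis}, whose linear independence statement relies on the criterion of \cite{DDMS} applied via Lemma~\ref{lem:freeness;0603}. The write-up of Theorem~\ref{THM:MAIN} itself therefore amounts to one sentence per family, citing respectively Corollary~\ref{cor:2203:1124} and the conjunction of Theorem~\ref{THMEQ} with Proposition~\ref{thm:main:bis}(b).
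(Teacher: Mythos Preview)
Your proposal is correct and follows exactly the paper's own approach: the paper's proof is a two-line argument citing Corollary~\ref{cor:2203:1124} for the family \eqref{basis:HL:g=1}, and the combination of Proposition~\ref{thm:main:bis}(b) with Theorem~\ref{thm:main:2606} (which is the same as Theorem~\ref{THMEQ}) for the family \eqref{POWER:TILDEGAMMA}. Your additional commentary on the upstream dependencies is accurate and matches the paper's logical structure.
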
 


\begin{proof}
The statement on the family \eqref{basis:HL:g=1} follows from Lem. \ref{cor:2203:1124}. 
The statement on the family \eqref{POWER:TILDEGAMMA}
follows from Prop. \ref{thm:main:bis}(b) and from Thm. \ref{thm:main:2606}.     
\end{proof}

\subsection{Examples of relations between elliptic HLs and the functions $\tilde\Gamma$}\label{TODO}

The fact that the iterated integration basepoint for the elliptic HLs $L_{\alpha_{i_1}\cdots \alpha_{i_n}}$ from \S\ref{NEWSECTION} is a fixed point $z_0$ \emph{different} from~$0$, whereas the iterated integration basepoint for the functions $\tilde\Gamma$ is precisely the point~$0$, makes it a bit cumbersome to pass from one basis to the other. We provide below a few examples of expressions which relate elements of the two basis in the simplest setting where $S=pr(0)$ consists only of one point. In this case, the basis of differential forms considered in \S\ref{NEWSECTION} reduces to $\alpha:=dz$ and $\beta:=E_2\,dz$. 

Let us introduce the lighter notation
$$
\tilde\Gamma(n_1 \ldots n_r;-):=\GLarg{n_1 &n_2 &\ldots &n_r}{0 &0 &\ldots &0}{-}.
$$
Since $\Gamma(\underbrace{0,\ldots ,0}_n;z)=\frac{z^n}{n!}$ and $L_{\small\underbrace{\alpha\cdots \alpha}_n}(z)=\frac{(z-z_0)^n}{n!}$, one immediately finds the formula
\begin{equation*}
L_{\small\underbrace{\alpha\cdots \alpha}_n}\,=\,\sum_{j=0}^n\frac{(-z_0)^{n-j}}{(n-j)!}\tilde\Gamma(\underbrace{0,\ldots ,0}_j;-)\,.
\end{equation*}

Furthermore, since $E_2=-g'_1$, it follows that
\begin{equation}\label{eq:240309n1}
L_{\beta}\,=\,-g_1+g_1(z_0).
\end{equation}

Combining this relation with the fact that $\tilde\Gamma(1;z)=\tilde\Gamma(1;z_0)+\int_{z_0}^zg_1(u)du$, one gets
\begin{equation*}
\tilde\Gamma(1;-)\,=\,-L_{\beta\alpha}+g_1(z_0)L_\alpha+\tilde\Gamma(1;z_0).
\end{equation*}

Moreover, from \eqref{eq:240309n1} and the shuffle product identity 
$L_\beta^k=k!L_{\scriptsize\underbrace{\beta\cdots \beta}_k}$, one gets for $n\geq 1$
\begin{align}\label{eq:240321n1}
L_{\small\underbrace{\beta\cdots \beta}_n}&=\frac{(g_1(z_0)-g_1)^n}{n!},\notag\\
g_1^n&=\sum_{k=0}^n\frac{(-1)^k\,n!}{(n-k)!}\,g_1^{n-k}(z_0)L_{\small\underbrace{\beta\cdots \beta}_k}.
\end{align}

Finally, let us express $\tilde\Gamma(2;-)$ in terms of elliptic HLs. By eq. \eqref{defgn}, one has $g_2=(g_1^2-E_2+e_2)/2$. Moreover, eq. \eqref{eq:240321n1} for $n=2$ gives $g_1^2=2L_{\beta\beta}-2g_1(z_0)L_{\beta}+g_1^2(z_0)$. Combining these two observations, one gets
\begin{align*}
\tilde\Gamma(2;-)=L_{\beta\beta\alpha}-g_1(z_0)L_{\beta\alpha}+\frac{e_2+g_1^2(z_0)}{2}\,L_{\alpha}-\frac{1}{2}\,L_{\beta}+\tilde\Gamma(2;z_0).
\end{align*}

\section{Relation of the functions $\tilde\Gamma$ with the functions $\mathrm{E_3}$}\label{sect:6:0407}

 The purpose of this section is to give an alternative proof, based on the results of \S\ref{sect:5:2606}, 
 of the fact that an algebra $\mathcal A_3$ attached to an elliptic curve 
 $\mathcal E_{alg}$ equipped with a degree 2 ramified covering $\mathcal E_{alg}\to\mathbb P^1$ which was constructed in \cite{BDDT} is stable 
 under integration. In \S\ref{sect:uniformization:2704}, we relate the framework of the present paper with the one of \cite{BDDT}. 
 In \S\ref{sect:6:2:2606}, we attach to each finite subset $S_0\subset\mathbb P^1_{\mathbb C}$ containing $\infty$ an algebra 
 $\mathcal A_3(S_0)$, which we prove to be stable under integration using Thm. \ref{thm:main:2606}; we derive the fact that 
 the inductive limit of the algebras $\mathcal A_3(S_0)$ is stable under integration, and identify this 
 inductive limit with the algebra $\mathcal A_3$ of \cite{BDDT}, thereby giving an alternative proof of the 
 result of this paper (Prop. \ref{prop:bddt:2704}(c)).

\subsection{Reminders on uniformisation of elliptic curves}\label{sect:uniformization:2704}

Let us denote by $(z,\tau)\mapsto E_2(z|\tau)$ (resp. $\tau\mapsto e_2(\tau)$) the meromorphic (resp. holomorphic) function 
on $\mathbb C\times\mathfrak H$ (resp. $\mathfrak H$) such that for any 
$\tau\in\mathfrak H$, the map $z\mapsto E_2(z|\tau)$ (resp. the element $e_2(\tau)\in\mathbb C$) coincides with the function~$E_2$ (resp. the number $e_2\in\mathbb C$) from \S\ref{sect:3:1:2704}. For $\tau \in\mathfrak H$, we set $\wp(z|\tau):=E_2(z|\tau)-e_2(\tau)$. 

Denote by $\mathbb C^3_*$ be the complement of the diagonals in $\mathbb C^3$. 

\begin{lem}\label{lem:unif1:2704}
Let $(\mathbf a_1,\mathbf a_2,\mathbf a_3)\in\mathbb C^3_*$. There exists $\tau\in\mathfrak H$, $a\in\mathbb C^\times$ and 
$b\in\mathbb C$ such that 
$$
(\mathbf a_1,\mathbf a_2,\mathbf a_3)=(a\wp(1/2|\tau)+b,a\wp(\tau/2|\tau)+b,a\wp((\tau+1)/2|\tau)+b).
$$    
\end{lem}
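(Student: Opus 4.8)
The plan is to realise the ordered triple $(\mathbf a_1,\mathbf a_2,\mathbf a_3)$ as an affine image of the triple of $\wp$-values at the three non-zero $2$-torsion points of a suitably chosen lattice, the freedom in the choice of $\tau$ being used to match one cross-ratio-type invariant. For $\tau\in\mathfrak H$ I would set $u_1(\tau):=\wp(1/2|\tau)$, $u_2(\tau):=\wp(\tau/2|\tau)$, $u_3(\tau):=\wp((\tau+1)/2|\tau)$; recalling that $\wp(z|\tau)=E_2(z|\tau)-e_2(\tau)$, these are the values of the standard Weierstrass function at the three non-zero $2$-torsion points of $\mathbb C/(\mathbb Z+\mathbb Z\tau)$. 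The first point to settle is that $u_1(\tau),u_2(\tau),u_3(\tau)$ are pairwise distinct: since $\wp(\cdot|\tau)$ is an even function of degree $2$ on $\mathbb C/(\mathbb Z+\mathbb Z\tau)$, one has $\wp(z|\tau)=\wp(w|\tau)$ if and only if $z\equiv\pm w\pmod{\mathbb Z+\mathbb Z\tau}$, and a direct check shows that $1/2$, $\tau/2$, $(\tau+1)/2$ are pairwise incongruent modulo $\pm1$ and $\mathbb Z+\mathbb Z\tau$. It follows that $\mu(\tau):=\dfrac{u_3(\tau)-u_2(\tau)}{u_1(\tau)-u_2(\tau)}$ is a well-defined holomorphic map $\mathfrak H\to\mathbb C\smallsetminus\{0,1\}$, the excluded values $0,1,\infty$ corresponding respectively to $u_3(\tau)=u_2(\tau)$, $u_3(\tau)=u_1(\tau)$, $u_1(\tau)=u_2(\tau)$.

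Next I would invoke the classical fact that the modular $\lambda$-function is a \emph{surjective} holomorphic map $\mathfrak H\to\mathbb C\smallsetminus\{0,1\}$ (it identifies the quotient of $\mathfrak H$ by the principal congruence subgroup of level $2$ with $\mathbb C\smallsetminus\{0,1\}$). The function $\mu$ above is, up to post-composition with a fractional-linear transformation permuting $\{0,1,\infty\}$ — the six such transformations corresponding to the six orderings of the roots $u_1,u_2,u_3$, the precise one depending on the normalisation chosen for $\lambda$ — one of the branches of $\lambda$. Since the group of these transformations preserves $\mathbb C\smallsetminus\{0,1\}$, the map $\mu:\mathfrak H\to\mathbb C\smallsetminus\{0,1\}$ is itself surjective.

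Finally, given $(\mathbf a_1,\mathbf a_2,\mathbf a_3)\in\mathbb C^3_*$, the quantity $\nu:=\dfrac{\mathbf a_3-\mathbf a_2}{\mathbf a_1-\mathbf a_2}$ lies in $\mathbb C\smallsetminus\{0,1\}$, so by surjectivity of $\mu$ I can pick $\tau\in\mathfrak H$ with $\mu(\tau)=\nu$. Setting $a:=\dfrac{\mathbf a_1-\mathbf a_2}{u_1(\tau)-u_2(\tau)}\in\mathbb C^\times$ (non-zero since $\mathbf a_1\neq\mathbf a_2$ and $u_1(\tau)\neq u_2(\tau)$) and $b:=\mathbf a_2-a\,u_2(\tau)$, one checks immediately that $a\,u_2(\tau)+b=\mathbf a_2$ and $a\,u_1(\tau)+b=\mathbf a_2+a\,(u_1(\tau)-u_2(\tau))=\mathbf a_1$, while $a\,u_3(\tau)+b=\mathbf a_2+a\,(u_3(\tau)-u_2(\tau))=\mathbf a_2+(\mathbf a_1-\mathbf a_2)\,\mu(\tau)=\mathbf a_2+(\mathbf a_1-\mathbf a_2)\,\nu=\mathbf a_3$, which is the asserted identity. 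The only genuinely non-formal ingredient, and hence the main obstacle, is the surjectivity of $\lambda$ onto $\mathbb C\smallsetminus\{0,1\}$ — a classical fact (it underlies Picard's little theorem); should one prefer a self-contained argument, this surjectivity can be re-derived directly for $\mu$ by combining its invariance under the principal congruence subgroup of level $2$, its limiting values $0$, $1$, $\infty$ at the corresponding cusps (where two of the $u_i(\tau)$ collide), and the open mapping theorem.
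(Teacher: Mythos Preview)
Your proof is correct and follows essentially the same strategy as the paper: reduce the problem to matching the affine invariant $(\mathbf a_3-\mathbf a_2)/(\mathbf a_1-\mathbf a_2)$ of the triple with the value of a $\lambda$-type function at some $\tau\in\mathfrak H$, then solve the remaining affine freedom for $a,b$. The paper organises this via the commuting $\mathbb C^\times\ltimes\mathbb C$- and $\mathfrak S_3$-actions on $\mathbb C^3_*$ and derives the needed surjectivity by passing through the $j$-function bijection $\mathrm{SL}_2(\mathbb Z)\backslash\mathfrak H\simeq\mathbb C$ and then correcting by an element of $\mathfrak S_3$; your version is more direct, citing the surjectivity of $\lambda$ outright and writing down $a,b$ explicitly, which makes the argument shorter and arguably cleaner.
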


\begin{proof}
The set $\mathbb C^3_*$ is equipped with the commuting actions of the 
group $\mathbb C^\times\ltimes\mathbb C$ by $(a,b)\cdot (\mathbf a_1,\mathbf a_2,\mathbf a_3):=(a\mathbf a_1+b,
a\mathbf a_2+b,a\mathbf a_3+b)$ and of the symmetric group 
$\mathfrak S_3$ by permutation. The set $\mathbb C\smallsetminus\{0,1\}$ is equipped with an action of $\mathfrak S_3$, generated by the
involutions $\lambda\mapsto 1-\lambda$ and $\lambda\mapsto 1/\lambda$. 

The map $\mathbb C^3_*\to\mathbb C\smallsetminus\{0,1\}$ given by $(\mathbf a_1,\mathbf a_2,\mathbf a_3)\mapsto 
\mathbf a_{21}/\mathbf a_{31}$, where we set 
$\mathbf a_{ij}=\mathbf a_i-\mathbf a_j$, is $\mathfrak S_3$-equivariant; 
it can be identified with the projection $\mathbb C^3_*\to(\mathbb C^\times\ltimes\mathbb C)\backslash\mathbb C^3_*$ 
of $\mathbb C^3_*$ on its set of orbits under the action of $\mathbb C^\times\ltimes\mathbb C$. 

The map $\mathbb C\smallsetminus\{0,1\}\to\mathbb C$ induced by $\lambda\mapsto 256(1-\lambda+\lambda^2)^3/(\lambda^2
(1-\lambda)^2)$ is $S_3$-invariant, and can be identified with the projection $\mathbb C\smallsetminus\{0,1\}\to 
\mathfrak S_3\backslash(\mathbb C\smallsetminus\{0,1\})$ of $\mathbb C\smallsetminus\{0,1\}$
on the set of its orbits under the action of $\mathfrak S_3$.  

The map $j : \mathfrak H\to\mathbb C$ defines a bijection 
$j : \mathrm{SL}_2(\mathbb Z)\backslash\mathfrak H\to\mathbb C$. On the other hand, the map 
$\lambda : \mathfrak H\to\mathbb C\smallsetminus\{0,1\}$ defined by $\tau\mapsto (\wp(\tau/2|\tau)-\wp(1/2|\tau))/
(\wp((1+\tau)/2|\tau)-\wp(1/2|\tau))$ is compatible with the group morphism $\mathrm{SL}_2(\mathbb Z)\to
\mathrm{SL}_2(\mathbb F_2)\simeq \mathfrak S_3$; it therefore defines a map 
$\lambda : \Gamma(2)\backslash\mathfrak H\to\mathbb C\smallsetminus\{0,1\}$, which is a bijection. 
The map $\lambda$ is also the composition with the projection $\mathbb C^3_*\to\mathbb C\smallsetminus\{0,1\}$
of the map $\mathfrak H\to \mathbb C^3_*$ given by $\tau\mapsto(\wp(1/2|\tau),\wp(\tau/2|\tau),\wp((\tau+1)/2|\tau))$. 

The situation is summarized in the diagram 
$$
\xymatrix{\mathbb C^3_*\ar^{\mathbb C^\times\ltimes\mathbb C}[r]&\mathbb C\smallsetminus\{0,1\}\ar^{\mathfrak S_3}[r]&
\mathbb C\\
\mathfrak H\ar[u]\ar@/_2pc/^{\mathrm{SL}_2(\mathbb Z)}[rr]\ar^{\Gamma(2)}[r]&\Gamma(2)\backslash\mathfrak H\ar^{\mathfrak S_3}[r]\ar^{\lambda}_\sim[u]&
\mathrm{SL}_2(\mathbb Z)\backslash\mathfrak H\ar_{j}^\sim[u]}
$$
where the notation $X\stackrel{\Gamma}{\to}Y$ means that the map $X\to Y$ is $\Gamma$-invariant, and sets up 
a bijection $\Gamma\backslash X\stackrel{\sim}{\to} Y$.  

Let now $(\mathbf a_1,\mathbf a_2,\mathbf a_3)\in\mathbb C^3_*$. Its image by the composition $\mathbb C^3_*\to \mathbb C\smallsetminus\{0,1\}
\to\mathbb C\stackrel{j^{-1}}{\to}\mathrm{SL}_2(\mathbb Z)\backslash\mathfrak H$ is a well-defined $\mathrm{SL}_2(\mathbb Z)$-orbit 
in $\mathfrak H$. Let $\tau_0\in\mathfrak H$ be an element of this orbit. By the properties of the above diagram, there exists 
$\sigma\in\mathfrak S_3$ such that $\lambda(\tau_0)\in\mathbb C\smallsetminus\{0,1\}$ is related to $\mathbf a_{21}/\mathbf a_{31}$ by 
$\mathbf a_{21}/\mathbf a_{31}=\sigma\cdot \lambda(\tau_0)$. Let $\tilde\sigma\in\mathrm{SL}_2(\mathbb Z)$ be a lift of $\sigma$, one then 
has $\mathbf a_{21}/\mathbf a_{31}=\lambda(\tau)$, where $\tau=\sigma\cdot\tau_0$. Then the elements $(\mathbf a_1,\mathbf a_2,\mathbf a_3)$ and 
$(\wp(1/2|\tau),\wp(\tau/2|\tau),\wp((\tau+1)/2|\tau))$ of $\mathbb C^3_*$
have the same image in $\mathbb C\smallsetminus\{0,1\}$, which implies that they are related by the action of 
$\mathbb C^\times\ltimes\mathbb C$. 
\end{proof}

In the rest of \S\ref{sect:uniformization:2704}, we fix $\vec{\mathbf  a}=(\mathbf a_1,\mathbf a_2,\mathbf a_3)\in\mathbb C^3_*$.  

\begin{defn}\label{DEF:E:ALG}
$\mathcal E_{\mathrm{alg}}$ is the projective curve in $\mathbb P^2(\mathbb C)$ defined by 
the equation $Y^2T=(X-\mathbf a_1T)(X-\mathbf a_2T)(X-\mathbf a_3T)$, where $[X:Y:T]$ is the canonical system of projective coordinates on 
$\mathbb P^2(\mathbb C)$. 
\end{defn} 

\begin{lem}\label{lem:6:3:toto}
Let $(a,b,\tau)\in\mathbb C^\times\times\mathbb C\times\mathfrak H$ be as in Lem. 
\ref{lem:unif1:2704}, let $\Lambda:=\mathbb Z+\mathbb Z\tau$ and $\mathcal E:=\mathbb C/\Lambda$. 
Let $a^{3/2}$ be a square root of $a^3$, and denote by $(z,\tau)\mapsto\wp'(z|\tau)$ the partial 
derivative of $(z,\tau)\mapsto\wp'(z|\tau)$ with respect to $z$. 

(a) There is a unique isomorphism $iso : \mathcal E\to \mathcal E_{\mathrm{alg}}$, given by 
$pr(z)\mapsto [a\wp(z|\tau)+b:(1/2)a^{3/2}\wp'(z|\tau):1]$ for $z\notin\Lambda$ and $pr(0)\mapsto [0:1:0]$. 

(b) The image by $iso$ of $pr(1/2)$, $pr(\tau/2)$ and $pr((1+\tau)/2)$ are respectively $[\mathbf a_1:0:1]$,
$[\mathbf a_2:0:1]$ and $[\mathbf a_3:0:1]$. The isomorphism $iso$ intertwines the involution $pr(z)\mapsto pr(-z)$ of~$\mathcal E$ with the involution $[X:Y:T]\mapsto[X:-Y:T]$ of $\mathcal E_{\mathrm{alg}}$. 
\end{lem}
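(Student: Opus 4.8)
The plan is to realise $iso$ as the classical Weierstrass uniformisation of $\mathcal E_{\mathrm{alg}}$. Fix $\tau\in\mathfrak H$ and write $w_1,w_2,w_3$ for the half-period values $\wp(1/2|\tau),\wp(\tau/2|\tau),\wp((1+\tau)/2|\tau)$. First I would recall the Weierstrass differential equation in its factorised form, $\wp'(z|\tau)^2=4\prod_{i=1}^{3}(\wp(z|\tau)-w_i)$. By Lem.~\ref{lem:unif1:2704} one has $\mathbf a_i=aw_i+b$, so substituting $X=a\wp(z|\tau)+b$, $Y=\tfrac12 a^{3/2}\wp'(z|\tau)$, $T=1$ into the cubic of Def.~\ref{DEF:E:ALG} yields $\prod_i(X-\mathbf a_iT)=a^3\prod_i(\wp(z|\tau)-w_i)=\tfrac14 a^3\wp'(z|\tau)^2=Y^2T$; since $\wp(\cdot|\tau)$ and $\wp'(\cdot|\tau)$ are $\Lambda$-periodic, this defines a holomorphic map $\mathcal E\smallsetminus\{pr(0)\}\to\mathcal E_{\mathrm{alg}}$. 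Using the Laurent expansions $\wp(z|\tau)=z^{-2}+O(z^2)$, $\wp'(z|\tau)=-2z^{-3}+O(z)$ at $z=0$ and clearing denominators (multiply the homogeneous coordinates by $z^3$), the point tends to $[0:1:0]$ as $z\to0$, so the map extends holomorphically with $pr(0)\mapsto[0:1:0]$, defining a non-constant morphism $iso:\mathcal E\to\mathcal E_{\mathrm{alg}}$ of compact Riemann surfaces ($\mathcal E_{\mathrm{alg}}$ being a smooth projective plane curve, as the $\mathbf a_i$ are pairwise distinct).

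Next I would show that $iso$ is an isomorphism by proving it is injective. Since $iso$ is non-constant and $\mathcal E_{\mathrm{alg}}$ is an irreducible curve, its image is all of $\mathcal E_{\mathrm{alg}}$. If $iso(pr(z_1))=iso(pr(z_2))$ with $z_1,z_2\notin\Lambda$, then comparing $X/T$-coordinates gives $\wp(z_1|\tau)=\wp(z_2|\tau)$, hence $z_2\equiv\pm z_1\pmod\Lambda$; comparing $Y/T$-coordinates and using that $\wp'(\cdot|\tau)$ is odd forces $z_2\equiv z_1$ unless $\wp'(z_1|\tau)=0$, in which case $z_1$ is a half-period, so $-z_1\equiv z_1\pmod\Lambda$ and again $pr(z_1)=pr(z_2)$. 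Since moreover $[0:1:0]$ admits only $pr(0)$ as a preimage, $iso$ is a bijective morphism of smooth projective curves, hence an isomorphism; uniqueness with the stated formula is immediate, the formula prescribing $iso$ on the dense open set $\mathcal E\smallsetminus\{pr(0)\}$. I expect this injectivity bookkeeping at the half-periods and at infinity to be the only mildly delicate point; the rest is the standard Weierstrass cubic computation.

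For part (b), the half-periods $1/2,\tau/2,(1+\tau)/2$ are exactly the zeros of $z\mapsto\wp'(z|\tau)$ on $\mathcal E\smallsetminus\{pr(0)\}$, and $\wp(1/2|\tau)=w_1$, $\wp(\tau/2|\tau)=w_2$, $\wp((1+\tau)/2|\tau)=w_3$, so $iso(pr(1/2))=[aw_1+b:0:1]=[\mathbf a_1:0:1]$, and likewise $iso(pr(\tau/2))=[\mathbf a_2:0:1]$, $iso(pr((1+\tau)/2))=[\mathbf a_3:0:1]$. Since $\wp(\cdot|\tau)$ is even and $\wp'(\cdot|\tau)$ is odd, for $z\notin\Lambda$ one has $iso(pr(-z))=[a\wp(z|\tau)+b:-\tfrac12 a^{3/2}\wp'(z|\tau):1]$, which is the image of $iso(pr(z))$ under $[X:Y:T]\mapsto[X:-Y:T]$; and $iso(pr(0))=[0:1:0]=[0:-1:0]$ is fixed by this involution. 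Hence $iso$ intertwines $pr(z)\mapsto pr(-z)$ with $[X:Y:T]\mapsto[X:-Y:T]$, which completes the argument.
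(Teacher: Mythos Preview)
Your proof is correct and follows essentially the same approach as the paper's: both rest on the factorised Weierstrass differential equation $\wp'(z|\tau)^2=4\prod_i(\wp(z|\tau)-w_i)$ combined with the relation $\mathbf a_i=aw_i+b$ from Lem.~\ref{lem:unif1:2704}, and on the parity of $\wp,\wp'$ for part~(b). The paper simply delegates the isomorphism verification to \cite{Kn}, Thm.~6.15, whereas you spell out the curve-equation check, the extension at $pr(0)$ via Laurent expansions, and the injectivity argument; this is a welcome expansion but not a different route.
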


\begin{proof}
(a) follows from the fact that functions $(z,\tau)\mapsto \wp(z|\tau)$ and $\wp'(z|\tau)$ satisfy the identity 
$\wp'(z|\tau)^2=4(\wp(z|\tau)-\wp(1/2|\tau))(\wp(z|\tau)-\wp(\tau/2|\tau))(\wp(z|\tau)-\wp((1+\tau)/2|\tau))$ (see \cite{Kn}, Thm.~6.15) 
and from the relation from Lem. \ref{lem:unif1:2704}. The first part of (b) follows from Lem. \ref{lem:unif1:2704}, from 
the vanishing for fixed $\tau$ of the function  
$z\mapsto \wp'(z|\tau)$ at $1/2$, $\tau/2$ and $(1+\tau)/2$, which follows from its oddness and from its $\Lambda$-periodicity, and its 
second part follows from the evenness of the function $z\mapsto\wp(z|\tau)$ and oddness of the function $z\mapsto \wp'(z|\tau)$ for 
$\tau$ being fixed.    
\end{proof}

\subsection{An alternative proof of a result of \cite{BDDT}}\label{sect:6:2:2606}

Let $\vec{\mathbf a}=(\mathbf a_1,\mathbf a_2,\mathbf a_3)\in\mathbb C^3_*$, let  
$(a,b,\tau)\in\mathbb C^\times\times\mathbb C\times\mathfrak H$ be as in Lem. \ref{lem:unif1:2704} and 
let $\mathcal E,\mathcal E_{\mathrm{alg}}$ be as in Lem. \ref{lem:6:3:toto} and Def. \ref{DEF:E:ALG}. 

Let $\pi : \mathcal E_{\mathrm{alg}}\to\mathbb P^1(\mathbb C)$ be the morphism given by the composition 
$\mathcal E_{\mathrm{alg}}\subset\mathbb P^2(\mathbb C)\to\mathbb P^1(\mathbb C)$, where the last 
morphism is $[X:Y:T]\mapsto[X:T]$. 
Let $S_0 \subset\mathbb P^1(\mathbb C)$ be a finite subset containing~$\infty$. 
Let $S_{\mathrm{alg}}:=\pi^{-1}(S_0)$ and $S:=iso^{-1}(S_{\mathrm{alg}})$.  

Then $S$ is a finite subset of $\mathcal E$, which is stable under the involution $pr(z)\mapsto pr(-z)$, and 
which contains $\{pr(0)\}$; moreover, there is an isomorphism 
$$
iso : \mathcal E\smallsetminus S\to \mathcal E_{\mathrm{alg}}\smallsetminus S_{\mathrm{alg}}. 
$$

In \cite{BDDT}, (3.16), one defines the family of multivalued holomorphic functions 
$$
x\mapsto \mathrm{E_3}(\begin{smallmatrix}
n_1&\cdots&n_k\\ c_1&\cdots&c_k\end{smallmatrix};x)
$$
on $\mathcal E_{\mathrm{alg}}\smallsetminus S_{\mathrm{alg}}$ (denoted $x\mapsto \mathrm{E_3}(\begin{smallmatrix}
n_1&\cdots&n_k\\ c_1&\cdots&c_k\end{smallmatrix};x,\vec{\mathbf a})$ in {\it loc. cit.} to underscore the dependence 
in $(\mathbf a_1,\mathbf a_2,\mathbf a_3)$), indexed by tuples 
$(n_1,c_1),\ldots,(n_k,c_k)$ in $\sqcup_{k\geq 0}I^k$, where 
$I:=(\mathbb Z\times S_0^{unr})\sqcup(\mathbb Z_{\geq 0}\times S_0^{ram})$ and 
$S_0^{ram}:=S_0\smallsetminus(S_0\cap\{\mathbf a_1,\mathbf a_2,\mathbf a_3,\infty\})$
and $S_0^{unr}:=S_0\smallsetminus S_0^{ram}$; one therefore has
for each $i\in[\![1,k]\!]$ the relations $(n_i,c_i)\in \mathbb Z\times S_0$, 
with\footnote{The range of values $\mathbb Z\times\mathbb P^1_{\mathbb C}$ for the pairs $(n_i,c_i)$ 
announced in \cite{BDDT}, eq. (3.16) should in fact be restricted by the condition that $n_i\geq 0$  
whenever $c_i\in\{\mathbf a_1,\mathbf a_2,\mathbf a_3,\infty\}$, as one can derive from (3.31) 
and the discussion following it.} 
$n_i\geq 0$ whenever $c_i\in \{\mathbf a_1,\mathbf a_2,\mathbf a_3,\infty\}$.   

Recall that $\big(z\mapsto \tilde\Gamma(\begin{smallmatrix}
n_1&\cdots&n_k\\ a_1&\cdots&a_k\end{smallmatrix};z)\big)$
is a family of multivalued holomorphic functions on $\mathcal E\smallsetminus S$, indexed by tuples 
$(n_1,a_1),\ldots,(n_k,a_k)$ in $\sqcup_{k\geq 0}(\mathbb Z_{\geq 0}\times \tilde S)^k$. 

\begin{prop}[see also \cite{BDDT}, \S5] \label{prop:iso:spaces:0307}
The map $iso^*$ induces an isomorphism of vector spaces between the linear spans
$\mathrm{Span}_{\mathbb C}\{x\mapsto \mathrm{E_3}(\begin{smallmatrix}
n_1&\cdots&n_k\\ c_1&\cdots&c_k\end{smallmatrix};x)\,|\,k\geq 0,(n_i,c_i)\in (\mathbb Z\times S_0^{unr})\sqcup(\mathbb Z_{\geq 0}\times S_0^{ram})\}$ and $\mathrm{Span}_{\mathbb C}\{z\mapsto \tilde\Gamma(\begin{smallmatrix}
n_1&\cdots&n_k\\ a_1&\cdots&a_k\end{smallmatrix};z)\,|\,k\geq 0,(n_i,a_i)\in \mathbb Z_{\geq 0}\times \tilde S \}$.
\end{prop}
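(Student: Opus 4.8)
The plan is to reduce everything to an explicit comparison of the two families of differential forms on $\mathcal E\smallsetminus S$ whose iterated integrals (based at $0$) define the functions $\mathrm E_3$ and $\tilde\Gamma$, and then invoke the functional equation \eqref{FE:omega} to see that both collections span the same space. First I would recall that, by \cite{BDDT}, (3.16), the function $\mathrm E_3(\begin{smallmatrix}n_1&\cdots&n_k\\ c_1&\cdots&c_k\end{smallmatrix};x)$ is the regularised iterated integral, from the origin, of the pullbacks under $\pi$ of certain meromorphic $1$-forms $\psi_{n,c}$ on $\mathbb P^1_{\mathbb C}$ together with the form $\varphi$ built from $Y=\tfrac12 a^{3/2}\wp'$; concretely, after applying $iso^*$ and using Lem.~\ref{lem:6:3:toto}, each such pullback becomes a $\mathbb C$-linear combination of the forms $\omega_{n,a}$ with $a\in pr^{-1}(S)$ (this is the content of the ``dictionary'' sketched in \cite{BDDT}, \S5, and it is exactly the computation relating the Kronecker function expansion $\sum_n g_n\alpha^{n-1}$ to the Weierstrass data). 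The key point is that $iso^*$ is an isomorphism of Riemann surfaces $\mathcal E\smallsetminus S\to\mathcal E_{\mathrm{alg}}\smallsetminus S_{\mathrm{alg}}$ carrying the base point of one regularisation to that of the other, so it induces an isomorphism of the relevant iterated-integral algebras; hence it suffices to match the two spanning sets of forms.

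The core linear-algebra step is then: show that $\mathrm{Span}_{\mathbb C}\{iso^*(\pi^*\psi_{n,c}),\ iso^*\varphi\}$ equals $\mathrm{Span}_{\mathbb C}\{\omega_{n,a}\mid n\ge 0,\ a\in pr^{-1}(S)\}$, and then show that iterated integrals of the forms $\omega_{n,a}$ over all $a\in pr^{-1}(S)$ span the same space as iterated integrals over the restricted index set $a\in\tilde S$. The second reduction is precisely the one already invoked in \S\ref{sect:tilde:Gamma:0407} just after Def.~\ref{def:3:2:1004}: by the functional equation \eqref{FE:omega}, $\omega_{n,a+\lambda}$ for $\lambda\in\Lambda$ is a $\mathbb C$-linear (in fact unipotent-triangular in $n$) combination of the $\omega_{k,a}$, so shuffle-algebra manipulations show the span of the associated iterated integrals is unchanged when one restricts to representatives in $\tilde S$. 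The first reduction is a finite, curve-by-curve check: the $2:1$ map $\pi$ has branch points exactly at the images of $pr(1/2),pr(\tau/2),pr((1+\tau)/2)$ and $pr(0)$, so a point $c\in S_0$ pulls back either to one ramified point (then $n\ge 0$ is forced and $iso^*\pi^*\psi_{n,c}$ is, up to $O_S$-multiples of $dz$, a combination of $\omega_{n',a}$ with $a$ the unique preimage) or to two unramified points $a,-a\in pr^{-1}(S)$ (then the two choices of sign of $n$ correspond to the two preimages); the form $\varphi$ built from $Y$ accounts for $\omega_{0,a}$-type contributions. One then checks both inclusions of spans directly.

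I would organise the write-up as follows: (i) recall from \cite{BDDT} the integral representation of the $\mathrm E_3$ and identify, via Lem.~\ref{lem:6:3:toto} and the Kronecker-function identities of \S\ref{sect:3:1:2704}, the pulled-back forms $iso^*(\text{forms of }\mathrm E_3)$ inside $\Omega_{hol}(\mathbb C\smallsetminus pr^{-1}(S))$; (ii) observe that $iso$ carries the origin-based (tangential) regularisation on $\mathcal E_{\mathrm{alg}}$ to the one used to define $\tilde\Gamma$, so that $iso^*$ intertwines the two iterated-integration maps $k_{\overrightarrow 0}$ built from the respective form-data; (iii) prove the equality of the two spans of forms by the finite case analysis above; (iv) conclude using \eqref{FE:omega} and the shuffle-algebra reduction that the two $\mathbb C$-linear spans of functions coincide, i.e.\ that $iso^*$ restricts to the claimed isomorphism.

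The hard part will be step (iii), namely pinning down the precise linear combinations: one must be careful that the $\mathrm E_3$ with \emph{negative} $n_i$ at unramified points $c_i$ genuinely correspond, after pullback, to the forms $\omega_{n,a}$ attached to the \emph{second} preimage $-a$ of $c_i$, and that no forms outside $\mathrm{Span}\{\omega_{n,a}\}$ (e.g.\ forms with higher-order poles not of Kronecker type, or contributions of $\wp,\wp'$) appear — this is exactly where the special structure of the Kronecker function, and the relations \eqref{relations:wp:Er} and \eqref{diffpropgn} tying $g_n$, $E_r$ and $\wp^{(r)}$, are used. Once the combinatorics of which preimage goes with which sign of $n$ is set up cleanly, the two inclusions of spans are routine and the functional-equation reduction is already available from \S\ref{sect:tilde:Gamma:0407}.
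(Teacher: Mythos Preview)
Your overall strategy—reduce to an equality of spans of differential forms on $\mathcal E\smallsetminus S$, then lift to iterated integrals—matches the paper's, and the reduction from parameters $a\in pr^{-1}(S)$ to $a\in\tilde S$ via \eqref{FE:omega} is also exactly what the paper does (the equality of form spans itself is simply cited from \cite{BDDT}, \S5, rather than reproved by the case analysis you sketch).

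However, your step (ii) contains a genuine gap. You assert that $iso$ carries the base point used for the $\mathrm E_3$ to the tangential base point used for $\tilde\Gamma$. This is not the case: the $\mathrm E_3$ are iterated integrals based at the \emph{ordinary} point $[0:\sqrt{-\mathbf a_1\mathbf a_2\mathbf a_3}:1]\in\mathcal E_{\mathrm{alg}}$ (the point with $X/T=0$), whereas $iso(pr(0))=[0:1:0]$ is the point at infinity on $\mathcal E_{\mathrm{alg}}$ (Lem.~\ref{lem:6:3:toto}(a)). Consequently, after applying $iso^*$, the $\mathrm E_3$ become iterated integrals of the forms $\omega_{n,a}$ based at some ordinary point $z_0\in\tilde{\mathcal E}_S$, not at the tangential base point $\overrightarrow 0$. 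The paper bridges this mismatch by invoking Lem.~\ref{NEWLEMMA:2704}(b), which says that the image of $k_{z_0}:\mathrm{Sh}(V)\to\mathcal O_{hol}(\tilde{\mathcal E}_S)$ is independent of $z_0$ and coincides with the image of $k_{\overrightarrow 0}$; this base-point independence is precisely what is needed to conclude that the two $\mathbb C$-spans of functions agree. Without that lemma (or the equivalent coproduct identity \eqref{rel:0905}), your step (ii) does not go through as stated.
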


\begin{proof}
The family of functions $x\mapsto \mathrm{E_3}(\begin{smallmatrix}
n_1&\cdots&n_k\\ c_1&\cdots&c_k\end{smallmatrix};x)$ with variable $(k,(n_1,c_1),\ldots,(n_k,c_k))$
is defined in \cite{BDDT} as follows. One introduces in {\it loc. cit.} a family of multivalued meromorphic differentials 
$\varphi_n(c,t)dt$ on $\mathcal E_{\mathrm{alg}}$, where $(n,c) \in 
(\mathbb Z\times S_0^{unr})\sqcup(\mathbb Z_{\geq 0}\times S_0^{ram})$ and $S_0^{ram}:=S_0\smallsetminus(S_0\cap\{\mathbf a_1,\mathbf a_2,\mathbf a_3,\infty\})$  with sets of poles contained in 
$pr_{\mathrm{alg}}^{-1}(S_{\mathrm{alg}})$, where $pr_{\mathrm{alg}} : \tilde{\mathcal E}_{\mathrm{alg}}\to\mathcal E_{\mathrm{alg}}$
is a universal cover. 
The functions $x\mapsto \mathrm{E_3}(\begin{smallmatrix}
n_1&\cdots&n_k\\ c_1&\cdots&c_k\end{smallmatrix};x)$ are the iterated integrals of these 
differentials starting from the point $[0:\sqrt{-\mathbf a_1\mathbf a_2\mathbf a_3}:1]$ in $\mathcal E_{\mathrm{alg}}$, 
where $\sqrt{-\mathbf a_1\mathbf a_2\mathbf a_3}$ 
is a square root of $-\mathbf a_1\mathbf a_2\mathbf a_3$. The isomorphism $iso : \mathcal E\to\mathcal E_{\mathrm{alg}}$ induces an 
isomorphism
$\widetilde{iso} : \tilde{\mathcal E}\to\tilde{\mathcal E}_{\mathrm{alg}}$, and the image by 
$\widetilde{iso}^*$ of $\mathrm{Span}_{\mathbb C}(\varphi_n(c,t)dt\,|\,(n,c) \in 
(\mathbb Z\times S_0^{unr})\sqcup(\mathbb Z_{\geq 0}\times S_0^{ram}))$ is a space of multivalued meromorphic 
differentials on $\mathcal E$, with sets of poles contained in $pr^{-1}(S)$, which is shown in \cite{BDDT}, \S5 to coincide with 
$\mathrm{Span}_{\mathbb C}(dz,T_{a}g_n\cdot dz,a\in pr^{-1}(S))$. It follows from \eqref{ellpropgn} that the latter space is equal to  
$\mathrm{Span}_{\mathbb C}(dz,T_{a}g_n\cdot dz,a\in \tilde S)$. 
So the linear span of the functions 
$x\mapsto \mathrm{E_3}(\begin{smallmatrix}
n_1&\cdots&n_k\\ c_1&\cdots&c_k\end{smallmatrix};x)$
corresponds via $iso^*$ to the linear span of the iterated integrals starting at $iso(0)$ of the elements of 
$\mathrm{Span}_{\mathbb C}(dz,T_{a}g_n\cdot dz,a\in\tilde S)$. By Lem. \ref{NEWLEMMA:2704}(b), this linear span coincides with 
that of the regularized iterated integrals starting at $pr^{-1}(0)$ of the elements of 
$\mathrm{Span}_{\mathbb C}(dz,T_{a}g_n\cdot dz,a\in\tilde S)$, which is the linear span of 
$z\mapsto \tilde\Gamma(\begin{smallmatrix}
n_1&\cdots&n_k\\ a_1&\cdots&a_k\end{smallmatrix};z)$ for variable $(k,(n_1,a_1),\ldots,(n_k,a_k))$.      
\end{proof}

The authors of \cite{BDDT} define in eq. (3.23) a multivalued function $\mathrm Z_3$ on $\mathcal E_{\mathrm{alg}}\smallsetminus\{\infty\}$, closely related to a classical elliptic integral, and prove that $iso^*(\mathrm Z_3)=4g_1$ (see \cite{BDDT}, eq.~(5.11)).

\begin{prop} \label{prop:1713:2704}
The subalgebra 
$$
\mathcal A_3(S_0):=\mathcal O(\mathcal E_{\mathrm{alg}}\smallsetminus S_{\mathrm{alg}})[\mathrm{Z_3}][
\mathrm{E_3}(\begin{smallmatrix}
n_1&\cdots&n_k\\ c_1&\cdots&c_k\end{smallmatrix};-)|
k,(n_1,c_1),\ldots,(n_k,c_k)]
$$
of the algebra of holomorphic multivalued functions on $\mathcal E_{\mathrm{alg}}\smallsetminus S_{\mathrm{alg}}$
is stable in the sense of Def.~\ref{def:stable:subalg}. 
\end{prop}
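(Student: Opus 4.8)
The plan is to transport the whole statement to the curve $\mathcal E_S$ along the isomorphism $iso$ and to recognise the resulting algebra as $A_{\mathcal E_S}$, whose stability under integration is already available.

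First I would record that $iso : \mathcal E_S \to \mathcal E_{\mathrm{alg}}\smallsetminus S_{\mathrm{alg}}$ is an isomorphism of affine curves; lifting it to an isomorphism of the chosen universal covers, one obtains an algebra isomorphism $iso^*$ from the algebra of holomorphic multivalued functions on $\mathcal E_{\mathrm{alg}}\smallsetminus S_{\mathrm{alg}}$ to $\mathcal O_{hol}(\tilde{\mathcal E}_S)$, which carries $\Omega(\mathcal E_{\mathrm{alg}}\smallsetminus S_{\mathrm{alg}})$ isomorphically onto $\Omega(\mathcal E_S)$ and commutes with $d$ and with iterated integration; hence it intertwines the endomorphisms $\mathrm{int}_\omega$ on the source with those on $\mathcal O_{hol}(\tilde{\mathcal E}_S)$, up to the harmless choice of base point (recall, from the remark after Def. \ref{def:stable:subalg}, that the notion of stable subalgebra does not depend on it). Consequently a subalgebra $\mathcal B$ of the algebra of holomorphic multivalued functions on $\mathcal E_{\mathrm{alg}}\smallsetminus S_{\mathrm{alg}}$ is stable in the sense of Def. \ref{def:stable:subalg} if and only if $iso^*(\mathcal B)$ is a stable subalgebra of $\mathcal O_{hol}(\tilde{\mathcal E}_S)$.

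Then I would identify $iso^*(\mathcal A_3(S_0))$ with $\mathcal G$. Indeed, $iso^*$ restricts to an isomorphism $\mathcal O(\mathcal E_{\mathrm{alg}}\smallsetminus S_{\mathrm{alg}}) \simeq O_S$; one has $iso^*(\mathrm Z_3) = 4 g_1$ by \cite{BDDT}, eq. (5.11) (recalled above); and by Prop. \ref{prop:iso:spaces:0307} the map $iso^*$ sends the linear span of the functions $\mathrm E_3$ onto the linear span of the functions $\tilde\Gamma$ with $(n_i,a_i)\in\mathbb Z_{\geq 0}\times\tilde S$. Since a subalgebra contains a set if and only if it contains its linear span, the subalgebra of $\mathcal O_{hol}(\tilde{\mathcal E}_S)$ generated by $O_S$, by $iso^*(\mathrm Z_3)$ and by the pullbacks of the functions $\mathrm E_3$ coincides with the one generated by $O_S$, by $g_1$ and by all the functions $\tilde\Gamma$; by Def. \ref{defn:mathcalG:2704} this is exactly $\mathcal G$. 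Invoking Thm. \ref{thm:main:2606} one gets $iso^*(\mathcal A_3(S_0)) = \mathcal G = A_{\mathcal E_S}$, which is a stable subalgebra by Lem.-Def. \ref{lem:ez2:1704}; by the first step, $\mathcal A_3(S_0)$ is therefore stable, which is the claim.

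I expect the only delicate point to be the first step, namely making precise the compatibility of $iso^*$ with the operators $\mathrm{int}_\omega$, i.e. choosing the universal covers and the lift of $iso$ so that pullback of forms and iterated integration match up. This is formal but needs the base-point independence noted after Def. \ref{def:stable:subalg} to avoid spurious constants; once it is settled, the rest is bookkeeping built on the already-proved Prop. \ref{prop:iso:spaces:0307}, the identity $iso^*(\mathrm Z_3)=4g_1$, and the equality $\mathcal G = A_{\mathcal E_S}$ of Thm. \ref{thm:main:2606}.
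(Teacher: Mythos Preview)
Your proposal is correct and follows essentially the same route as the paper: identify $iso^*(\mathcal A_3(S_0))$ with $\mathcal G$ via $iso^*(\mathrm Z_3)=4g_1$ and Prop.~\ref{prop:iso:spaces:0307}, then use the stability of $\mathcal G$. The only difference is that the paper concludes directly from Prop.~\ref{lem:17:1704} (which already asserts that $\mathcal G$ is stable), whereas you take the slightly longer path through Thm.~\ref{thm:main:2606} to get $\mathcal G=A_{\mathcal E_S}$ and then invoke the stability of $A_{\mathcal E_S}$; this is harmless but unnecessary, since Prop.~\ref{lem:17:1704} suffices.
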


\begin{proof}
Since $iso^*(\mathrm Z_3)=4g_1$, it follows that the image by $iso^*$ of this algebra is 
equal to $\mathcal G$ (see Def. \ref{defn:mathcalG:2704}). The result then follows from  Prop. \ref{lem:17:1704}. 
\end{proof}

Let us emphasize the dependence of $S_{\mathrm{alg}}$ in the finite subset $S_0$ such that $\{\infty\}\subset 
S_0\subset\mathbb P^1_{\mathbb C}$ by denoting it 
$S_{\mathrm{alg}}(S_0)$. Fix: (a) for each finite subset subset $S_0$ such that $\{\infty\}\subset 
S_0\subset\mathbb P^1_{\mathbb C}$, a universal cover 
$\widetilde{\mathcal E_{\mathrm{alg}}\smallsetminus S_{\mathrm{alg}}(S_0)}\to
\mathcal E_{\mathrm{alg}}\smallsetminus S_{\mathrm{alg}}(S_0)$; 
(b) for each pair of finite sets $S_0,S'_0$ with 
$\{\infty\}\subset S_0\subset S'_0\subset\mathbb P^1_{\mathbb C}$, a holomorphic map 
$p_{S_0,S'_0} : \widetilde{\mathcal E_{\mathrm{alg}}\smallsetminus S_{\mathrm{alg}}(S'_0)}
\to\widetilde{\mathcal E_{\mathrm{alg}}\smallsetminus S_{\mathrm{alg}}(S_0)}$
such that 
$$\xymatrix{\widetilde{\mathcal E_{\mathrm{alg}}\smallsetminus S_{\mathrm{alg}}(S'_0)}\ar^{p_{S_0,S'_0}}[r]\ar[d]&
\widetilde{\mathcal E_{\mathrm{alg}}\smallsetminus S_{\mathrm{alg}}(S_0)}\ar[d]\\
\mathcal E_{\mathrm{alg}}\smallsetminus S_{\mathrm{alg}}(S'_0)\ar[r]&
\mathcal E_{\mathrm{alg}}\smallsetminus S_{\mathrm{alg}}(S_0)}$$
commutes, such that for any triple of finite sets $S_0,S'_0,S''_0$ with 
$\{\infty\}\subset S_0\subset S'_0\subset S''_0\subset\mathbb P^1_{\mathbb C}$, 
one has $p_{S_0,S''_0}=p_{S_0,S'_0}\circ p_{S'_0,S''_0}$. One can then form the inductive limit algebra
$\varinjlim_{S_0}\mathcal O_{hol}(\widetilde{\mathcal E_{\mathrm{alg}}\smallsetminus S_{\mathrm{alg}}(S_0)})$; it contains
the algebra 
$\varinjlim_{S_0}\mathcal O(\mathcal E_{\mathrm{alg}}\smallsetminus S_{\mathrm{alg}}(S_0))$, which is the field 
$\mathbb C(\mathcal E_{\mathrm{alg}})$ of rational functions on $\mathcal E_{\mathrm{alg}}$.

Recall that the space of regular differentials on $\mathcal E_{\mathrm{alg}}$ is linearly spanned by 
$(TdX-XdT)/(YT)$. 

\begin{prop}\label{prop:bddt:2704}
(a) For each pair of finite sets $S_0,S'_0$ with 
$\{\infty\}\subset S_0\subset S'_0\subset\mathbb P^1_{\mathbb C}$, the algebra inclusion 
$p_{S_0,S'_0}^* : \mathcal O_{hol}(\widetilde{\mathcal E_{\mathrm{alg}}\smallsetminus S_{\mathrm{alg}}(S_0)})
\to\mathcal O_{hol}(\widetilde{\mathcal E_{\mathrm{alg}}\smallsetminus S_{\mathrm{alg}}(S'_0)})$ is such that 
$p_{S_0,S'_0}^* (\mathcal A_3(S_0))\subset \mathcal A_3(S'_0)$. Set $\mathcal A_3:=\varinjlim_{S_0}\mathcal A_3(S_0)$. 

(b) Let denote the endomorphism $\mathrm{int}_{(TdX-XdT)/(YT)}$ of 
$\mathcal O_{hol}(\widetilde{\mathcal E_{\mathrm{alg}}\smallsetminus S_{\mathrm{alg}}(S_0)})$ (see \S\ref{sect:21:2704}) 
as $\mathrm{int}_{S_0}$. Then for each  pair of finite sets $S_0,S'_0$ with 
$\{\infty\}\subset S_0\subset S'_0\subset\mathbb P^1_{\mathbb C}$, one has 
$\mathrm{int}_{S_0}\circ p_{S_0,S'_0}^*=p_{S_0,S'_0}^*\circ\mathrm{int}_{S'_0}$; let $\varinjlim_{S_0}\mathrm{int}_{S_0}$
be the corresponding endomorphism of $\varinjlim_{S_0}\mathcal O_{hol}(\widetilde{\mathcal E_{\mathrm{alg}}
\smallsetminus S_{\mathrm{alg}}(S_0)})$. 

(c) $\mathcal A_3$ is stable under $\varinjlim_{S_0}\mathrm{int}_{S_0}$. 
\end{prop}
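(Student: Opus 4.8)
The plan is to deduce all three parts from the facts already established in this section: via $iso^*$ each $\mathcal A_3(S_0)$ is identified with the algebra $\mathcal G$ attached to $S:=iso^{-1}(S_{\mathrm{alg}}(S_0))$ (Prop.~\ref{prop:1713:2704}), $\mathcal G$ equals $A_{\mathcal E_S}$ (Thm.~\ref{thm:main:2606}), and $\mathcal G$ is stable under $\mathrm{int}_\omega$ for every $\omega\in\Omega(\mathcal E_S)$ (Prop.~\ref{lem:17:1704}). As a preliminary I would fix, coherently over the directed poset of finite subsets $S_0\subset\mathbb P^1_{\mathbb C}$ containing $\infty$, the universal covers together with a base point (in the tangential sense, as the reference point of the $\mathrm E_3$ may be a puncture), so that each $p_{S_0,S'_0}$ respects these choices; this is where essentially all the care goes, and it is possible because the $p_{S_0,S'_0}$ already form a compatible inductive system of covering maps. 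I would also fix the sections $s\mapsto\tilde s$ of Def.~\ref{def:tildeS:3006} compatibly, so that the marked sets grow with $S_0$.

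For part (a), fix $S_0\subset S'_0$, so $S_{\mathrm{alg}}(S_0)\subset S_{\mathrm{alg}}(S'_0)$ and the corresponding sets $S\subset S'$, $\tilde S\subset\tilde S'$. I would check that $p_{S_0,S'_0}^*$ carries each generator of $\mathcal A_3(S_0)$ to a generator of $\mathcal A_3(S'_0)$: the algebra $\mathcal O(\mathcal E_{\mathrm{alg}}\smallsetminus S_{\mathrm{alg}}(S_0))$ is a subalgebra of $\mathcal O(\mathcal E_{\mathrm{alg}}\smallsetminus S_{\mathrm{alg}}(S'_0))$; the function $\mathrm Z_3$ is defined on $\mathcal E_{\mathrm{alg}}\smallsetminus\{\infty\}$ independently of $S_0$; the index set $(\mathbb Z\times S_0^{unr})\sqcup(\mathbb Z_{\geq0}\times S_0^{ram})$ is monotone in $S_0$; and, since the $\mathrm E_3(\ldots;-)$ are iterated integrals of differentials living on $\mathcal E_{\mathrm{alg}}$ starting from the common base point, $p_{S_0,S'_0}^*$ sends such a function over $\mathcal E_{\mathrm{alg}}\smallsetminus S_{\mathrm{alg}}(S_0)$ to the eponymous one over $\mathcal E_{\mathrm{alg}}\smallsetminus S_{\mathrm{alg}}(S'_0)$ (equivalently, after $iso^*$ this is the evident inclusion $\mathcal G_S\subset\mathcal G_{S'}$ read off Def.~\ref{defn:mathcalG:2704}). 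Hence $p_{S_0,S'_0}^*(\mathcal A_3(S_0))\subset\mathcal A_3(S'_0)$, $(\mathcal A_3(S_0))_{S_0}$ is an inductive system, and $\mathcal A_3:=\varinjlim_{S_0}\mathcal A_3(S_0)$ is a well-defined subalgebra of $\varinjlim_{S_0}\mathcal O_{hol}(\widetilde{\mathcal E_{\mathrm{alg}}\smallsetminus S_{\mathrm{alg}}(S_0)})$.

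For part (b), I would first note that $\omega_{\mathrm{alg}}:=(TdX-XdT)/(YT)$ is regular on all of $\mathcal E_{\mathrm{alg}}$, hence restricts to an element of $\Omega(\mathcal E_{\mathrm{alg}}\smallsetminus S_{\mathrm{alg}}(S_0))$ for every $S_0$, and that the commutative square relating $p_{S_0,S'_0}$ to the two covering maps forces the pullbacks of $\omega_{\mathrm{alg}}$ to the two universal covers to correspond under $p_{S_0,S'_0}^*$. With the base points chosen coherently as above, the identity $p_{S_0,S'_0}^*\circ\mathrm{int}_{S_0}=\mathrm{int}_{S'_0}\circ p_{S_0,S'_0}^*$ then follows from the functoriality of iterated integration: $p_{S_0,S'_0}$ sends a path from the chosen base point of $\widetilde{\mathcal E_{\mathrm{alg}}\smallsetminus S_{\mathrm{alg}}(S'_0)}$ to a point $\tilde z$ onto a path from the chosen base point of $\widetilde{\mathcal E_{\mathrm{alg}}\smallsetminus S_{\mathrm{alg}}(S_0)}$ to $p_{S_0,S'_0}(\tilde z)$, along which the integrands $f\cdot p^*\omega_{\mathrm{alg}}$ agree after pullback. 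Consequently the $\mathrm{int}_{S_0}$ pass to a well-defined endomorphism $\varinjlim_{S_0}\mathrm{int}_{S_0}$ of the colimit.

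Then part (c) is immediate: Prop.~\ref{prop:1713:2704} says each $\mathcal A_3(S_0)$ is a stable subalgebra in the sense of Def.~\ref{def:stable:subalg}, and since $\omega_{\mathrm{alg}}$ spans $\Omega(\mathcal E_{\mathrm{alg}})\subset\Omega(\mathcal E_{\mathrm{alg}}\smallsetminus S_{\mathrm{alg}}(S_0))$ we get $\mathrm{int}_{S_0}(\mathcal A_3(S_0))\subset\mathcal A_3(S_0)$; combining this with part (a) (transition maps preserve the $\mathcal A_3(S_0)$) and part (b) (they intertwine the operators), $\varinjlim_{S_0}\mathrm{int}_{S_0}$ maps $\mathcal A_3$ into itself, because any element of $\mathcal A_3$ is represented by some $f\in\mathcal A_3(S_0)$ whose image is represented by $\mathrm{int}_{S_0}(f)\in\mathcal A_3(S_0)$. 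The hard part will not be any of this bookkeeping but rather the preliminary step flagged above: the operators $\mathrm{int}_{S_0}$ depend on a base point in the universal cover, so one must set up a genuinely coherent system of (tangential) base points and lifts across the whole directed family before the clean commutation formula of (b) can be asserted on the nose; once that is in place, everything reduces formally to Thm.~\ref{thm:main:2606}, Prop.~\ref{lem:17:1704} and Prop.~\ref{prop:1713:2704}.
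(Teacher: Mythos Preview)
Your proposal is correct and follows essentially the same approach as the paper, which simply records that ``(a) and (b) are straightforward, and (c) follows from Prop.~\ref{prop:1713:2704}.'' You have supplied the details the paper omits: for (a) you check that generators go to generators, for (b) you invoke functoriality of iterated integration under a coherent choice of base points, and for (c) you combine Prop.~\ref{prop:1713:2704} with the compatibility statements (a) and (b), exactly as intended.

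One small remark: your emphasis on \emph{tangential} base points for the operators $\mathrm{int}_{S_0}$ is more cautious than necessary. The endomorphisms $\mathrm{int}_\omega$ of \S\ref{sect:21:2704} are defined with respect to an ordinary point $z_0\in\tilde C$, and the paper notes right after Def.~\ref{def:stable:subalg} that the notion of stable subalgebra is independent of this choice; hence for (c) no delicate regularisation is needed. The base point $[0:\sqrt{-\mathbf a_1\mathbf a_2\mathbf a_3}:1]$ enters only in the definition of the $\mathrm E_3$ functions themselves (i.e.\ in which specific multivalued functions generate $\mathcal A_3(S_0)$), not in the integration operators. So the ``hard part'' you flag reduces to choosing, once and for all, a compatible system of ordinary lifts $z_0(S_0)\in\widetilde{\mathcal E_{\mathrm{alg}}\smallsetminus S_{\mathrm{alg}}(S_0)}$ related by the $p_{S_0,S'_0}$, which is routine.
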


\begin{proof}
(a) and (b) are straightforward, and (c) follows from Prop. \ref{prop:1713:2704}. 
\end{proof}

Note that Prop. \ref{prop:bddt:2704}(c) recovers the result from \cite{BDDT}, stated in \S3 and proved in \S6.

\subsection*{Acknowledgements}

The research of B.E. has been partially funded by ANR grant ``Project
HighAGT ANR20-CE40-0016''. The research of F.Z. has been funded by the Royal Society, under the grant
URF\textbackslash R1\textbackslash201473.


\begin{thebibliography}{BMMS}

\bibitem[Bl]{Bl} S. Bloch. Higher regulators, algebraic K-theory, and zeta functions of elliptic curves,
volume 11 of CRM Monograph Series. American Mathematical Society, Providence, RI, 2000.

\bibitem[BB]{surv:mpl} D. Bowman, D. Bradley, Multiple polylogarithms: a brief survey, 
Contemp. Math., 291, American Mathematical Society, Providence, RI, 2001, 71-92.

\bibitem[BDDT]{BDDT} J. Broedel, C. Duhr, F. Dulat, L. Tancredi, Elliptic polylogarithms and iterated integrals on elliptic
curves. Part I: general formalism. J. High Energy Phys.(2018), no. 5, 093. 


\bibitem[BK]{BK} J. Broedel, A. Kaderli, Amplitude recursions with an extra marked point,  
Commun. Number Theory Phys. 16 (2022), no. 1, 75–158.

\bibitem[BMMS]{BMMS}  J. Broedel, C.R. Mafra, N. Matthes, O. Schlotterer, Elliptic multiple zeta values and
one-loop superstring amplitudes, J. High Energy Phys. (2015), no.7, 112.

\bibitem[Br]{Br} F. Brown, Multiple zeta values and periods of moduli spaces $\overline{\mathfrak M}_{0,n}$, 
 Ann. Sci. Éc. Norm. Supér. (4) 42 (2009), no. 3, 371-489.

\bibitem[BL]{BL} F. Brown and A. Levin, Multiple elliptic polylogarithms, preprint arXiv:1110.6917.

\bibitem[BGF]{BGF} J. Burgos Gil, J. Fresan, Multiple zeta values: from numbers to motives, Clay Mathematics Proceedings,
to appear.

\bibitem[CEE]{CEE} D. Calaque, B. Enriquez, P. Etingof, 
Universal KZB equations: the elliptic case, Progr. Math., 269
Birkhäuser Boston, Ltd., Boston, MA, 2009, 165–266.

\bibitem[DHS]{DHS} E. D'Hoker, M. Hidding, O. Schlotterer, Constructing polylogarithms on higher-genus Riemann surfaces, 
preprint arXiv:2306.08644. 

\bibitem[De]{De} P. Deligne, Le groupe fondamental de la droite projective moins trois points, Math. Sci. Res. Inst. Publ. 
16, Springer-Verlag, 1989, 79-297. 

\bibitem[DDMS]{DDMS} M. Deneufchâtel, G.H.E. Duchamp, V.H.N. Minh, A.I. Solomon,
Independence of hyperlogarithms over function fields via algebraic combinatorics, pp. 127–139 in
Algebraic informatics (Linz, Austria, 2011), edited by F. Winkler, Lecture Notes in Comput. Sci.
6742, Springer, 2011.

\bibitem[EZ]{EZ2}  B. Enriquez, F. Zerbini, Analogues of hyperlogarithm functions on affine complex curves, preprint arXiv:2212.03119.  

\bibitem[Kn]{Kn} A. Knapp, Elliptic curves, Math. Notes, 40, 
Princeton University Press, Princeton, NJ, 1992,

\bibitem[LD]{LD} J.A. Lappo-Danilevsky, Mémoires sur la théorie des systèmes des équations différentielles linéaires. Chelsea 
Publishing Co., New York, N.Y., 1953. 

\bibitem[Le]{Lev} A. Levin. Elliptic polylogarithms: an analytic theory. Compositio Math., 106(3):267–
282, 1997.

\bibitem[LR]{LR} A. Levin, G. Racinet, Towards multiple elliptic polylogarithms, preprint 
arXiv:math/0703237. 

\bibitem[Ma]{Ma}  N. Matthes, Elliptic Multiple Zeta Values, PhD Thesis, Universität Hamburg, 
Fakultät für Mathematik, Informatik und Naturwissenschaften, 2016.  

\bibitem[Pa]{Pan}  E. Panzer, Feynman integrals and hyperlogarithms, PhD Thesis, arXiv:1506.07243v1, 
Humboldt-Universität zu Berlin, Mathematisch-Naturwissenschaftliche Fakultät, 2015

\bibitem[Po]{P} H. Poincaré, Sur les groupes des équations linéaires. Acta Math. 4 (1884), no. 1, 201–-312.

\bibitem[W]{Wei} A. Weil, Elliptic functions according to Eisenstein and Kronecker, 
Classics Math., Springer-Verlag, Berlin, 1999. 

\bibitem[Z1]{Zag1} D. Zagier. The Bloch-Wigner-Ramakrishnan polylogarithm function. Math. Ann.,
286 (1990), no. 1-3, 1990, 613–624.

\bibitem[Z2]{Zag} D. Zagier, Periods of modular forms and Jacobi theta functions, 
Invent. Math. 104 (1991), no. 3, 449–465.


\end{thebibliography}
\end{document}